\documentclass[a4paper,11pt]{article}
\usepackage[margin=1in]{geometry}

\usepackage{amsthm,amsmath,amssymb,amsfonts}
\usepackage{thmtools}
\usepackage{thm-restate}

\usepackage[bookmarks=true, colorlinks=true, linkcolor=black, citecolor=black, urlcolor=black, plainpages=false, pdfpagelabels,final]{hyperref}
\usepackage{cleveref}
\usepackage[dvipsnames]{xcolor}
\usepackage{colonequals}
\usepackage{microtype} 
\usepackage[backend=biber,style=alphabetic,giveninits=true,doi=false,isbn=false,url=false,sorting=nyt,maxbibnames=99,date=year,maxalphanames=999]{biblatex}
\renewbibmacro{in:}{}
\def \RR {\mathbb{R}}

\def \eps {\varepsilon}
\def \madm {m_{\mathrm{ADM}}}
\def \Wloc {W_{\mathrm{loc}}}

\def \loc {\mathrm{loc}}
\def \Cinfc {C^\infty_{\mathrm{c}}}
\def \D {\mathcal{D}}
\def \Ric {\mathrm{Ric}}

\def \dg {d\mu_{g}}

\def \la {\langle}
\def \ra {\rangle}
\def \p {\partial}
\def \nablab {\overline{\nabla}}
\def \RCD {{\sf RCD}}
\def \Vol {\mathrm{Vol}}

\def\madm{m_{\mathrm{ADM}}}
\def\miso{m_{\mathrm{iso}}}
\def\misoc{m_{\mathrm{iso}}^{\mathrm{c}}}

\DeclareMathOperator \tr {tr}

\newcommand\restr[2]{{
  \left.\kern-\nulldelimiterspace 
  #1 
  \vphantom{\big|} 
  \right|_{#2} 
  }}

\makeatletter
\DeclareFontFamily{OMX}{MnSymbolE}{}
\DeclareSymbolFont{MnLargeSymbols}{OMX}{MnSymbolE}{m}{n}
\SetSymbolFont{MnLargeSymbols}{bold}{OMX}{MnSymbolE}{b}{n}
\DeclareFontShape{OMX}{MnSymbolE}{m}{n}{
    <-6>  MnSymbolE5
   <6-7>  MnSymbolE6
   <7-8>  MnSymbolE7
   <8-9>  MnSymbolE8
   <9-10> MnSymbolE9
  <10-12> MnSymbolE10
  <12->   MnSymbolE12
}{}
\DeclareFontShape{OMX}{MnSymbolE}{b}{n}{
    <-6>  MnSymbolE-Bold5
   <6-7>  MnSymbolE-Bold6
   <7-8>  MnSymbolE-Bold7
   <8-9>  MnSymbolE-Bold8
   <9-10> MnSymbolE-Bold9
  <10-12> MnSymbolE-Bold10
  <12->   MnSymbolE-Bold12
}{}

\let\llangle\@undefined
\let\rrangle\@undefined
\DeclareMathDelimiter{\llangle}{\mathopen}%
                     {MnLargeSymbols}{'164}{MnLargeSymbols}{'164}
\DeclareMathDelimiter{\rrangle}{\mathclose}%
                     {MnLargeSymbols}{'171}{MnLargeSymbols}{'171}
\makeatother

\theoremstyle{plain}
\newtheorem{theorem}{Theorem}[section]
\numberwithin{equation}{section}

\newtheorem{definition}[theorem]{Definition}
\newtheorem{proposition}[theorem]{Proposition}
\newtheorem{corollary}[theorem]{Corollary} 
\newtheorem{lemma}[theorem]{Lemma}
\newtheorem{remark}[theorem]{Remark}

\crefname{proposition}{Proposition}{Propositions}
\crefname{lemma}{Lemma}{Lemmas}
\crefname{remark}{Remark}{Remarks}
\crefname{equation}{Equation}{Equations}

\begin{document}

\title{Coordinate isoperimetric mass and a positive mass theorem for distributional scalar curvature}

\author{
Melanie Graf$^1$\thanks{\href{mailto:melanie.graf@uni-hamburg.de}{\tt melanie.graf@uni-hamburg.de}},\\
Eduardo Hafemann$^1$\thanks{\href{mailto:eduardo.hafemann@uni-hamburg.de}{\tt eduardo.hafemann@uni-hamburg.de}} \\ \\
$^1$Department of Mathematics, Universit\"{a}t Hamburg, \\
20146 Hamburg, Germany.
}

\maketitle
\vspace{-0.7em}
\begin{center}
Abstract
\end{center}
Extending the well-known Riemannian positive mass theorem to low-regularity metrics is an ongoing effort, starting from the early 2000s, that has recently seen renewed interest. Contrary to many other approaches we do not wish to assume smoothness at infinity nor to introduce a new definition of mass tailored to our approach, but rather go back to the coordinate isoperimetric mass originally proposed by Huisken. We establish  new asymptotic expressions for the coordinate isoperimetric mass of continuous asymptotically flat metrics, which constitute the main ingredient in proving a finiteness criterion, coordinate invariance and equality to the generalized ADM mass of Lee and LeFloch under reasonable assumptions. In particular, for $W^{1,p}_{-\tau}$ asymptotically flat metrics, with $p>n$, $\tau>(n-2)/2$, and nonnegative distributional scalar curvature, the coordinate isoperimetric mass is indeed coordinate invariant, and we establish a positive mass theorem in this regularity without any topological assumptions aside from asymptotic flatness. Our proof relies on a density theorem similar to that of Lee, Lesourd, and Unger, but under a weaker regularity and decay assumption, which may be of independent interest.

\bigskip\noindent{\small \textbf{Keywords.}  Coordinate isoperimetric mass, positive mass theorem, low-regularity metrics}

\vspace{-0.5em}
\addtocontents{toc}{\protect\enlargethispage{2\baselineskip}}
\setcounter{tocdepth}{2}
\tableofcontents
\clearpage

\section{Introduction}

In mathematical general relativity, time-symmetric initial data for an isolated system is modeled by an asymptotically flat Riemannian manifold, that is roughly speaking a Riemannian manifold that outside of some compact set containing all matter sources and any nontrivial topology looks like $\mathbb{R}^n$ outside a ball, with the metric approaching the Euclidean metric at infinity. Asymptotic flatness allows for a mathematically sound definition of physical quantities such as the ADM-mass \cite{ArnowittDeserMisner1961} 
\begin{equation}\label{eq:ADMmass_into_paper}
\madm:=\frac{1}{16\pi } \lim_{r\to \infty}  \int_{S_r} \sum_{i,j=1}^n (\partial_i g_{ij}-\partial_j g_{ii}) \frac{x^j}{|x|}\,dS^0_r,\vspace{-1ex}
\end{equation}
where $dS^0_r$ denotes the Euclidean area measure on $S_r$, the center of mass, and others. Notably the ADM-mass turned out to be finite and independent of the chosen asymptotically flat chart as shown by Bartnik \cite{Bartnik1986}, provided one has sufficient decay of the metric and its derivatives up to second order and the scalar curvature is integrable. It is a powerful tool for using mild global curvature control, together with geometric control available on the asymptotically flat end, to draw strong conclusion about the entire Riemannian manifold $(M, g)$: The Riemannian {\em Positive Mass Theorem} states that if $(M,g)$ is asymptotically flat and complete and has nonnegative scalar curvature, then also $\madm\geq 0$.  Further if, under the same assumptions of completeness and nonnegative scalar curvature $R\geq 0$, $\madm=0$ then $(M,g)$ must already be isometric to Euclidean space.  This theorem was first shown by Schoen and Yau in \cite{PMTSchoenYau} using minimal surface techniques for asymptotically Schwarzschildean Riemannian manifolds in dimension $n=3$, then extended  by them to the general asymptotically flat case and $3\leq n\leq 7$, cf.~\cite{SchoenYau1981}. In case $M$ is spin, the result was extended to all dimensions by Witten \cite{Witten1981} using harmonic spinors The higher dimensional non-spin case has been treated in \cite{Schoen2019,Lohkamp2016,Lohkamp2017}. More recently, new approaches have been developed using generic regularity for minimal surfaces in \cite{ChodoshMantoulidisSchulze2023, ChodoshMantoulidisSchulzeWang2025,BiHaoHeShiZhu26} covering dimensions up to $19$, and a new dimension descent scheme in \cite{BrendleWang2026} for arbitrary dimensions. Physically this geometric result further justifies $\madm$ being the total "mass" of the given isolated system as it guarantees non-negativity of the mass given a non-negative energy density.

The positive mass theorem has been generalized to allow for a limited drop in regularity of the metric. These efforts in lowering the necessary metric regularity have been crucial for the physical applicability of the positive mass theorem and for ensuring that arguments in the proof of, e.g., the Bunting--Masood-ul-Alam  black hole uniqueness theorem \cite{BuntingAlam1987} can be made fully mathematically rigorous. Another motivation comes from Riemannian geometry itself: Gromov \cite{Gromov2014} showed that scalar curvature lower bounds are preserved under $C^0$ limits of $C^2$ metrics, provided the limit is $C^2$ (also see \cite{MazurowskiYao2026QC0} for quantitative estimates). This indicates that scalar curvature lower bounds may be a $C^0$ property of the metric and it is therefore reasonable to ask whether a positive mass theorem can be proved at much lower regularity, perhaps even for $C^0$ metrics which could reveal interesting properties about scalar curvature. Additionally low-regularity versions of positive mass rigidity can be understood as removable singularity results. This is also evident in its compact analog, the classical torus rigidity result, stating that metrics with nonnegative scalar curvature on the torus are isometric to the flat torus metric. Investigating this property for low-regularity metrics also received increased attention recently. Concretely, \cite[Theorem 1.7]{LeeLitzingerSimon2026} established torus rigidity for $L^\infty \cap W^{1,n}_\loc$ metrics using Ricci flow, generalizing \cite{Jiang2022}.

The first low-regularity positive mass theorem for metrics admitting corners along a hypersurface (with rigidity in case $n=3$) goes back to Miao \cite{Miao2002} and was improved by McFeron and Szekelyhidi \cite{McFeron2012-gs} by combining some of Miao's approximation techniques with Ricci flow methods to include the rigidity case in any dimension. More recent results allow for lower regularity but impose restrictions on the dimension of the singular set \cite{Lee2013, Shi2018, Jiang2022}. Without restricting the dimension of the singular set, Grant and Tassotti \cite{grant2014positive} proved a positive mass theorem for $C^0 \cap \Wloc^{2, n/2}$ metrics that are smooth outside a compact set. More recently, in \cite{Hafemann2026}, this result was extended via convolution techniques and Friedrichs' lemma to $C^0 \cap W^{1,n}_\loc$ metrics that are smooth outside a compact set. Using Ricci-flow, Lee-Litzinger-Simon \cite{LeeLitzingerSimon2026} extended it to $L^\infty \cap W^{1,n}_\loc$ metrics that are smooth outside a compact set. All the results mentioned so far are based on reducing the problem to the smooth positive mass theorem by constructing suitable approximations and assume smoothness of the metric outside a compact set with the standard decay assumptions and hence use the classical ADM-mass in their formulation.

For metrics that are not smooth (or at least $C^2$) in the asymptotic region, the literature is considerably more limited. One of the new problems arising when allowing for a drop in regularity also in the asymptotic region, is that one first needs to replace the definition of the ADM-mass by a suitable candidate for the mass. By now there are several such candidates in the literature, two of which will be of particular relevance for us. 

In \cite{LeeLeFloch2015}, Lee and LeFloch proposed the following notion of mass for Sobolev metrics:
\begin{equation}\label{eq:LLmass_intro}
\mathbf{m} \colonequals \frac{1}{2(n-1) \omega_{n-1}} \inf_{\eps>0} \liminf_{r \rightarrow+\infty}\left(\frac{1}{\eps} \int_{r<\rho<r+\eps} \mathbf{V} \cdot \nablab \rho \,d\mu_h\right),
\end{equation}
where $\mathbf{V}$ is a vector field given by \eqref{eq:V_LeeLeFloch}, $\overline{\nabla}$ is the Levi-Civita connection of a fixed smooth background metric $h$, $\omega_{n-1}$ is the volume of the standard unit $(n-1)$-sphere, and $\rho$ is the radial function $\rho(x) = \sqrt{\sum (x^i)^2}$ defined by the asymptotically flat end. One should note that this is very immediately modeled after the ADM-mass, $\mathbf{V}$ is related to the linearization of the scalar curvature at the Euclidean background metric and this mass can be defined for $C^0 \cap W^{1,p}_{-\tau}$ metrics, where $p\geq1$ and $\tau>0$.

A different definition of the mass for asymptotically flat 3-manifolds, based on the isoperimetric inequality and aiming to capture more of a geometric meaning, was put forward earlier by Huisken. In \cite{Hui1}, he suggested a definition based on coordinate spheres in chosen asymptotically flat coordinates 
\begin{equation}\label{eq:intro_misoc}
    \misoc := \lim_{r\to\infty}\frac{2}{\mathcal{A}(r)} \left(V(r) - \frac{1}{6\sqrt{\pi}} \mathcal{A}(r)^{3/2} \right), 
\end{equation}
where $V(r)$ and $\mathcal{A}(r)$ denote, respectively, the volume of the coordinate ball and the area of the coordinate sphere of radius $r$, computed with respect to the metric $g$. This was later replaced by a manifestly coordinate invariant definition in \cite{Hui2,Hui3} (or \cite[Definition 7.1]{EichmairMetzger2013}), namely
\begin{equation}\label{eq:miso_exaust}
  m_{\mathrm{iso}}:= \sup_{\{\Omega_i\}_{i=1}^\infty} \limsup_{i\to \infty } \frac{2}{\mathrm{area}(\partial\Omega_i)} \Big(\mathrm{vol}(\Omega_i)-\frac{\mathrm{area}(\partial\Omega_i)^\frac{3}{2}}{6\sqrt{\pi}} \Big),
\end{equation}
where the supremum is taken over all sequences of sufficiently regular $\{\Omega_i\}$ compactly exhausting $M$. Clearly, $\misoc\leq m_{\mathrm{iso}}$ and both can be naturally generalized to dimension $n \geq 3$ as proposed in \cite[Section 8.3]{Jauregui2023-mh} (see \eqref{eq:misoc_def}). A remarkable feature of both definitions, which was already pointed out by Huisken \cite{Hui1,Hui2,Hui3}, is that they do not rely on any derivatives of the metric, so they naturally extend to $C^0$ metrics (replacing the area with the perimeter measure and adapting the assumptions on the $\Omega_i$, see e.g., \cite{Jauregui2019}) and even to the metric measure space setting (cf.\ also \cite[Sec.\ 5]{JaureguiPeralesPortegies2025} for a suggested definition  in local integral current spaces). Therefore, the isoperimetric mass in particular, given its evident coordinate invariance, seemed to be an interesting candidate for generalizing the positive mass theorem.

Progress towards establishing properties of the isoperimetric mass has been made by several authors: Following Huisken's suggestions, Fan, Shi and Tam, \cite{Fan2009-bl} showed that  $\misoc$ coincides with the ADM mass for $C^2$-asymptotically flat manifolds, based on an argument credited to P.~Miao, and is therefore coordinate invariant for $C^2$ metrics and hence $\madm\leq m_{\mathrm{iso}}$. Jauregui and Lee \cite{Jauregui2019} prove that $m_{\mathrm{iso}}=\madm$ under the assumption of non-negative scalar curvature. In particular the positive mass theorem holds for the isoperimetric mass. However, although several interesting results involving $\miso$ have been established, a recent note showed that, for any $C^0_{-\tau}$ asymptotically flat manifold with decay rate $\tau>0$, the isoperimetric mass $\miso$ is \emph{always} nonnegative, independently of the scalar curvature, rendering a positive mass theorem for $\miso$ vacuously true. We note that this observation does not apply to $\misoc$ nor to a suitable quasilocal version of \eqref{eq:miso_exaust}, the latter of which has recently been shown by Antonelli, Fogagnolo, Nardulli and Pozzeta, \cite{Antonelli2026}, to be nonnegative for $C^0$ Riemannian 3-manifolds that are $C^0_{\loc}$ asymptotic to $\RR^3$ with nonnegative scalar curvature in an approximation sense, using techniques from weak inverse mean curvature flow.

One remaining open question raised in \cite{Jauregui2024-uv} is whether a low-regularity generalization of the positive mass theorem might hold for $\misoc$. Even if such a result holds, one may wonder whether it carries any geometric meaning, given that the definition of $\misoc$  appears to be coordinate dependent (see \eqref{eq:intro_misoc}). Of course, the same could be said about the definition of the ADM-mass, which turned out to be coordinate independent under suitable assumptions, a result that directly carries over to $\misoc$  for $C^2_{-\tau}$ metrics since it coincides with the ADM mass at this regularity. Therefore one might ask whether a similar coordinate independence result for $\misoc$ carries over to lower-regularity. 
The main goal of this work is to examine these last two questions and assess their validity for low-regularity metrics.

Regarding the first question, so far no positive mass theorem has been established directly for $\misoc$ in low-regularity, the current existing results without assuming smoothness at infinity concern only other notions of mass. For instance, with the notion of mass defined in \eqref{eq:LLmass_intro}, Lee and LeFloch \cite{LeeLeFloch2015} generalized Witten's proof of the positive mass theorem to $n$-dimensional asymptotically flat \emph{spin} manifolds whose metrics only have regularity $C^0\cap W^{1,n}_{\loc}$ and decay in the weighted Sobolev space $W^{1,n}_{-\tau}$, where $\tau>(n-2)/2$ (see \Cref{sec:preliminaries} for the definition). Notably, their proof works directly at this regularity and does not rely on approximations or a density theorem to reduce the problem to the smooth positive mass theorem. 

For continuous metrics on $\RR^3$ respectively $\RR^n$ there are two results which have been announced very recently. Both use approximation based approaches and, contrary to our result, the existence of suitable approximations is essentially baked into the assumptions, namely in the definition of non-negative scalar curvature. Of course the upside of this is that it allows one to work with metrics that are merely continuous for which distributional scalar curvature is not available. The first, by Mazurowski and Yao \cite{MazurowskiYao2026PMT}, uses harmonic functions to prove a positive mass theorem for continuous asymptotically flat metrics $g \in C^0_{-\tau}(\RR^3)$ that admit local uniform approximations by smooth metrics with almost nonnegative scalar curvature. Their approach is based on a new proof of the smooth positive mass theorem in three dimensions using Green's functions (cf.\ \cite{Agostiniani2024}) and is restricted to $\RR^3$.  The second by Burkhardt-Guim \cite{burkhardtguim2026} is a positive mass theorem for $C^0_{-\tau}$ asymptotically flat metrics on $\RR^n$ which are sufficiently close to the Euclidean metric {\em globally} with nonnegative scalar curvature in the sense of Ricci flow. Both \cite{MazurowskiYao2026PMT} and \cite{burkhardtguim2026} use their own definition of mass, which is adapted to their setup instead of $\misoc$.\\   

The main result of this paper is a positive mass theorem for $W^{1,p}_{-\tau}$ asymptotically flat metrics on an arbitrary manifold (i.e.\ without assuming any additional dimension or topology restrictions apart from asymptotic flatness), with $\tau>(n-2)/2$ and $p>n$, using the distributional scalar curvature and $\misoc$. In particular, we answer the question concerning $\misoc$ posed in \cite{Jauregui2024-uv} in this setting and drop the hypothesis of smoothness outside a compact set from the main theorem of \cite{Hafemann2026,LeeLitzingerSimon2026} under this regularity assumption.

\begin{theorem}\label{theo:main_intro}
    Let $(M^n, g)$ be a $W^{1,p}_{-\tau}$, $p>n$, asymptotically flat manifold with $\tau>(n-2)/2$. Assume $R[g]\geq0$ in $\D^\prime$ and $R[g]$ is a finite measure outside some compact set. Then $\misoc(g)\geq0$. Moreover, $\misoc(g) = 0 $ if and only if $(M^n, g)$ is isometric to Euclidean space $(\RR^n, \delta)$.
\end{theorem}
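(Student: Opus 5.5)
The plan is to reduce \Cref{theo:main_intro} to the smooth positive mass theorem through an approximation argument, with the coordinate isoperimetric mass compared to the Lee--LeFloch mass $\mathbf{m}$ at each stage. The argument has three ingredients.

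\textbf{Step 1: equality of masses.} First I would establish that $\misoc(g)=\mathbf{m}(g)$ for $W^{1,p}_{-\tau}$ asymptotically flat metrics whose distributional scalar curvature is a finite measure outside a compact set. For this I would derive asymptotic expansions of the coordinate volume $V(r)$ and area $\mathcal{A}(r)$:
\begin{itemize}
\item write $g=\delta+h$ with $h\in W^{1,p}_{-\tau}$;
\item expand $\sqrt{\det g}$ and the induced area density to second order in $h$;
\item note that, since $\tau>(n-2)/2$, the quadratic terms are $o(r^{n-2})$ after integration, because $|h|^2r^{n-1}\cdot r\sim r^{n-2\tau}=o(r^{n-2})$ in an averaged sense.
\end{itemize}
The linear terms in $\frac{1}{\mathcal{A}}(V-c\mathcal{A}^{n/(n-1)})$ then combine into $\int_{B_r}(\tr h-\ldots)$ minus a sphere average. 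After integrating by parts in the radial direction this becomes an averaged flux of $\mathbf{V}\cdot\nablab\rho$ over annuli, which is essentially the Lee--LeFloch quantity (in the style of Miao's argument in \cite{Fan2009-bl}). Finiteness of $R$ as a measure near infinity, together with the divergence identity $R[g]=\nablab\cdot\mathbf{V}+O(|\p h|^2)$, should make the $\liminf$/$\inf_\eps$ in \eqref{eq:LLmass_intro} a genuine limit, and hence give $\misoc=\mathbf{m}$. The same computation also yields coordinate invariance.

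\textbf{Step 2: density theorem.} Next I would prove a density theorem in the spirit of Lee--Lesourd--Unger. Given $g$ as above, I would construct smooth metrics $g_k$ that are asymptotically flat in $C^2_{-\tau'}$ with $R[g_k]\ge0$ (or integrable with small negative part, then corrected) and $\mathbf{m}(g_k)\to\mathbf{m}(g)$. The construction proceeds as follows.
\begin{itemize}
\item Mollify $g$ to $g_\eps$, and use Friedrichs' lemma to get $R[g_\eps]-\rho_\eps*R[g]\to0$ in a weighted $L^{q}$ norm. This uses $p>n$, so $g\in C^0$ and products $\Gamma\cdot\Gamma\in L^{p/2}$ make sense.
\item Make a conformal change $u_\eps^{4/(n-2)}g_\eps$, solving $-c_n\Delta u+R_-u=0$ with $u\to1$ at infinity. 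The weighted norm of the negative part tends to zero, so $u_\eps\to1$ in $W^{2,q}_{-\tau}$.
\item Change the mass by $-\lim\int R_-(g_\eps)u\to0$.
\end{itemize}
Continuity of $\mathbf{m}$ under $W^{1,p}_{-\tau}$ convergence, plus the scalar curvature flux control, lets the mass pass to the limit.

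\textbf{Step 3: conclusion.} The smooth positive mass theorem, valid in all dimensions \cite{Schoen2019,Lohkamp2016,BrendleWang2026}, gives $\mathbf{m}(g_k)=\madm(g_k)\ge0$, so $\misoc(g)\ge0$.

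For rigidity, assume $\misoc(g)=0$. First show $R[g]=0$: otherwise a conformal deformation using a positive portion of $R$ would produce a metric of negative mass. Next show $\Ric=0$ distributionally, using a short-time Ricci-flow or conformal variation $g+t\chi\Ric$ argument as in the smooth case, which needs a weak version since $\Ric\in W^{-1,p}$. Finally, $\Ric=0$ together with harmonic coordinates gives smoothness, and flatness then follows from the smooth rigidity statement.

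\textbf{Main obstacle.} I expect the hardest part to be Step 2 at the weak decay $\tau>(n-2)/2$ without smoothness at infinity. The issue is that mollification does not preserve the mass a priori, and the error $R[g_\eps]-\rho_\eps*R[g]$ must be controlled in weighted norms uniformly near infinity. I would first try scaled mollification, using a variable radius $\eps(x)\sim\eps|x|$, so that the weighted estimates are scale-invariant and the error terms stay $o(1)$ in $L^{n/2}_{-2}$ norm. Rigidity at this regularity is the second delicate point.
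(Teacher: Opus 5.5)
\textbf{Verdict: there is a genuine gap.} Your Step 1 and the first half of Step 2 (mollification, Friedrichs-type control of $R[g_\eps]-R[g]\star_M\rho_\eps$, conformal correction by the negative part) match the paper. The passage to the classical positive mass theorem in Step 3, however, does not work in the full range of the statement.

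\textbf{Why Step 3 fails for $\tau\le n/2$.} With fixed-radius mollification you only get $\|\partial^2 g_\eps\|_{L^p_{-\tau-1}}\le C\eps^{-1}\|\partial g\|_{L^p_{-\tau-1}}$. So $g_\eps$ and $u_\eps^{4/(n-2)}g_\eps$ are in general $W^{2,p}$ asymptotically flat only at rate $\tau-1$. That suffices for the smooth theorem when $\tau>n/2$, but not for $(n-2)/2<\tau\le n/2$, which the statement covers.

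\textbf{Why scaled mollification does not obviously fix it.} You flag the obstacle, but a radius $\eps|x|$ destroys the commutation of mollification with $\partial$. Then $R[g_\eps]$ differs from the nonnegative regularisation of $R[g]$ by extra terms of the schematic form $|x|^{-1}\cdot(\partial g$ averaged against zero-mean kernels at scale $\eps|x|)$. These tend to zero in $L^p_{-\tau-2}$, hence in $L^{n/2}$. But $L^p_{-\tau-2}\not\subset L^1$ for $\tau<n-2$, so your mass shift $\int R[g_\eps]_-u$ is no longer controlled. $L^{n/2}$-smallness by itself says nothing about the mass at infinity.

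\textbf{The continuity claim is false as stated.} $\mathbf m$ is not continuous under $W^{1,p}_{-\tau}$ convergence. Take metrics equal to $\delta$ on $B_{R_i}$ and to Schwarzschild of mass $m$ outside $B_{2R_i}$. They converge to $\delta$ in $W^{1,p}_{-\tau}$ for $\tau<n-2$, yet keep mass $m$. What you need is tight weak convergence of the scalar curvature measures, as in \Cref{lemma:mass_cont}.

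\textbf{The missing idea: cut off instead of rescaling.} Set $g_{\eps,\lambda}=\chi_\lambda g_\eps+(1-\chi_\lambda)\delta$ and solve the conformal equation with potential $V_{\eps,\lambda}=-\chi_\lambda R[g_\eps]_-+(R[g_{\eps,\lambda}]-\chi_\lambda R[g_\eps])$. The result is harmonically flat with $R=u^{-4/(n-2)}\chi_\lambda R[g_\eps]_+\ge 0$, so the classical theorem applies.
\begin{itemize}
\item The gluing error lives on one dyadic annulus. \Cref{lemma:AnnularDecay} bounds it in $L^{n/2}\cap L^{p/2}_{-\tau'-2}$.
\item Its worst term $|x|^{-\tau}\partial^2 g_\eps$ costs $\eps^{-1}$ and decays only like $2^{-(2\tau-1-\tau')i}$. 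This forces the weaker rate $\tau'<\min\{\tau,2\tau-1\}$ and the coupling $\eps_i=2^{-i\eta/2}$, $\lambda_i=2^i$.
\end{itemize}

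\textbf{How the mass is then recovered.} It is not ``mass of $g_\eps$ minus a vanishing correction''. In fact $\misoc(g_{\eps,\lambda})=0$, and $\int V_iu_i\,d\mu_{g_i}\to -2(n-1)\omega_{n-1}\misoc(g)$ even though $V_i\to 0$ in $L^{n/2}$; this also illustrates the previous point. Convergence of $\madm(\widetilde g_i)$ comes instead from three facts:
\begin{itemize}
\item $\widetilde g_i\to g$ in $L^\infty$;
\item $\partial\widetilde g_i\to\partial g$ in $L^2$, which for $\tau'\le (n-2)/2$ needs \eqref{eq:gradient_estimate_L2} and the divergence structure of the gluing error;
\item $R[\widetilde g_i]\,d\mu_h\rightharpoonup dR[g]$ with uniformly bounded total mass.
\end{itemize}

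\textbf{On rigidity.} Your scalar-flat and Ricci-flat steps follow the paper in spirit. The Ricci step needs the uniform bounds of \Cref{lemma:ScalarLowerBound_W1p}, since $R[g_{\eps,t}]$ has no uniform $L^{n/2}$ bound. Your final step, an elliptic bootstrap in harmonic coordinates (valid since $p>n$) followed by Bishop--Gromov, is a reasonable alternative to the paper's RCD volume comparison plus low-regularity Myers--Steenrod. Both rigidity steps, though, argue by contradiction with nonnegativity, so they inherit the gap above.
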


Note that by the weighted Sobolev embedding, $W^{1,p}_{-\tau} \subset C^0_{-\tau}$, and $g$ is $C^0_{-\tau}$ asymptotically flat. Further, nonnegative distributions can be identified with nonnegative Radon measures \cite[Theorem 3.18]{Duistermaat2010}, i.e., if $R[g]\geq 0$ in $\D^\prime$ then there exists a Radon measure $dR[g]$ that agrees with $R[g]$. As consequence, the assumption that $R[g]$ is a finite measure outside some compact set just replaces the standard hypothesis of integrability of the scalar curvature to guarantee the finiteness of the mass. The proof of the nonnegativity follows directly from the following density theorem, which generalizes the density theorem of \cite{LLR2022} to lower regularity and a weaker decay rate.

\begin{restatable}{theorem}{densitythm}\label{theo:density}
 Let $(M^n, g)$ be a $W^{1,p}_{-\tau}$ asymptotically flat manifold, $p>n\geq3$, and $\tau \in (\frac{n-2}{2}, n-2)$. Assume $R[g]\geq0$ in $\D^\prime$ and $R[g]$ is a finite measure outside some compact set. Then, for any $\tau^\prime$ satisfying $0<\tau^\prime<\tau$ and $\tau^\prime< 2\tau-1$, there exists a sequence of $C^2$ asymptotically flat metrics $\widetilde{g}_i$ with the following properties:
\begin{enumerate}
        \item $\widetilde{g}_i$ is harmonically flat outside a bounded set in $\mathcal{E}$, that is, $\widetilde{g}_i = u_i^{\frac{4}{n-2}} \delta$ there, and $u_i$ is a Euclidean harmonic function outside this bounded set with expansion
        \begin{equation}\label{eq:harmonic_expansion}
        u_i(x) = 1 + \frac{A_i}{|x|^{n-2}} + O_\infty(|x|^{1-n}).
        \end{equation}
        \item $R[\widetilde{g}_i]\geq0$ and $R[\widetilde{g}_i]\,d\mu_{h} \rightharpoonup dR[g]$ weakly as finite Radon measures.
         \item The ADM mass of $\widetilde{g}_i$ is $2A_i$ and
        \begin{equation}
          \|g -\widetilde{g}_i\|_{W^{1,p}_{-\tau^\prime}}+  \|\partial g -\partial \widetilde{g}_i\|_{L^{2}}  + |\misoc(g)-\madm(\widetilde{g}_i)| \longrightarrow 0 \text{ as } i \to \infty.
        \end{equation}
        \item There exists a sequence $\delta_i \downarrow 0$ such that $g$ and $\widetilde{g}_i$ are $\delta_i$-close as bilinear forms on $M$,
        \begin{equation}
        (1-\delta_i)\widetilde{g}_i \leq g \leq (1+\delta_i) \widetilde{g}_i.  
        \end{equation}
        \end{enumerate}   
In particular, if $n\geq4$, then $\tau^\prime$ can be chosen so that $(n-2)/2<\tau^\prime < \tau$. 
\end{restatable}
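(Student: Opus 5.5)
The plan is to follow the scheme of Lee--Lesourd--Unger (and its ancestor, the Schoen--Yau harmonically flat density argument), adapted to $W^{1,p}_{-\tau}$ regularity and nonnegative distributional scalar curvature. There are four steps: mollify, cut off to the Euclidean metric at large radius, conformally correct, and identify the masses.

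\textbf{Step 1: smoothing.} Fix a cutoff radius $r_i\to\infty$ and mollify $g$ with a radius-dependent mollification scale. Since $p>n$, we have $g\in C^0_{-\tau}$, and Friedrichs-type commutator estimates control the error. Concretely, the scalar curvature of the mollified metric $g_\eps$ satisfies $R[g_\eps]=(R[g])*\phi_\eps+E_\eps$, where $E_\eps$ is quadratic in $\partial g$. By Friedrichs' lemma, $E_\eps\to0$ in $L^{p/2}$ with weights, using $\partial g\in L^p_{-\tau-1}$. We get $g_\eps\to g$ in $W^{1,p}_{-\tau'}$, and $\partial g_\eps\to\partial g$ in $L^2$. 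The $L^2$ convergence uses $\tau>(n-2)/2$, which makes $\partial g\in L^2$.

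\textbf{Step 2: cutoff at infinity.} Interpolate $\bar g_i=\chi_i g_\eps+(1-\chi_i)\delta$, with $\chi_i$ cutting off on the annulus $r_i<|x|<2r_i$. The negative part of $R[\bar g_i]$ is supported near the annulus and in the region where the mollification error is concentrated. Its $L^{n/2}$ (and weighted $L^1$) norm is $O(r_i^{-2\tau+n-2})$ up to lower-order terms, which tends to zero because $\tau>(n-2)/2$.

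\textbf{Step 3: conformal correction.} Solve $-a_n\Delta_{\bar g_i}u_i+R^-[\bar g_i]\,u_i=0$, with $u_i\to1$ at infinity and $a_n=4(n-1)/(n-2)$. Set $\widetilde g_i=u_i^{4/(n-2)}\bar g_i$, so that $R[\widetilde g_i]=u_i^{-(n+2)/(n-2)}R^+[\bar g_i]\ge0$. Solvability, together with $\|u_i-1\|_{W^{2,q}_{-\tau'}}\to0$, follows from the weighted Fredholm theory of Bartnik, with uniform invertibility coming from smallness of $R^-$ in $L^{n/2}$. The condition $\tau'<2\tau-1$ is dictated by the decay of $R^-$ coming from quadratic terms in $\partial g$, which is $O(|x|^{-2\tau-2})$. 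Outside a large ball, $\bar g_i=\delta$ and $R^-=0$, so $u_i$ is Euclidean harmonic and has the expansion \eqref{eq:harmonic_expansion}. This gives item~1, and the standard computation gives $\madm(\widetilde g_i)=2A_i$ (in the normalization used). Item~4 follows from uniform $C^0$ convergence of $u_i\to1$ and $g_\eps\to g$. Weak convergence of $R[\widetilde g_i]\,d\mu_h$ to $dR[g]$ follows from $R[g]\ge0$: mollification converges weakly, and the error terms and $R^-$ tend to zero in $L^1$. Finiteness of the limit uses that $R[g]$ is a finite measure outside a compact set.

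\textbf{Step 4: mass convergence.} We must show $\madm(\widetilde g_i)\to\misoc(g)$. I expect this to be the main obstacle. I would use the paper's asymptotic expression for $\misoc$ and its equality with the Lee--LeFloch mass $\mathbf m$ under these hypotheses. The identity $\madm(\widetilde g_i)=\mathbf m(\widetilde g_i)$ holds for smooth metrics, so it suffices to prove $\mathbf m(\widetilde g_i)\to\mathbf m(g)$. For this, write $\mathbf m$ via the divergence theorem as $\int R\,d\mu+\int Q(\partial g)$ over the end plus a compact contribution. Here $Q$ is quadratic, so it is controlled by the $L^2$ convergence of $\partial g$ and the weak convergence of the finite measures. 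The delicate point is the absence of mass concentration at infinity: we need tightness of $R[\widetilde g_i]$ near the cutoff annulus. I would handle this by choosing $r_i$ large enough that $dR[g](\{|x|>r_i\})\to0$ and the quadratic tails vanish. Finally, when $n\ge4$ we have $2\tau-1>(n-2)/2$ whenever $\tau>(n-2)/2$, since $(n-2)/2\ge1$. Hence $\tau'$ can be chosen in $((n-2)/2,\tau)$.
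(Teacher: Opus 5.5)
There is a genuine gap, and it lies in your choice of conformal potential. Cutting $g_\eps$ off to $\delta$ produces on the annulus the error $J_i:=R[\bar g_i]-\chi_iR[g_\eps]$. Its leading terms are $\partial^2\chi_i\,(g_\eps-\delta)=O(r_i^{-2-\tau})$ and $\partial\chi_i\,\partial g_\eps$, which are linear in $g_\eps-\delta$ and change sign, together with $(1-\chi_i)(g_\eps-\delta)\,\partial^2g_\eps$. It does not consist only of quadratic $O(|x|^{-2\tau-2})$ terms. Its $L^{n/2}$ and $L^{p/2}_{-\tau'-2}$ norms do tend to zero, but its $L^1$ norm does not. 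Since $\bar g_i$ is Euclidean near infinity it has zero mass, and comparing with \eqref{eq:miso_dist_form} gives $\int J_i\,d\mu_h\approx-2(n-1)\omega_{n-1}\misoc(g)$: the annulus carries the mass. Moreover, because $\tau<n-2$, $\|J_i\|_{L^1}$ is in general unbounded. So your claim that $R^-[\bar g_i]$ and the error terms vanish in $L^1$ is false. Were it true, \eqref{eq:conf_misoc} would give $\madm(\widetilde g_i)=\frac{1}{2(n-1)\omega_{n-1}}\int|R^-[\bar g_i]|\,u_i\,d\mu_{\bar g_i}\to0$ rather than $\misoc(g)$.

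More seriously, a potential built from $R^-[\bar g_i]$ absorbs only $J_i^-$. Then $R[\widetilde g_i]=u_i^{-4/(n-2)}R^+[\bar g_i]$ keeps $J_i^+$, so it is not bounded in $L^1$. Item~2 fails, and the mass-continuity argument you plan in Step~4 (this is \Cref{lemma:mass_cont}) loses its hypotheses. Choosing $r_i$ so that the tail of $dR[g]$ is small does not help, because the obstruction is created by the cut-off itself. Concretely, let $g=\Phi^*\delta$ be flat, with $\Phi(x)=x+\nabla f$ near infinity, $f=r^{2-\tau_0}Y_\ell(\theta)$, $\tau<\tau_0<n-2$, $\ell\ge2$. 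Then $R[g]=0$ and $\misoc(g)=0$. But $J_i\approx r_i^{-2-\tau_0}\Psi(|x|/r_i)Y_\ell(\theta)$ with $\Psi\not\equiv0$, so your metrics satisfy $\madm(\widetilde g_i)\gtrsim\|J_i^-\|_{L^1}\sim r_i^{n-2-\tau_0}\to\infty$.

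The missing idea is to put the \emph{entire} two-signed cut-off error into the potential, as the paper does: $V_i=-\chi_iR[g_{\eps_i}]_-+\bigl(R[g_i]-\chi_iR[g_{\eps_i}]\bigr)$. Then $R[\widetilde g_i]=u_i^{-4/(n-2)}\chi_iR[g_{\eps_i}]_+$, which is uniformly bounded in $L^1$ and converges weakly to $dR[g]$. Solvability and $u_i\to1$ need only $\|V_i\|_{L^{n/2}\cap L^{p/2}_{-\tau'-2}}\to0$, never $L^1$-smallness, and the masses converge by \Cref{lemma:mass_cont}.

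Two further corrections. First, the restriction $\tau'<2\tau-1$ does not come from quadratic terms, which only need $\tau'<2\tau$. It comes from $(1-\chi_i)(g_\eps-\delta)\,\partial^2g_\eps$: mollification only gives $\|\partial^2g_\eps\|_{L^p_{-\tau-1}}\le C\eps^{-1}\|\partial g\|_{L^p_{-\tau-1}}$, with no gain in decay. This $\eps^{-1}$ loss is why the mollification scale must be tied to the cut-off radius ($\lambda_i=2^i$, $\eps_i=2^{-i\eta/2}$; see \Cref{lemma:AnnularDecay}). Second, $\|\partial\widetilde g_i-\partial g\|_{L^2}\to0$ requires $\|\nabla u_i\|_{L^2}\to0$. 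This does not follow from $W^{1,p}_{-\tau'}$ convergence when $\tau'\le(n-2)/2$, which is forced when $n=3$ and $\tau\le3/4$. The paper obtains it from the divergence structure of $V_i$ via \eqref{eq:gradient_estimate_L2}.
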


The proof of rigidity in \Cref{theo:main_intro} is analogous to the rigidity argument provided in \cite{Hafemann2026} employing volume comparison theory for RCD-spaces (as in \cite{Jiang2022}) to conclude that the manifold is isometric to Euclidean space, but we use our density theorem (or the nonnegativity part of \Cref{theo:main_intro}) to prove Ricci-flatness.

Returning to more general questions about the coordinate isoperimetric mass, we can show that if $g \in C^0_{-\tau} \cap W^{1,2}_{-\tau}$, $p\geq2$ and $\tau>(n-2)/2$, then the coordinate isoperimetric mass $\misoc$ and the generalized ADM-mass of Lee and LeFloch $\mathbf{m}$ agree by \Cref{prop:EquivMasses}. Thus another way to interpret our results is that \Cref{theo:main_intro} drops the spin hypothesis from the positive mass theorem of Lee and LeFloch when $g \in W^{1,p}_{-\tau}$, $p>n$. On the other hand, if $g \in C^0_{-\tau} \cap W^{1,n}_{-\tau}$ and $M$ is spin, then the positive mass theorem for spin manifolds of Lee and LeFloch applies to $\misoc$, since $\misoc = \mathbf{m}$ (see \Cref{cor:miso_PMTspin}). This equality, or rather the new asymptotic expressions for $\misoc$ derived in the process (see \Cref{theo:miso_div}), have also been a crucial ingredient in the proof of the convergence of the masses in the density theorem.
Concerning the coordinate invariance of $\misoc$ we show that,  for $W^{1,p}_{-\tau}$ asymptotically flat metrics, with $\tau > (n-2)/2$ and $p>n$, $\misoc$ is a geometric invariant of the metric if $R[g]$ is a measure outside a compact set. In particular, this holds if $R[g]\geq 0$ in $\D^\prime$ (see \Cref{cor:miso_coordinv}). Geometric invariance can also be proved for metrics that are just $C^0_{-\tau}$, but only under additional reasonable assumptions on the change of coordinates, decay, and the well-definedness of $\misoc$ in different charts (see \Cref{theo:CoordIndep}). 

From all this, we conclude that $\misoc$ admits a positive mass theorem for $W^{1,p}_{-\tau}$ asymptotically flat metrics, $p>n$ and $\tau>(n-2)/2$, and can be considered a meaningful geometric invariant in this regularity, and possibly warrants further attention by the community.\\ 

\noindent \textbf{Use of AI Disclosure.} 
All the proofs and mathematical ideas were developed and written up by the authors, inspired by their own previous works and the references cited throughout the text. Artificial intelligence was used on parts of the manuscript as an auxiliary tool for correcting grammar and spelling, and to improve wording. In addition, it was used to check selected proofs of the finalized manuscript where it pointed out the dimension or decay restriction in \Cref{theo:CoordIndep} due to the translations. The authors take full responsibility for the content of the manuscript.\\

\noindent \textbf{Acknowledgments.}\label{acknowledgments} 
MG and EH acknowledge support from the Deutsche Forschungsgemeinschaft (DFG, German Research Foundation) under Germany’s Excellence Strategy – EXC 2121 "Quantum Universe" – 390833306. MG would like to thank Dan Lee for helpful discussions during the Thematic Program on Nonsmooth Riemannian and Lorentzian Geometry at the Fields Institute in fall 2022.

\subsection{Preliminaries} \label{sec:preliminaries}

Throughout this paper manifolds $M^n$ are assumed to be smooth, Hausdorff, second countable, connected, and $n$-dimensional with $n\geq 3$. Riemannian manifolds are denoted by $(M^n, g)$, and the Lebesgue measure induced by $g$ is denoted by $d\mu_g$. The covariant derivative and Laplacian with respect to $g$ are denoted by $\nabla^g$ and $\Delta^g$, respectively. The Euclidean metric is denoted by $\delta$, and the Euclidean Laplacian is denoted by $\Delta$. In Euclidean space, we denote the coordinate ball by $B_r=\{x\in \RR^n: |x|<r \}$ and the coordinate sphere by $S_r=\{x\in \RR^n: |x|=r \}$.

In what follows, we introduce the usual definitions of asymptotically flat manifolds, weighted Sobolev spaces, ADM mass, and isoperimetric mass.

\begin{definition}\label{def:TopStruc}
A smooth noncompact manifold $M^n$ is said to possess a \textit{topological structure of infinity} $(\mathcal{E}, \Phi)$ if there exists a compact set $K$ and a $C^\infty$ diffeomorphism 
\begin{equation}
\Phi \colon M \setminus K \to \RR^n\setminus \overline{B_{R}},
\end{equation}
where $B_R$ denotes a ball of radius $R>0$ in $\RR^n$, and $\mathcal{E} = M \setminus K$. The coordinates $x^1, \ldots, x^n$ induced by $\Phi$ define a background metric $h=\delta$ on $\mathcal{E}$, which can be extended arbitrarily to a complete metric on $M$. 
\end{definition}
Less regular topological structures of infinity can also be considered. For instance, if $\Phi$ is a $C^k$ diffeomorphism with $k \geq 1$, we can still introduce asymptotic coordinates and discuss $C^0$ metrics in these coordinates. In this case, we refer to $(\mathcal{E},\Phi)$ as a $C^k$ topological structure of infinity.

\begin{definition}
Let $(\mathcal{E}, \Phi)$ be a topological structure of infinity for $M^n$. Given $k\in \mathbb{N}$, $p\ge 1$, and $s\in \RR$, we define the \emph{weighted Sobolev space} $W^{k,p}_s(M)$ to be the space of functions $u\in W^{k,p}_\loc(M)$ with finite norm 
\[\|u\|_{W^{k,p}_s(M)}=\|u\|_{W^{k,p}(M\setminus \mathcal E)}+\sum_{|I|\leq k} \|\partial^I u\|_{L^p_{s-|I|}(\mathcal E)},\]
where $I$ is a multi-index and the weighted $L^p_s$ norm is defined by 
\[\|u\|_{L^p_s(\mathcal E)}=\left(\int_\mathcal E |u|^p|x|^{-sp} \,\frac{dx}{|x|^n}\right)^\frac 1p.\]
Moreover, for $\alpha \in [0,1]$, we define the \emph{weighted H\"older space} $C^{k,\alpha}_{s}(M)$ to be the space of functions $u\in C^{k,\alpha}_\loc(M)$ with finite norm 
$$
\|u\|_{C^{k,\alpha}_s(M)}=\|u\|_{C^{k,\alpha}(M\setminus \mathcal E)}+ \sum_{|I|=k} \sup _{\substack{x, y \in \mathcal{E} \\ 0<|x-y| \leq|x| / 2}}|x|^{\alpha+|I|-s} \frac{\left|\partial^I u(x)-\partial^I u(y)\right|}{|x-y|^\alpha}.
$$
\end{definition}

Note that the norms are constructed relative to the fixed smooth background metric $h$ and the spaces depend on the structure $\Phi$ and $h$. Moreover, denoting by $r(x) = \sqrt{\sum (x^i)^2}$ the radial function induced by $\Phi$ on $\mathcal{E}$, we have $r^s\notin L^p_s$ but $r^{s-\delta}\in L^p_s$ for any $\delta>0$ and $L^p_{s'}\subset L^p_{s}$ if $s'\ge s$, and $L^p = L^p_{-n/p}$.

\begin{definition}\label{def:AFmfd}
    Let $M^n$ be a smooth manifold with a complete Riemannian metric $g \in \Wloc^{k,p}$, $p>n$, $k\geq1$, and a topological structure of infinity $(\mathcal{E}, \Phi)$. Let $\tau>0$, the pair $(M^n, g)$ is said to be a $W^{k,p}_{-\tau}$ asymptotically flat manifold if
    \begin{equation}{\label{eq:AsympCond}}
    (\Phi_*g)_{ij} - \delta_{ij} \in W^{k,p}_{-\tau}(\mathcal{E}),    
    \end{equation}
    and $(\Phi_*g)$ is continuous and uniformly equivalent to $\delta$ on $\mathcal{E}$, i.e., there exists $\Lambda\geq 1$ such that
    \begin{equation}{\label{eq:UniformEquiv}}
        \Lambda^{-1}|X|^2 \leq (\Phi_*g)_{ij} (x) X^iX^j \leq \Lambda |X|^2 \quad \text{for all}  \quad x \in \mathcal{E},\,X^i \in \RR^n.
    \end{equation}
    We also define $C^{k,\alpha}_{-\tau}$ asymptotically flat manifolds, $\alpha \in [0,1]$, in a similar fashion.
\end{definition}

Uniform equivalence of $g$ to $\delta$ \eqref{eq:UniformEquiv} guarantees that the spaces $L^p_s$ are independent of the structure $\Phi$ and background metric $h$ (see proof of \cite[Lemma 1]{Chrusciel1986}, where the corresponding radial functions are compared via uniform equivalence). In contrast, the spaces $W_{s}^{k,p}$, $k\geq1$, defined using partial derivatives with respect to $(x^i)$ will depend on $\Phi$. As discussed in \cite[Section 2]{Bartnik1986}, this is unavoidable since the metric $g$ is not regular enough to define higher covariant derivatives and also because \eqref{eq:AsympCond} is a weak assumption on the derivatives of $\Phi$.

On the other hand, \eqref{eq:AsympCond} and \eqref{eq:UniformEquiv} are sufficiently strong to guarantee that, for a fixed $\Phi$, the weighted Sobolev spaces used in this work (see \Cref{sec:AnalysisConfLap}), namely $W^{1,p}_{-\tau}$, $W^{2,p/2}_{-\tau}$, and $W^{2,p}_{-\tau}$, with $p>n$, can be equivalently defined using the covariant derivatives with respect to $g$ or $h$. The only nontrivial case is the second-order spaces, for which $(\nabla_g^2 u-\nabla_h^2 u)\approx (\Gamma(h)-\Gamma(g)) \partial u$, and the decay assumptions together with the Sobolev embedding provide sufficient control over lower-order terms involving the Christoffel symbols $\Gamma$. Hence, for a fixed $\Phi$, the metrics $g$ and $h$ induce equivalent weighted Sobolev norms for $W^{2,p/2}_{-\tau}$ and $W^{2,p}_{-\tau}$.

If $g_{ij} - \delta_{ij} \in W^{1,p}_{-\tau}$, with $p>n$, then the weighted Morrey inequality implies that $g_{ij}-\delta_{ij} \in C^{0,\alpha}_{-\tau}$, for some $\alpha \in (0,1)$, and that there exists a \textit{pointwise decay constant} $C_g<\infty$ such that
\begin{equation}
|g_{ij}-\delta_{ij}| \leq C_g |x|^{-\tau}  \quad \text{ on }\mathcal{E}.
\end{equation}
Thus $g_{ij} - \delta_{ij} \in W^{1,p}_{-\tau}$ already implies that $g$ is asymptotically flat. 

If $(M^n, g)$ is $W^{2,p}_{-\tau}$ asymptotically flat manifold, with $p>n$ and $\tau>(n-2)/2$, and $R[g] \in L^1(M)$, the \textit{ADM mass of $(M, g)$} (see \cite{ArnowittDeserMisner1961,Bartnik1986}) is defined by
\begin{equation}
\madm(g) = \lim_{r \to \infty} \frac{1}{2(n-1)\omega_{n-1}} \int_{S_r} \sum_{i,j=1}^n (g_{ij,i}-g_{ii,j})\nu_j\,dS,
\end{equation}
where $\omega_{n-1}$ is the area of the standard unit $(n-1)$-sphere in $\RR^n$, $S_r$ is the coordinate sphere in $\mathcal{E}$ of radius $r$, $\nu$ is its outward unit normal, and $dS$ is the Euclidean area element on $S_r$. Note that the limit can also be taken for $C^1$ metrics, but not necessarily well-defined.

In contrast, the isoperimetric mass can be defined for $C^0_{-\tau}$ metrics and a natural generalization of the coordinate isoperimetric mass to dimension $n \geq 3$, proposed in \cite{Jauregui2019}, is seen to be:
\begin{equation}
    \misoc(g)= \lim_{r\to\infty}  \frac{2}{\omega_{n-1}}\left(\frac{\omega_{n-1}}{\mathcal{A}(r)}\right)^{\frac{2}{n-1}}\left(V(r)-\frac{1}{n\left(\omega_{n-1}\right)^{\frac{1}{n-1}}}\mathcal{A}(r)^{\frac{n}{n-1}}\right),
\end{equation}
where $V(r)$ and $\mathcal{A}(r)$ are the volume and area of the coordinate ball, respectively. However, it is not clear which are the minimal conditions one needs to impose on $g$ to guarantee that $\misoc(g)$ is well-defined and finite for merely continuous metrics. In particular, for smooth asymptotically flat metrics, the ADM mass $\madm$ is well-defined and finite if the scalar curvature $R$ is integrable \cite[Proposition 4.1]{Bartnik1986}, and hence $\misoc$ is also well-defined and finite \cite[Corollary 2.3]{Fan2009-bl}. 
Conditions guarantee that $\misoc(g)$ is finite and well-defined are discussed in \Cref{sec:finitenessmass}. 

\section{Coordinate isoperimetric mass} \label{sec:IsoFormulas}

The main goal of this section is to develop a framework for the coordinate isoperimetric mass and to clarify its relationship with the ADM mass in low-regularity. We first derive identities relating the coordinate isoperimetric mass to the ADM mass. We then address its well-definedness and uniqueness, and prove finiteness under conditions on the distributional scalar curvature.

\subsection{Asymptotic expansion}
In this section, we establish several identities for the isoperimetric mass that allow it to be expressed in a form resembling a volumetric version of the ADM mass. Most of the computations follow \cite{Fan2009-bl}, which assumes $n=3$. We restate the relevant proofs and lemmas to emphasize that many of the arguments remain valid in higher dimensions and $C^0_{-\tau}$ metrics. This may prove useful in future attempts to establish a $C^0$ positive mass theorem for the coordinate isoperimetric mass $\misoc$, however, such a result would require new techniques to address the lack of regularity in this setting.

Consider a $C^0_{-\tau}$ asymptotically flat manifold $(M^n,g)$ with coordinates $(x^1,\ldots,x^n)$ on the asymptotically flat end and $\tau>0$. Let $n=n^i\frac{\p}{\p x^i}$ be the unit outward normal (with respect to $g$) of $S_r$  and $n_i=g_{ij}n^j$, where we use Einstein summation convention. Then, we have
\begin{equation}\label{outwardnormal1}
   n^i=\sum_{j=1}^n\frac{g^{ij}x^j}{r|\nabla r|}\ \text{and}\
   n_i=\frac{x^i}{r|\nabla r|},
\end{equation}
where $r=(\sum_{i=1}^n(x^i)^2)^\frac12$. The metric
induced on $S_r$ is $\gamma_{ij}=g_{ij}-n_in_j$ and we define $\sigma_{ij} \colonequals g_{ij}-\delta_{ij}$. We also denote by $\misoc(r)$ as the isoperimetric mass function that is given by
\begin{equation}\label{eq:misoc_def}
        \misoc(r)= \frac{2}{\omega_{n-1}}\left(\frac{\omega_{n-1}}{\mathcal{A}(r)}\right)^{\frac{2}{n-1}}\left(V(r)-\frac{1}{n\left(\omega_{n-1}\right)^{\frac{1}{n-1}}}\mathcal{A}(r)^{\frac{n}{n-1}}\right),
\end{equation}
where $\omega_{n-1}$ is the volume of the $(n-1)$-sphere and $V(r)$ and $\mathcal{A}(r)$ are the volume and area of the coordinate ball, respectively. In particular $\misoc = \lim_{r \to \infty}\misoc(r)$.

We start by computing some relevant geometric quantities for metrics in $C^0_{-\tau}$. The following lemma can be found in \cite[Lemma 2.1.]{Fan2009-bl}.

\begin{lemma}\label{2nd}
Let $(M^n, g)$ be a $C^0_{-\tau}$ asymptotically flat manifold, $\tau>0$. With the above notations, on $S_r$ we have that
\begin{equation}\label{2nd-eq0}
|\nabla r|^2=1-\frac{\sigma_{ij}x^ix^j}{r^2}+O(r^{-2\tau}),
\end{equation}
and
\begin{equation}\label{eq:dSr_dSr0}
dS_r=(1+ \sum_{i,j=1}^n \gamma_{ij}\sigma_{ij}+O(r^{-2\tau}))^\frac12dS_r^0,   
\end{equation}
where $dS_r^0$ is the Euclidean induced volume form. Hence
\begin{equation}\label{eq:Ar_Ar0}
\mathcal{A}(r)=\omega_{n-1}r^{n-1}+\frac12 \sum_{i,j=1}^n \int_{S_r} \gamma_{ij}\sigma_{ij}
dS_r+O(r^{(n-1)-2\tau}).
\end{equation}
\end{lemma}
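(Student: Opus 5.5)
The proof is pointwise linear algebra on $S_r$, using only $|\sigma_{ij}|\le C|x|^{-\tau}$ (equivalently $g^{ij}=\delta_{ij}-\sigma_{ij}+O(r^{-2\tau})$, obtained from the Neumann series since $\sigma$ is small for $r$ large), and then integration of the area element.

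\textbf{Step 1: the gradient.} For $|\nabla r|^2$ I would compute directly
\[
|\nabla r|^2=g^{ij}\partial_i r\,\partial_j r=g^{ij}\frac{x^ix^j}{r^2}.
\]
Substituting the expansion of $g^{ij}$ gives
\[
|\nabla r|^2=1-\frac{\sigma_{ij}x^ix^j}{r^2}+O(r^{-2\tau}),
\]
since $|x^ix^j/r^2|\le 1$. This is \eqref{2nd-eq0}.

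\textbf{Step 2: the area element.} I would express the area density as the ratio of $g$-volume to Euclidean volume, corrected by the normal component. Writing $dS_r^0$-tangent frames, the induced metric on $S_r$ is the restriction of $g$ to $TS_r$. Hence $dS_r/dS_r^0=\sqrt{\det(g|_{TS_r})}$, computed in a Euclidean orthonormal frame $\{e_a\}$ of $TS_r$. Then
\[
g(e_a,e_b)=\delta_{ab}+\sigma(e_a,e_b),
\]
so that
\[
\det(g|_{TS_r})=1+\sum_a\sigma(e_a,e_a)+O(r^{-2\tau}).
\]
The trace satisfies $\sum_a\sigma(e_a,e_a)=\sum_i\sigma_{ii}-\sigma_{ij}x^ix^j/r^2$.

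It remains to check that this equals $\sum_{i,j}\gamma_{ij}\sigma_{ij}+O(r^{-2\tau})$, where $\gamma_{ij}=g_{ij}-n_in_j$. By \eqref{outwardnormal1}, $n_i=x^i/(r|\nabla r|)$, and by Step 1, $|\nabla r|^{-2}=1+O(r^{-\tau})$. Therefore
\[
\gamma_{ij}=\delta_{ij}-\frac{x^ix^j}{r^2}+O(r^{-\tau}).
\]
Contracting with $\sigma_{ij}=O(r^{-\tau})$ gives
\[
\sum\gamma_{ij}\sigma_{ij}=\sum_i\sigma_{ii}-\frac{\sigma_{ij}x^ix^j}{r^2}+O(r^{-2\tau}),
\]
as required. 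This proves \eqref{eq:dSr_dSr0}. The main care here is this identification of the correct tangential trace, in particular the cross-check that the index-wise (Euclidean) contraction $\sum\gamma_{ij}\sigma_{ij}$ agrees with the frame trace up to $O(r^{-2\tau})$. It is also where $|\nabla r|\neq 1$ enters only at second order.

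\textbf{Step 3: the area.} Taylor expanding the square root,
\[
dS_r=\Bigl(1+\tfrac12\sum\gamma_{ij}\sigma_{ij}+O(r^{-2\tau})\Bigr)dS_r^0.
\]
Integrating over $S_r$, with $\int_{S_r}dS_r^0=\omega_{n-1}r^{n-1}$, gives
\[
\mathcal{A}(r)=\omega_{n-1}r^{n-1}+\tfrac12\int_{S_r}\sum\gamma_{ij}\sigma_{ij}\,dS_r^0+O(r^{n-1-2\tau}).
\]
Since the integrand is $O(r^{-\tau})$, replacing $dS_r^0$ by $dS_r$ in this integral costs only another $O(r^{-2\tau})\cdot r^{n-1}$. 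This yields \eqref{eq:Ar_Ar0}.
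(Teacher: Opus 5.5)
Your proposal is correct and follows essentially the same route as the paper: you expand $g^{ij}=\delta_{ij}-\sigma_{ij}+O(r^{-2\tau})$ to get $|\nabla r|^2$, compute $dS_r/dS_r^0=\sqrt{\det g(e_a,e_b)}$ in a Euclidean orthonormal frame of $S_r$, identify the tangential trace with $\sum\gamma_{ij}\sigma_{ij}$ up to $O(r^{-2\tau})$ using $n_i=x^i/r+O(r^{-\tau})$, and then expand the square root and integrate. You also check explicitly that $\gamma_{ij}=\delta_{ij}-x^ix^j/r^2+O(r^{-\tau})$, and that replacing $dS_r^0$ by $dS_r$ in the integral costs only $O(r^{n-1-2\tau})$; the paper leaves both of these steps implicit.
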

\begin{proof}
Recall that the components of $\nabla r$ are given by $(\nabla r)^i = \sum_{k=1}^n g^{ik} x^k/r$ and the inverse components of the metric can be approximated by $g^{ij} = \delta_{ij} - \sigma_{ij} + O(r^{-2\tau})$. Combining both identities, we have
\begin{equation}
(\nabla  r)^i = g^{ik} \partial_k r = (\delta_{ik}-\sigma_{ik}) \frac{x^k}{r}  +  O(r^{-2\tau}).    
\end{equation}
A simple computation then yields
\begin{equation}
|\nabla r|^2 = g^{ij} \partial_ir \partial_jr = 1  - \sigma_{ij} \frac{x^ix^j}{r^2} + O(r^{-2\tau}),
\end{equation}
which proves \eqref{2nd-eq0}. In particular, we have
\begin{equation}\label{eq:normal_approx}
n^i=\sum_{j=1}^n\frac{g^{ij}x^j}{r|\nabla r|} = \frac{x^i}{r} + O(r^{-\tau}),    
\end{equation}
and
\begin{equation}\label{eq:normal_approx2}
n_i=\frac{x^i}{r|\nabla r|} = \frac{x^i}{r} + O(r^{-\tau}).    
\end{equation}
Next, let $e_1,\ldots,e_{n-1}$ be an orthonormal frame on $S_r$ with respect to the Euclidean metric. Then
\begin{align*}
    \,dS_r & = \sqrt{\det (g(e_k,e_\ell))} \,dS^0_r.
\end{align*}
Since $g(e_k,e_\ell)= \delta_{k\ell} + \sigma(e_k,e_\ell)$ and $\sigma = O(r^{-\tau})$, we may approximate the determinant by
\begin{equation}
    \det (g(e_k,e_\ell)) = 1 + \sum_{k=1}^{n-1} \sigma(e_k, e_k) + O(r^{-2\tau}).
\end{equation}
Observing that $\sigma(e_k, e_k) = e_k(x^i)e_k(x^j) \sigma_{ij}$, using the identity $ \delta^{ij} - x^ix^j/r^2 = \sum_k^{n-1} e_k(x^i)e_k(x^j)$ and \eqref{eq:normal_approx}, we obtain
\begin{align}
    \,dS_r 
    & = \left(1+ \sum_{i,j=1}^n \gamma_{ij}  \sigma_{ij} + O(r^{-2\tau})\right)^{1/2} \,dS^0_r,
\end{align}
which proves \eqref{eq:dSr_dSr0}.

Finally, integrating over $S_r$ and applying the expansion $(1+x)^{1/2} = 1 + \frac{1}{2} x + O(x^2)$, we deduce
\begin{align}
\mathcal{A}(r)=\omega_{n-1}r^{n-1}+\frac12\sum_{i,j=1}^n\int_{S_r}\gamma_{ij}\sigma_{ij}
dS_r+O(r^{(n-1)-2\tau}).   
\end{align}
\end{proof}

The following theorem is based on the computations in Theorem 2.2 and Corollary 2.3 of \cite{Fan2009-bl}.

\begin{theorem}\label{theo:m_iso_tr}
Let $(M^n, g)$ be a $C^0_{-\tau}$ asymptotically flat manifold, $\tau>(n-2)/2$. Then, it holds that
    \begin{equation}\label{eq:tr_miso}
        \misoc(r)  = -r^{n-2} \mathcal{I}_A + \frac{1}{\omega_{n-1} r^2} \int_{B_r\setminus B_R} \tr_\delta(\sigma) \,d\mu_g +o(1),
    \end{equation}
where
\begin{equation}\label{eq:Ia_def}
\mathcal{I}_A = \frac{1}{(n-1)\omega_{n-1} r^{n-1}}\int_{S_r} \left(\delta^{ij} - \frac{x^ix^j}{|x|^2}\right)\sigma_{ij} \,dS_r.    
\end{equation}
\end{theorem}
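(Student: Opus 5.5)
The plan is to expand $V(r)$ and $\mathcal{A}(r)$ separately to first order in $\sigma$, substitute both expansions into \eqref{eq:misoc_def}, and check that every discarded term is $O(r^{n-2-2\tau})$ or $O(r^{-2})$. Both of these are $o(1)$ because $\tau>(n-2)/2$ and $n\geq 3$.

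\emph{Volume.} Split $V(r)=\mathrm{vol}_g(\Phi^{-1}(\overline{B_R})\cup K)+\int_{B_r\setminus B_R}d\mu_g$, where the first term is a constant $C_0$. On $\mathcal{E}$ we have $\sqrt{\det g}=1+\tfrac12\tr_\delta(\sigma)+O(|x|^{-2\tau})$. Hence
\begin{equation*}
V(r)=\frac{\omega_{n-1}}{n}r^n+\frac12\int_{B_r\setminus B_R}\tr_\delta(\sigma)\,d\mu_g+O\Big(\int_{B_r\setminus B_R}|x|^{-2\tau}dx\Big)+C_1 .
\end{equation*}
Here $dx$ is replaced by $d\mu_g$ in the $\tr_\delta\sigma$ integral, which costs another $O(|x|^{-2\tau})$ in the integrand. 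The error integral is $O(r^{n-2\tau})$ when $2\tau<n$, and $O(\log r)$ or $O(1)$ otherwise. After multiplying by the prefactor $\sim r^{-2}$ it is therefore $o(1)$; the same holds for the constant $C_1$.

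\emph{Area.} Start from \eqref{eq:Ar_Ar0} of Lemma~\ref{2nd}. By \eqref{eq:normal_approx2}, $\gamma_{ij}=\delta_{ij}-x^ix^j/r^2+O(r^{-\tau})$, and by \eqref{eq:dSr_dSr0}, $dS_r=(1+O(r^{-\tau}))dS_r^0$. These give
\begin{equation*}
\mathcal{A}(r)=\omega_{n-1}r^{n-1}(1+\varepsilon(r)),\qquad \varepsilon=\tfrac{n-1}{2}\mathcal{I}_A+O(r^{-2\tau}),\qquad \mathcal{I}_A=O(r^{-\tau}).
\end{equation*}
I will check that the normalisation in \eqref{eq:Ia_def} produces exactly the factor $\tfrac{n-1}{2}$, whether $\mathcal{I}_A$ is read with $dS_r$ or $dS_r^0$; the difference between the two is absorbed in $O(r^{-2\tau})$. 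Taylor expansion then gives
\begin{equation*}
\frac{\mathcal{A}^{n/(n-1)}}{n\,\omega_{n-1}^{1/(n-1)}}=\frac{\omega_{n-1}}{n}r^n+\frac{\omega_{n-1}r^n}{n-1}\varepsilon+O(r^{n-2\tau})
=\frac{\omega_{n-1}}{n}r^n+\frac{\omega_{n-1}}{2}r^n\mathcal{I}_A+O(r^{n-2\tau}),
\end{equation*}
and also $\frac{2}{\omega_{n-1}}\big(\frac{\omega_{n-1}}{\mathcal{A}}\big)^{2/(n-1)}=\frac{2}{\omega_{n-1}r^2}(1+O(r^{-\tau}))$.

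\emph{Combination.} Substituting both expansions, the $r^n$ terms cancel. The bracket becomes $\tfrac12\int_{B_r\setminus B_R}\tr_\delta\sigma\,d\mu_g-\tfrac{\omega_{n-1}}{2}r^n\mathcal{I}_A+O(r^{n-2\tau})+O(1)$. Multiplying by $\frac{2}{\omega_{n-1}r^2}$ yields the two main terms of \eqref{eq:tr_miso} plus errors of size $O(r^{n-2-2\tau})+O(r^{-2})$.

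The step needing care is the $(1+O(r^{-\tau}))$ correction in the prefactor multiplying the main terms. Since $|\tr_\delta\sigma|\lesssim |x|^{-\tau}$, the bracket's main part is $O(r^{n-\tau})$, up to a possible $\log r$ when $\tau\ge n$, where it is then $O(\log r)$. The correction therefore contributes $O(r^{n-2-2\tau})=o(1)$, and this is exactly where $\tau>(n-2)/2$ is needed. Otherwise the proof is bookkeeping of the second-order remainders in the determinant, square-root and power expansions, all of which are uniformly $O(|x|^{-2\tau})$ by the pointwise $C^0_{-\tau}$ decay.
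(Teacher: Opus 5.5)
Your proof is correct. The overall strategy matches the paper's: expand $V(r)$ and $\mathcal{A}(r)$ to first order in $\sigma$, substitute into \eqref{eq:misoc_def}, and use $\tau>(n-2)/2$ to kill the remainders. Where you differ is in how $V(r)$ is obtained and how the final combination is organised.

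\textbf{Volume.} The paper uses the coarea formula $V'(r)=\int_{S_r}|\nabla r|^{-1}\,dS_r$ together with the expansion of $|\nabla r|$ from Lemma~\ref{2nd}. This produces the normal part $\frac12\int_{S_r}\sigma_{ij}x^ix^j/|x|^2\,dS_r$. It then integrates in $r$ and adds the tangential part coming from $\int_R^r\mathcal{A}(s)\,ds$, so $\tr_\delta\sigma$ appears only as the sum of normal and tangential pieces. You instead expand the volume density pointwise, $\sqrt{\det g}=1+\frac12\tr_\delta\sigma+O(|x|^{-2\tau})$. This gives the trace at once and needs neither the coarea formula nor the $|\nabla r|$ expansion; you use Lemma~\ref{2nd} only for the area.

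\textbf{Combination.} The paper writes $\misoc(r)=\frac{2V}{\omega_{n-1}}(\omega_{n-1}/\mathcal{A})^{2/(n-1)}-\frac2n(\mathcal{A}/\omega_{n-1})^{(n-2)/(n-1)}$. It expands both factors to first order in $\mathcal{I}_A$, and the two first-order coefficients combine to give $-r^{n-2}\mathcal{I}_A$. You cancel the $r^n$ terms inside the bracket first. The prefactor is then needed only up to a relative $O(r^{-\tau})$ error, and its product with the $O(r^{n-\tau})$ main part is $O(r^{n-2-2\tau})$.

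\textbf{Comparison.} Your route is shorter and more self-contained. It also makes the use of $\tau>(n-2)/2$ explicit, including the logarithmic case $2\tau=n$, which the paper simply absorbs into $o(r^2)$. The paper's route reuses both expansions of Lemma~\ref{2nd} and follows the Fan--Shi--Tam computation closely, working throughout with sphere integrals split into tangential and normal parts.
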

\begin{proof}
By the coarea formula and \eqref{2nd-eq0}, we have
\begin{align}
    V^\prime(r)  = \int_{S_r} \frac{1}{|\nabla r|} \,dS_r
     = \mathcal{A}(r) + \frac12 \int_{S_r} \frac{\sigma_{ij}x^ix^j}{|x|^2}dS_r + O(r^{(n-1)-2\tau}).
\end{align}
Using \eqref{eq:Ar_Ar0} and expressing $\gamma_{ij}\sigma_{ij}$ in terms of $x^i$ via \eqref{eq:normal_approx2}, we obtain
\begin{equation}\label{eq:Ar_type2}
\mathcal{A}(r) = \omega_{n-1} r^{n-1} + \frac12\int_{S_r} (\delta^{ij} - \frac{x^ix^j}{|x|^2})\sigma_{ij} \,dS_r + O(r^{(n-1)-2\tau}),   
\end{equation}
Integrating $V^\prime(r)$ and combining with \eqref{eq:Ar_type2}, we have
\begin{align*}
    V(r)&  = \int^r_{R} \mathcal{A}(s)ds + \frac12 \int_{B_r\setminus B_R} \frac{\sigma_{ij}x^ix^j}{|x|^2} \,d\mu_g +o(r^2)\\
    &  = \left( \frac{\omega_{n-1}}{n} r^n + \frac12\int_{B_r\setminus B_R} (\delta^{ij} - \frac{x^ix^j}{|x|^2})\sigma_{ij} \,d\mu_g \right)+ \frac12 \int_{B_r\setminus B_R} \frac{\sigma_{ij}x^ix^j}{|x|^2} \,d\mu_g + o(r^2)\\
    &  = \frac{\omega_{n-1}}{n} r^n + \frac12\int_{B_r\setminus B_R} \tr_\delta(\sigma) \,d\mu_g + o(r^2),
\end{align*}
where we have absorbed the lower order term $R^n$ into $o(r^2)$.

We now compute $\misoc(r)$. We start by considering the following simplification
\begin{align}
\misoc(r) & = \frac{2}{\omega_{n-1}}\left(\frac{\omega_{n-1}}{\mathcal{A}(r)}\right)^{\frac{2}{n-1}}\left(V(r)-\frac{1}{n\left(\omega_{n-1}\right)^{\frac{1}{n-1}}}\mathcal{A}(r)^{\frac{n}{n-1}}\right)\\
& =  \frac{2 V(r)}{\omega_{n-1}}\left(\frac{\omega_{n-1}}{\mathcal{A}(r)}\right)^{\frac{2}{n-1}} - \frac{2}{n}\left(\frac{\mathcal{A}(r)}{\omega_{n-1}}\right)^{\frac{n-2}{n-1}}.
\end{align}
From the definition of $\mathcal{I}_A$ \eqref{eq:Ia_def} and \eqref{eq:Ar_type2}, we expand
\begin{equation}
\mathcal{A}(r) = \omega_{n-1} r^{n-1} +\frac{1}{2} (n-1) \omega_{n-1} r^{n-1} \mathcal{I}_A + O(r^{(n-1)-2\tau}).
\end{equation}
Hence,
\begin{align}
\frac{2}{n}\left(\frac{\mathcal{A}(r)}{\omega_{n-1}}\right)^{\frac{n-2}{n-1}}
  &  = \frac{2}{n} r^{n-2} \left( 1 + \frac{(n-1)}{2} \mathcal{I}_A + O(r^{-2\tau})\right)^\frac{n-2}{n-1} \\
    &  =  \frac{2}{n} r^{n-2} \left( 1 + \frac{(n-2)}{2} \mathcal{I}_A \right) + O(r^{(n-2)-2\tau}).
\end{align}
Similarly, we have
\begin{align}
\left(\frac{\omega_{n-1}}{\mathcal{A}(r)}\right)^{\frac{2}{n-1}} & = \frac{1}{r^2} \left(\frac{1}{1 +\frac{1}{2} (n-1) \mathcal{I}_A + O(r^{-2\tau})}\right)^{\frac{2}{n-1}} \\
    &  = \frac{1}{r^2} \left(1 - \mathcal{I}_A\right) + O(r^{-2-2\tau}).
\end{align}
Using the above approximation, the identity for $V(r)$, and that $\mathcal{I}_A = O(r^{-\tau})$, we obtain
\begin{align}
\frac{2 V(r)}{\omega_{n-1}}\left(\frac{\omega_{n-1}}{\mathcal{A}(r)}\right)^{\frac{2}{n-1}}
& = \frac{2 V(r)}{\omega_{n-1}} \frac{1}{r^2}  \left( 1 - \mathcal{I}_A\right) + o(1)\\
& = \frac{2}{n}r^{n-2}\left( 1 - \mathcal{I}_A\right) + \frac{1}{\omega_{n-1} r^2}\int_{B_r\setminus B_R} \tr_\delta(\sigma) \,d\mu_g  + o(1).
\end{align}
Subtracting the two expressions yields
\begin{align}
\misoc(r)
    & =  -r^{n-2}\mathcal{I}_A  + \frac{1}{\omega_{n-1} r^2}\int_{B_r\setminus B_R} \tr_\delta(\sigma) \,d\mu_g + o(1).
\end{align}
\end{proof}

Assuming extra regularity, we can obtain a convenient expression that resembles the ADM mass formula. Note that we only need additional regularity but not any additional decay for derivatives of the metric.

\begin{theorem}\label{theo:miso_div}
Let $(M^n, g)$ be a $C^0_{-\tau}\cap \Wloc^{1,p}$ asymptotically flat manifold, $\tau>(n-2)/2$ and $p\geq1$. Then, it holds that 
\begin{equation}
\misoc(r)  = \frac{1}{(n-1) \omega_{n-1} r^2} \left(\int_{B_r\setminus B_R} (\operatorname{div}_\delta(\sigma) - d\operatorname{tr}_\delta(\sigma))(\rho \partial_\rho) \,d\mu_\delta \right) + o(1),   
\end{equation} 
where $\partial_\rho$ is the radial vector and $d\mu_\delta$ denotes the Euclidean measure.
\end{theorem}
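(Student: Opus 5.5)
The plan is to start from the expression in Theorem~\ref{theo:m_iso_tr} and show that the right-hand side there equals the claimed volume integral up to $o(1)$. The link is an integration by parts on the annulus $B_r\setminus B_R$: it turns the divergence-type integrand into exactly the surface term $-r^{n-2}\mathcal{I}_A$ plus the trace volume term.

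Write $\sigma_{ij}=g_{ij}-\delta_{ij}$ and $\rho\partial_\rho = x^j\partial_j$. Then
\[
(\operatorname{div}_\delta\sigma - d\operatorname{tr}_\delta\sigma)(\rho\partial_\rho) = x^j\partial_i\sigma_{ij} - x^j\partial_j \operatorname{tr}_\delta\sigma .
\]
Using $\partial_i x^j=\delta_{ij}$ and $\partial_j x^j = n$, I will rewrite this as
\[
\partial_i(\sigma_{ij}x^j) - \partial_j(x^j\operatorname{tr}_\delta\sigma) + (n-1)\operatorname{tr}_\delta\sigma .
\]
The Euclidean divergence theorem on $B_r\setminus B_R$ then gives
\[
\int_{B_r\setminus B_R}(\dots)\,d\mu_\delta = \int_{S_r}\Big(\frac{\sigma_{ij}x^ix^j}{r} - r\operatorname{tr}_\delta\sigma\Big)dS^0_r + (n-1)\int_{B_r\setminus B_R}\operatorname{tr}_\delta\sigma\,d\mu_\delta - (\text{same boundary term on } S_R).
\]
The $S_r$ integrand equals $-r\big(\delta^{ij}-\tfrac{x^ix^j}{r^2}\big)\sigma_{ij}$. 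The $S_R$ term is a fixed constant, so after dividing by $r^2$ it is $o(1)$.

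Next I divide everything by $(n-1)\omega_{n-1}r^2$ and compare with \eqref{eq:tr_miso}.

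\emph{Surface term.} This becomes $-\frac{1}{(n-1)\omega_{n-1}r}\int_{S_r}(\delta^{ij}-x^ix^j/r^2)\sigma_{ij}\,dS^0_r$. By \eqref{eq:dSr_dSr0}, $dS_r=(1+O(r^{-\tau}))dS^0_r$ and the integrand is $O(r^{-\tau})$. So replacing $dS^0_r$ by $dS_r$ costs $O(r^{n-1-2\tau})$ inside the integral, hence $O(r^{n-2-2\tau})=o(1)$ overall since $\tau>(n-2)/2$. The result is exactly $-r^{n-2}\mathcal{I}_A+o(1)$.

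\emph{Volume term.} This becomes $\frac{1}{\omega_{n-1}r^2}\int\operatorname{tr}_\delta\sigma\,d\mu_\delta$. We have $d\mu_g=(1+O(|x|^{-\tau}))d\mu_\delta$ and $\operatorname{tr}_\delta\sigma=O(|x|^{-\tau})$. So the error is $r^{-2}\int_{B_r\setminus B_R} O(|x|^{-2\tau})\,dx$, which is $O(r^{n-2-2\tau})$ (or $O(r^{-2}\log r)$ or $O(r^{-2})$ in the borderline and small-power cases), and in all cases $o(1)$. Combining both terms with Theorem~\ref{theo:m_iso_tr} yields the statement.

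The only delicate step is justifying the divergence theorem under the weak hypothesis $\sigma\in C^0\cap W^{1,p}_{\loc}$ with merely $p\ge1$. Here the vector fields $\sigma_{ij}x^j$ and $x^j\operatorname{tr}_\delta\sigma$ lie in $W^{1,1}(B_r\setminus B_R)\cap C^0(\overline{B_r\setminus B_R})$. To handle this I would mollify $\sigma$ to get $\sigma_\eps$ on a neighbourhood of the closed annulus, apply the classical divergence theorem to $\sigma_\eps$, and pass to the limit. The volume integrals converge since $\partial\sigma_\eps\to\partial\sigma$ in $L^1_{\loc}$. The boundary integrals converge since $\sigma_\eps\to\sigma$ uniformly on compact sets by continuity, so the trace coincides with the pointwise restriction. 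No decay of $\partial\sigma$ is needed, because the argument is performed on each fixed annulus and all asymptotics come only from the $C^0_{-\tau}$ bounds.
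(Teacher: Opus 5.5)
Your proposal is correct and follows essentially the paper's argument: you start from Theorem~\ref{theo:m_iso_tr}, replace $dS_r$ and $d\mu_g$ by their Euclidean counterparts at a cost of $O(r^{n-2-2\tau})=o(1)$, and integrate $x^j\partial_i\sigma_{ij}-x^j\partial_j\operatorname{tr}_\delta\sigma$ by parts on the annulus. The only difference is cosmetic: the paper converts the surface integral of $\operatorname{tr}_\delta\sigma$ using the fundamental theorem of calculus for the absolutely continuous function $r\mapsto r^{n}\int_{S_1}\operatorname{tr}_\delta\sigma(r\theta)\,d\theta$, and applies the divergence theorem only to $x^j\sigma_{ij}\partial_i$, whereas you apply the divergence theorem to both fields, with an explicit mollification justification and an explicit treatment of the borderline case $2\tau=n$ in the volume error.
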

\begin{proof}
Rearranging \eqref{eq:tr_miso} and applying \eqref{eq:dSr_dSr0}, we obtain the following
\begin{align*}
\misoc (r) &  = -r^{n-2} \mathcal{I}_A + \frac{1}{\omega_{n-1} r^2} \int_{B_r\setminus B_R} \tr_\delta(\sigma) \,d\mu_g + o(1)\\
&  = -\left( \frac{1}{(n-1)\omega_{n-1}r}\int_{S_r} \left(\delta^{ij} - \frac{x^ix^j}{|x|^2}\right)\sigma_{ij} \,dS_r  \right)+ \frac{1}{\omega_{n-1} r^2} \int_{B_r\setminus B_R} \tr_\delta(\sigma)  \,d\mu_g + o(1)\\
&  = \frac{1}{(n-1) \omega_{n-1} r^2} \left( -r \left(\int_{S_r} \tr_\delta(\sigma) - \frac{x^ix^j}{r^2}\sigma_{ij}dS_r^0\right) +(n-1) \int_{B_r\setminus B_R} \tr_\delta(\sigma) \,d\mu_\delta\right) + o(1).
\end{align*}

Define $F(r) = \int_{S_r} \tr_\delta(\sigma) \,dS_1^0$, and recall that $\tr_\delta(\sigma) \in C^0_{-\tau}\cap\Wloc^{1,1}(M)$ and $dS_1^0$ is the surface measure of the unit ball. Since $r \mapsto r^n F(r)$ is absolutely continuous, $r\geq R$, by the fundamental theorem of calculus and product rule, we have 
$$
r^n F(r) - R^nF(R)= \int_R^r \partial_\rho \left( \rho^n F(\rho)\right) \,d\rho = \int_R^r  \left( n \rho^{n-1} F(\rho) + \rho^n \partial_\rho F(\rho) \right) \,d\rho ,
$$
in particular, substituting the definition of $F(r)$, we obtain
\begin{align*}
r^n F(r) - R^nF(R) &= \int_R^r  \left( n \rho^{n-1}  \int_{S_\rho} \tr_\delta(\sigma) \,dS_1^0 + \rho^n \partial_\rho\left(  \int_{S_\rho} \tr_\delta(\sigma) \,dS_1^0\right) \,d\rho\right)\\
&= \int_R^r  \left( n \rho^{n-1}  \int_{S_\rho} \tr_\delta(\sigma) \,dS_1^0 + \rho^n \int_{S_\rho} \partial_\rho\tr_\delta(\sigma) \,dS_1^0 \,d\rho\right)\\
& = \int_R^r\int_{S_\rho}  n \tr_\delta(\sigma)  + \rho \partial_\rho\tr_\delta(\sigma) \,dS^0_\rho \,d\rho.
\end{align*}
The last equality together yields
\begin{align}
  r \int_{S_r} \tr_\delta(\sigma) \,dS_r^0 = r^n F(r) =  \int_{B_r \setminus B_R} n \tr_\delta(\sigma) + \rho \partial_\rho(\tr_\delta(\sigma))d\mu_\delta + R^nF(R).
\end{align}
Consequently,
\begin{align*}
\misoc(r)
&  = \frac{1}{(n-1) \omega_{n-1} r^2} \left( \int_{S_r} \frac{x^ix^j}{r}\sigma_{ij}dS_r^0 -\int_{B_r\setminus B_R}  \tr_\delta\sigma + \rho \partial_\rho\tr_\delta\sigma \,d\mu_\delta  \right) + o(1)\\
&  = \frac{1}{(n-1) \omega_{n-1} r^2} \left( \int_{S_r} \frac{x^ix^j}{r}\sigma_{ij}dS_r^0 - \int_{B_r\setminus B_R}   \partial_\rho(\rho \tr_\delta\sigma) \,d\mu_\delta \right) + o(1).
\end{align*}
Consider the vector field $X$ with components $X^i = x^j\sigma_{ij} \partial_i$, since $\sigma \in C^0_{-\tau}\cap W^{1,1}_\loc$, the divergence theorem shows that
\begin{equation}
    \int_{B_r\setminus B_R}  \operatorname{div}_\delta  (X) \,d\mu_\delta= \int_{S_r} \frac{x^ix^j}{r}\sigma_{ij}dS_r^0 - \int_{S_R} \frac{x^ix^j}{R}\sigma_{ij}dS_R^0,
\end{equation}
which yields
\begin{align}\label{eq:miso_radial}
\misoc(r) &  = \frac{1}{(n-1) \omega_{n-1} r^2} \left(\int_{B_r\setminus B_R}  \operatorname{div}_\delta  (X) - \partial_\rho(\rho \tr_\delta(\sigma)) \,d\mu_\delta \right) + o(1).
\end{align}
The integrand can be further simplified:
\begin{align*}
 \operatorname{div}_\delta  (X) - \partial_\rho(\rho \tr_\delta(\sigma)) &  = \partial_i(x^j \sigma_{ij})- ( \rho \partial_\rho \tr_\delta(\sigma) + \tr_\delta(\sigma))\\
 & = \delta^{ij} \sigma_{ij} + x^j \partial_i \sigma_{ij} - ( \rho \partial_\rho \tr_\delta(\sigma) + \tr_\delta(\sigma))\\
 & =  x^j \partial_i \sigma_{ij} - \rho \partial_\rho \tr_\delta(\sigma) \\
 & =  x^j \partial_i \sigma_{ij}- x^j \partial_j\sigma_{ii}.
\end{align*}
Finally, recalling that $\operatorname{div}_\delta(\sigma) = (\partial_i \sigma_{ij}) dx^j$ and $d\operatorname{tr}_\delta(\sigma) = (\partial_j\sigma_{ii}) dx^j$, we conclude that
\begin{align*}
\misoc(r) & = \frac{1}{(n-1) \omega_{n-1} r^2} \left(\int_{B_r\setminus B_R} (\operatorname{div}_\delta(\sigma) - d\operatorname{tr}_\delta(\sigma))(\rho \partial_\rho) \,d\mu_\delta \right) + o(1).
\end{align*}
\end{proof}

\begin{corollary}\label{cor:proof_ADMmass}
    Let $(M^n, g)$ be a $C^0_{-\tau}$ asymptotically flat manifold such that $g \in C^1$, $\tau>(n-2)/2$. Assume that $\madm(g)$ exists, then $\misoc$ exists and 
    $$\misoc =  \lim_{r\rightarrow \infty} \frac{1}{(n-1)\omega_{n-1} r^{2}} \left(\int_{B_r\setminus B_R} (\operatorname{div}_\delta(\sigma) - d\operatorname{tr}_\delta(\sigma))(\rho \partial \rho) \,d\mu_\delta \right) =  \madm.$$
\end{corollary}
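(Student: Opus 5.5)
The plan is to start from \Cref{theo:miso_div}, which applies since $g\in C^1\subset \Wloc^{1,p}$. It gives
\[
\misoc(r)=\frac{1}{(n-1)\omega_{n-1}r^2}\int_{B_r\setminus B_R}\Phi(x)\,d\mu_\delta+o(1),
\qquad \Phi(x):=(\operatorname{div}_\delta\sigma-d\tr_\delta\sigma)(\rho\p_\rho)=x^j(\p_i\sigma_{ij}-\p_j\sigma_{ii}).
\]
So the first equality (existence of $\misoc$ together with the volumetric formula) follows as soon as the volumetric expression has a limit. The whole task therefore reduces to showing that this volumetric average converges to $\madm$.

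To do this, I will use the coarea formula in the radial variable and write
\[
\int_{B_r\setminus B_R}\Phi\,d\mu_\delta=\int_R^r s\,m(s)\,ds,\qquad m(s):=\int_{S_s}(\p_i\sigma_{ij}-\p_j\sigma_{ii})\frac{x^j}{s}\,dS^0_s .
\]
Here I use $x^j=s\,x^j/s$ on $S_s$. Since $g\in C^1$, the function $m$ is continuous in $s$. By the definition of $\madm$, $m(s)=2(n-1)\omega_{n-1}\madm+\epsilon(s)$ with $\epsilon(s)\to0$. I will then compute
\[
\frac{1}{(n-1)\omega_{n-1}r^2}\int_R^r s\,m(s)\,ds=\frac{2(n-1)\omega_{n-1}\madm}{(n-1)\omega_{n-1}}\cdot\frac{r^2-R^2}{2r^2}+\frac{1}{(n-1)\omega_{n-1}r^2}\int_R^r s\,\epsilon(s)\,ds .
\]
The first term tends to $\madm$. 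For the second, a Cesàro-type argument works: fix $\eta>0$ and choose $s_0$ with $|\epsilon|<\eta$ for $s>s_0$. The contribution from $[R,s_0]$ is $O(r^{-2})$, and the contribution from $[s_0,r]$ is at most $\eta r^2/2$ before normalization. Hence the second term is $o(1)$. This shows that the volumetric limit exists and equals $\madm$, and that $\lim_r\misoc(r)$ exists and equals the same value.

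The main obstacle is bookkeeping rather than analysis, and I will check two points in particular. First, the normalization: the ADM mass carries $\frac{1}{2(n-1)\omega_{n-1}}$ while the volumetric formula carries $\frac{1}{(n-1)\omega_{n-1}}$, and the factor $1/2$ from $\int s\,ds$ must compensate exactly, as the computation above shows it does. Second, the $o(1)$ error in \Cref{theo:miso_div} relies only on $C^0_{-\tau}$ decay with $\tau>(n-2)/2$, so no decay of $\p g$ is needed. Continuity of $m(s)$ is also needed for the coarea identity and the integration, and it is guaranteed by $g\in C^1$.
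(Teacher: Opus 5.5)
Your proposal is correct and is essentially the paper's proof. The paper likewise integrates the spherical ADM flux radially to rewrite the volumetric expression from \Cref{theo:miso_div} as $\frac{2}{r^2}\int_R^r \rho\, m(\rho)\,d\rho$, with $m$ normalized by $\frac{1}{2(n-1)\omega_{n-1}}$, and then concludes by the same Ces\`aro-type averaging. The only addition in the paper is the remark that if $\madm$ is infinite then so is $\misoc$; your averaging argument gives this too with the obvious modification.
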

\begin{proof}
Since the metric is $C^1$, we define the ADM mass function $m(r)$ as
\begin{equation}
    m(r)\colonequals \frac{1}{2(n-1) \omega_{n-1}} \int_{S_r}\left( \operatorname{div}_\delta(\sigma) - d\tr_\delta \sigma\right) (\nu) \,dS_r^0,
\end{equation}
where $\nu = x^j/|x| \partial_j$. Note that
\begin{align}
r m(r) &=     \frac{1}{2(n-1) \omega_{n-1}}\int_{S_r}(\operatorname{div}_\delta(\sigma) - d\operatorname{tr}_\delta(\sigma))(r \partial_r) \,dS_r^0,
\end{align}
where $\p_r = x^j \partial_j$. From the definition of the ADM mass, we have $\madm = \lim_{r\rightarrow\infty} m(r)$ is well-defined. \Cref{theo:miso_div} and these facts imply that
\begin{align*}
\frac{1}{(n-1) \omega_{n-1} r^2} \left(\int_{B_r\setminus B_R} (\operatorname{div}_\delta(\sigma) - d\operatorname{tr}_\delta(\sigma))(\rho \partial_\rho) \,d\mu_\delta \right)  & = \frac{2}{r^2} \int_R^r \rho m(\rho) \,d\rho +o(1).
\end{align*}
If $\madm$ is infinite, then $\misoc$ is also infinite. We therefore assume that $\madm$ is finite, and we have
\begin{align*}
\frac{2}{r^2} \int_R^r \rho m(\rho) & = \frac{2}{r^2} \int_R^r \rho (m(\rho)-\madm) + \rho \madm \,d\rho + o(1) \\
 & = \left(1-\frac{R^2}{r^2}\right)\madm + \frac{2}{r^2}\int_R^r \rho (m(\rho)-\madm)\,d\rho + o(1).    
\end{align*}
Taking the limit as $r \rightarrow \infty$, we conclude that $\misoc = \madm$.
\end{proof}

\subsection{The uniqueness of mass}

The coordinate isoperimetric mass $\misoc$ is defined in terms of the coordinates balls, and hence it is not a priori clear that it is a coordinate invariant quantity. One might therefore expect this definition to depend on the choice of coordinates, even after restricting to an appropriate class of coordinate system, especially for continuous metrics. In this section, we show that within a suitable class of coordinate systems, it is in fact invariant under change of coordinates.

\begin{definition}\label{def:rigid_motion}
Let $(\Phi, x)$ be a structure of infinity on an end $\mathcal{E}_1$ for $(M^n, g)$, where $g$ is a $C^0_{-\tau_1}$ asymptotically flat metric with decay rate $\tau_1>0$. We define $\mathcal{A}_{\Phi}$ as the set of structures of infinity $(\Psi, y)$ on ends $(\mathcal{E}_2)$ such that $g$ is a $C^0_{-\tau_2}$ asymptotically flat metric with decay rate $\tau_2>0$, and the transition map 
$$(\Psi \circ \Phi^{-1}) \colon \RR^n \setminus \overline{B_R} \to \RR^n \setminus \overline{B_R}$$
satisfies, for $\tau=\min{\{\tau_1,\tau_2\}}$,
\begin{equation}
y^i(x)=O^i_j x^j+a^i+\zeta^i(x), \quad (O^i_j,a^i) \in O(n, \RR) \times \RR^n,    
\end{equation}
where the remainder term $\zeta^i$ satisfies
\begin{equation}
\zeta^i \in C^1_{1-\tau}(\Phi, \mathcal{E}_1), \quad \text{ for } i=1,\ldots ,n.    
\end{equation}
We further assume that $(\Psi \circ \Phi^{-1})$ is a $C^1$ diffeomorphism near infinity. We say that $\Psi$ differs from $\Phi$ by a \textit{rigid motion plus lower-order terms} if $\Psi \in \mathcal{A}_\Phi$.
\end{definition}

This definition is inspired by a result of \textcite[Corollary 3.2]{Bartnik1986}, which shows that any two structures at infinity of $(M, g)$ that $g$ is $W^{1,p}_{-\tau_1}$ and $W^{1,p}_{-\tau_2}$ asymptotically flat with decay $\tau_1$, $\tau_2>(n-2)/2$, respectively, and $p>n$, differ by a rigid motion plus lower order terms and $\zeta^i \in W^{2,p}_{1-\tau}(\Phi, \mathcal{E}_1)$. The proof relies on the construction of harmonic coordinates for the metric $g$ and therefore requires higher regularity of the metric.

In what follows, we prove that the coordinate isoperimetric mass $\misoc$ is invariant under changes of coordinates that are rigid motion plus lower-order term, provided that $\misoc$ is well defined in both coordinate systems and the decay rate satisfies $\tau>(n-3)$. For $n=3$ and $n=4$, this result can be understood as an analogue of the coordinate invariance of $\madm$ \cite[Theorem 4.2]{Bartnik1986} at lower regularity and without assuming integrability of the scalar curvature, provided $\misoc$ is well-defined in both coordinate systems. The condition $\tau>n-3$ arises in controlling the error caused by changing the family of coordinate spheres. A similar issue arises for the ADM mass in \cite{Bartnik1986}, where it is addressed by assuming integrability of the scalar curvature. At the regularity considered here, however, the scalar curvature need not be defined even as a distribution.

Interestingly, the isoperimetric mass and \Cref{theo:m_iso_tr} allow for an argument requiring only $C^1_{1-\tau}$ regularity of $\eta$, rather than the $W^{2,p}_{1-\tau}$ regularity used in \cite[Theorem 4.2]{Bartnik1986}, and no additional assumptions on the scalar curvature when $n=3$ or $n=4$. In particular, this invariance result also applies to $\madm$, provided that the metric satisfies the conditions stated in \Cref{cor:proof_ADMmass}. A criterion for the finiteness of $\misoc$ using the distributional scalar curvature is given in the next section (see \Cref{cor:miso_coordinv}), which also yields coordinate invariance without the additional assumption $\tau>n-3$ if $n\geq 5$.

\begin{theorem}[Coordinate-independence of the mass]\label{theo:CoordIndep}
Let $(\Phi, x)$ be a structure of infinity for $(M^n, g)$ such that $g$ is a $C^0_{-\tau_1}$ asymptotically flat metric with decay $\tau_1>(n-2)/2$. Let $(\Psi, y) \in \mathcal{A}_\Phi$ be another structure of infinity that differs from $\Phi$ by a rigid motion plus lower order terms with decay $\tau_2>(n-2)/2$ (see \Cref{def:rigid_motion}). For $\Phi$ (and $\Psi$), we define
\begin{equation}\label{eq:admiso_phi}
\misoc(g,\Phi) = \lim_{r \rightarrow \infty}\frac{2}{\omega_{n-1}}\left(\frac{\omega_{n-1}}{\mathcal{A}^\Phi(r)}\right)^{\frac{2}{n-1}}\left(V^\Phi(r)-\frac{1}{n\left(\omega_{n-1}\right)^{\frac{1}{n-1}}}\mathcal{A}^\Phi(r)^{\frac{n}{n-1}}\right),    
\end{equation}
where $V^{\Phi}(r)$ and $\mathcal{A}^{\Phi}(r)$ denote the volume and area of the coordinate ball of radius $r$ with respect to $g$ defined in the coordinate system $\Phi$. Assuming that $\misoc(g,\Phi)$ and $\misoc(g,\Psi)$ are well-defined and $\tau_1,\tau_2>n-3$, we have
$$\misoc(g,\Phi) = \misoc(g,\Psi).$$ 
In particular, there is no extra decay assumption if $n=3$ or $n=4$.
\end{theorem}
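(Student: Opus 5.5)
We first reduce to the case where the rigid part of the transition map is trivial. An orthogonal map $O$ sends Euclidean balls centred at the origin to Euclidean balls of the same radius, and $g$ stays $C^0_{-\tau}$ in the rotated chart. So $V^\Phi$, $\mathcal{A}^\Phi$ and hence $\misoc(r)$ are unchanged under $x\mapsto Ox$, and we may assume $O=\mathrm{id}$. Writing $y=x+a+\zeta(x)$ with $\zeta\in C^1_{1-\tau}$, the $\Psi$-coordinate sphere $\{|y|=r\}$ becomes, in $x$-coordinates, a perturbed sphere. Our plan is to compare the isoperimetric quotients of the two families of regions $B_r^x$ and $D_r:=\Psi^{-1}(B_r^y)$ directly in the single chart $\Phi$. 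The point is that $V$ and $\mathcal{A}$ are geometric quantities, so we never need to transform $\sigma$.

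\textbf{Step 1: expand $\misoc$ for a perturbed ball.} For a region $D\subset\mathcal{E}_1$ whose boundary is a radial graph $\{\rho=r+f(\theta)\}$, we write $\Theta(D)$ for the isoperimetric expression in \eqref{eq:misoc_def}, with $V(D)$, $\mathcal{A}(\partial D)$ in place of $V(r)$, $\mathcal{A}(r)$. Set $\mathcal{M}(D) := V(D)-\frac{1}{n\omega_{n-1}^{1/(n-1)}}\mathcal{A}(\partial D)^{\frac{n}{n-1}}$, so that $\Theta(D)=\frac{2}{\omega_{n-1}}\bigl(\frac{\omega_{n-1}}{\mathcal{A}(\partial D)}\bigr)^{\frac{2}{n-1}}\mathcal{M}(D)$. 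Here $f = -a\cdot\theta - \zeta\cdot\theta + O(r^{-1}|a|^2) = O(1)+O(r^{1-\tau})$, with $|\partial_\theta f| = O(1)+O(r^{1-\tau})$ by the $C^1_{1-\tau}$ bound. We split $V(D)-V(B_r)$ and $\mathcal{A}(\partial D)-\mathcal{A}(S_r)$ into a Euclidean part and a $\sigma$-part.

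\begin{itemize}
\item \emph{Euclidean part.} We use the Euclidean isoperimetric deficit for nearly spherical sets (Fuglede-type expansion). Translation leaves the deficit exactly zero. The remaining contribution is quadratic in $f/r$ and $\nabla_\theta f/r$ after subtracting the translation mode, so its effect on $\mathcal{M}$ is $O(r^{n-2}\cdot r^{-2\tau})\cdot r^{2}/r^{2}$. After the prefactor $r^{-2}$ this gives $O(r^{n-2-2\tau})=o(1)$, since $\tau>(n-2)/2$.
\item \emph{$\sigma$-part.} Using \Cref{2nd} and the argument of \Cref{theo:m_iso_tr} on the graph $\partial D$, the $\sigma$-contributions differ from those on $B_r$ by integrals of $\sigma$ over the symmetric difference $D\triangle B_r$, a shell of width $O(1)+O(r^{1-\tau})$. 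There are also terms $\sigma\cdot\nabla f/r$ in the area element.
\end{itemize}

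\textbf{Step 2: estimate the errors.} The shell has volume $O(r^{n-1}(1+r^{1-\tau}))$, and $\sigma=O(r^{-\tau})$ on it. After multiplying by the prefactor $r^{-2}$, this contributes $O(r^{n-3-\tau}+r^{n-2-2\tau})$. The cross terms $\int_{S_r}\sigma\,\nabla f/r$ contribute $O(r^{n-2}\cdot r^{-\tau}(r^{-1}+r^{-\tau}))$, which has the same order. The bound $r^{n-2-2\tau}$ is $o(1)$. The bound $r^{n-3-\tau}$ is $o(1)$ exactly when $\tau>n-3$, which is the hypothesis. It is automatic for $n=3,4$ given $\tau>(n-2)/2$. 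This also explains where the restriction comes from: the translation $a$ moves the sphere by $O(1)$, and $\sigma$ is integrated over an $O(1)$-thick shell without any cancellation available at $C^0$ regularity.

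\textbf{Step 3: conclude.} Steps 1 and 2 give $\Theta(D_r)=\misoc^\Phi(r)+o(1)$. Since $\Theta(D_r)$ is by definition $\misoc^\Psi(r)$, both limits exist by assumption, and they coincide.

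\textbf{Main obstacle.} The delicate step is the Euclidean part of Step 1: showing that the first-order variations of $V$ and $\mathcal{A}^{n/(n-1)}$ cancel exactly for an arbitrary $C^1$ boundary perturbation. For $\zeta$ this is only at the quadratic level, since the first variation of the Euclidean isoperimetric functional vanishes at the round sphere. We must also check that the mixed terms between $a$ and $\zeta$ are controlled. To handle this, we would first handle the pure translation, where the Euclidean part is exact, and then treat $\zeta$ as a perturbation of the translated ball, expanding $\mathcal{M}$ to second order.
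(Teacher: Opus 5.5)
\textbf{Gap: the area term cannot be compared pointwise in $r$.} Several parts of your argument are fine: the reduction to a single chart ($\misoc(g,\Psi)(r)=\Theta(D_r)$), the Euclidean part, and the volume term. For the Euclidean part, translation invariance plus the vanishing first variation of $V-\frac{1}{n\omega_{n-1}^{1/(n-1)}}P^{n/(n-1)}$ at a round ball is a nice geometric version of the paper's ``curious cancellation''.

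The gap is in the $\sigma$-part of the \emph{area}. The term $\mathcal{A}(\partial D_r)-\mathcal{A}_\delta(\partial D_r)=\frac12\int_{\partial D_r}\tr_{\partial D_r}\sigma\,dS^0+O(r^{n-1-2\tau})$ is an integral of $\sigma$ over the surface $\partial D_r$ itself. Its difference from the corresponding integral over $S_r$ is not an integral over the shell $D_r\triangle B_r$; turning it into one would need a divergence theorem, i.e.\ derivatives of $\sigma$. Writing your cross terms as integrals over $S_r$ tacitly evaluates $\sigma$ at $r\theta$ instead of at $(r+f(\theta))\theta$. For a $C^0_{-\tau}$ metric, the values of $\sigma$ on the two surfaces are related only through $|\sigma|\le Cr^{-\tau}$. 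After the factor $r\cdot r^{-2}$ with which the area enters $\Theta$, this difference is therefore only $O(r^{n-2-\tau})$, which is not $o(1)$ for $\tau\le n-2$.

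This is a real failure, not bookkeeping. Take $\sigma_{ij}=|x|^{2-n}\sin|x|\,\delta_{ij}$ near infinity (so $\tau=n-2$) and the pure translation $y=x+a$, $a\neq 0$. \Cref{theo:m_iso_tr} gives $\misoc(g,\Phi)(r)=-\sin r+O(r^{-1})$. It also gives $\misoc(g,\Psi)(r)=-\frac{c(a)}{\omega_{n-1}}\sin r+O(r^{-1})$, with $c(a)=\int_{S^{n-1}}\cos(a\cdot\theta)\,d\theta<\omega_{n-1}$. Your Step 2 would instead predict a difference of order $O(r^{-1})$. Neither limit exists in this example, so the theorem itself is not contradicted. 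But your Steps 1--2 never use the existence of the limits, so they would prove a false pointwise estimate.

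\textbf{The missing idea: average in the radius before comparing.} Use the hypothesis that both limits exist before comparing, by working with the Ces\`aro means $\frac1r\int_R^r\misoc(g,\cdot)(\rho)\,d\rho$, which converge to the same limits. Integrating in the radius turns each surface term into a volume integral, via the coarea formula for the foliations $\{\partial D_\rho\}$ and $\{S_\rho\}$. The result is an integral of $\sigma/\rho$, times a Jacobian $1+O(r^{-\tau})$, over $D_r\setminus D_R$ and over $B_r\setminus B_R$ respectively. The difference is then bounded by $\frac1r\sup(|\sigma|\rho^{-1})\,|D_r\triangle B_r|=O(|a|r^{n-3-\tau}+r^{n-2-2\tau})$. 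The mismatch between the tangential projections along $\partial D_\rho$ and along $S_\rho$ costs the same amount. This is where $\tau>n-3$ genuinely enters. The Euclidean part and the volume terms remain pointwise controlled, so they pass through the averaging unchanged. With this averaging step your single-chart approach goes through, and it then essentially coincides with the paper's argument after its cancellation step.
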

\begin{proof}   
By \Cref{def:rigid_motion}, we can write $y^i$ as
$$
y^i(x)=O^i_j x^j+a^i+\zeta^i(x), \quad (O^i_j,a^i) \in O(n, \RR) \times \RR^n,
$$
where $\tau=\min{\{\tau_1,\tau_2\}}>(n-2)/2$ and 
$\zeta^i \in C^{1}_{1-\tau}(\Phi, \mathcal{E}_1)$. In particular, the inverse has the expansion
$$
x^i(y)=(O^{-1})^i_j (y^j-a^j)-\eta^i(y),
$$
where $\eta^i(y) \in C^{1}_{1-\tau}(\Psi, \mathcal{E}_2)$.

The volume and area of coordinate spheres do not change under rigid coordinate rotation. Consequently, replacing the coordinate $y^i$ by $(O^{-1})^i_jy^j$ preserves the mass. In order to proceed with the proof, we may assume without loss of generality that the matrix $O$ is the identity. The metric transformations are given by
\begin{equation}
g_{ij}^\Psi(y)  =g_{k \ell}^\Phi(x(y)) \frac{\partial x^k}{\partial y^i} \frac{\partial x^{\ell}}{\partial y^j},    
\end{equation}
where
\begin{equation}
\frac{\partial x^{\ell}}{\partial y^j}     = \delta^\ell_j - \partial_j \eta^\ell(y),    
\end{equation}
and $\partial_j \eta^\ell \in  C^{0}_{-\tau}(\Psi, \mathcal{E}_2)$. Combining the decay rate of $\partial_j \eta^\ell$ with 
\begin{equation}\label{eq:r_estimate}
r(y) = r(x(y)) + O(|a|) +  O(r(x(y))^{1-\tau}),
\end{equation}
and the decay rate of $g^\Phi(x(y))$, we have
$$
\sigma^\Psi_{ij} = g^\Psi_{ij} - \delta_{ij} = \sigma_{ij}^\Phi(x(y)) - \partial_i \eta^j(y) - \partial_j\eta^i(y) + O(r(y)^{-2\tau}), 
$$
where $\sigma_{ij}^\Phi(x(y)) = g_{ij}^\Phi - \delta_{ij}$.

We now show that the linear terms in $\eta$ contribute only a vanishing error to the mass. This is due to an interesting cancellation, which also appears under stronger regularity assumptions in Bartnik \cite[Theorem 4.2]{Bartnik1986}, where it is described as a "curious" cancellation. Because coordinate balls depend on the chosen chart, we denote by $B^\Psi_r$ and $S^\Psi_r$ the coordinate ball and sphere of radius $r$ defined with respect to the chart $\Psi$. Analogously, we define $S^\Phi_r$ and $B^\Phi_r$ using the chart $\Phi$. Computing the trace of $\sigma^\Psi_{ij}-\sigma^\Phi_{ij}$ and applying the divergence theorem, we have
\begin{align}
     \frac{(n-1)}{r^2}\int_{B_r^\Psi\setminus B_R} \tr_\delta\left(\sigma^\Psi-\sigma^\Phi\right) \,d\mu_\delta^\Psi & =-(n-1)\frac{2}{r^2}\int_{B_r^\Psi\setminus B_R^\Psi} \operatorname{div}_\delta(\eta)d\mu_\delta  + o(1)\\
     &=-(n-1)\frac{2}{r^2}\int_{S_r^\Psi} \eta \cdot \nu_\Psi \,dS_r^{0}  + o(1),
\end{align}
where $\nu_\Psi = y^i/r\partial_i$ is the outward normal vector of $S_r^\Psi$ and $o(1)$ absorbed the boundary term. 

Similarly, for the $\mathcal{I}_A$ term in \Cref{theo:m_iso_tr}, we can split $\eta = \eta^\top + \eta^\perp$, where $\eta^\top$ and $\eta^\perp$ are the tangent and normal parts to the sphere $S_r^\Psi$ and $\eta^\perp = (\eta \cdot \nu_\Psi) \nu_\Psi$. Let $e_1,\ldots, e_{n-1}$ be an orthonormal frame on $S_r^\Psi$ with respect to the Euclidean metric, then
\begin{align*}
     -\frac{1}{r}\int_{S_r^\Psi} & \left(\delta^{ij}-\frac{y^iy^j}{r(y)^2}\right) \left(\sigma^\Psi_{ij}-\sigma^\Phi_{ij}\right)  \,dS_r^0  \\
     & = \frac{2}{r}\int_{S_r^\Psi} \left(\delta^{ij}-\frac{y^iy^j}{r(y)^2}\right) \partial_i \eta^j \,dS_r^0 + O(r^{(n-2)-2\tau})\\
    & = \frac{2}{r}\int_{S_r^\Psi} \sum_{k=1}^{n-1} e_k(y^i)e_k(y^j) \partial_i \eta^j \,dS_r^0 + O(r^{(n-2)-2\tau})\\
      & = \frac{2}{ r}\int_{S_r^\Psi} \left(\operatorname{div}_{S_r}(\eta^\top)  + \frac{(n-1)}{r} (\eta \cdot \nu_\Psi)\right) \,dS_r^0 +O(r^{(n-2)-2\tau})\\
      & =  (n-1)\frac{2}{r^2} \int_{S_r^\Psi} (\eta \cdot \nu_\Psi) \,dS_r^0 +O(r^{(n-2)-2\tau}).
\end{align*}
Combining these equations, we find that 
\begin{align*}
    \frac{1}{r^2} \left( -r \int_{S_r^\Psi} \left(\delta^{ij}-\frac{y^iy^j}{r(y)^2}\right)\partial_i \eta^j  \,dS_r^0 + (n-1) \int_{B_r^\Psi\setminus B_R^\Psi} \operatorname{div}(\eta) \,d\mu_\delta \right) & = O(r^{(n-2)-2\tau}) + o(1).
\end{align*}
Then, if $\tau>(n-2)/2$ and $n\geq3$, we have that $(n-1) \omega_{n-1} \misoc(g,\Psi)(r)$ can be written as
\begin{equation}\label{eq:approx_aftercancelation}
\left( -\frac{1}{r} \int_{S_r^\Psi} \left(\delta^{ij} - \frac{y^i y^j}{r^2}\right)\sigma_{ij}^\Phi(x(y))\,dS_r^{0} + \frac{(n-1)}{r^2}\int_{B_r^\Psi\setminus B_R^\Psi} \tr_\delta \sigma^\Phi(x(y)) \,d\mu_\delta\right) + o(1).    
\end{equation}
Assuming that $O$ is the identity matrix, we have $y^i = x^i(y) + a^i + \eta^i(y)$ with $\eta^i = O(r^{1-\tau})$. From this and \eqref{eq:r_estimate}, we have
\begin{align}
\frac{y^iy^j}{r(y)^2}
& = \frac{x(y)^ix(y)^j}{r(x(y))^2}+ O(|a| r(x(y))^{-1}) + O(r(x(y))^{-\tau}),    
\end{align}
where $O(|a| r(x(y))^{-1})$ is a term that cannot be absorbed if $\tau>1$. Note that
\begin{equation}
    \frac{1}{r}\int_{S_r^\Psi} O(|a| r(x(y))^{-1}) \sigma_{ij}^\Phi(x(y))\,dS_r^{0} =  O(|a|r^{n-3-\tau}),
\end{equation}
and $O(|a|r^{n-3-\tau}) = o(1)$ if $\tau>n-3$. This condition is trivially satisfied for $n=3$ and $n=4$ since $\tau>(n-2)/2$. However, it is an extra restriction for higher-dimensions. From now on, assume that $\tau>n-3$, then $(n-1) \omega_{n-1} \misoc(g,\Psi)(r)$ becomes
\begin{equation}\label{eq:diff_spheres}
\left( -\frac{1}{r} \int_{S_r^\Psi} \left(\delta^{ij} - \frac{x^i(y)x^j(y)}{r(x(y))^2}\right)\sigma_{ij}^\Phi(x(y))\,dS_r^{0} + \frac{(n-1)}{r^2}\int_{B_r^\Psi\setminus B_R^\Psi} \tr_\delta \sigma^\Phi(x(y)) \,d\mu_\delta\right) + o(1),
\end{equation}
We claim that the limit defining the coordinate isoperimetric mass \eqref{eq:admiso_phi} (or \eqref{eq:diff_spheres}) is independent of the family of spheres chosen. In particular, we show that
\begin{equation}\label{eq:CVClaim0}
\frac{1}{r}\int^r_R \misoc(g,\Psi)(\rho) \,d\rho= \frac{1}{r} \int^r_R \misoc(g,\Phi)(\rho) \, d\rho+ o(1).
\end{equation}
Taking the limit as $r \to \infty$, we obtain equality of the masses, since both masses are well-defined.

We start by showing that 
\begin{equation}\label{eq:CVClaim2}
\frac{1}{r^2}\int_{B_r^\Psi\setminus B_R^\Psi} \sigma_{ii}^\Phi(x(y))\,d\mu_\delta - \frac{1}{r^2} \int_{B_r^\Phi \setminus B_R^\Phi} \sigma_{ii}^\Phi(x)\,d\mu_\delta = o(1).
\end{equation}
Since $y^i(x) = x^i + a^i + \zeta(x)^i$, where $\zeta^i = O(r^{1-\tau})$, after changing coordinates we have
\begin{equation}
\frac{1}{r^2}\int_{B_r^\Psi\setminus B_R^\Psi} \sigma_{ii}^\Phi(x(y))\,d\mu_\delta = \frac{1}{r^2} \int_{\Sigma_r\setminus \Sigma_R} \sigma_{ii}^\Phi(x)d\mu_\delta + o(1),
\end{equation}
where $\Sigma_r = (\Phi \circ \Psi^{-1})(B_r^\Psi)
$, and similarly for $\Sigma_R$. In particular $\Sigma_r$ is close to a coordinate ball in the asymptotically flat end of $\Phi$ but slightly deformed and possibly translated. Then, we estimate
\begin{align*}
    \frac{1}{r^2}\left| \int_{\Sigma_r\setminus \Sigma_R} \sigma_{ii}^\Phi(x)d\mu_\delta - \int_{B_r^\Phi \setminus B_R^\Phi} \sigma_{ii}^\Phi(x)\,d\mu_\delta \right|& \leq   \frac{1}{r^2} \int_{\Omega_r} |\sigma_{ii}^\Phi(x)|\, d\Omega_r + o(1)\\
    & \leq  \frac{1}{r^2}\sup_{x \in \Omega_r} |\sigma_{ii}(x)| |\Omega_r|+ o(1),
\end{align*}
where $\Omega_r$ is the symmetric difference $(\Sigma_r \setminus B_r^\Phi) \cup (B_r^\Phi \setminus \Sigma_r)$. Note that $\Omega_r$ is contained in an annular region of width $O(|a| + r^{1-\tau})$ around $S^\Phi_r$ due to $\zeta^i$. From this, we have that $|\Omega_r| = O(|a| r^{n-1} +r^{n-\tau})$ and the claim \eqref{eq:CVClaim2} follows since $\tau>n-3$.

Note that integration with respect to the radius converts the surface integrals of $\misoc(g,\Psi)(r)$ into volumetric integrals. Therefore, it is sufficient to show that
\begin{equation}\label{eq:CVClaim1}
\frac{1}{r}\left(\int_R^r \frac{1}{\rho(y)} \int_{S_\rho^\Psi} \sigma_{ij}^\Phi(x(y))\,dS_\rho^{0}\,d\rho - \int_R^r \frac{1}{\rho(x)} \int_{S_\rho^\Phi} \sigma_{ij}^\Phi(x)\,dS_\rho^{0}\,d\rho \right) = o(1).
\end{equation}
Note that the change of coordinates also affects $\rho$. Since $y^i(x) = x^i + a^i + \zeta^i(x)$ and $|\det Dy(x)| = 1 + O(r(x)^{-\tau})$, by \eqref{eq:r_estimate}, we have 
\begin{equation}
    \frac{|\det Dy(x)|}{r(y(x))} - \frac{1}{r(x)} = O(|a|r^{-2}) + O(r^{-1-\tau}).
\end{equation}

Applying the coarea theorem and changing coordinates, we have
\begin{align}
    \frac{1}{r} \int_{\Sigma_r\setminus \Sigma_R} |\sigma_{ij}^\Phi(x)| \left| \frac{|\det Dy(x)|}{r(y(x))} - \frac{1}{r(x)} \right|\,dx & = \frac{1}{r} \int_{\Sigma_r\setminus \Sigma_R} O(|a|r^{-2-\tau}) + O(r^{-1-2\tau})dx \\
    & = o(1),
\end{align}
since $\tau>n-3$ and $\tau>(n-2)/2$. The term $o(1)$ also absorbs the inner boundary terms. Therefore, estimating the difference over the symmetric difference $\Omega_r$, we obtain
\begin{align*}
    \frac{1}{r}\left|\int_R^r \frac{1}{\rho(y)} \int_{S_\rho^\Psi} \sigma_{ij}^\Phi(x(y))\,dS_\rho^{0}\,d\rho - \int_R^r \frac{1}{\rho(x)} \int_{S_\rho^\Phi} \sigma_{ij}^\Phi(x)\,dS_\rho^{0}\,d\rho \right| &  \leq  \frac{1}{r}\int_{\Omega_r} \frac{1}{\rho(x)}|\sigma_{ij}^\Phi(x)|\,d\Omega_r + o(1)\\
    &  \leq \frac{1}{r} \sup_{x \in \Omega_r} \left|\frac{\sigma_{ij}(x)}{\rho(x)}\right| |\Omega_r| + o(1)\\
    &  =  o(1),
\end{align*}
where we have used that $|\Omega_r| = O(|a| r^{n-1} +r^{n-\tau})$.The remaining term containing $\frac{x^i(y)x^j(y)}{r(x(y))^2}\sigma_{ij}^\Phi(x(y))$ can be treated similarly. This shows that \eqref{eq:CVClaim0} holds and then both masses agree as $r \to \infty$. 
\end{proof}

\subsection{The finiteness of mass}\label{sec:finitenessmass}

At this stage, it is not clear under which conditions $\misoc$ is well-defined, finite, and nonnegative for asymptotically flat manifolds endowed with low-regularity metrics. In the setting of $C^0\cap W_{-\tau}^{1, n}$ asymptotically flat manifolds with $\tau>(n-2) / 2$, \cite{LeeLeFloch2015} introduced a notion of generalized ADM mass $\mathbf{m}$, which agrees with $\madm$ and therefore with $\misoc$ whenever the metric possesses sufficient regularity. They further established a criterion ensuring the finiteness of $\mathbf{m}$, as well as a positive mass theorem for spin manifolds. In this section, we show that both notions of mass coincide under the additional assumption of $C^0_{-\tau}$ decay, and the requirement that the distributional scalar curvature be a measure outside some compact, and that the corresponding results extend to $\misoc$ in this setting. In the sequel, we recall the definition of distributional scalar curvature used in \cite{LeeLeFloch2015}, and together with the definition of $\mathbf{m}$, and establish the equivalence between $\misoc$ and $\mathbf{m}$. 

\subsubsection{Distributional scalar curvature}

The space of \textit{distributions on $M$} is defined as the topological dual of the space of compactly supported densities, that is,
\begin{equation}
\D^\prime(M) := \big(\Gamma_c(M, \text{Vol}(M)) \big)^\prime.
\end{equation}
This definition can be extended to distributional (r,s)-tensor fields on $M$ (see \cite{Grosser2001}). From this, we can define a distributional connection, a distributional Riemann curvature tensor, and so on. For a sufficiently regular metric, i.e., $g \in C^0 \cap W^{1,2}_\loc$ (see \cite{LeFloch2007} for minimal conditions), it is also possible to define a notion of distributional Levi-Civita connection. Building upon this theory, we can define the \textit{distributional scalar curvature} $R[g] \in \D^\prime(M)$ as the trace of the distributional Ricci tensor (for a detailed construction, see \cite[Section 2.3]{Hafemann2026}).

To connect this with the classical setting, recall that for a $C^2$ metric the scalar curvature can be expressed in coordinates as
\begin{equation}\label{eq:CoordScalarC}
R = \partial_k V^k + Q,    
\end{equation}
where the vector field $V$ and the scalar field $Q$ are defined by
\begin{equation}\label{eq:V_defSc}
V^k \colonequals g^{i j} \Gamma_{i j}^k - g^{i k} \Gamma_{ij}^j,    
\end{equation}
\begin{equation}\label{eq:F_defSc}
Q \colonequals  - \left(\partial_m g^{i j}\right) \Gamma_{i j}^m+\left(\partial_j g^{i j}\right) \Gamma_{i m}^m+g^{i j} \Gamma_{i j}^m \Gamma_{k m}^k-g^{i j} \Gamma_{i k}^m \Gamma_{j m}^k.   
\end{equation}
The same coordinate decomposition extends naturally to the distributional setting. Let $(U_i, \psi_i)$ be a coordinate cover with a partition of unity $(\eta_i)$. In each chart, let $V_i$ and $Q_i$ denote the above expressions computed in the corresponding coordinates. Then for any $u \in \Cinfc(M)$, we define 
\begin{align}\label{eq:dist_scalar}
\la R, u \,d\mu_h \ra & = \sum_i \int_{U_i} -V_i^k \partial_k \left(u \eta_i \sqrt{|h|}\right)+ Q_i \left(u \eta_i \sqrt{|h|}\right)dx.
\end{align}
This expression is independent of the choice of partition of unity. 

The following proposition shows that the scalar curvature can also be extended to less regular test densities as long as $g$ is sufficiently regular. 

\begin{proposition}\label{prop:ExtendingRDist_H26}
Let $(M^n, g)$ be a Riemannian manifold with $ g \in C^0\cap W^{1,p}_{\loc}$, $p\geq 2$. Then, the distributional scalar curvature $R[g]$ can be extended to all compactly supported test densities in $L^{(p/2)^\prime}$ whose derivatives lie in $L^{p^\prime}$, where $p^\prime = p/(p-1)$ and $(p/2)^\prime = p/(p-2)$ for $p>2$, while $(p/2)^\prime=\infty$ if $p=2$.
\end{proposition}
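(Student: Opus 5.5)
The plan is to show that the defining expression \eqref{eq:dist_scalar} is continuous with respect to the norm $\|\phi\|_{L^{(p/2)'}}+\|\partial\phi\|_{L^{p'}}$ on compactly supported densities $\phi$. We then extend $R[g]$ by density of $\Cinfc$ in this space.

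First, work locally: fix a chart $U_i$ from the cover and a compact set $K$ containing the support of the test density. On $K$, $g\in C^0$ and is positive definite, so $g^{ij}$ is continuous and bounded, hence in $L^\infty(K)$. Moreover $\partial g^{ij}=-g^{ik}g^{j\ell}\partial g_{k\ell}\in L^p(K)$, so the Christoffel symbols satisfy $\Gamma^k_{ij}\in L^p(K)$. Looking at the structure of \eqref{eq:V_defSc} and \eqref{eq:F_defSc}, this gives $V^k\in L^p(K)$. For $Q$, each term is a product of two factors from the list $\partial g^{-1},\Gamma$, multiplied by bounded coefficients, so $Q\in L^{p/2}(K)$ by H\"older.

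Next, for a test density written locally as $\phi=u\eta_i\sqrt{|h|}$, bound the two terms separately by H\"older:
\[
\Big|\int V^k\partial_k\phi\,dx\Big|\le \|V\|_{L^p(K)}\|\partial\phi\|_{L^{p'}},\qquad \Big|\int Q\phi\,dx\Big|\le \|Q\|_{L^{p/2}(K)}\|\phi\|_{L^{(p/2)'}}.
\]
When $p=2$, we have $Q\in L^1$ and $(p/2)'=\infty$, so the second bound still holds. Multiplication by the smooth, compactly supported functions $\eta_i\sqrt{|h|}$ preserves both norms up to constants, and the sum over $i$ is finite on $K$. Therefore $|\langle R,\phi\rangle|\le C_K\big(\|\phi\|_{L^{(p/2)'}}+\|\partial\phi\|_{L^{p'}}\big)$.

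Finally, approximate a general compactly supported $\phi$ with $\phi\in L^{(p/2)'}$ and $\partial\phi\in L^{p'}$ by mollifications $\phi_\eps$ supported in a fixed compact neighbourhood. For $p>2$ both exponents are finite, and $\phi_\eps\to\phi$ in both norms. The functional is Cauchy along this sequence, so we define $\langle R,\phi\rangle$ as the limit, and the estimate shows it is independent of the approximating sequence. Independence of the partition of unity passes to the limit from the smooth case. Alternatively, one can simply note that the formula \eqref{eq:dist_scalar} makes sense directly for such $\phi$ and is continuous, which avoids the limit altogether.

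The main obstacle is the endpoint $p=2$, where $(p/2)'=\infty$ and mollification does not converge in $L^\infty$. There I would take the direct definition by the formula rather than extension by density. Convergence can be handled with dominated convergence: $\phi_\eps\to\phi$ almost everywhere with uniform $L^\infty$ bound, and $Q\in L^1$. This gives consistency with the smooth definition.
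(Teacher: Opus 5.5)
Your proposal is correct and follows the paper's argument, which likewise observes that $V\in L^p_{\loc}$ and $Q\in L^{p/2}_{\loc}$, so by H\"older every term of \eqref{eq:dist_scalar} is integrable for such test densities and the formula itself gives the extension; your density/consistency step is a harmless refinement. One small point to make explicit: when $\partial_k$ falls on $\eta_i\sqrt{|h|}$ you need $\phi\in L^{p^\prime}$ on its compact support, and this follows from $(p/2)^\prime\geq p^\prime$.
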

\begin{proof}
    The result follows by examining \eqref{eq:dist_scalar} and applying H\"{o}lder's inequality to verify that all terms are integrable. See \cite[Proposition 2.8]{Hafemann2026} for details.  In particular, note that for any $p\geq 2$, any $L^\infty$ test function with derivatives in $L^2$ is an admissible test function.
    \end{proof}
 
In a different vein, the distributional scalar curvature can also be defined in a coordinate-invariant way via a smooth background metric, an approach that is also useful for our purposes. For instance, the definition of distributional scalar curvature in \cite{LeeLeFloch2015} uses this approach and is slightly different from the definition in \eqref{eq:dist_scalar}. The former is well suited for computations, while the latter is better suited for the regularisation procedure. Both definitions are equivalent, and in the proof of finiteness of $\misoc$, the definition provided in \cite{LeeLeFloch2015} is more convenient.

Let $\nablab$ and $\nabla$ be the Levi-Civita connections induced by the smooth background metric $h$ and by $g \in C^2$, respectively. Define the tensor $W:= \nabla - \nablab$, whose components are given by
\begin{equation}\label{eq:W_LeeLeFloch}
    W^k_{ij} = \frac{1}{2} g^{kl} \left( \nablab_i g_{jl}+\nablab_j g_{il}-\nablab_l g_{ij}\right).
\end{equation}
Then, as in \cite{LeeLeFloch2015}, the scalar curvature admits the following decomposition
\begin{equation}
    R[g] = \nablab_k \mathbf{V}^k + F,
\end{equation}
where the vector field $\mathbf{V}^k$ and scalar field $F$ are determined by
\begin{equation}\label{eq:V_LeeLeFloch}
    \mathbf{V}^k := g^{ij} W^k_{ij}- g^{ik} W^j_{ji} = g^{ij}g^{k\ell} \left(\nablab_j g_{i\ell} - \nablab_\ell g_{ij}\right),
\end{equation}
\begin{align}\label{eq:F_LeeLeFloch}
    F  := g^{ij}\overline{R}_{ij}  -  \nablab_k g^{ij} W^k_{ij} + \nablab_k g^{ik} W^j_{ji}
+ g^{ij}\big(W^k_{k\ell}W^\ell_{ij}-W^k_{j\ell}W^\ell_{ik}\big), 
\end{align}
where $\overline{R}_{ij}$ is the Ricci curvature of the smooth background metric $h$ (see \Cref{def:TopStruc}).

Then, if $g \in C^0 \cap W^{1,2}_\loc$, the distributional scalar curvature $R[g]$ is defined, for every compactly supported smooth test function $u:M \to \RR$, by
\begin{equation}\label{eq:ScalarCurvLeeLeFloch}
    \llangle R[g],  u \rrangle
:= \int_M \left( - \mathbf{V} \cdot\nablab \Big( u \, \frac{d\mu_g}{d\mu_h}\Big)  + F u \, \frac{d\mu_g}{d\mu_h} \right)  \,d\mu_h, 
\end{equation}
where $\frac{d\mu_g}{d\mu_h} \in C^0 \cap W^{1,2}_\loc$ is the density of $d\mu_g$ with respect to $d\mu_h$ and $W$, $\mathbf{V}$, and $F$ are defined in \Cref{eq:W_LeeLeFloch,eq:V_LeeLeFloch,eq:F_LeeLeFloch}. Note that all the terms are well-defined since $W \in L^2_\loc$, $V \in L^2_\loc$, and $F \in L^1_\loc$. The definition of $\llangle R[g],  u \rrangle$ does not depend on $h$ (see \cite[Definition 2.1]{LeeLeFloch2015}). 

The coordinate definition \eqref{eq:dist_scalar} and the above construction \eqref{eq:ScalarCurvLeeLeFloch} define the same distribution in the sense that they agree on the natural identification of test densities. More precisely, since $d\mu_h = \frac{d\mu_h}{d\mu_g} d\mu_g$, the two distributions coincide under the following identification,
\begin{equation}
\langle R[g], u  \,d\mu_h \rangle = \langle R[g], u \frac{d\mu_h}{d\mu_g}  \,d\mu_g \rangle  = \llangle R[g],  u\frac{d\mu_h}{d\mu_g} \rrangle.
\end{equation}

If $g \in C^0 \cap W^{1,p}_\loc$, $p\geq 2$, from \Cref{prop:ExtendingRDist_H26} (or a generalized version of \cite[Proposition 2.2]{LeeLeFloch2015}), the distributional scalar curvature \eqref{eq:ScalarCurvLeeLeFloch} can be extended for all compactly supported functions $u \in L^{(p/2)^\prime}$ whose derivatives lie in $L^{p^\prime}$, where $p^\prime = p/(p-1)$ and $(p/2)^\prime = p/(p-2)$ for $p>n$, while $(p/2)^\prime=\infty$ if $p=2$. 

\subsubsection{Finiteness of $\misoc$ and equivalence to the mass of \cite{LeeLeFloch2015}}

Assuming that $g$ is $C^0 \cap W^{1,p}_{-\tau}$ asymptotically flat, with $p\geq 2$ and $\tau>(n-2)/2$, and that the distributional scalar curvature $R[g]$ is a measure outside some compact set, $\mathbf{m}$ can be defined, for any $\eps>0$, by
\begin{equation}\label{eq:mLeeLeFloch}
\mathbf{m} \colonequals \frac{1}{2(n-1) \omega_{n-1}} \lim _{r \rightarrow+\infty}\left(\frac{1}{\eps} \int_{r<\rho<r+\eps} \mathbf{V} \cdot \nablab \rho \,d\mu_h\right),
\end{equation}
where $\mathbf{V}$ is given by \eqref{eq:V_LeeLeFloch} and $\rho$ is the radial function $\rho(x) = \sqrt{\sum (x^i)^2}$ defined by the asymptotically flat end. This limit exists and is independent of $\eps$, though possibly infinite, and finite when $R[g]$ is a finite measure. Although, this statement was proven only for $p=n$ in \cite[Proposition 2.4]{LeeLeFloch2015}, the same holds for $p\geq2$ due to \Cref{prop:ExtendingRDist_H26}. In particular, using this definition of mass \eqref{eq:mLeeLeFloch}, \cite{LeeLeFloch2015} also established a positive mass theorem for spin  $C^0 \cap W^{1,n}_{-\tau}$ asymptotically flat manifolds.

In what follows, we show the finiteness of $\misoc$ by an argument analogous to Lee-LeFloch~\cite[Proposition 2.4]{LeeLeFloch2015} assuming that $g \in C^0_{-\tau} \cap W^{1,p}_{-\tau}$, $p\geq2$ and the distributional scalar curvature is a finite signed measure outside some compact set. This assumption serves as a natural replacement for $L^1$ integrability, since $R[g] \geq 0$ in $\D^\prime$ implies that $R[g]$ is a distribution of order zero and hence defines a nonnegative Radon measure \cite[Theorem 3.18]{Duistermaat2010}. At the end of the proof, it is clear by inspection that $\misoc$ and $\mathbf{m}$ are equivalent. This also implies that $\mathbf{m}$ is coordinate invariant by applying \Cref{theo:CoordIndep}, a fact not addressed in \cite{LeFloch2007} (see \Cref{cor:miso_coordinv}). The additional decay required for this equivalence arises from the fact that the formulas for $\misoc$ in \Cref{sec:IsoFormulas} were derived assuming $C^0_{-\tau}$ rather than $C^0$.

\begin{proposition}\label{prop:EquivMasses}
Let $(M^n, g)$ be a $C^0_{-\tau}\cap W^{1,p}_{-\tau}$, asymptotically flat manifold, with $\tau>(n-2) / 2$ and $p\geq 2$. It follows that
\begin{enumerate}
    \item  If $R[g]$ is a finite, signed measure outside a compact set, then $\misoc$ well-defined and finite.
    \item If $R[g]$ is a measure outside some compact set, then $\misoc$ is well-defined. Moreover, $\misoc$ is finite if and only if $R[g]$ is a finite measure outside a compact set.
    \item Assume either (1) or (2), we have $\misoc = \mathbf{m}$.
\end{enumerate}

\end{proposition}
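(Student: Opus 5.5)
The starting point is \Cref{theo:miso_div}, which applies because $g\in C^0_{-\tau}\cap W^{1,p}_{-\tau}\subset C^0_{-\tau}\cap\Wloc^{1,p}$. It gives
\[
\misoc(r)=\frac{1}{(n-1)\omega_{n-1}r^2}\int_{B_r\setminus B_R}(\operatorname{div}_\delta\sigma-d\tr_\delta\sigma)(\rho\partial_\rho)\,d\mu_\delta+o(1).
\]
On $\mathcal{E}$ we take $h=\delta$. Then $\mathbf{V}^k=g^{ij}g^{k\ell}(\partial_jg_{i\ell}-\partial_\ell g_{ij})$, and the first step is to compare this with the linearised field $\partial_i\sigma_{ik}-\partial_k\sigma_{ii}$. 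The difference is $O(|\sigma||\partial\sigma|)$, so $\mathbf{V}\cdot\nablab\rho$ equals the integrand above (divided by $\rho$) up to an error bounded by $|\sigma||\partial\sigma|$. Using $|\sigma|\le C|x|^{-\tau}$, the error contributes to $\misoc(r)$ at most $r^{-2}\int_{B_r\setminus B_R}|x|^{1-\tau}|\partial\sigma|\,dx$. Applying H\"older's inequality with the weighted norm $\|\partial\sigma\|_{L^p_{-\tau-1}}$ bounds this by $Cr^{-2}\cdot r^{\,n-2\tau}$, possibly up to a logarithm, and this is $o(1)$ since $\tau>(n-2)/2$. (For $p=2$ this is a direct weighted Cauchy--Schwarz estimate; for $p>2$ the weight only improves.) Hence
\[
\misoc(r)=\frac{1}{(n-1)\omega_{n-1}r^2}\int_{B_r\setminus B_R}\rho\,\mathbf{V}\cdot\nablab\rho\,d\mu_\delta+o(1)=\frac{2}{r^2}\int_R^r\rho\,\mathbf{m}(\rho)\,d\rho+o(1),
\]
where $\mathbf{m}(\rho):=\frac{1}{2(n-1)\omega_{n-1}}\int_{S_\rho}\mathbf{V}\cdot\nablab\rho\,dS^0_\rho$ is defined for a.e.\ $\rho$ and is locally integrable in $\rho$ by the coarea formula.

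The second step is to understand $\mathbf{m}(\rho)$ through the distributional scalar curvature, following \cite[Proposition 2.4]{LeeLeFloch2015}. We test $R[g]$, in the form \eqref{eq:ScalarCurvLeeLeFloch}, against radial Lipschitz cutoffs $u=\chi(\rho)$ with $\chi$ piecewise linear between $r_1$ and $r_2$. These are admissible by \Cref{prop:ExtendingRDist_H26}. The density factor $d\mu_g/d\mu_h=1+O(\rho^{-\tau})$ and the term $F$ are quadratic in $\partial\sigma$, hence in $L^1$ on $\mathcal{E}$ by the weighted estimate, and they can be absorbed. This yields, for a.e.\ $r_1<r_2$,
\[
\mathbf{m}(r_2)-\mathbf{m}(r_1)=c_n\,R[g](\{r_1<\rho<r_2\})+\epsilon(r_1,r_2),\qquad \epsilon\to0 \text{ as } r_1\to\infty.
\]
Technically one first proves the averaged version, with $\varepsilon$-shells as in \eqref{eq:mLeeLeFloch}, and then passes to the limit. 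If $R[g]$ is a finite signed measure outside a compact set, this shows that $\mathbf{m}(\rho)$ is Cauchy, so $\mathbf{m}(\rho)\to\mathbf{m}$ is finite. If $R[g]$ is merely a measure, hence nonnegative, $\mathbf{m}(\rho)$ is essentially monotone up to $o(1)$, so its limit exists in $(-\infty,\infty]$, and it is finite exactly when the total mass of $R[g]$ near infinity is finite.

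The final step transfers this to $\misoc$ by the Ces\`aro argument of \Cref{cor:proof_ADMmass}. If $\mathbf{m}(\rho)\to\mathbf{m}$, then $\frac{2}{r^2}\int_R^r\rho\,\mathbf{m}(\rho)\,d\rho\to\mathbf{m}$, including the case $\mathbf{m}=+\infty$. This gives that $\misoc$ is well-defined in cases (1) and (2) and that $\misoc=\mathbf{m}$, which is (3). The converse in (2) follows because the weighted average of an eventually almost-monotone function diverges whenever the function does. I expect the main obstacle to be the second step. One has to justify radial Lipschitz test functions and control the boundary terms on a.e.\ sphere with only $W^{1,p}_{-\tau}$ regularity. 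One also has to check that the quadratic error terms are integrable at infinity in weighted norms for all $p\ge2$, not only for $p=n$; the $C^0_{-\tau}$ decay is what makes the H\"older estimates close.
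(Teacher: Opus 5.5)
Your proof is correct. After the common first step (\Cref{theo:miso_div} plus the weighted H\"older bound on the quadratic error), it takes a different route from the paper's.

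The paper never works with fluxes through individual spheres. It tests $R[g]$ directly against $\chi_r\frac{d\mu_h}{d\mu_g}$, where $\chi_r$ is the quadratic cutoff \eqref{eq:testFunc}. Its gradient $-2x/(r^2-R^2)$ already encodes your Ces\`aro weight $2\rho/r^2$. Hence $\misoc(r)=\frac{1}{2(n-1)\omega_{n-1}}\bigl(\llangle R[g],\chi_r\frac{d\mu_h}{d\mu_g}\rrangle-\int_M F\chi_r\,d\mu_h\bigr)+o(1)$. The limit then follows from dominated convergence in case (1), and from the pointwise monotonicity of $\chi_r$ in $r$ in case (2). This yields the closed formula \eqref{eq:miso_dist_form}, and $\misoc=\mathbf{m}$ is read off by comparing with Lee--LeFloch's expression for $\mathbf{m}$.

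You instead redo Lee--LeFloch's flux argument on a.e.\ sphere, obtaining the exact identity $2(n-1)\omega_{n-1}\bigl(\mathbf{m}(r_2)-\mathbf{m}(r_1)\bigr)=R[g](\{r_1<\rho<r_2\})-\int_{r_1<\rho<r_2}F\,d\mu_h$, and then Ces\`aro-average. This costs some Lebesgue-point bookkeeping. In exchange it gives a stronger intermediate statement: the sphere and shell fluxes converge themselves, to a limit in $(-\infty,\infty]$ in case (2). It also makes $\misoc=\mathbf{m}$ immediate from the definition \eqref{eq:mLeeLeFloch}, without appealing to Lee--LeFloch's formula.

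One point should be stated explicitly rather than ``absorbed'': your trapezoidal cutoffs $u$ must enter as $u\,\frac{d\mu_h}{d\mu_g}$, as in Lee--LeFloch and in the paper, so that the density factor cancels exactly in \eqref{eq:ScalarCurvLeeLeFloch}. Suppose you test with $u$ and treat $\bigl(\frac{d\mu_g}{d\mu_h}-1\bigr)\mathbf{V}\cdot\nablab u$ as an error. The leftover term is then $\int_{S_\rho}O(\rho^{-\tau})|\partial\sigma|\,dS^0_\rho$. For $\tau$ close to $(n-2)/2$ this is small only after averaging in $\rho$ over $B_r$, not on individual spheres or fixed-width shells.
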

\begin{proof}
Since $g-\delta \in W^{1,p}_{-\tau}$, and $\tau>(n-2)/2$, \Cref{theo:miso_div} applies, giving
\begin{align}\label{eq:CoordMisoApprox}
    \misoc(r) &= \frac{1}{(n-1)\omega_{n-1} r^2} \left(\int_{B_r \setminus B_R}  (\partial_i g_{ij} - \partial_j g_{ii}) x^j \,d\mu_\delta\right) + o(1).
\end{align}
Using $g^{ij}= \delta^{ij}-\sigma_{ij} + O(r^{-2\tau})$, we can rewrite
\begin{align}
    \misoc(r) &= \frac{1}{(n-1)\omega_{n-1} r^2} \left(\int_{B_r \setminus B_R}  g^{i j} g^{k \ell}\left(\partial_k g_{\ell i}-\partial_i g_{\ell k}\right) x^j \,d\mu_\delta \right) + \frac{1}{r^2}O(\sigma \partial \sigma x)+ o(1).
\end{align}
where $O(\sigma \partial \sigma x)$ schematically denotes all terms involving products of $\sigma$, $\partial \sigma$ and $x^i$. 

We claim that 
\begin{equation}\label{eq:claim_decay_int}
    \frac{1}{r^2}O(\sigma \partial \sigma x) = o(1).
\end{equation}
Since $|\sigma| \leq C_g |x|^{-\tau}$, we see
\begin{equation}
\left|\int_{B_r\setminus B_R} \sigma \partial \sigma x\,d\mu_\delta\right| \leq C_g \int_{B_r\setminus B_R} |x|^{1-\tau} \partial \sigma \,d\mu_\delta.    
\end{equation}
Moreover, because $\partial \sigma \in L^p_{-\tau-1}$, we write
\begin{equation}
|x|^{1-\tau}\partial \sigma = \left(|x|^{1+\tau-\frac{n}{p}}\partial \sigma\right) |x|^{-2\tau+\frac{n}{p}},    
\end{equation}
and applying H\"older, with $1/p^\prime = 1-1/p$, we obtain
\begin{align*}
\left|\int_{B_r\setminus B_R} \sigma \partial \sigma x \,d\mu_\delta\right|  & \leq C_g\left(\int_{B_r\setminus B_R} ||x|^{(1+\tau)-\frac{n}{p}} \partial \sigma|^p \,d\mu_\delta\right)^{1/p}  \left(\int_{B_r\setminus B_R} |x|^{(-2\tau+\frac{n}{p})p^\prime}  \,d\mu_\delta \right)^{1/p^\prime}\\
& \leq C_g \| \partial \sigma\|_{L^{p}_{-\tau-1}} \left(\int_{B_r\setminus B_R} |x|^{(-2\tau+\frac{n}{p})p^\prime}  \,d\mu_\delta \right)^{1/p^\prime}\\
& \leq C_g \| \partial \sigma\|_{L^{p}_{-\tau-1}} r^{\max\left(0,-2\tau+n\right)+\eps},
\end{align*}
for some $\eps>0$ small and the claim follows after dividing by $r^2$.

Therefore, we have 
\begin{align}
     \misoc(r) &= \frac{1}{(n-1)\omega_{n-1} r^2} \int_{B_r \setminus B_R} \sum_{i=1}^n V^i x^i  \,d\mu_\delta + o(1),
\end{align}
where $V^i$ is given in \eqref{eq:V_defSc}.

Next, we relate this to the distributional scalar curvature via \eqref{eq:dist_scalar}. For any $r>R$, define $\chi_r(x)$ to be a compactly supported Lipschitz test function as
\begin{equation}\label{eq:testFunc}
\chi_r(x) =
\begin{cases}
1 &|x| \le R, \\
1-\frac{|x|^2-R^2}{r^2-R^2}, & R \leq|x| \le r, \\
0, &|x| \ge r, \\
\end{cases}
\end{equation}
such that $0\leq \chi_r(x) \leq 1$. From \Cref{prop:ExtendingRDist_H26}, it is clear that $\chi_r$ is sufficiently regular to be used as a test function. A direct computation gives,
\begin{equation}
\partial_i \chi_r= - 2 \frac{x^i}{r^2}+O\left(r^{-2}\right)\,\quad \text{ on } \operatorname{int}(B_{r}\setminus B_R),
\end{equation}  
and $\partial_i \chi_r \equiv 0$ outside $B_r$. Hence,
\begin{align}
    \frac{1}{(n-1)\omega_{n-1}} \int_{B_r \setminus B_R} \sum_{i=1}^n V^i \frac{x^i}{r^2}  \,d\mu_\delta 
    & = \frac{1}{2(n-1)\omega_{n-1}}\int_{B_{r} \setminus B_{R}} - V^i \partial_i\chi_r\,d\mu_\delta + o(1),
\end{align}
where the lower order terms were absorbed by $O(r^{1-2\tau})$.  

Note that, $V$ defined by \eqref{eq:V_defSc} and $\mathbf{V}$ by \eqref{eq:V_LeeLeFloch} agree outside $B_{R}$ as well as $d\mu_h$ and $d\mu_\delta$, then
\begin{equation}
    \frac{1}{2(n-1)\omega_{n-1}}\int_{B_{r} \setminus B_{R}} -V^i \partial_i\chi_r\,d\mu_\delta =  \frac{1}{2(n-1)\omega_{n-1}}\int_{B_{r} \setminus B_{R}} - \mathbf{V} \cdot \nablab \chi_r\,d\mu_h.
\end{equation}
By the definition of distributional scalar curvature in \eqref{eq:ScalarCurvLeeLeFloch}, we have
\begin{equation}
    \int_{B_{r} \setminus B_{R}} -\mathbf{V} \cdot \nablab \chi_r\,d\mu_h =   \llangle R[g], \chi_r \frac{d\mu_h}{d\mu_g} \rrangle - \int_M F \chi_r d\mu_h,
\end{equation}
where $F$ is given by \eqref{eq:F_LeeLeFloch}. Note again that $\chi_r\frac{d\mu_h}{d\mu_g}$ is also an admissible test function by \Cref{prop:ExtendingRDist_H26}. Since $F \in L^{p/2}_{-2\tau-2} \subset L^1$ and $\chi_r \to 1$ pointwise as $r \to \infty$, by Lebesgue's dominated converge theorem 
\begin{equation}
    \int_M F \chi_r d\mu_h \to  \int_M F \,d\mu_h \quad \text{ as } r \to \infty.
\end{equation}

Arguing similarly as in \cite[Proposition 2.4]{LeeLeFloch2015}, if $R[g]$ is a finite signed measure outside a compact, the dominated convergence theorem and boundedness of $\frac{d\mu_h}{d\mu_g}$ implies that $\llangle R[g], \chi_r \frac{d\mu_h}{d\mu_g} \rrangle$ converges as $r \to \infty$. Passing to the limit as $r \to \infty$, we obtain that
\begin{equation}\label{eq:miso_dist_form}
\misoc = \lim_{r \to \infty} \misoc(r)  = \frac{1}{2(n-1)\omega_{n-1}}\left(\llangle R[g], \frac{d\mu_h}{d\mu_g} \rrangle  -  \int_M F \,d\mu_h \right),   
\end{equation}
exists, and is finite.

If $R[g]$ is a (nonnegative) measure outside a compact set, the quantity  $\llangle R[g], \chi_r \frac{d\mu_h}{d\mu_g} \rrangle$ is monotone in $r$, and so it must converge as $r \to \infty$, though possibly infinite. In particular, $\misoc(g)$ exists and it is finite if and only if $R[g]$ is a finite measure outside a compact set. 

Finally, the equality between the masses $\misoc$ and $\mathbf{m}$ follows by comparing  \eqref{eq:miso_dist_form} with the corresponding expression for $\mathbf{m}$ in \cite[Equation 2.9]{LeeLeFloch2015} that coincides with \eqref{eq:mLeeLeFloch}.
\end{proof}

The finiteness condition for $\misoc$ (resp. $\mathbf{m})$ directly implies coordinate invariance for more regular metrics.

\begin{corollary}\label{cor:miso_coordinv}
  Let $(\Phi, x)$ and $(\Psi, y)$ be two structures of infinity for $(M^n, g)$ such that $g$ is a $W^{1,p}_{-\tau_1}(\Phi)$ and $W^{1,p}_{-\tau_2}(\Psi)$ asymptotically flat metric with $p>n$ and decay rates $\tau_1, \tau_2$ respectively, and $\tau = \min\{\tau_1, \tau_2\} >(n-2)/2$. Assume that $R[g]$ satisfies (1) or (2) in \Cref{prop:EquivMasses}. Then $\misoc(g,\Phi)$ and $\misoc(g, \Psi)$ are well-defined and coincide. 
\end{corollary}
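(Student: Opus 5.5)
Since $p>n$, the weighted Morrey inequality gives $g-\delta\in C^{0,\alpha}_{-\tau_1}(\Phi)$ and $g-\delta\in C^{0,\alpha}_{-\tau_2}(\Psi)$. Hence $g$ is $C^0_{-\tau}\cap W^{1,p}_{-\tau}$ asymptotically flat in both charts, and since $p>n\geq 2$, \Cref{prop:EquivMasses} applies in each chart separately. The assumption on $R[g]$ is chart independent: $R[g]$ is a distribution on $M$, and being a (finite) measure outside a compact set does not refer to coordinates. So $\misoc(g,\Phi)$ and $\misoc(g,\Psi)$ are well defined, and each satisfies \eqref{eq:miso_dist_form}:
\[
\misoc(g,\Phi)=\frac{1}{2(n-1)\omega_{n-1}}\Big(\llangle R[g],\tfrac{d\mu_{h_\Phi}}{d\mu_g}\rrangle-\int_M F_\Phi\,d\mu_{h_\Phi}\Big),
\]
and the same with $\Psi$. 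Here $h_\Phi,h_\Psi$ are smooth background metrics equal to $\delta$ in the respective charts. In case (2) with infinite mass, both sides equal $+\infty$ because the finiteness criterion is chart independent, so assume finiteness from now on.

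I would not compare these expressions directly. Instead I would use that \eqref{eq:miso_dist_form} was derived as a limit of $\chi_r$-tested quantities, and rerun that argument with a single background metric. The plan is to use Bartnik's result quoted after \Cref{def:rigid_motion} (\cite[Corollary 3.2]{Bartnik1986}): because $g$ is $W^{1,p}_{-\tau_i}$ asymptotically flat in both charts with $\tau_i>(n-2)/2$ and $p>n$, the transition map satisfies $y=Ox+a+\zeta$ with $\zeta\in W^{2,p}_{1-\tau}\subset C^1_{1-\tau}$. Hence $\Psi\in\mathcal{A}_\Phi$, and the structure of the proof of \Cref{theo:CoordIndep} is available.

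That theorem needed $\tau>n-3$ only to control the translation $a$ and the deformation of the coordinate spheres. Here I would avoid sphere comparisons altogether by working in the Lee--LeFloch framework. The quantity $\llangle R[g],u\rrangle$ does not depend on $h$ (\cite[Definition 2.1]{LeeLeFloch2015}). Fix $h=h_\Phi$. Then $\mathbf{m}$ computed with the cutoff $\chi^\Phi_r$ versus $\chi^\Psi_r$ differs by
\[
\int(\chi^\Phi_r-\chi^\Psi_r)\,dR[g]-\int F(\chi^\Phi_r-\chi^\Psi_r)\,d\mu_h-\int \mathbf{V}\cdot\nablab(\chi^\Phi_r-\chi^\Psi_r)\,d\mu_h\,\frac{d\mu_h}{d\mu_g}.
\]
The first two terms tend to zero. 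Both cutoffs tend to $1$ pointwise, are bounded by $1$, the measure is finite outside a compact set, and $F\in L^1$, so dominated convergence applies. For the third term, write $\nablab(\chi^\Phi_r-\chi^\Psi_r)$ explicitly: $\chi^\Psi_r$ is a function of $|y|^2/r^2=|x+a+\zeta|^2/r^2$. On the common annulus the gradient difference is $O((|a|+|x|^{1-\tau})/r^2)+O(|x||\partial\zeta|/r^2)$, and on the symmetric difference of the supports each gradient is $O(1/r)$ with volume $O(|a|r^{n-1}+r^{n-\tau})$.

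The main obstacle is this third term. The naive bound $|\mathbf{V}|\lesssim|\partial g|$ combined with a volume estimate recreates the $\tau>n-3$ restriction. To overcome it, I would exploit the integrability $\mathbf{V}\in L^p_{-\tau-1}$ through Hölder, as in the proof of \Cref{prop:EquivMasses}, rather than pointwise bounds. For example, on the common annulus,
\[
\frac{|a|}{r^2}\int_{B_{2r}\setminus B_R}|\mathbf{V}|\,d\mu_\delta\lesssim \frac{|a|}{r^2}\,\|\mathbf{V}\|_{L^p_{-\tau-1}}\,r^{\,n-1-\tau}
\]
is still too large when $n$ is big. So if this fails, I would fall back on the cleaner route: show directly that $\mathbf{m}$ in the two charts agrees, using the $\eps$-shell form \eqref{eq:mLeeLeFloch} together with independence of $h$ and of the choice of exhaustion. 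Concretely, I would prove that the limit $\lim_r\llangle R[g],\phi_r\rrangle-\int F\phi_r$ is the same for any family of cutoffs $\phi_r\to1$ whose gradients are of the form $-\nablab\psi(\rho_\Psi/r)$. The translation contributes an exact divergence up to lower-order terms, since $\nablab\rho_\Psi-\nablab\rho_\Phi=O(|a|/\rho)+O(\rho^{-\tau})$. That is the step I would work hardest on. Once it is done, the equality $\misoc=\mathbf{m}$ in each chart from \Cref{prop:EquivMasses} finishes the proof.
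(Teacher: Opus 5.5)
You have a genuine gap: you never relate $\misoc(g,\Psi)$ to the background $h_\Phi$, and that is where the content of the corollary lies. Applied in the chart $\Psi$, the proof of \Cref{prop:EquivMasses} gives
\[
\misoc(g,\Psi)=\lim_{r\to\infty}\frac{1}{2(n-1)\omega_{n-1}}\int_M-\mathbf{V}_{h_\Psi}\cdot\nablab^{\Psi}\chi^\Psi_r\,d\mu_{h_\Psi}.
\]
Here $\mathbf{V}$ is built from $h_\Psi$, and so are $F$ and $\frac{d\mu_h}{d\mu_g}$ in \eqref{eq:miso_dist_form}. The $h$-independence of $\llangle R[g],u\rrangle$ does not let you replace $h_\Psi$ by $h_\Phi$ in these quantities. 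Yet that is exactly what you do when you ``fix $h=h_\Phi$'' and treat $\misoc(g,\Psi)$ as $\mathbf{m}$ computed with the cutoff $\chi^\Psi_r$. The ``independence of $h$'' invoked in your fallback is likewise the statement to be proved. Concretely, in $x$-coordinates $\Gamma(h_\Psi)-\Gamma(h_\Phi)\approx\partial^2\zeta$, and the leading part of $\mathbf{V}_{h_\Psi}-\mathbf{V}_{h_\Phi}$ is, up to sign, $\partial_j(\partial_j\zeta^k-\partial_k\zeta^j)$. It disappears only because it pairs to zero with $\partial_k\chi$ by antisymmetry, i.e.\ through the cancellation used in the proof of \Cref{theo:CoordIndep}.

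Conversely, the step you call the main obstacle is not one, because your three-term difference double counts. For a fixed background $h$, testing \eqref{eq:ScalarCurvLeeLeFloch} with $\chi\frac{d\mu_h}{d\mu_g}$ gives the exact identity
\[
\int_M-\mathbf{V}\cdot\nablab\chi\,d\mu_h=\llangle R[g],\chi\tfrac{d\mu_h}{d\mu_g}\rrangle-\int_M F\chi\,d\mu_h .
\]
So the flux term equals your first two terms, which tend to zero by dominated convergence as you note. For fixed $h$ the limit therefore does not depend on the cutoff family, and neither the translation $a$ nor the deformation of spheres ever needs to be estimated; the H\"older attempt and the fallback address a non-issue. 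The paper supplies the missing step in four moves. It starts from \eqref{eq:approx_aftercancelation}, where the $\partial\eta$ terms have already cancelled using only $\eta\in C^1_{1-\tau}$ and $\tau>(n-2)/2$. It reruns \Cref{theo:miso_div} for $\sigma^\Phi(x(y))$ in the $y$-chart. It trades $y$-derivatives for $x$-derivatives via the chain rule and \eqref{eq:claim_decay_int}. Finally it changes variables to get
\[
\misoc(g,\Psi)(r)=\frac{1}{2(n-1)\omega_{n-1}}\int_M-\mathbf{V}_{h_\Phi}\cdot\nablab^{\Phi}\big(\chi^\Psi_r\circ y\big)\,d\mu_{h_\Phi}+o(1).
\]
Only then does the fixed-background identity, with $\chi^\Psi_r\circ y\to1$ pointwise, yield $\misoc(g,\Psi)=\misoc(g,\Phi)$. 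Alternatively, you could close your own route by comparing $\mathbf{V}_{h_\Psi}$ and $\mathbf{V}_{h_\Phi}$ directly, using $\zeta\in W^{2,p}_{1-\tau}$. The antisymmetric leading term integrates to zero against $\nablab\chi^\Psi_r$. The remainders, of size $|x|^{-\tau}|\partial^2\zeta|$ plus the Jacobian between $d\mu_{h_\Psi}$ and $d\mu_{h_\Phi}$, are $o(1)$ by the H\"older argument behind \eqref{eq:claim_decay_int}.
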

\begin{proof}
    The masses $\misoc(g,\Phi)$ and $\misoc(g, \Psi)$ are well-defined by \Cref{prop:EquivMasses}. By harmonic coordinates \cite[Corollary 3.2]{Bartnik1986}, the coordinate systems $(\Phi, x)$ and $(\Psi, y)$ differ by a rigid motion plus lower-order terms. If $n<5$ or $\tau>n-3$, the coordinate invariance of the coordinate isoperimetric mass under such transformations is given by \Cref{theo:CoordIndep}, which implies $\misoc(g,\Phi) = \misoc(g, \Psi)$. 

    The above can be proved more generally in any dimension $n\geq3$ by showing that the defining limit of $\misoc$ is independent of the family of coordinate spheres. The proof starts by using the cancellation from the proof of \Cref{theo:CoordIndep}, without invoking the additional decay assumption $\tau>n-3$. The remaining steps essentially reproduce the proofs of \Cref{theo:miso_div} and \Cref{prop:EquivMasses}. In what follows, we sketch the main steps. Using the notation of the proof of \Cref{theo:CoordIndep}, recall that from \eqref{eq:approx_aftercancelation} $(n-1)\omega_{n-1} \misoc(g,\Psi)(r)$ can be written as
$$\left( -\frac{1}{r} \int_{S_r^\Psi} \left(\delta^{ij} - \frac{y^i y^j}{r^2}\right)\sigma_{ij}^\Phi(x(y))\,dS_r^{0} + \frac{(n-1)}{r^2}\int_{B_r^\Psi\setminus B_R^\Psi} \tr_\delta \sigma^\Phi(x(y)) \,d\mu_\delta\right) + o(1).$$
    Following the same argument as in \Cref{theo:miso_div}, but applied to $\sigma_{ij}^\Phi(x(y))$, we obtain
    \begin{equation}\label{eq:change_coord1}
        \misoc(g,\Psi)(r) = \frac{1}{(n-1)\omega_{n-1} r^2} \left(\int_{B^\Psi_r \setminus B^\Psi_R}  \left[\partial_i^y \left(\sigma^\Phi_{ij}(x(y))\right) - \partial_j^y \left(\sigma^\Phi_{ii}(x(y))\right)\right] y^j \,d\mu_\delta\right) + o(1),
    \end{equation}
    where $\partial_i^y $ are the coordinate derivatives with respect to $y$.

    Since both charts differ by a rigid motion plus lower-order terms, w.l.o.g, assuming no rotation, we have that  $\frac{\partial x^{i}}{\partial y^j} = \delta^i_j + O(r^{-\tau})$. By the chain rule and the claim \eqref{eq:claim_decay_int}, we see that \eqref{eq:change_coord1} can be rewritten as 
    \begin{equation}
        \misoc(g,\Psi)(r) = \frac{1}{(n-1)\omega_{n-1} r^2} \left(\int_{B^\Psi_r \setminus B^\Psi_R}  \left[\partial_i^x \sigma^\Phi_{ij}(x(y)) - \partial_j^x \sigma^\Phi_{ii}(x(y))\right] y^j \,d\mu_\delta\right) + o(1),
    \end{equation}
    Denote by $V_\Phi$ the vector field \eqref{eq:V_defSc} induced by $\Phi$ and define a test function $\chi_r^\Psi(y)$ as in \eqref{eq:testFunc}. Following the proof of \Cref{prop:EquivMasses}, we can rewrite the above expression as
    \begin{align*}
                \misoc(g,\Psi)(r) & = \frac{1}{2(n-1)\omega_{n-1}} \left(\int_{B^\Psi_r \setminus B^\Psi_R} - {V}_\Phi^i(x(y)) \partial_i^y \chi_r^\Psi(y) \,d\mu_{ \delta} \right)+o(1).
    \end{align*}
    Changing coordinates and using the claim \eqref{eq:claim_decay_int} again, we have
        \begin{align}\label{eq:Vi_Phi_eq_1}
                \misoc(g,\Psi)(r) & = \frac{1}{2(n-1)\omega_{n-1}} \left(\int_{\Sigma_r \setminus \Sigma_R} - {V}_\Phi^i(x) \partial_i^x \chi_r^\Psi(y(x)) \,d\mu_{ \delta} \right)+o(1),
    \end{align}
    where $\Sigma_r = (\Phi \circ \Psi^{-1})(B_r^\Psi)$.

    Let $h^\Phi$ be the Riemannian background metric induced by $\Phi$ that is Euclidean outside a large ball. Without loss of generality, assume that $h^\Phi=\delta$ outside $\Sigma_R$. Let $\overline{\nabla}^\Phi$ be the Levi-Civita connection of $h^\Phi$ and let $\mathbf{V}_\Phi$ be the vector field \eqref{eq:V_LeeLeFloch} induced by $\overline{\nabla}^\Phi$ and $\nabla^g$. Then, \eqref{eq:Vi_Phi_eq_1} can be written in terms of the geometric quantities defined by $\Phi$, which are related to the distributional scalar curvature \eqref{eq:ScalarCurvLeeLeFloch} acting on $\chi^\Psi_r(y(x))$, i.e.,
        \begin{align*}
        \misoc(g,\Psi)(r) & = \frac{1}{2(n-1)\omega_{n-1}} \left(\int_{\Sigma_r \setminus \Sigma_R}  - \mathbf{V}_\Phi(x) \cdot \overline{\nabla}^\Phi \chi_r^\Psi(y(x)) \,d\mu_{ \delta} \right)+o(1)\\
         & =   \frac{1}{2(n-1)\omega_{n-1}} \left(\llangle R[g], \chi_r^\Psi \frac{d\mu_{h^\Phi}}{d\mu_g} \rrangle - \int_M F(x) \chi_r^\Psi(y(x)) \,d\mu_{h^\Phi}\right) + o(1).
    \end{align*}
    Taking the limit as $r \to \infty$, arguing exactly as in the proof of \Cref{prop:EquivMasses} using its conditions (1) or (2), and using the fact that $\chi_r^\Psi \to 1$ pointwise as $r \to \infty$, we have
    \begin{align*}
        \lim_{r \to \infty} \misoc(g,\Psi)(r) =  \frac{1}{2(n-1)\omega_{n-1}}\left(\llangle R[g], \frac{d\mu_{h^\Phi}}{d\mu_g} \rrangle - \int_M F(x) \,d\mu_{h^\Phi} \right)= \lim_{r \to \infty}\misoc(g,\Phi)(r),
    \end{align*}
    then $\misoc(g,\Psi)  = \misoc(g,\Phi)$.
\end{proof}

\begin{remark}\label{remark:weakADM}
    For $C^0_{-\tau}\cap W^{1,p}_{-\tau}$ asymptotically flat metrics with $p\geq 2$ and $\tau>(n-2)/2$, the proof of \Cref{prop:EquivMasses} also shows that $\misoc$ agrees with the weak ADM mass $m_W(g, \Phi)$ in \cite[Definition 3.6]{Lundgren2026}, provided that $g$ has distributional scalar curvature in $L^1$ sense of \cite[Definition 3.3]{Lundgren2026}. This equivalence can be seen by noting that the family of test functions $\chi_{r}$ in \eqref{eq:testFunc} form a sequence of compactly supported locally Lipschitz functions $\{\chi_{r}:M \to \RR\}_{r\geq R}$ satisfying 
    $$\sup_{r}\sup_M \left(|\chi_{r}| + |rD\chi_{r}|\right) \leq C,$$
    for some constant $C>0$ and $\chi_r^{-1}(1)$ contains $K$ and exhausts $M$ as $r \to \infty$. Then, by \eqref{eq:CoordMisoApprox}, we obtain
    \begin{equation}
        \misoc(r) = \frac{1}{2(n-1)\omega_{n-1}} \left(\int_{M}  (\operatorname{div}_\delta(e) -d\tr_\delta(e)) (-\overline{\nabla}\chi_r) \,d\mu_h\right) + O(r^{(n-2)-2\tau}),
    \end{equation}
    where $e = g - h$. This expression is an approximation for $m_W(g, \Phi)$, which is independent of $\{\chi_{r}:M \to \RR\}_{r\geq R}$ (see \cite[Theorem 3.9]{Lundgren2026}). Taking the limit as $r \to \infty$ yields
    \begin{equation}
        \misoc = \lim_{r \to \infty} \frac{1}{2(n-1)\omega_{n-1}} \left(\int_M  (\operatorname{div}_\delta (e) -d\tr_\delta (e)) (-\overline{\nabla}\chi_r) \,d\mu_h\right) = m_W (g, \Phi).
    \end{equation}
\end{remark}

\section{Positive mass theorem and density theorem}

The equivalence between $\misoc$ and $\mathbf{m}$ established in the previous section, combined with the distributional positive mass theorem for spin manifolds in \cite[Theorem 1]{LeeLeFloch2015}, yields the following spin positive mass theorem for $\misoc(g)$.

\begin{corollary}\label{cor:miso_PMTspin}
   Let $(M^n, g)$ be a $C^0_{-\tau}\cap W^{1,n}_{-\tau}$, complete asymptotically flat manifold, with $\tau>(n-2) / 2$ endowed with a spin structure. If $R[g]\geq 0$ in $\D^\prime$ and is a finite measure outside a compact set, then $\misoc\geq 0$. Moreover, $\misoc(g) \equiv 0$ if $(M,g)$ is isometric to Euclidean space.
\end{corollary}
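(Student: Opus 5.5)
The plan is to reduce the statement to the spin positive mass theorem of Lee and LeFloch \cite[Theorem 1]{LeeLeFloch2015}, using the identification of masses from \Cref{prop:EquivMasses}.

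\textbf{Step 1: check the hypotheses of \Cref{prop:EquivMasses}.} Since $n\geq 3$, we have $p=n\geq 2$. The metric is $C^0_{-\tau}\cap W^{1,n}_{-\tau}$ asymptotically flat with $\tau>(n-2)/2$. By assumption $R[g]\geq 0$ in $\D^\prime$, so by \cite[Theorem 3.18]{Duistermaat2010} it is a nonnegative Radon measure. It is moreover a finite measure outside a compact set, so condition (1) (equivalently (2)) of \Cref{prop:EquivMasses} holds. That proposition then gives that $\misoc$ is well-defined and finite, and that $\misoc=\mathbf{m}$, where $\mathbf{m}$ is the Lee--LeFloch mass \eqref{eq:mLeeLeFloch}.

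\textbf{Step 2: check the hypotheses of \cite[Theorem 1]{LeeLeFloch2015}.} That theorem requires a complete spin manifold with a $C^0\cap W^{1,n}_{-\tau}$ asymptotically flat metric, $\tau>(n-2)/2$, and nonnegative distributional scalar curvature that is integrable (a finite measure) near infinity. One point needs care: their distributional scalar curvature \eqref{eq:ScalarCurvLeeLeFloch} and the one in \eqref{eq:dist_scalar} coincide under the identification of test densities described earlier. This identification multiplies the test function by the positive continuous density $d\mu_h/d\mu_g$, so nonnegativity transfers, and so does finiteness of the measure outside a compact set. Their theorem then gives $\mathbf{m}\geq 0$, and hence $\misoc=\mathbf{m}\geq 0$.

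\textbf{Step 3: the final claim.} If $(M,g)$ is isometric to $(\RR^n,\delta)$, take the standard chart; then $\sigma\equiv 0$ and $\misoc(r)=0$ for all large $r$. Since $\misoc=\mathbf{m}$ is chart-independent by \Cref{cor:miso_coordinv} (the metric is $W^{1,n}$ rather than $W^{1,p}$ with $p>n$, so one could instead argue through $\mathbf{m}$ directly), this gives $\misoc=0$. I would also note that Lee--LeFloch's rigidity statement yields the converse: $\mathbf{m}=0$ forces flatness.

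The only genuine obstacle is matching hypotheses across conventions: the two definitions of distributional scalar curvature, and the finiteness condition. Once these are matched, the result is immediate.
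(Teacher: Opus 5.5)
Your proof of $\misoc\geq 0$ is the paper's argument. \Cref{prop:EquivMasses} with $p=n$ identifies $\misoc$ with $\mathbf{m}$, and \cite[Theorem 1]{LeeLeFloch2015} gives $\mathbf{m}\geq 0$. Matching the two conventions for $R[g]$ through the positive continuous density $d\mu_h/d\mu_g$ is exactly the check that is needed.

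The gap is in Step 3. $\misoc$ is computed with the coordinate spheres of the given chart $\Phi$, not those of the chart furnished by the isometry. So $\sigma\equiv 0$ in the latter says nothing about $\misoc(g,\Phi)$ unless you have chart-independence, and at this regularity you do not:
\begin{itemize}
\item \Cref{cor:miso_coordinv} needs $p>n$, since it rests on Bartnik's harmonic coordinates.
\item \Cref{theo:CoordIndep} needs $\tau>n-3$ together with a transition map of the form in \Cref{def:rigid_motion}, which you have not established.
\item Nothing in the paper gives chart-independence of $\mathbf{m}$ for $W^{1,n}$ metrics, so ``argue through $\mathbf{m}$ directly'' is not an argument.
\end{itemize}
The paper obtains the whole corollary, this clause included, by citing the Lee--LeFloch theorem together with $\misoc=\mathbf{m}$.

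A self-contained repair avoids any change of chart. Let $F\colon(M,g)\to(\RR^n,\delta)$ be a ($C^1$) isometry. It carries the region of $M$ bounded by $\Phi^{-1}(S_r)$ onto a bounded domain of $\RR^n$ with Euclidean volume $V(r)$ and perimeter $\mathcal{A}(r)$. The Euclidean isoperimetric inequality $V(r)\leq \frac{1}{n\,\omega_{n-1}^{1/(n-1)}}\mathcal{A}(r)^{\frac{n}{n-1}}$ then yields $\misoc(r)\leq 0$ for every $r$ in the chart $\Phi$. The standing hypotheses still hold, so your Step 2 gives $\misoc\geq 0$, and hence $\misoc=0$.
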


In this section, our goal is to show that the spin condition in \Cref{cor:miso_PMTspin} can be dropped. The argument combines weighted Friedrichs' lemma, established in \cite{Hafemann2026} for compact sets, with a generalization of the density theorem of \cite{LLR2022} to metrics without decay assumptions on their second derivatives. The proof of the weighted Friedrichs lemma (\Cref{cor:ConvFriedPlusApproximations_weighted}) largely reduces to the compact case, so it is deferred to \Cref{append:A}. The density theorem (\Cref{theo:density}) depends crucially on this result and on the observation that decay of the first derivatives of the metric yields a certain control of the second derivatives of the mollified metrics  (see \eqref{eq:Weighted_reg_dev}). 

The remainder of the paper is organized as follows. In \Cref{sec:reg_metric}, we derive improved convergence results for the scalar curvature and for $\misoc$ of smooth approximations $g_\varepsilon$ using weighted Friedrichs' lemma developed in \Cref{append:A}. \Cref{sec:AnalysisConfLap} develops elliptic estimates for $W^{1,p}_{-\tau}$ metrics, where $p>n$ and $\tau>(n-2)/2$, while \Cref{sec:ProofDensityTheo} verifies that they hold for the mollified metrics $g_\eps$, even after interpolating with Euclidean space, and apply it to prove the density theorem \Cref{theo:density}. These latter two sections follow the exposition of \cite{LLR2022}. In \Cref{sec:Rigidity_W1p}, we show the rigidity part of the main theorem (\Cref{theo:main_intro}) following the approach in \cite{Hafemann2026} but without assuming smoothness outside a compact set.

\subsection{Regularised metric and isoperimetric mass} \label{sec:reg_metric}

\subsubsection{Basic regularisation procedure and properties}

In order to deal with distributions, we introduce the regularisation of non-smooth tensor fields by performing convolution with a mollifier in local charts. Start by choosing a countable, locally finite family of relatively compact coordinate charts $\psi_i \colon U_i \to B_1(0)\subset \RR^n$, $i \in \mathbb{N}$, which together cover $M$. Let $(\eta_i)$ be a partition of unity subordinate to this covering, with $\operatorname{supp}(\eta_i) \subseteq U_i$. Additionally, choose smooth cut-off functions $\chi_i \in \Cinfc(U_i)$ satisfying $\chi_i \equiv 1$ on an open set containing $\operatorname{supp}(\eta_i)$. Next, let $\rho \in \Cinfc(\RR^n)$ be a nonnegative function with unit integral supported in $B_1(0)$ and define $\rho_\eps(x) \colonequals \eps^{-n} \rho(x/\eps)$ for $\eps \in (0,1]$. Using the standard notation $f_*$ and $f^*$ for push-forward  and pull-back of distribution under diffeomorphism $f$, we define the regularised version of a distributional tensor field $\mathcal{T} \in \D^\prime \mathcal{T}^r_s(M)$ as
\begin{equation}\label{eq:reg_tensor}
    \mathcal{T} \star_M \rho_\eps (x) \colonequals \sum_{i \in \mathbb{N}} \chi_i (x) (\psi_i)^* \left[ \big((\psi_i)_*(\eta_i \mathcal{T})\big) * \rho_\eps\right] (x),
\end{equation}
where $(\psi_i)_*(\eta_i\mathcal{T})$ is a distribution with compact support on $\RR^n$, and convolution with $\rho_\eps$ is performed component-wise. This procedure produces a smooth tensor field on $\RR^n$. The smoothness of the map $(\eps, x) \mapsto \mathcal{T} \star_M \rho_\eps$ on $(0, 1] \times M$ is guaranteed by the cut-off function $\chi_i$. Since $\rho$ is nonnegative, we have that for any $u \in \D^\prime(M)$ we also have $u \star_M \rho_\eps\geq 0$ for any $\eps \in (0,1]$.

This smoothing procedure provides convergence properties similar to those of standard convolution regularisation in $\RR^n$. Let $g$ be a $C^0\cap W^{1,p}_\loc$-Riemannian metric, $p\geq2$ and define
\begin{equation} \label{eq:def_reg_metric}
    g_\eps \colonequals g \star_M \rho_\eps,
\end{equation}
then, for sufficiently small $\eps>0$, $g_\eps$ is a Riemannian metric and $g_\eps \to g $ locally uniformly and in $W^{1,p}_\loc$ as $\eps \to 0$. From the general formula for the inverse of $g$, we have $(g \star_M \rho_\eps)^{-1} \to g^{-1}$ locally uniformly and in $W^{1,p}_\loc$ as well. In particular, $R[g] \in \D^\prime(M)$ and $R[g_\eps]$ and $R[g] \star_M \rho_\eps$ converges to $R[g]$ as distributions (see \cite[Theorem 4.6]{LeFloch2007}).

For manifolds $M$ possessing a topological structure of infinity, the procedure described in \Cref{eq:reg_tensor} is always carried out using a family of coordinate charts which cover $K$ together with $\Phi$. This provides a natural choice of charts to regularize rough metrics on $M$, as in \eqref{eq:def_reg_metric}. Given this identification, the convergence properties can also be extended to weighted Sobolev spaces as follows.
\begin{lemma}\label{lemma:FamilyOfMetrics_weighted}
    Let $g$ be a $C^0_s \cap W^{k,p}_{s}$ asymptotically flat metric, with $k$, $p \in [1,\infty)$, and $s \in \RR$. Then, for all $\eps>0$, there exists a smooth Riemannian metric $g_\eps$ and with the following properties:
    \begin{enumerate}
        \item For every $s^\prime>s$, $g_\eps $ converge to $g$ in $C^0_{s^\prime}(M) \cap W^{k,p}_s(M)$ as $\eps \to 0$. 
        \item If $k-n/p>0$, then $g_\eps $ converge to $g$ in $C^0_{s}(M) \cap W^{k,p}_s(M)$ as $\eps \to 0$. Moreover, if  $s\leq0$, then there exists $\eps_0>0$, such that for all $\eps < \eps_0$, the metrics $g_\eps$ are uniformly equivalent to $g$ with the same constant, i.e., there exists $\Lambda(\eps)>0$ such that
        $$
        \Lambda(\eps)^{-1} g_\eps \leq g \leq \Lambda(\eps) g_\eps,
        $$
        and $\Lambda(\eps) \to 1$ as $\eps \to 0$.
    \end{enumerate}
\end{lemma}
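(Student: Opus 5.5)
The plan is to take $g_\eps \colonequals g \star_M \rho_\eps$ as in \eqref{eq:def_reg_metric}, with the charts chosen as described: finitely many relatively compact charts covering a neighbourhood of $K$, together with a locally finite cover of the end $\mathcal{E}$ by unit-scale coordinate balls in the chart $\Phi$. On the end I will arrange $\chi_i \equiv 1$ where relevant and $\sum_i \eta_i = 1$, so that outside a large ball $g_\eps$ is the ordinary Euclidean convolution $g * \rho_\eps$ componentwise in the $x$-coordinates. Since $\delta * \rho_\eps = \delta$ there, we get $\sigma_\eps \colonequals g_\eps - \delta = \sigma * \rho_\eps$ away from a compact set. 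On the compact part the statement is the classical one: $g_\eps \to g$ in $C^0$ and in $W^{k,p}$ (by Friedrichs-type facts for the operator $\star_M$), and there the weights are irrelevant. So everything reduces to estimates of $\sigma * \rho_\eps - \sigma$ on $\{|x| > R'\}$.

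\textbf{Weighted Sobolev convergence.} For $|x| \geq 2$ and $\eps \leq 1$, the weight $|x|^{-s}$ is comparable to $|y|^{-s}$ for all $y \in B_\eps(x)$, with constants independent of $x$ and $\eps$. Hence $\|\partial^I(\sigma*\rho_\eps)\|_{L^p_{s-|I|}} \leq C\|\partial^I\sigma\|_{L^p_{s-|I|}}$ on the end, where $\partial^I$ commutes with convolution. This gives uniform boundedness of the linear maps $\sigma \mapsto \sigma * \rho_\eps$ on $W^{k,p}_s$. For $\sigma$ smooth with compact support the convergence is trivial, and such functions are dense in $W^{k,p}_s$ (truncate with cutoffs $\phi(x/N)$, whose derivatives are compensated by the weights). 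A standard $\epsilon/3$ argument then gives convergence in $W^{k,p}_s$ for all $p<\infty$.

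\textbf{Weighted $C^0$ convergence.} For $s' > s$, I split the end at radius $N$. On $B_N$, uniform continuity gives $\sup |\sigma_\eps - \sigma| \to 0$. On $\{|x| > N\}$, the bound $|\sigma_\eps - \sigma| \leq 2\sup_{B_1(x)}|\sigma| \leq C\|\sigma\|_{C^0_s}|x|^{s}$ yields $|x|^{-s'}|\sigma_\eps - \sigma| \leq C N^{s-s'}$, which is small once $N$ is large. This proves (1).

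\textbf{Item (2).} When $k > n/p$, I use the weighted Morrey/Sobolev embedding $W^{k,p}_s \hookrightarrow C^0_s$ (the same embedding used earlier for $W^{1,p}_{-\tau} \subset C^0_{-\tau}$). It gives $\|g_\eps - g\|_{C^0_s} \leq C\|g_\eps - g\|_{W^{k,p}_s} \to 0$ by the first part. If $s \leq 0$, then $\|g_\eps - g\|_{C^0_s} \to 0$ implies $\sup_M |g_\eps - g|_h \to 0$, because $|x|^{s} \leq R^{s}$ on the end and the compact part is handled directly. Uniform equivalence \eqref{eq:UniformEquiv} of $g$ to $h$ then gives $|g_\eps(X,X) - g(X,X)| \leq \eta(\eps)\, g(X,X)$ with $\eta(\eps) \to 0$. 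So $\Lambda(\eps) = (1-\eta(\eps))^{-1}$ works for $\eps < \eps_0$, and it also shows that $g_\eps$ is Riemannian.

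\textbf{Main obstacle.} The delicate step is the bookkeeping near the interface between the compact charts and the end. There the $\chi_i$ and $\eta_i$ cutoffs create commutator terms $\partial\chi_i$ and $\partial\eta_i$ in derivatives of $g_\eps$. Since only finitely many charts meet any bounded region, these terms live on a compact set. There they converge by the unweighted local theory, which is what removes the danger.
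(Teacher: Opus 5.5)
Your proposal is correct and follows essentially the paper's argument. You mollify via $\star_M$ with the end charts taken from $\Phi$, get convergence on the compact part from the local theory, and control the end by a uniform weighted bound for convolution; your comparability of $|x|^{-s}$ and $|y|^{-s}$ on $B_\eps(x)$ is exactly the content of the paper's Jensen-plus-translation estimate $\|f\star_M\rho_\eps\|_{L^p_s(\mathcal{E})}\le C\|f\|_{L^p_s(\mathcal{E}_\eps)}$. You then treat $C^0_{s'}$ by the same splitting and deduce item (2) from the weighted Sobolev embedding and the uniform equivalence of $g$ to $\delta$, as the paper does. The only cosmetic difference is that you close the $W^{k,p}_s$ convergence by density of $\Cinfc$ and an $\epsilon/3$ argument, whereas the paper combines the uniform bound with local convergence and the smallness of the $L^p_s$ tails of $g-\delta$.
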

\begin{proof}
    The convergence in $C^0_\loc$ and $W^{k,p}_\loc(M)$ follows almost exactly as in \cite[Lemma 4.1]{grant2014positive}, except that restricting the mollification of the metric $g$ to a compact set is unnecessary. To obtain convergence in $W^{k,p}_{s}$, we decompose $M = K \cup \mathcal{E}$ where $K$ is a sufficiently large compact set and set $\mathcal{E} = M \setminus K$. The convergence on $K$ follows from the local result while the convergence on $\mathcal{E}$ follows from the estimate $\|f \star_M\rho_\eps\|_{L^p_s(\mathcal{E})} \leq C \|f\|_{L^p_s(\mathcal{E}_\eps)}$ where $\mathcal{E}_\eps$ is the $\eps$-neighbourhood of $\mathcal{E}$, which follows by Jensen's inequality and the continuity of translation operators on weighted spaces (see \Cref{lemma:translation_weighted} and \Cref{lemma:AuxFriedrichsWp_weighted}). The $C^{0}_{s^\prime}$ convergence follows, for $s^\prime > s$, from a similar argument. Item (2) follows from the weighted Sobolev embedding \cite[Theorem 1.2]{Bartnik1986}, if $k-n/p>0$ then $W^{k,p}_s(M) \subset C^{0,\alpha}_s(M)$ for some $\alpha>0$, and by applying the $C^0_s$ convergence with $s\leq 0$ and the $g$ uniform equivalence to $\delta$ in $\mathcal{E}$ \eqref{eq:UniformEquiv} to conclude uniform equivalence of $g_\eps$ and $g$. 
\end{proof}

\subsubsection{Improved estimates for $R[g_\varepsilon]$ and convergence of $\misoc$}

The approximating metrics in the density theorem are built upon the approximations $g_\eps$, and for this, appropriate control over the negative part of their scalar curvature $R[g_\eps]_-$ and coordinate isoperimetric mass $\misoc(g_\eps)$ is necessary. The two main steps consist in obtaining improved convergence estimates for $R[g_\eps]_-$ (\Cref{prop:ConvofRs_weighted}) and showing that $\misoc(g_\eps)$ remains close to $\misoc(g)$ (\Cref{cor:miso_close_eps}). In what follows, we derived these results.

Because $R[g]\star_M \rho_\eps$ inherits good properties of $R[g]$, while $R[g_\eps]$ does not immediately do so, we prove the following result, which can be understood as an improved convergence statement for $R[g_\eps]$ when compared to the regularised distribution $R[g] \star_M \rho_\eps$, obtained via the weighted Friedrichs' lemma (\Cref{cor:ConvFriedPlusApproximations_weighted}). From this, we also obtain improved estimates for $R[g_\eps]_-$ if $R[g]\geq 0$ in $\D^\prime$, which form the cornerstone of the main proof. Note that the following result does not require any decay of the $C^0$ part of the metric.

\begin{proposition}\label{prop:ConvofRs_weighted}
    Let $(M^n, g)$ be a $C^0\cap W^{1,p}_{-\tau}$, $p\geq 2$, asymptotically flat manifold, with $\tau>0$. Then 

   \begin{enumerate}
       \item  $R[g] \star_M \rho_\eps - R[g_\eps] \longrightarrow 0 \text{ in } L^{p/2}_{-2\tau-2}(M) \text { as } \eps \to 0.$
       \item If $R[g]\geq0$ in $\D^\prime$, then $R[g_\eps]_- \longrightarrow 0$ in $L^{p/2}_{-2\tau-2}(M)$ as $\eps \to 0$.
   \end{enumerate}
\end{proposition}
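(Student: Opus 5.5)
The plan is to compare the two distributions through the coordinate decomposition \eqref{eq:dist_scalar}. On each chart, $R[g]$ is a sum of a divergence term $\partial_k(V^k \cdot)$, which is bilinear in $g^{-1}$ and $\partial g$, and a term $Q$, which is a sum of products of the form $g^{-1}g^{-1}\,\partial g\,\partial g$ (schematically).

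\emph{Step 1: split the difference.} Write
\[
R[g]\star_M\rho_\eps - R[g_\eps] = \big(\partial_k V^k[g]\big)\star_M\rho_\eps - \partial_k V^k[g_\eps] + Q[g]\star_M\rho_\eps - Q[g_\eps],
\]
plus lower-order terms coming from the cutoffs $\chi_i,\eta_i$ and the density $\sqrt{|h|}$. In $\mathcal{E}$ we may take $h=\delta$, and the charts there are translates of $\Phi$, so these terms are harmless.

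\emph{Step 2: the quadratic terms.} $Q[g]\in L^{p/2}_{-2\tau-2}$, since $\partial g\in L^p_{-\tau-1}$ and $g^{-1}\in L^\infty$. Hence $Q[g]\star_M\rho_\eps\to Q[g]$ in $L^{p/2}_{-2\tau-2}$ by the weighted mollifier convergence in \Cref{lemma:FamilyOfMetrics_weighted} (via \Cref{lemma:translation_weighted}). Next, $Q[g_\eps]\to Q[g]$ in $L^{p/2}_{-2\tau-2}$: Hölder in weighted spaces gives $L^p_{-\tau-1}\cdot L^p_{-\tau-1}\subset L^{p/2}_{-2\tau-2}$, together with $\partial g_\eps\to\partial g$ in $L^p_{-\tau-1}$ and $g_\eps^{-1}\to g^{-1}$ uniformly. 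The uniform convergence is where continuity of $g$ enters; no decay of $g-\delta$ is needed.

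\emph{Step 3: the divergence terms.} Here I would invoke the weighted Friedrichs lemma, \Cref{cor:ConvFriedPlusApproximations_weighted}. Write $V^k = a\,\partial g$ with $a\in C^0\cap W^{1,p}_{\loc}$ built from $g^{-1}$, with $\partial a\in L^p_{-\tau-1}$. Then
\[
\partial(a\,\partial g)\star\rho_\eps - \partial(a_\eps\,\partial g_\eps) = \big[\partial(a\,\partial g)\star\rho_\eps - \partial\big(a\,(\partial g\star\rho_\eps)\big)\big] + \partial\big((a-a_\eps)\,\partial g_\eps\big).
\]
The bracket is a Friedrichs commutator, which tends to zero in $L^{p/2}_{-2\tau-2}$ because $\partial a\in L^p_{-\tau-1}$ and $\partial g\in L^p_{-\tau-1}$. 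The second term contains $\partial a_\eps - \partial a$ times $\partial g_\eps$, which converges as in Step 2, and also $(a-a_\eps)\,\partial^2 g_\eps$. This last piece is the main obstacle. The route is to use $\|\partial^2 g_\eps\|\lesssim\eps^{-1}$ times a mollified $|\partial g|$, which is the weighted second-derivative control \eqref{eq:Weighted_reg_dev}, and to pair it with $|a-a_\eps|\lesssim \eps\,M(\partial a)$ or $\eps\,\omega(\eps)$. This is exactly the content the Friedrichs corollary should package, so I would phrase the term in the form required there. Collecting Steps 2 and 3 gives item (1). Localising to $K$ and to $\mathcal{E}$ follows the proof of \Cref{lemma:FamilyOfMetrics_weighted}.

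\emph{Step 4: item (2).} If $R[g]\geq 0$, then $R[g]\star_M\rho_\eps\geq0$ pointwise, because $\rho\ge0$ and $\chi_i,\eta_i\ge0$. Therefore
\[
R[g_\eps]_- \le \big|R[g]\star_M\rho_\eps - R[g_\eps]\big|
\]
pointwise, and item (2) follows from item (1).
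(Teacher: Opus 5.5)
Your proposal follows essentially the same route as the paper: the split $R=\partial V+Q$, weighted H\"older for the quadratic part, the weighted Friedrichs corollary (\Cref{cor:ConvFriedPlusApproximations_weighted} with \Cref{remark:appendixFried}) for the divergence part, and pointwise nonnegativity of $R[g]\star_M\rho_\eps$ for item (2). The one step to sharpen is the $(a-a_\eps)\,\partial^2 g_\eps$ term: pairing $\|a-a_\eps\|_{L^p_{-\tau-1}}\le C\eps$ with the $O(\eps^{-1})$ bound \eqref{eq:Weighted_reg_dev} only gives boundedness, and convergence needs the little-$o$ estimate $\eps\|\partial(\partial g*\rho_\eps)\|_{L^p_{-\tau-1}}\to0$ of \Cref{lemma:AuxFriedrichsWp_weighted}, which the corollary uses internally. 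The rate bound $\|a-a_\eps\|_{L^p_{-\tau-1}}\le C\eps$ is exactly the corollary's hypothesis, which the paper obtains from the cofactor formula and \eqref{eq:speed_decay2}.
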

\begin{proof}
This proof largely follows that of \cite[Proposition 3.5]{Hafemann2026}. Recall that, as presented in \eqref{eq:CoordScalarC}, the scalar curvature can be written schematically as
$$
R[g] = \partial V[g] + Q[g],
$$
where $V[g] \approx (g^{-1})^2 \partial g$ and $Q[g] \approx (\partial g)^2$.

Since $\partial g \in L^p_{-\tau-1}$, and $g^{-1}$ is continuous and uniformly bounded (see \Cref{sec:preliminaries}), we have that 
$$
Q[g_\eps]-Q[g]\star_M \rho_\eps \longrightarrow 0 \text{ in } L^{p/2}_{-2\tau-2}(M) \text{ as } \eps \to 0,
$$
by the weighted H\"{o}lder inequality together with the standard mollifier convergence properties.

For the remaining term, it is sufficient to show that
$$
V[g_\eps] - V[g] \star_M\rho_\eps \longrightarrow 0 \text{ in } W^{1, p/2}_{-2\tau-1}(M) \text { as } \eps \to 0.
$$
We now sketch the proof of this convergence. Since $g$ is $C^0 \cap W^{1,p}_{-\tau}$ asymptotically flat metric, the quantity $V[g_\eps]$ can be written schematically as $
V[g_\eps] \approx a_\eps f_\eps$ where $a_\eps - c = \left((g_\eps^{-1})^2 -c \right) \in W^{1,p}_{-\tau}$,  $f_\eps = \partial g_\eps \in L^p_{-\tau-1}$, and where $c$ is equals to $1$ if $a_\eps$ consists of diagonal components the metric and zero otherwise. By the standard mollifier convergence properties $a_\eps \to a$ in $W^{1,p}_{-\tau}$ and in $C^0_\loc$ and $f_\eps \to f$ in $L^{p}_{-\tau-1}$ where $a =(g^{-1})^2$ and $f =\partial g$. Moreover, the convergence rate estimate $\|a_\eps - a \|_{L^p_{-\tau-1}} \leq C \eps$, for some constant $C>0$ independent of $\eps$, can be obtained by expressing the inverse of the metric using the cofactor matrix formula together with the estimate \eqref{eq:speed_decay2}. These are precisely the hypotheses of \Cref{cor:ConvFriedPlusApproximations_weighted}. Therefore, together with \Cref{remark:appendixFried}, we conclude that
\begin{equation}
R[g] \star_M \rho_\eps - R[g_\eps] \longrightarrow 0 \text{ in } L^{p/2}_{-2\tau-2}(M) \text { as } \eps \to 0,
\end{equation}
and the first result follows. A detailed proof can be found in \cite[Proposition 3.5]{Hafemann2026}.

The convergence of $R[g_\eps]_-$ follows from the fact that if $R[g]\geq0$ in $\D^\prime$, then $R[g] \star_M \rho_\eps \geq 0$ pointwise. Hence $|R[g_\eps]_-| \leq |R[g_\eps] - R[g] \star_M \rho_\eps|$ and
\begin{equation}
\|R[g_\eps]_-\|_{L^{p/2}_{-2\tau-2}(M)} \leq \|R[g_\eps]-R[g] \star_M \rho_\eps\|_{L^{p/2}_{-2\tau-2}(M)} \longrightarrow 0 \text { as } \eps \to 0,
\end{equation}
by the first part of the proof.
\end{proof}  

The above result is essential to show that the finiteness of the $L^1$ norm of $R[g_\eps]$ provided that $R[g]$ is a finite signed measure outside a compact set. In particular, this implies that $\misoc(g_\eps)$ is finite. Moreover, recall that if $R[g]\geq 0$ in $\D^\prime$, then $R[g]$ can be identified with a nonnegative Radon measure \cite[Theorem 3.18]{Duistermaat2010}.

\begin{lemma}\label{lemma:conv_eps_sc_curv_L1}
    Let $(M^n, g)$ be a $C^0\cap W^{1,p}_{-\tau}$, $p\geq 2$, asymptotically flat manifold, with $\tau>(n-2) / 2$. Suppose that $R[g]$ is a finite signed measure outside a compact set. Then the following holds.
    \begin{enumerate}
    \item $R[g] \star_M \rho_\eps, R[g_\eps] \in L^1(M)$ with $L^1$-norms uniformly bounded in $\eps$ outside a compact set.
    \item $R[g_\eps] \to R[g]$ in $\D^\prime$ globally and weakly as signed measures outside a compact set, i.e., there exists a compact set $K$ and a finite signed measure $dR[g]$ outside $K$ that agrees with the distribution $R[g]$ such that 
    \begin{equation}
        \int_{M\setminus K} f R[g_\eps] d\mu_h - \int_{M\setminus K} f dR[g] \longrightarrow 0 \text{ as } \eps \to 0,
    \end{equation}
    for all $f \in C^0_\mathrm{b}(M)$ bounded continuous functions on $M$ such that $f = 0$ on $K$.
    \item If $R[g]\geq 0$ in $\D^\prime$, then $R[g] \star_M \rho_\eps$ and $R[g_\eps]$ are uniformly bounded in $L^1(M)$ as $\eps \to 0$. Moreover, $R[g]$ can be identified with a nonnegative Radon measure and $R[g_\eps]\,d\mu_h \rightharpoonup R[g]$ weakly as signed Radon measures on $M$.
    \end{enumerate}  
\end{lemma}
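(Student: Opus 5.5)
The plan is to split $R[g_\eps]$ as $R[g]\star_M\rho_\eps + (R[g_\eps]-R[g]\star_M\rho_\eps)$. The first summand inherits the measure properties of $R[g]$. The second is controlled in $L^{p/2}_{-2\tau-2}(M)$ by \Cref{prop:ConvofRs_weighted}(1). The key observation is that $L^{p/2}_{-2\tau-2}(\mathcal E)\subset L^1(\mathcal E)$ whenever $\tau>(n-2)/2$. Indeed, $L^1=L^1_{-n}$, the weighted spaces are nested in the weight, and by weighted H\"older with exponents $p/2$ and $(p/2)'$ one gets $\|f\|_{L^1}\le C\|f\|_{L^{p/2}_{-2\tau-2}}\|1\|_{L^{(p/2)'}_{2\tau+2-n}}$. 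Here $\|1\|_{L^{(p/2)'}_{2\tau+2-n}}$ is finite because the weight $2\tau+2-n>0$ makes $r^{0}$ belong to that space. Hence $R[g_\eps]-R[g]\star_M\rho_\eps\to0$ in $L^1(M)$. The same bound applied to $F$ and $Q$ is what was already used in \Cref{prop:EquivMasses}.

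\textbf{Item (1).} Let $K'$ be a compact set outside of which $R[g]$ is a finite signed measure $\mu$, and enlarge it to a compact $K$ containing the $1$-neighbourhood of $K'$ in the charts used for $\star_M$. On $M\setminus K$, only charts supported away from $K'$ contribute. There $(\psi_i)_*(\eta_i R[g])*\rho_\eps$ is a mollified finite measure, so its $L^1$ norm is at most $C|\eta_i\mu|$. The charts are locally finite with bounded overlap, and on the end $\Phi$ the mollification is just Euclidean convolution. Summing the pieces gives $\|R[g]\star_M\rho_\eps\|_{L^1(M\setminus K)}\le C|\mu|(M\setminus K')$, uniformly in $\eps$. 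Adding the $L^1$-convergent difference term from the first paragraph gives the uniform bound for $R[g_\eps]$.

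\textbf{Item (2).} Convergence in $\D'$ globally is already known from \cite[Theorem 4.6]{LeFloch2007}. For weak convergence outside $K$, I would take $f\in C^0_{\mathrm b}$ with $f=0$ on $K$ and split $\int fR[g_\eps]\,d\mu_h$ into the difference term, which tends to $0$ since $f$ is bounded and the difference tends to $0$ in $L^1$, and $\int f\,R[g]\star_M\rho_\eps\,d\mu_h$. For the latter, I would move the mollification onto the test function chartwise, with the Jacobians $\sqrt{|h|}$ absorbed by $\chi_i$. This gives $\int \tilde f_\eps\,d\mu$, where $\tilde f_\eps$ is a transposed mollification of $f$; it is uniformly bounded and converges pointwise to $f$ by continuity. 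Dominated convergence against the finite measure $|\mu|$ then gives the limit $\int f\,d\mu$.

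\textbf{Item (3) and the main obstacle.} The support issue near $\partial K$ is the main obstacle: $\tilde f_\eps$ need not vanish on $K$, so the measure sees part of a collar around $K$. I would first handle $f$ supported away from $K$, where $\tilde f_\eps$ stays supported in the region where $\mu$ is finite. For general $f$ I would argue by approximation, using tightness: the uniform bound $|\mu|(M\setminus K')<\infty$ lets me cut $f$ by $\mathbf 1_{\{|x|>r\}}$-type continuous cutoffs with error small uniformly in $\eps$. Alternatively, one can shrink $K'$ slightly in the statement's choice of $K$. For item (3), $R[g]\ge0$ gives a Radon measure on all of $M$, so $R[g]\star_M\rho_\eps\ge0$ has $L^1$ norm equal to its action on $1$. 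On a compact set this is bounded by the local mass; outside, it is bounded by item (1). Combined with the $L^1$ convergence of the difference, this gives uniform $L^1$ bounds. Weak convergence as Radon measures then follows by combining local vague convergence from distributional convergence with the uniform bounds and the tail tightness from item (2).
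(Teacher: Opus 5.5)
Your proposal is correct and follows the paper's proof essentially step for step: the split $R[g_\eps]=R[g]\star_M\rho_\eps+(R[g_\eps]-R[g]\star_M\rho_\eps)$, a Fubini bound for the mollified measure outside an enlarged compact set, the embedding $L^{p/2}_{-2\tau-2}\subset L^1$ for $\tau>(n-2)/2$, a transposed-mollifier plus dominated-convergence argument for item (2), and globalization for item (3). The collar ``obstacle'' you flag is not a real obstacle: your own choice of $K$ containing a neighbourhood of $K'$ (the paper's $K_0\subset\operatorname{int}K_1$) already forces $\tilde f_\eps$ to be supported in $M\setminus K'$ for every $f$ vanishing on $K$ once $\eps$ is small, so no tightness or approximation step is needed.
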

\begin{proof}
   Assume that $R[g]$ is a finite signed measure outside a compact set, then there exists a compact set $K_0$ and some finite signed measure $dR[g]$ supported on $M \setminus K_0$ such that 
   \begin{equation}\label{eq:measure_dRg}
    \langle R[g], \varphi \,d\mu_h \rangle = \int_{M \setminus K_0} \varphi(x) \,dR[g](x), \quad \forall \varphi \in \D(M \setminus K_0),
   \end{equation}
   and $|dR[g]|(M \setminus K_0) < \infty$. Without loss of generality, assuming that $K_0$ is sufficiently large and its complement included in the asymptotic end $\mathcal{E}$, the convolution operator can be written only in the asymptotically flat chart. Choose a compact set $K_1$ such that $K_0 \subset \operatorname{int}(K_1)$. Then for any $x \in M \setminus K_1$ and $\eps>0$ sufficiently small, the support of $\rho_\eps(x - \cdot)$ is contained in $M \setminus K_0$ and we have
   \begin{align*}
    |R[g]\star_M \rho_\eps(x)| = \left| \langle R[g], \rho_\eps(x-\cdot) \,d\mu_h \rangle \right| 
    =  \left|  \int_{M\setminus K_0} \rho_\eps(x-y) \,dR[g] (y) \right|
    \leq     \int_{M\setminus K_0} \rho_\eps(x-y) \left|dR[g]\right|(y).
   \end{align*}
   Now, integrating in $x$ and applying Fubini's theorem yields the estimate
      \begin{align*}
    \int_{M\setminus K_1} |R[g]\star_M \rho_\eps(x)| \,dx & \leq   \int_{M\setminus K_1} \int_{M\setminus K_0} \rho_\eps(x-y) \left|dR[g]\right|(y) \,dx \\ 
    & \leq   \int_{M\setminus K_0} \left(\int_{M\setminus K_1} \rho_\eps(x-y) \,dx \right)\left|dR[g]\right|(y) \\
    & \leq   \int_{M\setminus K_0} \left|dR[g]\right|(y) = |dR[g]|(M\setminus K_0) < \infty,
   \end{align*}
   where we have used that $\rho_\eps$ has unit $L^1$ norm. Since $R[g]\star_M\rho_\eps \in C^\infty(M)$, this proves that $R[g]\star_M\rho_\eps \in L^1(M)$ and $\|R[g]\star_M\rho_\eps\|_{L^1(M \setminus K_1)}\leq C$ uniformly in $\eps$.

    The integrability of $R[g_\eps]$ follows from the estimate
    \begin{equation}
    \|R[g_\eps]\|_{L^1} \leq \|R[g_\eps]-R[g] \star_M \rho_\eps\|_{L^1} +  \|R[g] \star_M \rho_\eps\|_{L^1},    
    \end{equation}
    because \Cref{prop:ConvofRs_weighted} implies that $R[g_\eps]-R[g] \star_M \rho_\eps \to 0 \text{ in } L^{p/2}_{-2\tau-2}(M) \subset L^1(M)$ if $\tau>(n-2)/2$, and $R[g] \star_M \rho_\eps \in L^1(M)$. In particular, the uniform $L^1$ bound for $R[g] \star_M \rho_\eps$ outside a compact set, together with the fact $R[g_\eps]-R[g] \star_M \rho_\eps \to 0 \text{ in } L^1$ that implies that $\|R[g_\eps]\|_{L^1}$ remains uniformly bounded for all $\eps$ sufficiently small outside a compact set as well, that is,
    \begin{equation}\label{eq:Unif_bound_Rgeps}
    \|R[g_\eps]\|_{L^1(M\setminus K_1)} \leq \|R[g_\eps]-R[g] \star_M \rho_\eps\|_{L^1(M\setminus K_1)} +  |dR[g]|(M\setminus K_0),    
    \end{equation}
    and this proves item (1).

   For item (2), since $g \in C^0 \cap W^{1,p}_\loc$ and $g_\eps \to g$ in $C^0_\loc \cap W^{1,p}_\loc$, we have that $R[g_\eps] \to R[g]$ in $\D^\prime$ as $\eps \to 0$. Let $f \in C_\mathrm{b}^0(M)$ be a bounded continuous functions such that $f=0$ on $K_1$, then 
   \begin{align*}
       \left| \int_{M} fR[g_\eps] \,dx  - \int_{M}fdR[g]\right| & \leq \|f\|_\infty \|R[g_\eps] - R[g]\star_M \rho_\eps\|_{L^1(M)} +  \left|\int_{M} f (R[g]\star_M \rho_\eps \,dx - dR[g] )\right|,
   \end{align*}
   \Cref{prop:ConvofRs_weighted} implies that the first term converges to zero. To see that the second term converges to zero, we use Fubini's theorem and the fact that $\rho_\eps$ has unit integral to estimate
   \begin{align*}
     \left| \int f (R[g]\star_M \rho_\eps \,dx - dR[g] )\right| & = \left|\int \int f(x)  \rho_\eps(x-y) dR[g](y) \,dx - \int f(y)dR[g](y) \right|\\
     & = \left|\int \int (f(x)-f(y))  \rho_\eps(x-y) \,dx dR[g](y) \right|\\
     & \leq  \int \underbrace{\left(\int \left|(f(x)-f(y))\right| \rho_\eps(x-y) \,dx\right)}_{\colonequals F_\eps(y)} |dR[g]|(y).
   \end{align*}
   For every fixed $y$, the continuity of $f$ implies that $F_\eps(y) \to 0$ as $\eps \to 0$. Moreover, $|F_\eps| \leq 2\|f\|_\infty$ by the standard properties of mollifiers. Since $|dR[g]|(M \setminus K_0)$ is finite, the dominated convergence theorem shows that the second term converges to zero. Therefore, $R[g_\eps]\,d\mu_h \rightharpoonup R[g]$ weakly as signed measures outside a compact set as $\eps \to 0$.

   The distributional nonnegativity of $R[g]$ assumed in item (3) implies that $R[g]$ can be identified with a nonnegative Radon measure on $M$ \cite[Theorem 3.18]{Duistermaat2010}. This measure is finite on compact sets and finite outside $K_0$ by assumption, so it is finite globally. Hence, the above arguments can be globalized, yielding in particular that $\|R[g]\star_M\rho_\eps\|_{L^1(M)}$ and $\|R[g_\eps]\|_{L^1(M)}$ are uniformly bounded in $\eps$. Since $\rho_\eps \,dx \rightharpoonup \delta_0 $ weakly as measures, a chartwise argument using the definition of $\star_M$ gives $R[g] \star_M \rho_\eps \,d\mu_h \rightharpoonup dR[g]$ weakly as signed Radon measures on $M$ as $\eps \to 0$. Combining this with $\|R[g_\eps]-R[g] \star_M \rho_\eps\|_{L^1} \to 0$ as $\eps \to 0$ yields $R[g_\eps]\,d\mu_h \rightharpoonup dR[g]$ weakly as signed Radon measures on $M$ as $\eps \to 0$.

\end{proof}

In what follows, we note that suitable approximations do not significantly change the mass and preserve its finiteness.

\begin{lemma}\label{lemma:mass_cont}
Let $(M^n, g)$ be a $C^0_{-\tau} \cap W^{1,p}_{-\tau}$ asymptotically flat manifold, with $\tau>(n-2)/2$ and $p\geq 2$, such that $R[g]$ is a finite signed measure outside a compact set $K$. Let $g_i$ be a sequence of $C^0_{-\tau} \cap W^{1,p}_{-\tau}$ asymptotically flat metrics such that each $R[g_i]$ is a finite signed measure outside a fixed compact set $K$, represented by $dR[g_i]$, and such that $|dR[g_i]|(M \setminus K)$ is uniformly bounded. Moreover, assume that $g_i \to g$ in $L^\infty$, $\partial g_i \to \partial g$ in $L^2$, and $dR[g_i] \rightharpoonup dR[g]$ weakly as signed measures outside $K$. Then, 
$$\misoc(g_i) \longrightarrow \misoc(g) \quad \text{ as } i \to \infty.$$
\end{lemma}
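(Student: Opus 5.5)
We work from the representation formula \eqref{eq:miso_dist_form} in the proof of \Cref{prop:EquivMasses}, which holds for $g$ and for each $g_i$ since all are $C^0_{-\tau}\cap W^{1,p}_{-\tau}$ with $R$ a finite signed measure outside $K$:
\[
\misoc(g)=\frac{1}{2(n-1)\omega_{n-1}}\Big(\llangle R[g],\tfrac{d\mu_h}{d\mu_g}\rrangle-\int_M F[g]\,d\mu_h\Big).
\]
There $\llangle R[g],\tfrac{d\mu_h}{d\mu_g}\rrangle$ means the limit as $r\to\infty$ of $\llangle R[g],\chi_r\tfrac{d\mu_h}{d\mu_g}\rrangle$. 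To avoid interchanging this limit with $i\to\infty$, we fix $r_0$ with $K\subset B_{r_0}$ and choose a smooth cutoff $\psi$ with $\psi\equiv1$ on a neighbourhood of $K$ and $\operatorname{supp}\psi\subset B_{r_0}$. We then split $\chi_r=\psi\chi_r+(1-\psi)\chi_r$. The first piece is independent of $r$ for $r>r_0$ and is compactly supported. The second vanishes near $K$, so it is tested against the measure:
\[
\llangle R[g],(1-\psi)\chi_r\tfrac{d\mu_h}{d\mu_g}\rrangle=\int (1-\psi)\chi_r\tfrac{d\mu_h}{d\mu_g}\,dR[g].
\]
Letting $r\to\infty$ by dominated convergence gives
\[
2(n-1)\omega_{n-1}\misoc(g)=\llangle R[g],\psi\tfrac{d\mu_h}{d\mu_g}\rrangle+\int(1-\psi)\tfrac{d\mu_h}{d\mu_g}\,dR[g]-\int_M F[g]\,d\mu_h,
\]
and the same identity holds for every $g_i$ with the same $\psi$.

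We then show that each of the three terms converges.

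\textbf{Compact term.} We expand $\llangle R[g_i],\psi\, d\mu_h/d\mu_{g_i}\rrangle$ using \eqref{eq:ScalarCurvLeeLeFloch} as
\[
\int -\mathbf V[g_i]\cdot\nablab\psi+F[g_i]\,\psi\,d\mu_h .
\]
Here $\mathbf V\approx (g^{-1})^2\partial g$ and $F\approx \overline{R}\,g^{-1}+(g^{-1})^{k}(\partial g)^2$. Uniform convergence $g_i\to g$ (hence $g_i^{-1}\to g^{-1}$ uniformly, by uniform equivalence) together with $\partial g_i\to\partial g$ in $L^2$ gives $\mathbf V[g_i]\to\mathbf V[g]$ and $F[g_i]\to F[g]$ in $L^1_{\loc}$. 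This term therefore converges.

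\textbf{Measure term.} The function $f_i=(1-\psi)\,d\mu_h/d\mu_{g_i}$ converges uniformly to $f=(1-\psi)\,d\mu_h/d\mu_g$. These functions are bounded, continuous, and vanish on $K$. We estimate
\[
\Big|\int f_i\,dR[g_i]-\int f\,dR[g]\Big|\le \|f_i-f\|_\infty\,|dR[g_i]|(M\setminus K)+\Big|\int f\,(dR[g_i]-dR[g])\Big|.
\]
The first term vanishes by the uniform bound on $|dR[g_i]|(M\setminus K)$. The second vanishes by the assumed weak convergence as signed measures outside $K$, interpreted as in \Cref{lemma:conv_eps_sc_curv_L1}(2) with test functions in $C^0_{\mathrm b}$ vanishing on $K$.

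\textbf{$F$ term (main obstacle).} We need $\int_M F[g_i]\,d\mu_h\to\int_M F[g]\,d\mu_h$ globally, not just locally. Outside a compact set $h=\delta$, so $\overline{R}_{ij}=0$ there, and $F$ is a sum of terms of the form (bounded coefficient in $g^{-1}$)$\,\times\,\partial g\,\partial g$. We write $F[g_i]-F[g]$ as
\[
a_i\,\partial g_i\,\partial g_i-a\,\partial g\,\partial g=(a_i-a)\,\partial g_i\,\partial g_i+a\,(\partial g_i-\partial g)\,\partial g_i+a\,\partial g\,(\partial g_i-\partial g).
\]
Since $a_i\to a$ in $L^\infty$ and $\partial g_i\to\partial g$ in $L^2$ (so $\|\partial g_i\|_{L^2}$ is bounded), the $L^1$ norm of this difference is at most $C(\|a_i-a\|_\infty+\|\partial g_i-\partial g\|_{L^2})\to0$. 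The hypothesis $\partial g_i\to\partial g$ in global $L^2$ is exactly what controls this quadratic term. The point to check carefully is that $a_i\to a$ uniformly on all of $M$; this follows from $g_i\to g$ in $L^\infty$ and uniform equivalence to $\delta$ on $\mathcal E$.

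Combining the three convergences gives $\misoc(g_i)\to\misoc(g)$.
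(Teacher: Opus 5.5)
Your proposal is correct and follows essentially the same route as the paper. You use the representation \eqref{eq:miso_dist_form} and a cutoff splitting: the compactly supported part converges by the $L^\infty$ and $L^2$ convergence of $g_i$ and $\partial g_i$, the exterior part converges by the uniform bound on $|dR[g_i]|(M\setminus K)$ plus the weak convergence, and $F[g_i]-F[g]\to0$ globally in $L^1$ via H\"older. Your observation that $\llangle R[g_i],\psi\, d\mu_h/d\mu_{g_i}\rrangle=\int(-\mathbf{V}[g_i]\cdot\nablab\psi+F[g_i]\psi)\,d\mu_h$ is a small streamlining that avoids the paper's use of $W^{1,2}_{\loc}$ convergence of the densities.

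One harmless slip: $\psi\chi_r$ is not literally independent of $r$, since the paper's $\chi_r$ is $\equiv 1$ only on $\{|x|\le R\}$. However, $\psi\chi_r\to\psi$ in $W^{1,\infty}$ on the fixed compact set $\operatorname{supp}\psi$, so the compact term still converges to $\llangle R[g],\psi\, d\mu_h/d\mu_g\rrangle$.
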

\begin{proof}
    Since $R[g_i]$ is a finite, signed measure outside a compact set, $\misoc(g_i)$ is finite by \Cref{prop:EquivMasses}, and from \eqref{eq:miso_dist_form}, the difference $\misoc(g)-\misoc(g_i)$ is given by
\begin{equation}   
 \frac{1}{2 (n-1) \omega_{n-1} }\Bigg(\llangle R[g]-R[g_i], \frac{d\mu_h}{d\mu_g} \rrangle +\llangle R[g_i], \frac{d\mu_h}{d\mu_{g}} - \frac{d\mu_h}{d\mu_{g_i}}  \rrangle
 - \int_M F-F_i \,d\mu_h \Bigg),
\end{equation}
where $ \frac{d\mu_h}{d\mu_{g_i}} \in C^0 \cap W^{1,p}_\loc$, $p\geq 2$, is the density of $d\mu_h$ with respect to $d\mu_{g_i}$. Note that $\frac{d\mu_h}{d\mu_{g_i}} $ is an admissible test density for both distributional scalar curvatures $R[g]$ and $R[g_i]$ by \Cref{prop:ExtendingRDist_H26} and the assumed regularity of $g, g_i \in W^{1,p}_\loc$, $p\geq 2$.

Let $\chi \in C^\infty_c(M)$ be equal to $1$ in a neighbourhood of $K$. Splitting the first term using $\chi$, its compactly supported part converges to zero by the distributional convergence implied by $g_i \to g$ in $L^\infty_\loc$ and $\partial g_i \to \partial g$ in $L^2_\loc$. The exterior part converges to zero by the weak convergence, since $(1-\chi) d\mu_h/d\mu_g$ is an admissible test function converging to $1$ at infinity. 

The second term is handled similarly, using that $\frac{d\mu_h}{d\mu_{g_i}}  \to \frac{d\mu_h}{d\mu_{g}}$ in $L^\infty \cap W^{1,2}_\loc$ for the compact region. On the exterior region, we can estimate
\begin{equation}
    \left|\llangle R[g_i], (1-\chi) \frac{d\mu_h}{d\mu_{g}} - \frac{d\mu_h}{d\mu_{g_i}}  \rrangle\right| \leq \| \frac{d\mu_h}{d\mu_{g}} - \frac{d\mu_h}{d\mu_{g_i}} \|_{L^\infty} |dR[g_i]|(M\setminus K),
\end{equation}
which converges to zero by $\frac{d\mu_h}{d\mu_{g_i}} \to \frac{d\mu_h}{d\mu_g}$ in $L^\infty$ the uniform bound of $|dR[g_i]|(M \setminus K)$. 

Finally, since $F_i \approx g^{-1}(\partial g_i)^2$ schematically, we can write $F_i-F$ in terms of $(\partial g_i-\partial g)(|\partial g_i|+  |\partial g|)$  multiplied by uniformly bounded coefficients, and terms involving $(g_i^{-1}-g^{-1})$ multiplied by quadratic expressions in $\partial g_i$ and $\partial g$, hence $F_i-F \to 0$ in $L^1$ by combining H\"older's inequality, $g_i \to g \in L^\infty$, and $\partial g_i \to \partial g$ in $L^2$. 
\end{proof}

Recall that, for smooth metrics, if $g_i \to g$ in $W^{2,q}_{-\tau}$, $q>n$ and $R[g_i] \to R[g]$ in $L^1$ on an \textit{exterior coordinate chart}, then $\madm(g_i) \to \madm(g)$ (see \cite[Lemma 3.35]{L:19}). By \Cref{lemma:mass_cont}, if $g$ is smooth but only $C^0_{-\tau} \cap W^{1,p}_{-\tau}$ asymptotically flat, $p\geq2$, and $g_i$ are smooth as well, with $g_i \to g$ in $L^\infty$ and $\partial g_i \to \partial g$ in $L^2$ \textit{globally}, the same conclusion holds for $\madm(g)$, if well-defined, by \Cref{cor:proof_ADMmass}, as long as the distributional scalar curvature converges weakly as measures outside a compact set and that $R[g_i]$ has uniformly bounded $L^1$-norm. This provides an alternative convergence criterion for the ADM mass, requiring lower regularity, integrability, and decay rate assumptions and relying only on the distributional and weak measure convergence of the scalar curvature on the exterior region.

\begin{corollary} \label{cor:miso_close_eps}
Let $(M^n, g)$ be a $C^0_{-\tau} \cap W^{1,p}_{-\tau}$ asymptotically flat manifold, with $p\ge 2$ and $\tau>(n-2)/2$. Suppose that $R[g]$ is a finite signed measure outside a compact set. Then, there exists $\eps_0>0$ such that $\misoc(g_\eps)$ is finite for all $\eps<\eps_0$ and
$$
 \misoc(g_\eps) \longrightarrow \misoc(g) \text{ as } \eps \to 0.
$$
\end{corollary}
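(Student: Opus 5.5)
The plan is to obtain the corollary as a direct application of \Cref{lemma:mass_cont}, taking $g_i = g_{\eps_i}$ for an arbitrary sequence $\eps_i \downarrow 0$. The work is to check each hypothesis of that lemma for the mollified metrics $g_\eps$ of \eqref{eq:def_reg_metric}, for all $\eps$ below some $\eps_0$.

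\emph{Step 1: regularity and decay of $g_\eps$.} Apply \Cref{lemma:FamilyOfMetrics_weighted} with $s=-\tau$ and $k=1$. This gives $g_\eps - \delta \in W^{1,p}_{-\tau}$ and $g_\eps \to g$ in $W^{1,p}_{-\tau}$. For the $C^0_{-\tau}$ part, I would argue directly on the end $\mathcal E$, where mollification is ordinary convolution in the chart. Since $|\sigma(y)| \le C_g |y|^{-\tau}$ and $|x-y|\le \eps \le 1$, we get $|\sigma \star \rho_\eps(x)| \le C|x|^{-\tau}$ with $C$ independent of $\eps$. So each $g_\eps$ is $C^0_{-\tau}\cap W^{1,p}_{-\tau}$ asymptotically flat, and uniform equivalence holds for small $\eps$. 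The convergences $g_\eps \to g$ in $L^\infty$ and $\partial g_\eps \to \partial g$ in $L^2$ are then as follows.
\begin{itemize}
\item[(a)] $L^\infty$: on a compact set this is local uniform convergence. On $\mathcal E$ outside a large ball $B_\rho$, both $g_\eps$ and $g$ are within $C\rho^{-\tau}$ of $\delta$, so the two pieces combine.
\item[(b)] $L^2$ for derivatives: $L^p_{-\tau-1} \subset L^2 = L^2_{-n/2}$ holds when $\tau+1 > n/2$, which is exactly $\tau>(n-2)/2$ (using H\"older when $p>2$). Hence $W^{1,p}_{-\tau}$ convergence gives $\partial g_\eps \to \partial g$ in $L^2$.
\end{itemize}

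\emph{Step 2: scalar curvature as measures.} By \Cref{lemma:conv_eps_sc_curv_L1}(1), $R[g_\eps]\in L^1$, with $L^1$ norms outside a fixed compact $K_1$ uniformly bounded for $\eps<\eps_0$. So each $R[g_\eps]\,d\mu_h$ is a finite signed measure outside $K_1$, with uniformly bounded total variation there. \Cref{lemma:conv_eps_sc_curv_L1}(2) gives weak convergence to $dR[g]$ outside $K_1$, tested against bounded continuous $f$ vanishing on $K_1$. Finiteness of $\misoc(g_\eps)$ then follows from \Cref{prop:EquivMasses}(1). I would enlarge $K$ to $K_1$ so that $g$ and all $g_\eps$ share one compact set, as \Cref{lemma:mass_cont} requires.

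\emph{Step 3: conclude.} With all hypotheses verified, \Cref{lemma:mass_cont} gives $\misoc(g_{\eps_i})\to\misoc(g)$ for every sequence $\eps_i \downarrow 0$, hence the full limit as $\eps \to 0$.

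The main obstacle I expect is a mismatch between the test functions. The weak convergence in \Cref{lemma:conv_eps_sc_curv_L1} uses $f$ vanishing on $K_1$, while \Cref{lemma:mass_cont} tests against $(1-\chi)\,d\mu_h/d\mu_g$. That function is bounded and continuous, and it vanishes near $K$ once $\chi\equiv 1$ on a neighbourhood of $K_1$, so the two should match after this choice of $\chi$. I will also verify uniformity in $\eps$ of the $C^0_{-\tau}$ constant, since \Cref{lemma:FamilyOfMetrics_weighted}(1) only gives convergence in $C^0_{s'}$ for $s'>s$. The direct convolution estimate in Step 1 is what handles this.
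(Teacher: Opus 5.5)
Your proposal is correct and matches the paper's argument: the paper also proves the corollary in one line, by checking the hypotheses of \Cref{lemma:mass_cont} for $g_\eps$ using \Cref{lemma:FamilyOfMetrics_weighted} and \Cref{lemma:conv_eps_sc_curv_L1}. Your additional checks make explicit what that line leaves implicit: the $\eps$-uniform $C^0_{-\tau}$ bound for $g_\eps$, $L^\infty$ convergence without assuming $p>n$, the embedding $L^p_{-\tau-1}\subset L^2$, and using the $L^1$ bound only outside a fixed compact set, which is all \Cref{lemma:mass_cont} requires.
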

\begin{proof}
Notice that the sequence $g_\eps$ satisfies the hypothesis of \Cref{lemma:mass_cont}, since $g_\eps \to g$ in $L^\infty \cap W^{1,p}_{-\tau}$ by \Cref{lemma:FamilyOfMetrics_weighted}, $\|R[g_\eps]\|_{L^1}\leq C$ for some constant $C$ independent of $\eps$, and $R[g_\eps]\,d\mu_h \rightharpoonup R[g]$ weakly as signed measures outside a compact set by \Cref{lemma:conv_eps_sc_curv_L1}.
\end{proof}

\subsection{Analysis of the conformal Laplacian} {\label{sec:AnalysisConfLap}}

The main result of this section is the construction of a controlled solution to a certain Schr\"{o}dinger equation on $W^{1,p}_{-\tau}$ asymptotically flat manifolds. This is essential for handling the negative scalar curvature introduced after mollifying the metric $g$ and interpolating $g_\eps$ with the Euclidean metric in order to prove the density theorem, \Cref{theo:density}. A careful tracking of the constants is necessary to apply the same result to $g_\eps$ and to show that the resulting bounds can be chosen uniformly in $\eps$. This is carried out in the present section. The presentation follows \cite{LLR2022}. For convenience we begin by collecting standard properties of the Laplacian for asymptotically flat manifolds.

\subsubsection{Properties of the Laplacian for asymptotically flat manifolds}\label{sec:PropLap}
The theory of elliptic operators on $W^{1,p}_{-\tau}$ asymptotically flat manifolds, with $p>n$, is presented in a particularly clear form in \cite{Bartnik1986}, which will serve as our main reference throughout this work. We recall the definition of an elliptic operator $P$ asymptotic to the Euclidean Laplacian $\Delta$ at rate $\tau$ and collect two fundamental results that show that if $P$ is a selfadjoint operator then it is Fredholm with the same index as $\Delta$.
\begin{definition}\label{def:Deltaasymp_decayrate}
    The operator $u \mapsto Pu$ defined by
    \begin{equation}
    Pu = a^{ij}(x) \partial_{ij}^2 u + b^i(x)\partial_iu + c(x)u    
    \end{equation}
    on $\RR^n$ is said to be asymptotic to $\Delta$ (at rate $\tau$) if there exists $n<p<\infty$ and $\tau\geq0$ and constants $C_1$, $\lambda$ such that
    \begin{equation}\label{eq:g_uniform_elip}
    \lambda^{-1} |\xi|^2 \leq a^{ij}(x) \xi_i \xi_j\leq \lambda |\xi|^2 \quad \text{for all} \quad x\in \RR^n, \xi \in \RR^n     
    \end{equation}
    \begin{equation}\label{eq:C1_def}
    \|a^{ij}-\delta_{ij}\|_{W^{1,p}_{-\tau}} + \|b^i\|_{L^{p}_{-\tau-1}} + \|c\|_{L^{p/2}_{-\tau-2}}\leq C_1,    
    \end{equation}
    where $\delta_{ij}$ is the usual flat metric on $\RR^n$.
\end{definition}
    It is straightforward to generalize this definition to $W^{1,p}_{-\tau}$ asymptotically flat manifolds. The conditions above are satisfied by the divergence-form operator $\Delta_g$ when $g_{ij}$ is uniformly elliptic metric on $\RR^n$ and $(g_{ij}-\delta_{ij}) \in W^{1,p}_{-\tau}$ for some $\tau\geq0$. Standard weighted estimates then imply that if an operator $P$ is asymptotic to $\Delta$, the map $P \colon W^{2,q}_{s} \to L^q_{s-2}$ is bounded for all $1\leq q \leq p/2$ and $s \in \RR$. The restriction on $q$ is a consequence of the weak assumption on\footnote{Note that Bartnik assumes that $q\leq p$. However, if $c \in L^{p/2}$ and $q>p/2$, it is not necessarily true that $Pu \in L^q_{s-2}$ for every $u \in W^{2,q}_s$. This does not affect applications to the Laplacian, for which $c=0$.} $c$, but it still allows us to work in $W^{2,p/2}_s$, whose elements are continuous since $p>n$. We say that a decay rate $s \in \RR$ is exceptional if $ s \in \mathbb{Z} \setminus(2-n, 0)$, which correspond to the orders of growth of harmonic functions in $\RR^n \setminus \{0\}$.

We first consider the Laplacian on an asymptotically flat manifold (see \cite[Proposition 2.2]{Bartnik1986}). 
\begin{proposition}\label{prop:Lap_iso}
Let $(M,g)$ be a $W^{1,p}_{-\tau}$ asymptotically flat manifold with $p>n$, $\tau>0$. Then, for any $1<q\leq p/2$ and $s \in \RR$ not exceptional,
$$
\Delta_g \colon W_s^{2, q} \rightarrow L_{s-2}^{q}
$$
is Fredholm. Moreover, $\Delta_g$ is an isomorphism if $2-n<s<0$. 
\end{proposition}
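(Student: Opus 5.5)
This is Bartnik's Proposition 2.2. The plan is to reduce it to the Euclidean Laplacian by a perturbation argument, and then to obtain the isomorphism from the index and a maximum principle. Throughout I use the coordinate expression
\[
\Delta_g u = g^{ij}\partial^2_{ij}u + b^i\partial_i u,
\qquad b^i = \tfrac{1}{\sqrt{|g|}}\partial_j\big(\sqrt{|g|}\,g^{ij}\big)\in L^p_{-\tau-1},
\]
so that $\Delta_g$ is asymptotic to $\Delta$ at rate $\tau$ in the sense of \Cref{def:Deltaasymp_decayrate}, with $c=0$.

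\textbf{Step 1 (boundedness).} By the weighted Sobolev embedding with $p>n$, we have $g^{ij}-\delta_{ij}\in C^0_{-\tau}$. The term $b\,\partial u$ is controlled by weighted H\"older together with the embedding $W^{2,q}_s\hookrightarrow W^{1,q^*}_{s-1}$. These give that $\Delta_g\colon W^{2,q}_s\to L^q_{s-2}$ is bounded for $q\le p/2$ (in fact for $q\le p$).

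\textbf{Step 2 (scale-broken a priori estimate).} Here I would establish
\[
\|u\|_{W^{2,q}_s}\le C\big(\|\Delta_g u\|_{L^q_{s-2}}+\|u\|_{L^q(B_{R_0})}\big).
\]
On a large exterior region $\{|x|>R_0\}$, write $\Delta_g=\Delta+(g^{ij}-\delta^{ij})\partial_{ij}+b\,\partial$. For non-exceptional $s$, the flat operator $\Delta\colon W^{2,q}_s\to L^q_{s-2}$ on $\mathbb{R}^n$ satisfies the weighted estimate $\|u\|_{W^{2,q}_s}\le C\|\Delta u\|_{L^q_{s-2}}$. This is the classical Nirenberg--Walker/McOwen result, and it is exactly where non-exceptional weights enter. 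Both perturbation terms have small operator norm on $\{|x|>R_0\}$ once $R_0$ is large: the second-order term is bounded by $\sup_{|x|>R_0}|g-\delta|\to 0$, and the first-order term by $\|b\|_{L^p_{-\tau-1}(|x|>R_0)}\to 0$, using the extra decay $\tau>0$. On the compact part I use standard interior $W^{2,q}$ estimates, valid since $g\in W^{1,p}$ with $p>n$ gives continuous leading coefficients. Gluing the two with a cutoff gives the estimate above.

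\textbf{Step 3 (Fredholm).} Since $W^{2,q}_s\hookrightarrow L^q(B_{R_0})$ is compact, the a priori estimate gives finite-dimensional kernel and closed range. For the cokernel I use duality: the dual of $L^q_{s-2}$ is $L^{q'}_{2-n-s}$, and the adjoint is a divergence-form operator of the same type. It is again asymptotic to $\Delta$ with weight $2-n-s$, which is non-exceptional exactly when $s$ is. Applying the same estimate and local regularity to the adjoint shows the cokernel is finite dimensional. An alternative route is to deform $g_t=(1-t)\delta+tg$, keep the estimate uniform in $t$, and deduce that the index is constant and equal to that of $\Delta$.

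\textbf{Step 4 (isomorphism for $2-n<s<0$).} The index is locally constant along the homotopy $g_t$, which stays uniformly elliptic with uniform constants in Step 2. Since $\Delta$ is an isomorphism on $\mathbb{R}^n$ for $2-n<s<0$, the index of $\Delta_g$ is $0$ in this range. For injectivity, suppose $u\in W^{2,q}_s$ with $\Delta_g u=0$. Since $s<0$, $u\to 0$ at infinity: this follows from $W^{2,q}_s\subset C^0_s$ for $q>n/2$, and for smaller $q$ one first bootstraps $u$ into $W^{2,p/2}_s$ by elliptic regularity. The weak maximum principle on $M$, which is connected and contains no zeroth-order term, then forces $u\equiv 0$. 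Index zero together with injectivity gives surjectivity.

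The main obstacle is Step 2 at the low regularity of the first-order coefficient $b\in L^p_{-\tau-1}$. The smallness of $b\,\partial u$ in $L^q_{s-2}$ must come from a weighted H\"older/Sobolev pairing, and this is exactly where $q\le p/2$ (or $q\le p$) and $p>n$ are needed. I would settle this step first, following Bartnik's Theorem 1.2 and Proposition 1.6 closely.
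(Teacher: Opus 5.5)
The paper gives no proof of its own here and simply cites Bartnik's Proposition 2.2. Your Steps 1--3 reproduce the standard scheme behind that result, but Step 4 has a genuine gap. The statement concerns an arbitrary asymptotically flat $M$, which may have nontrivial topology inside the compact set $K$. On such an $M$ the path $g_t=(1-t)\delta+tg$ does not exist, because $\delta$ is only defined on the end $\mathcal{E}$. The most you can do is deform $g$ to a smooth background metric $h$ on $M$. The index of $\Delta_h$ on $M$ is \emph{not} determined by the fact that the Euclidean Laplacian is an isomorphism on $\RR^n$; identifying the two would need a separate excision or relative index argument. As written, the hypothesis $s>2-n$ enters your proof only through an index computation that is valid for $M=\RR^n$ alone. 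The homotopy alternative you offer in Step 3 has the same defect.

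The missing idea is the one that explains the lower bound $2-n<s$, and Step 3 already contains the ingredients. Pair with respect to $d\mu_g$, so that $\Delta_g$ is formally self-adjoint. Then any $v$ in the annihilator of the closed range lies in $L^{q'}_{2-n-s}$ and is a distributional solution of $\Delta_g v=0$. Since $s\in(2-n,0)$ forces $2-n-s\in(2-n,0)$, in particular $2-n-s<0$, local regularity plus rescaled estimates on dyadic annuli give $v\to0$ at infinity. The maximum principle argument you already use for the kernel then yields $v=0$. So $\Delta_g$ is surjective, hence an isomorphism, with no index computation needed.

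Two technical points. First, $q$ may be close to $1$, so $q'$ may exceed $p$. Then $v$ need not lie in $W^{2,q'}_{\loc}$, because $b\,\partial v$ is only in $L^p_{\loc}$. Both the finite-dimensionality of the cokernel in Step 3 and the regularity here should therefore be run at an exponent $\le p$, with a slightly larger, still non-exceptional weight. Second, in Step 2 the estimate $\|u\|_{W^{2,q}_s}\le C\|\Delta u\|_{L^q_{s-2}}$ is false on all of $\RR^n$ for $s>0$, since constants lie in the kernel. For non-exceptional $s$ it holds only for $u$ supported away from the origin, which is how you actually use it.
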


Similar reasoning extends to more general elliptic operators (see \cite[Corollary 2.3]{Bartnik1986}).

\begin{proposition}\label{prop:P_elliptic_index}
Let $(M, g)$ be a $W^{1,p}_{-\tau}$ asymptotically flat manifold with $p>n$, $\tau>0$. Let $P$ be a formally selfadjoint elliptic operator and asymptotic to $\Delta$ at rate $\tau$. Then, for any $1<q\leq p/2$ and $s \in \RR$ not exceptional,
$$
P \colon W_s^{2, q} \rightarrow L_{s-2}^{q}
$$
is Fredholm, and its Fredholm index equals that of the Euclidean Laplacian on the corresponding weighted spaces.
\end{proposition}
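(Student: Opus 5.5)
The approach reduces the statement to Bartnik's smooth-coefficient argument: I would view $P$ as a compact perturbation of an operator whose Fredholm theory on weighted spaces is already known. First I would use the scale-broken weighted estimate for $P$. Since $a^{ij}-\delta_{ij}\in W^{1,p}_{-\tau}\subset C^0_{-\tau}$ with $p>n$, and $b\in L^p_{-\tau-1}$, $c\in L^{p/2}_{-\tau-2}$, I would establish
\begin{equation*}
\|u\|_{W^{2,q}_s}\le C\bigl(\|Pu\|_{L^q_{s-2}}+\|u\|_{L^q(B_{R_0})}\bigr),
\end{equation*}
for some $R_0$ and all non-exceptional $s$ and $1<q\le p/2$. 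To do this, I would compare $P$ with $\Delta$ on dyadic annuli $A_k=\{2^{k}<|x|<2^{k+1}\}$ after rescaling to unit size. On each annulus, the standard interior $L^q$ estimate for operators with continuous leading coefficients applies. The lower-order terms are absorbed using H\"older together with Sobolev embedding, which is where $q\le p/2$ is needed for the $c\,u$ term. Near infinity, the scaled errors $\|a-\delta\|_{C^0(A_k)}$, $\|b\|_{L^p_{-1}(A_k)}$ and $\|c\|_{L^{p/2}_{-2}(A_k)}$ tend to $0$. I would then sum the annular estimates and invoke the Euclidean isomorphism $\Delta\colon W^{2,q}_s\to L^q_{s-2}$ on the exterior region for non-exceptional $s$ (McOwen/Bartnik). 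This yields the estimate with a compact error term.

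Second, I would deduce the Fredholm property. Because the embedding $W^{2,q}_s\hookrightarrow L^q(B_{R_0})$ is compact, the estimate gives a finite-dimensional kernel and closed range. For the cokernel, I would identify $(L^q_{s-2})^*$ with $L^{q'}_{2-n-s}$ and note that an annihilator $v$ of the range solves $P^*v=0$ weakly. Formal selfadjointness gives $P^*=P$, so $v$ lies in the kernel of $P$ at the dual weight $2-n-s$, which is again non-exceptional. Local regularity bootstraps $v$ into $W^{2,\tilde q}_{2-n-s}$; the bootstrap uses the same coefficient hypotheses and may need an exponent below $p/2$. The estimate from the first step applied at the dual weight then shows this space is finite-dimensional.

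Third, I would identify the index by a homotopy. The family $P_t=(1-t)\Delta+tP$, $t\in[0,1]$, consists of operators asymptotic to $\Delta$ with constants bounded uniformly in $t$: ellipticity is convex, so $\lambda$ is preserved, and the norms in \eqref{eq:C1_def} scale linearly in $t$. The estimate of the first step therefore holds uniformly in $t$, so each $P_t$ is Fredholm and $t\mapsto P_t$ is norm-continuous into bounded operators $W^{2,q}_s\to L^q_{s-2}$. By local constancy of the index, $\operatorname{ind}P=\operatorname{ind}\Delta$. On a manifold, I would take $\Delta$ here to be $\Delta_h$ for the background metric, or $\Delta_g$, whose index is controlled by \Cref{prop:Lap_iso}. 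Either way, the index agrees with that of the Euclidean Laplacian, because compactly supported changes of the coefficients are compact perturbations.

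The main obstacle is the first step: the exterior estimate with only $W^{1,p}$ leading coefficients, and in particular keeping $c$ merely in $L^{p/2}$ compatible with $q\le p/2$. I would first try absorbing $c\,u$ through $\|cu\|_{L^q_{s-2}(A_k)}\le\|c\|_{L^{p/2}_{-2}(A_k)}\|u\|_{L^\infty_s(A_k)}$, and control $\|u\|_{L^\infty_s(A_k)}$ by the embedding $W^{2,q}\hookrightarrow L^\infty$. That embedding requires $q>n/2$. For smaller $q$, I would instead use the embedding into $L^{nq/(n-2q)}$ combined with H\"older, which again closes provided $q\le p/2$.
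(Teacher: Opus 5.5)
Your first two steps reproduce the standard argument behind the result the paper cites (\cite[Proposition 2.2 and Corollary 2.3]{Bartnik1986}): annular estimates, the exterior Euclidean estimate at non-exceptional weights to get a compact error term, and duality plus formal selfadjointness for the cokernel. Those steps are essentially fine.

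\textbf{The gap.} It is in the last sentence of your third step. The homotopy connects $P$ to $\Delta_h$ (or $\Delta_g$) on $M$, not to the Laplacian on $\RR^n$. The claim that \emph{compactly supported changes of the coefficients are compact perturbations} does not bridge the two, for two reasons:
\begin{itemize}
\item A compactly supported change of the principal coefficients is not a compact map $W^{2,q}_s\to L^q_{s-2}$. The weakly null sequence $k^{-2}\phi\sin(k\,x\cdot\xi)$ has images that stay away from $0$ in norm.
\item When $M$ is not diffeomorphic to $\RR^n$, the two operators do not even act on the same spaces.
\end{itemize}
\Cref{prop:Lap_iso} only gives $\operatorname{ind}\Delta_h=0$ for $2-n<s<0$. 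So your argument proves the index statement only in that range (the only case the paper uses) or when $M=\RR^n$.

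For the other non-exceptional weights you must compute $\operatorname{ind}\Delta_h$ on $M$ directly. One way, using $h=\delta$ on $\mathcal{E}$:
\begin{itemize}
\item For $s>0$, every $u\in\ker\Delta_h\cap W^{2,q}_s$ has a spherical-harmonic expansion on the end. Its growing part is a harmonic polynomial $H$ of degree less than $s$.
\item The map $u\mapsto H$ is injective. If $H=0$, then $u=O(r^{2-n})$ lies in $W^{2,q}_{s'}$ for some $s'\in(2-n,0)$, so $u=0$ by \Cref{prop:Lap_iso}.
\item The map is surjective. Let $\chi$ be a cutoff supported in $\mathcal{E}$ and equal to $1$ near infinity. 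Solve $\Delta_h v=-\Delta_h(\chi H)\in\Cinfc(M)$ at a weight in $(2-n,0)$ and take $\chi H+v$.
\item The cokernel is the kernel at the weight $2-n-s<2-n$, which vanishes by the same injectivity.
\item The case $s<2-n$ follows by duality.
\end{itemize}
This reproduces exactly the Euclidean kernel and cokernel dimensions. Alternatively, you could invoke a relative index (excision) theorem.

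\textbf{Smaller repairs.}
\begin{itemize}
\item Your uniform estimate only makes each $P_t$ semi-Fredholm (finite-dimensional kernel, closed range). Your cokernel argument needs formal selfadjointness, which $(1-t)\Delta+tP$ generally lacks: $P$ is symmetric with respect to its own density (e.g.\ $d\mu_g$ for $-\Delta_g+V$), while $\Delta$ is symmetric with respect to $dx$. Either interpolate the density together with the coefficients, writing $P$ in divergence form. Or use Kato's theorem that the index, with values in $\mathbb{Z}\cup\{-\infty\}$, is locally constant on semi-Fredholm operators. Since $P_0$ is Fredholm, every $P_t$ is then Fredholm with the same index, which even makes the selfadjointness-based cokernel argument unnecessary for $P$ itself.
\item What you need from the Euclidean theory is the exterior estimate $\|v\|_{W^{2,q}_s}\le C\|\Delta v\|_{L^q_{s-2}}$ for $v$ supported outside a ball. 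This holds for every non-exceptional $s$, by McOwen's isomorphism on $\RR^n\setminus\{0\}$ with homogeneous weights. On $\RR^n$ itself, $\Delta$ is an isomorphism only for $2-n<s<0$.
\item When $q'>p/2$, the regularity bootstrap cannot keep the weight, since $L^{q'}_\delta\not\subset L^{\tilde q}_\delta$ for $\tilde q<q'$. You land in $W^{2,\tilde q}_{2-n-s+\eps}$. This is harmless, because $2-n-s+\eps$ is still non-exceptional for small $\eps$ and finite-dimensionality is all you need.
\end{itemize}
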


As further preparation let us state the standard Sobolev inequality for $C^0$ metrics that are uniformly equivalent to Euclidean space.

\begin{lemma}\label{SobolevIneq}
    Let $M^n$ be a smooth manifold with a complete Riemannian metric $g \in C^0$, and a topological structure of infinity $(\mathcal{E},\Phi)$. Assume that $g$ is uniformly equivalent to $\delta$ on $\mathcal{E}$. Then there exists a constant $C(g)>0$, depending only on $g$, such that
\begin{equation}
    \left(\int_M |u|^{\frac{2n}{n-2}}d\mu_g \right)^{\frac{n-2}{n}}\leq C(g) \int_M |du|_g^2d\mu_g,
\end{equation}
    for every $u \in C^\infty_c(M)$. By a density argument, it then holds for every $u \in W^{1,2}(M)$. Moreover, if $\overline{g}$ is uniformly equivalent metric to $g$ with constant $\Lambda>0$, then
\begin{equation}
\frac{1}{\Lambda^n} C(\overline{g}) \leq C(g) \leq \Lambda^n C(\overline{g})    .
\end{equation}
\end{lemma}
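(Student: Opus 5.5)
The Sobolev inequality for a smooth complete metric on $M$ that equals the Euclidean metric on $\mathcal{E}$ is the natural starting point: I would take the background metric $h$ from \Cref{def:TopStruc}, with $h=\delta$ on $\mathcal{E}$, and reduce everything to $h$. First I would prove the inequality for $h$. On $\mathcal{E}\cong\RR^n\setminus \overline{B_R}$ the Euclidean Sobolev inequality holds for functions supported in $\mathcal{E}$ (extend by zero). On the compact core $K$ one has the local Sobolev inequality $\|u\|_{L^{2^*}(K')}\le C(\|du\|_{L^2(K')}+\|u\|_{L^2(K')})$ on a slightly larger compact $K'$. To remove the lower order $L^2$ term I would use a standard argument for manifolds with one Euclidean end. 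Take a cutoff $\psi$ with $\psi=1$ on $K'$ and $\operatorname{supp}\psi$ compact, and split $u=\psi u+(1-\psi)u$. The piece $(1-\psi)u$ is handled by the Euclidean inequality, which produces a term $\|u\,d\psi\|_{L^2}$ supported on a compact annulus. To control $\|u\|_{L^2(\text{compact})}$ by $\|du\|_{L^2(M)}$, I would argue by contradiction and compactness. Take $u_k$ with $\|du_k\|_{L^2}\to0$ and $\|u_k\|_{L^{2^*}}=1$. By Rellich the $u_k$ converge locally in $L^2$ to a constant $c$. On the end, the Euclidean Sobolev inequality applied to $(1-\psi)u_k$ forces $c=0$, because nonzero constants are not in $L^{2^*}(\mathcal{E})$. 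This gives a contradiction. Completeness and connectedness of $M$ are used here.

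Second, I would pass from $h$ to $g$. Since $g$ is continuous on $M$, it is uniformly equivalent to $h$ on the compact set $M\setminus\mathcal{E}$, and by hypothesis it is uniformly equivalent to $\delta=h$ on $\mathcal{E}$. So $\Lambda^{-1}h\le g\le\Lambda h$ globally for some $\Lambda\ge1$. Under such an equivalence, $d\mu_g$ and $d\mu_h$ differ by at most a factor $\Lambda^{n/2}$, and $|du|_g^2$ and $|du|_h^2$ differ by at most a factor $\Lambda$. Tracking the exponents $(n-2)/n$ on the left side and $1$ on the right gives $C(g)\le \Lambda^{a}C(h)$ with $a\le n$. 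The same computation, applied to $g$ and $\overline g$ with constant $\Lambda$, yields the stated comparison $\Lambda^{-n}C(\overline g)\le C(g)\le\Lambda^n C(\overline g)$, provided $C(\cdot)$ is taken to be the optimal constant. I would check the precise power: $\Lambda^{(n-2)/2}\cdot\Lambda^{1}\cdot\Lambda^{n/2}=\Lambda^{n}$, so $n$ is a valid bound.

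Third, the extension to $W^{1,2}(M)$ follows by density of $C^\infty_c$ together with Fatou's lemma (or strong $L^{2^*}$ convergence along a subsequence) on the left side. The main obstacle is the first step, the global inequality for $h$ without the $L^2$ term. Most likely I would simply cite it as standard for asymptotically Euclidean manifolds (e.g.\ \cite{Bartnik1986} or \cite{LLR2022}), and sketch the compactness argument above as a backup.
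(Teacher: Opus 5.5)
Your proposal is correct and matches the paper's proof in substance. The paper cites Schoen--Yau's Lemma 3.1 for the inequality, asserting that the argument carries over to $C^0$ metrics uniformly equivalent to $\delta$ by inspection, and obtains the comparison of constants by multiplying through by powers of $\Lambda$, which is exactly your count $\Lambda^{(n-2)/2}\cdot\Lambda\cdot\Lambda^{n/2}=\Lambda^{n}$. The only difference is that you derive the $C^0$ case from the smooth background metric $h$ via that same comparison, rather than re-inspecting the smooth proof, which is a slightly cleaner justification; you also correctly note that the two-sided bound on $C(\cdot)$ only makes sense for the optimal constant.
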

\begin{proof}
  The proof for the first statement can be found in \cite[Lemma 3.1]{PMTSchoenYau}, and the proof also holds for $C^0$ metrics that are uniformly equivalent to the Euclidean metric by inspection. The second result is obtained simply by multiplying the Sobolev constant by a power of the uniform bound $\Lambda$. A similar argument can be found in \cite[Proposition 4.10]{grant2014positive}.
\end{proof}

\subsubsection{Analysis of the Schrödinger equation}

Let us now turn to the analysis of the operator $-\Delta_g+V$ for smooth potentials $V\in L^{p/2}_{-\tau-2}(M)$ with negative part in $L^{n/2}(M)$. The main result of this section is the following, which is similar to \cite[Proposition 2.12]{LLR2022} and the results in Section 2 therein. The proof relies on the results and constants presented in \Cref{sec:PropLap}.

\begin{proposition}\label{prop:Isom}
    Let $(M^n, g)$ be a $W^{1,p}_{-\tau}$ asymptotically flat manifold, with $p>n$ and $0<\tau<n-2$. Let $V \in L^{p/2}_{-\tau-2}(M)$ and denote by $V_-$ its negative part. Assume that $\|V_-\|_{L^{\frac{n}{2}}(M, g)} < C(g)^{-1}$, where $C(g)$ is the constant from \Cref{SobolevIneq}. Then the operator
    \begin{equation}
    -\Delta_g +V \colon W^{2,p/2}_{-\tau}(M) \to L^{p/2}_{-\tau-2}(M)    
    \end{equation}
    is an isomorphism.
    For every $f \in L^{p/2}_{-\tau-2}$, let $w \in W^{2,p/2}_{-\tau}(M)$ be the solution of
    \begin{equation}
    -\Delta_g w + V w = f.
    \end{equation}
    Let $\Lambda>0$ and let $h$ be a smooth background metric such that $\Lambda^{-1} h\leq g \leq \Lambda h$. There exist constants $\delta_0>0$ and $C>0$, depending on $n, p, \tau, h, \Lambda$, and the operator norm of $\Delta_g^{-1}$, such that if
\begin{equation}
    \|V_-\|_{L^{n/2}} + \|V\|_{L^{p/2}_{-\tau-2}} <\delta_0,
\end{equation}
then
\begin{equation}\label{eq:improved_gradient}
          \|w\|_{L^\infty_{-\tau}} + \|w\|_{W^{1,p}_{-\tau}} + \|w\|_{W^{2,p/2}_{-\tau}} \leq C \|f\|_{L^{p/2}_{-\tau-2}}.
\end{equation}
Moreover, $\delta_0$ and $C$ can be chosen uniformly for metrics that are sufficiently close to $g$ in $W^{1,p}_{-\tau}$.

If $f$ can be written as $f = \operatorname{div}_g X + \mathcal{Q}$, where $X \in C^1_c(TM)$ and $\mathcal{Q} \in C^0_c(M)$, then 
        \begin{equation}\label{eq:gradient_estimate_L2}
            \|\nabla^{g} w\|_{L^{2}(M, g)}  \leq  \frac{\left(\|X\|_{L^2(M,g)} + \sqrt{C(g)} \|\mathcal{Q}\|_{L^{\frac{2n}{n+2}}(M, g)}\right)}{1- C(g) \|V_-\|_{L^{n/2}(M, g)}}.
        \end{equation}
\end{proposition}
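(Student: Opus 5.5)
The proof has three parts: injectivity via an energy estimate, surjectivity via Fredholm theory, and then the quantitative bounds. We first show that $-\Delta_g+V\colon W^{2,p/2}_{-\tau}\to L^{p/2}_{-\tau-2}$ is Fredholm of index zero. The operator $-\Delta_g+V$ is formally selfadjoint and asymptotic to $\Delta$ at rate $\tau$. The coefficients $a^{ij}=g^{ij}$ lie in $W^{1,p}_{-\tau}$ up to $\delta$, the first-order coefficients $b^i\sim \partial g$ lie in $L^p_{-\tau-1}$, and $c=V\in L^{p/2}_{-\tau-2}$. Since $0<\tau<n-2$, the weight $-\tau$ is not exceptional. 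So \Cref{prop:P_elliptic_index} with $q=p/2$ gives that the operator is Fredholm with the same index as $\Delta$ on these spaces, which is zero by \Cref{prop:Lap_iso}. It therefore suffices to prove injectivity.

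\textbf{Injectivity.} Suppose $(-\Delta_g+V)w=0$ with $w\in W^{2,p/2}_{-\tau}$. Since $p>n$, $w$ is continuous and $O(|x|^{-\tau})$, and $\nabla w\in L^{p}_{-\tau-1}$ by the weighted Sobolev embedding. Multiply by $w\chi_r$ for a cutoff $\chi_r$ and integrate by parts. The boundary/cutoff errors are controlled by $r^{n-2-2\tau}$-type terms. These are handled by first bootstrapping the decay of $w$: the equation implies $w\in W^{2,q}_{s}$ for $s$ slightly above $2-n$, because the kernel at rate $-\tau$ coincides with the kernel at any nonexceptional rate in $(2-n,0)$. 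In particular $\nabla w\in L^2$. We then obtain
\[
\int|\nabla w|^2\,d\mu_g\le\int V_-w^2\,d\mu_g\le\|V_-\|_{L^{n/2}}\|w\|^2_{L^{2n/(n-2)}}\le C(g)\|V_-\|_{L^{n/2}}\int|\nabla w|^2,
\]
using H\"older's inequality and \Cref{SobolevIneq}. Since $C(g)\|V_-\|_{L^{n/2}}<1$, this forces $\nabla w=0$, and the decay of $w$ gives $w=0$.

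\textbf{Quantitative bound \eqref{eq:improved_gradient}.} Write $-\Delta_g w=f-Vw$, so that
\[
\|w\|_{W^{2,p/2}_{-\tau}}\le\|\Delta_g^{-1}\|\bigl(\|f\|_{L^{p/2}_{-\tau-2}}+\|Vw\|_{L^{p/2}_{-\tau-2}}\bigr).
\]
By the weighted H\"older inequality, $\|Vw\|_{L^{p/2}_{-\tau-2}}\le\|V\|_{L^{p/2}_{-2}}\|w\|_{L^\infty_0}$. Since $L^\infty_{-\tau}\subset L^\infty_0$ and $W^{2,p/2}_{-\tau}\hookrightarrow L^\infty_{-\tau}$, we get $\|Vw\|_{L^{p/2}_{-\tau-2}}\lesssim\|V\|_{L^{p/2}_{-\tau-2}}\|w\|_{W^{2,p/2}_{-\tau}}$; here the weight bookkeeping needs checking, possibly with a slightly weaker weight on $V$. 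For $\delta_0$ small this term is absorbed into the left-hand side. The $W^{1,p}_{-\tau}$ and $L^\infty_{-\tau}$ norms then follow from the weighted Sobolev embedding $W^{2,p/2}_{-\tau}\subset W^{1,p}_{-\tau}$ (since $p>n$, i.e. $p/2>n/2$) and from Morrey's inequality. The constants depend only on $\|\Delta_g^{-1}\|$, the embedding constants (which depend on $h$ and $\Lambda$), $n$, $p$ and $\tau$. Uniformity for nearby metrics follows because $\Delta_{g'}-\Delta_g$ has small operator norm when $g'$ is $W^{1,p}_{-\tau}$-close to $g$, so $\Delta_{g'}^{-1}$ is uniformly bounded by a Neumann series.

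\textbf{Gradient estimate \eqref{eq:gradient_estimate_L2}.} Test the equation against $w$ itself; this is justified by the same decay bootstrap, since $f$ is compactly supported and hence $w$ decays like $|x|^{2-n}$. This gives
\[
\int|\nabla w|^2+\int Vw^2=-\int\langle X,\nabla w\rangle+\int\mathcal Q w .
\]
Bound the right-hand side by $\|X\|_{L^2}\|\nabla w\|_{L^2}+\|\mathcal Q\|_{L^{2n/(n+2)}}\sqrt{C(g)}\|\nabla w\|_{L^2}$, and the left-hand side from below by $(1-C(g)\|V_-\|_{L^{n/2}})\|\nabla w\|^2_{L^2}$, discarding $V_+$. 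Dividing by $\|\nabla w\|_{L^2}$ gives the claim.

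The main obstacle is justifying the integrations by parts at infinity when $\tau$ may be at most $(n-2)/2$. For this I would rely on the decay improvement from the index/kernel independence of the weight in $(2-n,0)$.
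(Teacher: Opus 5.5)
Your proposal is correct and follows the paper's proof essentially step by step. The common steps are: Fredholm of index zero via the asymptotic-to-$\Delta$ theory; injectivity from the energy identity plus H\"older and the Sobolev inequality, with the integration by parts justified by bootstrapping the decay of $w$; absorption of the $Vw$ term for the quantitative bound; and testing against $w$ for the gradient estimate. For the bootstrap, the paper iterates $\Delta_g w = Vw\in L^{p/2}_{-2\tau-2}$ through the isomorphism $\Delta_g\colon W^{2,p/2}_{-s}\to L^{p/2}_{-s-2}$, $0<s<n-2$, rather than quoting weight-independence of the kernel.

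Your intermediate inequality $\|Vw\|_{L^{p/2}_{-\tau-2}}\le\|V\|_{L^{p/2}_{-2}}\|w\|_{L^\infty_0}$ is misweighted, but the slip is harmless. The correct and sufficient bound is simply $\|Vw\|_{L^{p/2}_{-\tau-2}}\le\|V\|_{L^{p/2}_{-\tau-2}}\|w\|_{L^\infty}$, exactly as in the paper, so no weaker weight on $V$ is needed.
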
 
\begin{proof}
    Since $g$ is  $W^{1,p}_{-\tau}$ asymptotically flat with $p>n$ and $V \in L^{p/2}_{-\tau-2}(M)$, the operator $\Delta_g+V$ is asymptotic to $\Delta$ at rate $\tau$ (see \Cref{def:Deltaasymp_decayrate}). By \Cref{prop:P_elliptic_index}, the operator is Fredholm of index zero and injectivity is equivalent to surjectivity, so it suffices to show the operator has trivial kernel. Recall that the weighted spaces $W^{2,p/2}_{-\tau}$ and $L^{p/2}_{-\tau}$ defined with respect to $g$ and $h$ are equivalent (see the discussion after \Cref{def:AFmfd}). Suppose $w \in W^{2,p/2}_{-\tau}(M)$ satisfies
    \begin{align}
    -\Delta_g w + V w= 0.    
    \end{align}
     Multiplying by $w$ and integrating by parts (justified below) yields
    \begin{align*}
    \int_M |\nabla^{g}w|_g^2 \,d\mu_g & = -\int_MVw^2 \,d\mu_g\\
    & \leq \int_M V_-w^2 \,d\mu_g\\
    & \leq \left(\int_M |V_-|^{\frac{n}{2}} \,d\mu_g\right)^\frac{2}{n} \left(\int_M |w|^{\frac{2n}{n-2}} \,d\mu_g\right)^\frac{n-2}{n}.
    \end{align*}
    Invoking \Cref{SobolevIneq}, we obtain
    \begin{align}
        \int_M |\nabla^g w|_g^2 \,d\mu_g \leq C(g) \left(\int_M |V_-|^{\frac{n}{2}} \,d\mu_g\right)^\frac{2}{n}  \int_M |\nabla^gw|_g^2 \,d\mu_g.
    \end{align}
    Since $\|V_-\|_{L^{\frac{n}{2}}(M,g)} < C(g)^{-1}$, it follows that $w\equiv 0$ on $M$.

    We now justify the integration by parts. To this end, we first improve the decay rate of $w$. The weighted Sobolev embedding gives $w \in L^\infty_{-\tau}$. Together with $V \in L^{p/2}_{-\tau-2}$, we have
    $$
    \Delta_g w = V w \in L^{p/2}_{- 2 \tau - 2 }.
    $$
    In particular, $\Delta_g:W^{2,p/2}_{-s}(M) \to L^{p/2}_{-s-2}(M)$ is an isomorphism for any $0<s<(n-2)$ (\Cref{prop:Lap_iso}). The improved decay of the right-hand side, together with uniqueness, implies that $w \in W^{2,p/2}_{-s}$ whenever $s \leq 2 \tau$ and $s<n-2$. If $0<\tau\leq(n-2)/2$, iterating the argument yields $w \in W^{2,p/2}_{-\gamma}$ for some $(n-2)/2<\gamma<n-2$. If $\tau>(n-2)/2$, we simply take $\gamma=\tau$.

    Since $p>n$, the weighted Morrey inequality implies that $w \in C^{0,\alpha}_{-\gamma}$ for some $\alpha \in (0,1)$ and the weighted Sobolev embedding gives $\nabla^g w \in W^{1,p}_{-\gamma}$. This fact implies that $w\Delta_gw$ and $|\nabla w|^2_g$ belong to $L^{p/2}_{-2\gamma-2}(M) \subset L^1(M)$, since $\gamma>(n-2)/2$. Let $\chi_R$ be a smooth cutoff function such that $\chi_R=1$ on $B_R$, $\chi=0$ on $M \setminus B_{2R}$, $0\leq \chi_R \leq 1$, and $|\nabla \chi_R | \leq C/R$ for some constant $C$. Testing the equation with $\chi_R w$ gives
    \begin{equation}
         \int_M \chi_R |\nabla^{g}w|_g^2 \,d\mu_g  = -\int_M \chi_RVw^2 \,d\mu_g - \int_M w \langle \nabla \chi_R, \nabla w \rangle_g\, d\mu_g,
    \end{equation}
    where the second integrand on the right-hand side is in $L^{p/2}_{-2 \gamma-2}\subset L^1$ and is supported in $B_{2R} \setminus B_R$. Taking the limit and applying the dominated convergence theorem yields the desired equality. Therefore, integrating by parts is justified and this concludes the first part of the proof.

Let $f \in L^{p/2}_{-\tau-2}$, and let $w \in W^{2,p/2}_{-\tau}$ satisfying $\Delta_g w = V w - f$. Since $\Delta_g:W^{2,p/2}_{-\tau}(M) \to L^{p/2}_{-\tau-2}(M)$ is an isomorphism,  there exists a constant $C_{\Delta_g^{-1}}>0$ such that 
\begin{equation}
    \|w\|_{W^{2,p/2}_{-\tau}} \leq C_{\Delta_g^{-1}} \| \Delta_g w\|_{L^{p/2}_{-\tau-2}} =  C_{\Delta_g^{-1}} \| Vw - f\|_{L^{p/2}_{-\tau-2}},
\end{equation}
where the constant $C_{\Delta_g^{-1}}=\|\Delta_g^{-1}\|_{\mathcal{L}(L^{p/2}_{-\tau-2}, W^{2,p/2}_{-\tau})}$ is the operator norm of the inverse Laplacian $\Delta_g^{-1}$ and depends on $g$, $p$, and $\tau$. The map $g \mapsto \Delta_g$ is continuous in the operator norm with respect to the $W^{1,p}_{-\tau}$ topology and invertibility is stable under small perturbations, so $C_{\Delta_g^{-1}}$ can be chosen uniformly for metrics that are sufficiently $W^{1,p}_{-\tau}$ close to $g$. Then, we can estimate

\begin{align}\label{eq:LaplacianEstimate}
 \|w\|_{W^{2,p/2}_{-\tau}} \leq C_{\Delta_g^{-1}} \|Vw - f\|_{L^{p/2}_{-\tau-2}} & \leq   C_{\Delta_g^{-1}} \left(\|w\|_{L^\infty}\|V\|_{L^{p/2}_{-\tau-2}} + \|f\|_{L^{p/2}_{-\tau-2}}\right).
\end{align}

Since $n-k(p/2)<0$ for $k=2$ and $p>n$, the weighted Sobolev embedding implies the weighted $L^\infty_{-\tau}$ bound for $w$, 
\begin{equation}\label{eq:weighted_Linfinity}
\|w\|_{L^\infty} \leq     \|w\|_{L^\infty_{-\tau}} \leq C  \|w\|_{W^{2,p/2}_{-\tau}},
\end{equation}
for some constant $C>0$ that additionally depends on $\|h\|_{L^\infty}$ and $\|h-\delta\|_{W^{1,p}_{-\tau}}$ (see the discussion after \Cref{def:AFmfd}). 

Let $C(h)$ be the Sobolev constant of $h$ (\Cref{SobolevIneq}). Since $\|V_-\|_{L^{n/2}(M,g)} \leq \Lambda \|V_-\|_{L^{n/2}}$ and $C(g) \leq \Lambda^n C(h)$, taking $\delta_0<\frac{1}{2}\min\{\frac{1}{\Lambda^{n+1} C(h)}, \frac{ 1}{C C_{\Delta_g^{-1}}}\}$, the term containing $\|V\|_{L^{p/2}_{-\tau-2}}$ in \eqref{eq:LaplacianEstimate} can be absorbed into the left-hand side. It follows that there exists a constant $C>0$ such that
\begin{align}
 \|w\|_{W^{2,p/2}_{-\tau}} \leq C  \|f\|_{L^{p/2}_{-\tau-2}},
\end{align}
where $C=C(n, p,  \tau, \Lambda, C(h), h, g)$, and $C$ can be chosen uniformly for metrics that are sufficiently $W^{1,p}_{-\tau}$ close to $g$.

Moreover, by the weighted Sobolev embedding with $p>n$, we estimate
\begin{equation}
    \|w\|_{W^{1,p}_{-\tau}} \leq C   \|w\|_{W^{2,p/2}_{-\tau}},
\end{equation}
where the constant $C>0$ does not depend on any additional parameters. Combining everything, we have that
\begin{equation}
     \|w\|_{L^\infty_{-\tau}} + \|w\|_{W^{1,p}_{-\tau}} + \|w\|_{W^{2,p/2}_{-\tau}}\leq  C \|f\|_{L^{p/2}_{-\tau-2}},
\end{equation}
where $C=C(n, p,  \tau, \Lambda, C(h), h, g)$, and $C$ can be chosen uniformly for all metrics sufficiently close to $g$ in $W^{1,p}_{-\tau}$.

  Assume that $f$ is given by $f = \operatorname{div}_g X + \mathcal{Q}$, where $X \in C^1_c(TM)$ and $\mathcal{Q} \in C^0_c(M)$, and $w$ solving
   \begin{equation}
       -\Delta_{g} w + V w = f.
   \end{equation}
By the decay improvement argument used in the proof of the triviality of the kernel, the compact support of $f$ implies that  $w \in W^{2,p/2}_{-\gamma}$ for some $(n-2)/2<\gamma<(n-2)$. Then, we multiply both sides by $w$, integrate over $M$, and integrate by parts to obtain

\begin{align}\label{eq:weak_form_ConfLap}
    \int_M |\nabla^{g} w|^2  \dg= -\int_M \left(V w^2 + \langle X, \nabla^g w\rangle_g - \mathcal{Q} w \right)  \dg.
\end{align}
Let $n^* = \frac{2n}{n-2}$. It follows from the above equality and H\"{o}lder's inequality that 
\begin{align*}
\int_M |\nabla^{g} w|^2  \dg & \leq  \int_{M} V_- w^2 - \langle X, \nabla^g w\rangle_g + \mathcal{Q} w  \dg\\
&  \leq  \|V_-\|_{L^{n/2}(M, g)} \|w\|_{L^{n^*}(M, g)}^2+  \|X\|_{L^{2}(M, g)} \|\nabla^g w\|_{L^{2}(M, g)}\\
&  + \|\mathcal{Q}\|_{L^{\frac{2n}{n+2}}(M, g)} \|w\|_{L^{n^*}(M, g)}.
\end{align*}
The Sobolev inequality (\Cref{SobolevIneq}) yields the following estimate 
\begin{align}
    \|w\|_{L^{n^*}(M, g)}^2& \leq C(g) \|\nabla^{g} w\|_{L^{2}(M, g)}^2.
\end{align}
Combining the last two inequalities, we derive
\begin{align}
     \|\nabla^{g} w\|_{L^{2}(M, g)}^2 & \leq C(g) \|V_-\|_{L^{n/2}(M, g)} \|\nabla^{g} w\|_{L^{2}(M, g)}^2 \\
     & + \left(\|X\|_{L^2(M,g)} + \sqrt{C(g)} \|\mathcal{Q}\|_{L^{\frac{2n}{n+2}}(M, g)}\right) \|\nabla^{g} w\|_{L^{2}(M, g)}.
\end{align}
Rearranging the inequality, we conclude
\begin{equation}\label{eq:n_star_boundf}
     \|\nabla^{g} w\|_{L^{2}(M, g)}  \leq  \frac{\left(\|X\|_{L^2(M,g)} + \sqrt{C(g)} \|\mathcal{Q}\|_{L^{\frac{2n}{n+2}}(M, g)}\right)}{1- C(g) \|V_-\|_{L^{n/2}(M, g)}}.
\end{equation}
\end{proof}

From the above result, we can construct a conformal metric of $g$ satisfying the standard properties.

\begin{proposition}\label{prop:ConfReescProp}
     Let $(M^n, g)$ be a $W^{1,p}_{-\tau}$ asymptotically flat manifold, with $p>n$ and $\tau \in (\frac{n-2}{2}, n-2)$. Let $V \in L^{p/2}_{-\tau-2}(M)$ and denote by $V_-$ its negative part. Assume that $\|V_-\|_{L^{\frac{n}{2}}(M, g)} < \frac{c_n}{C(g)}$, where $C(g)$ is the constant from \Cref{SobolevIneq} and $c_n \colonequals 4\frac{n-1}{n-2}$. Then there exists a globally defined positive function $u>0$ on $M$, with  $u-1 \in W^{2,p/2}_{-\tau}$, satisfying
    \begin{equation}\label{eq:conf_PDE}
    -c_n\Delta_gu +Vu = 0.
    \end{equation}
    Moreover, the conformally rescaled metric $\widetilde{g} = u^{\frac{4}{n-2}}g$ is $W^{1,p}_{-\tau}$ asymptotically flat. Assuming that $\misoc(g)$ is finite and $V \in L^1(M)$, the isoperimetric mass of $\widetilde{g}$ is given by
    \begin{equation}\label{eq:conf_misoc}
    \misoc(\widetilde{g})=\misoc(g) - \frac{1}{2(n-1)\omega_{n-1}}\int_M V u \,d\mu_g    
    \end{equation}
     and
\begin{equation}\label{eq:Sc_conformalmetric}
     R[\widetilde{g}] = (R[g] - V)u^{-\frac{4}{n-2}} \quad \text{ in } \D^\prime(M).
\end{equation}
Furthermore, there exists a constant $\delta_0>0$, depending on $n, p, \tau, h, \Lambda$, and the operator norm of $\Delta_g^{-1}$, such that if 
\begin{equation}
    \|V_-\|_{L^{\frac{n}{2}}}+ \|V\|_{L^{p/2}_{-\tau-2}}< \delta_0,
\end{equation}
then $\frac{1}{2}\leq u\leq \frac{3}{2}$ on $M$.
\end{proposition}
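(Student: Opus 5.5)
We look for $u$ in the form $u = 1 + w$ with $w \in W^{2,p/2}_{-\tau}$. Dividing \eqref{eq:conf_PDE} by $c_n$, the equation becomes
\[
-\Delta_g w + \tfrac{V}{c_n} w = -\tfrac{V}{c_n}.
\]
The potential $V/c_n$ lies in $L^{p/2}_{-\tau-2}$, and its negative part satisfies $\|V_-/c_n\|_{L^{n/2}(M,g)} < C(g)^{-1}$. So \Cref{prop:Isom} applies and gives a unique solution $w \in W^{2,p/2}_{-\tau}$. We also need the right-hand side $-V/c_n$ to lie in $L^{p/2}_{-\tau-2}$, and it does.

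\emph{Positivity of $u$.} We argue by a Moser/maximum-principle argument rather than a classical maximum principle, since $V$ has no sign. Suppose $u$ is negative somewhere and test the equation with $u_- = \max(-u,0)$. Because $u \to 1$ at infinity, $u_-$ has compact support. The energy identity gives
\[
c_n\int |\nabla u_-|^2 \le \int V_- u_-^2 .
\]
By Hölder and \Cref{SobolevIneq}, the right side is at most $C(g)\|V_-\|_{L^{n/2}} \|\nabla u_-\|_{L^2}^2$. Since $C(g)\|V_-\|_{L^{n/2}} < c_n$, this forces $u_- \equiv 0$, so $u \ge 0$. To get strict positivity we use the weak Harnack inequality for $W^{2,p/2}$ solutions of $-c_n\Delta_g u + Vu = 0$. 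This is valid because $V \in L^{p/2}_{\loc}$ with $p/2 > n/2$, and the principal coefficients are continuous, in fact $W^{1,p}$. Weak Harnack together with $u \to 1$ at infinity gives $u > 0$ everywhere.

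\emph{Bounds in the small regime.} When $\|V_-\|_{L^{n/2}} + \|V\|_{L^{p/2}_{-\tau-2}} < \delta_0$, estimate \eqref{eq:improved_gradient} yields
\[
\|w\|_{L^\infty} \le C\|V\|_{L^{p/2}_{-\tau-2}}/c_n \le 1/2
\]
once $\delta_0$ is shrunk, which is the claim $\tfrac12 \le u \le \tfrac32$.

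\emph{Regularity of $\widetilde g$.} Since $p > n$, weighted embeddings give $w \in W^{1,p}_{-\tau} \cap C^0_{-\tau}$. Hence $u^{4/(n-2)} - 1 \in W^{1,p}_{-\tau}$, because a smooth function of a bounded positive $W^{1,p}$ function stays in $W^{1,p}$, with the chain rule controlling the derivative. The product of this with $g$ then keeps $\widetilde g - \delta \in W^{1,p}_{-\tau}$, which shows $\widetilde g$ is $W^{1,p}_{-\tau}$ asymptotically flat.

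\emph{Scalar curvature.} The transformation law $R[\widetilde g] = u^{-\frac{n+2}{n-2}}(-c_n\Delta_g u + R[g]u)$ holds for smooth data. We extend it to $\D'$ by approximating $g$ with $g_\eps$ and $u$ with mollifications. All terms converge distributionally because $g, u \in C^0 \cap W^{1,p}_{\loc}$ and $R$ depends continuously on the metric in this topology. Substituting $c_n\Delta_g u = Vu$ then gives \eqref{eq:Sc_conformalmetric}.

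\emph{Mass formula.} This is the main obstacle, since we cannot use an ADM surface integral. We start from \Cref{theo:m_iso_tr} for $\widetilde g$. To first order, $\widetilde\sigma = \sigma + \tfrac{4}{n-2}w\,\delta$, plus quadratic errors of order $O(r^{-2\tau})$, which contribute $o(1)$ since $\tau > (n-2)/2$. The $w$-contributions to $-r^{n-2}\mathcal{I}_A$ and to the volume term are
\[
-\frac{4}{(n-2)\omega_{n-1} r}\int_{S_r} w\,dS_r^0 \;+\; \frac{4n}{(n-2)\omega_{n-1} r^2}\int_{B_r\setminus B_R} w\,d\mu_\delta .
\]
Following the proof of \Cref{theo:miso_div}, we convert this into a radial integral of the form
\[
\frac{c}{r^2}\int_{B_r\setminus B_R} \partial_\rho w\,\rho\,d\mu_\delta .
\]
This is exactly $\frac{c}{r^2}\int_R^r \rho\,m_w(\rho)\,d\rho$, where
\[
m_w(\rho) = \int_{S_\rho}\partial_\nu w .
\]
By the averaging trick of \Cref{cor:proof_ADMmass}, the expression therefore converges to $\lim_\rho m_w(\rho)$, provided that limit exists. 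To identify that limit, integrate the equation $c_n\Delta_g w = Vu$ over $B_\rho$ against the test function $\chi_\rho$ of \eqref{eq:testFunc}; this uses $V \in L^1$. The result is
\[
\lim_{\rho\to\infty}\frac{1}{\rho^2}\int_{B_\rho}\rho\,\partial_\rho w \;\propto\; -\frac{1}{c_n}\int_M Vu\,d\mu_g .
\]
In this step the differences between $\Delta_g$ and $\Delta$ produce terms like $\sigma\,\partial w\,x$, and these are $o(1)$ by the Hölder estimate \eqref{eq:claim_decay_int}. Tracking the constants $4/(n-2)$, $c_n$ and $(n-1)\omega_{n-1}$ should then produce exactly the coefficient $\frac{1}{2(n-1)\omega_{n-1}}$ in \eqref{eq:conf_misoc}.
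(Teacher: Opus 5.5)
Your proposal is correct and follows essentially the same route as the paper: existence from \Cref{prop:Isom} applied to $-c_n\Delta_g+V$, the Schoen--Yau test with $u_-$ followed by a strong maximum principle (your weak Harnack step) for positivity, the $L^\infty$ bound from \eqref{eq:improved_gradient}, and reducing $\misoc(\widetilde g)(r)-\misoc(g)(r)$ to $-\frac{4}{(n-2)\omega_{n-1}r^2}\int_{B_r\setminus B_R}\rho\,\partial_\rho w\,d\mu_\delta$ before using the equation. Your two deviations are cosmetic: you linearize $\widetilde\sigma$ inside \Cref{theo:m_iso_tr} instead of applying \Cref{theo:miso_div} to $\widetilde g$ directly, and you test the equation with $\chi_r$ from \eqref{eq:testFunc} instead of using the divergence theorem on each ball and then averaging. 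Since the $\chi_r$ test already gives the averaged limit directly (with the $g$-versus-$\delta$ discrepancy controlled by \eqref{eq:claim_decay_int}), your caveat ``provided that limit exists'' is unnecessary.
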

\begin{proof}
    Write $w = u-1$, then solving \eqref{eq:conf_PDE} is equivalent to finding a solution of
    \begin{align}
    -c_n\Delta_g w +Vw &= -V,    \\
    w &\in W^{2,p/2}_{-\tau},
    \end{align}
    where $c_n= 4 \frac{n-1}{n-2}$. By \Cref{prop:Isom} and the hypothesis $\|V_-\|_{L^{\frac{n}{2}}(M, g)} < \frac{c_n}{C(g)}$, the operator $-c_n\Delta_g +V \colon W^{2,p/2}_{-\tau} \to L^{p/2}_{-\tau-2}$ is an isomorphism, and hence there exists a unique solution $w \in W^{2,p/2}_{-\tau}$. In particular, $w \in C^{0,\alpha}_{-\tau}$, $\alpha \in (0,1)$ by the Sobolev embedding with $k=2$ and the weighted Morrey inequality.
    
    The positivity proof of $u$ follows as in \cite[Lemma 3.3]{PMTSchoenYau}. Let $E \colonequals \{ x \in M \colon u<0\}$, which is precompact since $u$ is asymptotic to one. Let $u_-$ be the negative part of $u$. If $E$ is not empty, a density argument allows us to use $u_- \in W^{1,p}_\loc$ as a test function in the weak formulation of \eqref{eq:conf_PDE}, then 
    \begin{align*}
        \|\nabla^{g} u\|_{L^2(E, g)}^2 = -\frac{1}{c_n} \int _E V u_-^2 \,d\mu_{g}  \leq  \frac{1}{c_n} \int _E V_- u^2 \,d\mu_{g} \leq  \frac{C(g)}{c_n} \|V_-\|_{L^{n/2}(E,g)}  \|\nabla^{g} u\|_{L^2(E, g)}^2,
    \end{align*}
    which is a contradiction since $\frac{C(g)}{c_n}\|V_-\|_{L^{n/2}(E,g)}<1$. This implies that $u\geq0$, and positivity of $u$ follows from the strong maximum principle with potential $V \in L^{p/2}_\loc$, $p>n$ (see \cite[Theorem 8.19]{GilbargTrudinger2001} and remark at the end of Section 8.10).
     
    Define the conformal metric $\widetilde{g} = u^{\frac{4}{n-2}}g$. Applying the chain rule, the interpolation inequality, and weighted H\"{o}lder's inequality to $u^\frac{4}{n-2}=(1+w)^\frac{4}{n-2}$ we see that $(1+w)^\frac{4}{n-2}-1 \in W^{2,p/2}_{-\tau}$. The $W^{1,p}_{-\tau}$ asymptotic flatness of $\widetilde{g}$ is obtained in a similar fashion by computing $\widetilde{g}_{ij}-\delta_{ij}$.

    Let $B_r$ denote the coordinate ball of radius $r$ and $\sigma_{ij} = g_{ij}-\delta_{ij}$. Using the weighted Morrey inequality for $w$ and the decay rate of $\sigma_{ij}=g_{ij}-\delta_{ij}$, we have
 \begin{align*}
    \frac{1}{r^2}\int_{B_r\setminus B_R}\partial_k\widetilde{g}_{ij} x^\ell \,dB_r
    & = \frac{1}{r^2}\int_{B_r\setminus B_R}  \left(\frac{4}{(n-2)}\partial_kw \delta_{ij} + \partial_k \sigma_{ij}\right) x^\ell \,dB_r + o(1),
 \end{align*}  
    where the terms involving powers of $w$ and $\partial\sigma$ were dealt with analogously to those in the proof of \Cref{prop:EquivMasses}. 
    
    Then, by \Cref{theo:miso_div}, we obtain
    \begin{equation}
        \misoc(\widetilde{g})(r) = \misoc(g)(r) - \frac{4}{(n-2) \omega_{n-1} r^2} \int_R^r \rho \int_{S_\rho} \frac{\p u}{\p \rho}  \,dS_\rho \,d\rho + o(1).
    \end{equation}
    Applying the divergence theorem and  $\eqref{eq:conf_PDE}$, then
    \begin{equation}
    \misoc(\widetilde{g})(r) = \misoc(g)(r) - \frac{1}{(n-1) \omega_{n-1} r^2} \int_R^r \rho \int_{B_\rho} V u  \,d\mu_g \,d\rho + o(1).
    \end{equation}
    Since $Vu$ is integrable and $\misoc(g)$ is finite, taking the limit as $r \to \infty$ gives
    \begin{equation}
    \misoc(\widetilde{g}) = \misoc(g) - \frac{1}{2(n-1) \omega_{n-1}} \int_M V u \,d\mu_g.
    \end{equation}
    \Cref{eq:Sc_conformalmetric} follows from the standard formula for the scalar curvature under a conformal change of metric and by approximating $g$ by smooth metric $g_\eps$.

    For the last claim, choosing $\delta_0$ small enough ensures that $\sup_M|w|\leq \frac{1}{2}$ by \Cref{prop:Isom}. It then follows that $u=1+w$ has positive upper and lower bounds, namely $u(x) \in \left[\frac{1}{2},\frac{3}{2}\right]$ for all $x \in M$. 
    
\end{proof}

\subsection{Proof of the density theorem, \Cref{theo:density}} \label{sec:ProofDensityTheo}

Let $\{g_\eps\}_{\eps>0}$ be a smooth family of metrics obtained by mollifying globally. Let $\chi$ be a smooth nonnegative cut-off function on $\RR^n$ such that $\chi \equiv 1$ on $B_1$ and vanishes outside $B_2$. Define $\chi_\lambda(x) \colonequals \chi(x/\lambda)$, for $\lambda\geq1$. Define the following interpolation between the metric $g_\eps$ and the Euclidean metric $\delta$ on $M$ by
    \begin{equation}\label{eq:def_g_lambdaeps}
    g_{\eps,\lambda} \colonequals \chi_\lambda g_\eps + (1-\chi_\lambda)\delta,    
    \end{equation}
    such that $g_{\eps,\lambda}=g_\eps$ for $|x|<\lambda$, $g_{\eps,\lambda} = \delta$ for $|x|>2\lambda$, and $g_{\eps,\lambda}$ interpolates between the two in the annular region in between.  This interpolation is necessary for our argument since the second derivatives of $g_\eps$ do not decay sufficiently fast to apply the smooth positive mass theorem at the end of the argument.
    
     The regularisation process introduces pointwise negative scalar curvature and changes the mass. Moreover, $\misoc(g_{\eps,\lambda})=0$ since the metric is Euclidean outside a compact set. The goal of this section is to conformal rescale the metric $g_{\eps,\lambda}$ in order to obtain a metric with nonnegative scalar curvature, isoperimetric mass close to $g$, and with better decay rate than $g$ and $g_\eps$.

    For every $\eps>0$ consider the conformal Laplacian equation
    \begin{align*}
    -c_n \Delta_{\eps,\lambda} u_{\eps,\lambda} + V_{\eps,\lambda} u_{\eps,\lambda} & = 0  \text{ in } M,\\
        u_{\eps,\lambda}  &\to 1  \text{ on } \mathcal{E},
    \end{align*}
where $\Delta_{\eps,\lambda}$ refers to the Laplacian of the metric $g_{\eps,\lambda}$ and the potential
\begin{equation}\label{eq:V_def}
   V_{\eps,\lambda}=-\chi_\lambda R[g_{\eps}]_-+\left(R[g_{\eps,\lambda}]-\chi_\lambda R[g_{\eps}]\right).
\end{equation}
We define $w_{\eps,\lambda}\colonequals u_{\eps,\lambda}-1$ and then solve
\begin{align*}
-c_n \Delta_{\eps,\lambda} w_{\eps,\lambda} + V_{\eps,\lambda} w_{\eps,\lambda} & = -V_{\eps,\lambda}  \text{ in } M,\\
    w_{\eps,\lambda}  & \in W^{2,p/2}_{-\tau^\prime},
\end{align*}
as in \Cref{prop:Isom,prop:ConfReescProp}, where $\tau>\tau^\prime>0$. Note that the second proposition can be applied since $V_{\eps,\lambda}$ is compactly supported. The hypothesis of these results are checked below using the following two lemmas. The only term in $V_{\eps,\lambda}$ \eqref{eq:V_def} that presents an additional difficulty is $R[g_{\eps,\lambda}]-\chi_\lambda R[g_{\eps}]$, which is handled in \Cref{lemma:AnnularDecay} and requires the smaller decay rate $\tau^\prime$. On the other hand, $R[g_{\eps}]_-$ can be controlled by the improved convergence obtained from weighted Friedrichs' Lemma in \Cref{prop:ConvofRs_weighted}. First, we show an auxiliary result from \cite{LLR2022}.

\begin{lemma} \label{lemma:dyadic}
Let $f\in L^p_{-s}$, where $p\geq1$ and $s>0$. Let $A_i$ denote the dyadic annulus $2^i\le |x|\le 2^{i+1}$. For any $q \in [1,p]$ and $i$ sufficiently large there exists a constant $C$ independent of $f$ and $i$ such that
\[\|f\|_{L^q(A_i)}\le C2^{-\eta i}\|f\|_{L^p_{-s}},\]
where $\eta = s-\frac nq$.
\end{lemma}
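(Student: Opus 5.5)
The plan is to compare the unweighted $L^q$ norm on $A_i$ with the weighted $L^p_{-s}$ norm in two steps. First I pass from $L^q$ to $L^p$ on the annulus by H\"older's inequality. Then I convert the $L^p$ norm into the weighted one, using that $|x|$ is comparable to $2^i$ on $A_i$.

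For $i$ large, $A_i\subset\mathcal{E}$, so on $A_i$ we have $2^i\le |x|\le 2^{i+1}$ and $|A_i| = c_n 2^{ni}$ for a dimensional constant $c_n$.

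For $q\le p$, H\"older's inequality gives
\[
\|f\|_{L^q(A_i)} \le |A_i|^{\frac1q-\frac1p}\,\|f\|_{L^p(A_i)} \le C\,2^{ni(\frac1q-\frac1p)}\|f\|_{L^p(A_i)}.
\]

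Next I relate $\|f\|_{L^p(A_i)}$ to the weighted norm. By definition,
\[
\|f\|_{L^p_{-s}(A_i)}^p=\int_{A_i}|f|^p|x|^{sp-n}\,dx .
\]
On $A_i$ the weight satisfies $|x|^{sp-n}\ge c\,2^{i(sp-n)}$, with $c$ depending only on $n,s,p$. This holds whatever the sign of $sp-n$: since $2^i\le|x|\le 2^{i+1}$, the weight lies between $2^{i(sp-n)}$ and $2^{(i+1)(sp-n)}$, so it is bounded below by $\min(1,2^{sp-n})\,2^{i(sp-n)}$. Hence
\[
\|f\|_{L^p(A_i)}\le C\,2^{-i(s-\frac np)}\|f\|_{L^p_{-s}(A_i)}\le C\,2^{-i(s-\frac np)}\|f\|_{L^p_{-s}} .
\]

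Combining the two estimates, the exponent is
\[
ni\Big(\frac1q-\frac1p\Big)-i\Big(s-\frac np\Big)=-i\Big(s-\frac nq\Big)=-\eta i ,
\]
which yields $\|f\|_{L^q(A_i)}\le C2^{-\eta i}\|f\|_{L^p_{-s}}$. The constant $C$ depends only on $n,p,q,s$.

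There is no genuine obstacle here. The only point requiring care is the two-sided comparison of the weight $|x|^{sp-n}$ with $2^{i(sp-n)}$ on $A_i$, and this is where the restriction to $i$ large enters, so that $A_i$ lies in the chart $\mathcal{E}$. If one uses the norm on all of $M$ including the compact part, the inequality $\|f\|_{L^p_{-s}(A_i)}\le\|f\|_{L^p_{-s}}$ is trivial.
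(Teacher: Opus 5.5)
Your proposal is correct and follows essentially the same route as the paper's proof. You apply H\"older's inequality on $A_i$ to pass from $L^q$ to $L^p$ at the cost of $|A_i|^{\frac1q-\frac1p}\sim 2^{ni(\frac1q-\frac1p)}$, then compare the weight $|x|^{sp-n}$ with $2^{i(sp-n)}$ on the annulus. Your explicit constant $\min(1,2^{sp-n})$ handles both signs of $sp-n$ cleanly, a point the paper leaves implicit in its ``$\le C$''.
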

\begin{proof}
 The following proof follows \cite[Lemma 3.1]{LLR2022}. Since $p\geq q$, we can use H\"older's inequality to estimate
 \begin{align*}
     \left(\int_{A_i}|f|^q\,dx\right)^{\frac pq} &\le \Vol(A_i)^{\frac{p}{q}-1}\int_{A_i}|f|^p\,dx \le C (2^i)^{n(\frac pq-1)} \int_{A_i}|f|^p\,dx.
 \end{align*}
Let $s>0$. We decompose the prefactor as 
 \[(2^i)^{n(\frac pq-1)}=\left((2^{i})^{\frac nq-s}\right)^p \cdot \left((2^{i})^{s}\right)^p(2^i)^{-n},\]
then
  \begin{align*}
     (2^i)^{n(\frac pq-1)} \int_{A_i}|f|^p\,dx &=\left((2^{i})^{\frac nq-s}\right)^p\int_{A_i}|(2^{i})^{s}f|^p\,\frac{dx}{(2^i)^n}\\
     &\le C \left((2^{i})^{\frac nq-s}\right)^p \int_{A_i}||x|^{s}f|^p\,\frac{dx}{|x|^n}\\
     &\leq C (2^{-i\eta})^p \|f\|_{L^p_{-s}}^p,
 \end{align*}
 where $\eta = s-\frac nq$
\end{proof}

\begin{lemma}\label{lemma:AnnularDecay}
 Let $(M^n, g)$ be a $W^{1,p}_{-\tau}$ asymptotically flat manifold, $p>n$, $n\geq3$ and $\tau>\frac{n-2}{2}$. Let $\lambda_i=2^i$. For any fixed $\eps>0$ sufficiently small and any $\tau^\prime$ satisfying $0<\tau^\prime<\tau$ and $\tau^\prime< 2\tau-1$, there exists $I$ such that for all $i\geq I$, one has
\begin{equation}
\|R[g_{\eps, \lambda_i}] - \chi_{\lambda_i} R[g_\eps]\|_{L^{\frac{n}{2}} \cap  L^{p/2}_{-\tau^\prime-2}}\leq \frac{C}{\eps} 2^{-\eta i} \to 0 \text{ as } \lambda_i \to \infty,
\end{equation}
where $C$ is independent of $\eps$ and $\eta>0$ depending on $g$, $\tau$, $\tau^\prime$, $p$, and $n$. In particular, if $n\geq4$, then $\tau^\prime$ can be chosen so that $(n-2)/2<\tau^\prime < \tau$.
\end{lemma}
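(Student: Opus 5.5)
The difference $R[g_{\eps,\lambda}]-\chi_\lambda R[g_\eps]$ is supported in the annulus $A_\lambda=\{\lambda\le|x|\le 2\lambda\}$, since $g_{\eps,\lambda}=g_\eps$ for $|x|<\lambda$ and $g_{\eps,\lambda}=\delta$ for $|x|>2\lambda$. The plan is to obtain a pointwise schematic bound for it on $A_\lambda$ and then convert that bound into dyadic norm estimates via \Cref{lemma:dyadic}.

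\textbf{Step 1 (pointwise bound).} Write $g_{\eps,\lambda}-\delta=\chi_\lambda\sigma_\eps$ with $\sigma_\eps=g_\eps-\delta$. Using the coordinate expression $R=\partial V+Q$, with $V\approx (g^{-1})^2\partial g$ and $Q\approx(\partial g)^2$, I expand both $R[g_{\eps,\lambda}]$ and $\chi_\lambda R[g_\eps]$ around $\delta$. The leading term $\chi_\lambda\,\partial^2\sigma_\eps$ cancels, leaving on $A_\lambda$
\[
|R[g_{\eps,\lambda}]-\chi_\lambda R[g_\eps]|\lesssim |\partial^2\chi_\lambda||\sigma_\eps|+|\partial\chi_\lambda||\partial\sigma_\eps|+|\sigma_\eps||\partial^2\sigma_\eps|+(1+|\partial\chi_\lambda||\sigma_\eps|)|\partial\sigma_\eps|^2+\dots.
\]
The quadratic terms $|\sigma_\eps||\partial^2\sigma_\eps|$ appear because the nonlinear parts of $R[\chi_\lambda\sigma_\eps]$ and of $\chi_\lambda R[\sigma_\eps]$ do not cancel exactly. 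I will use $|\partial\chi_\lambda|\lesssim\lambda^{-1}$, $|\partial^2\chi_\lambda|\lesssim\lambda^{-2}$, and $|\sigma_\eps|\le C\lambda^{-\tau}$ on $A_\lambda$, which holds uniformly in small $\eps$ by \Cref{lemma:FamilyOfMetrics_weighted}(2).

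\textbf{Step 2 (second derivatives, the main obstacle).} The term $\partial^2\sigma_\eps$ has no decay assumption, so I will use the mollifier bound $\partial^2\sigma_\eps=(\partial\sigma)\ast\partial\rho_\eps$. In the asymptotic chart this gives $\|\partial^2\sigma_\eps\|_{L^p(A_\lambda)}\le C\eps^{-1}\|\partial\sigma\|_{L^p(A'_\lambda)}$, where $A'_\lambda$ is a slightly enlarged annulus. This is the source of the factor $C/\eps$. Combining it with \Cref{lemma:dyadic} (at weight $-\tau-1$) gives $\|\partial\sigma\|_{L^q(A_i)}\lesssim 2^{-i(\tau+1-n/q)}\|\partial\sigma\|_{L^p_{-\tau-1}}$ for $q\le p$.

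\textbf{Step 3 (norm estimates).} Each term is now estimated in $L^{n/2}(A_i)$ and in $L^{p/2}_{-\tau'-2}(A_i)$, using Hölder's inequality in the form $L^p\cdot L^p\subset L^{p/2}$. On $A_i$ the weight $|x|^{\tau'+2}|x|^{-2n/p}$ is comparable to $2^{i(\tau'+2-2n/p)}$. The resulting exponents are as follows.
\begin{itemize}
\item Linear terms $\lambda^{-1}|\partial\sigma_\eps|$ and $\lambda^{-2}|\sigma_\eps|$: these give $2^{-i(\tau-\tau')}$ in the weighted norm and $2^{-i(\tau+2-2)}$ in $L^{n/2}$. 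Both exponents are positive since $\tau'<\tau$.
\item Quadratic terms such as $|\sigma_\eps||\partial^2\sigma_\eps|$: these give $\eps^{-1}2^{-i(2\tau+1-\tau'-2)}$ in the weighted norm, which decays exactly when $\tau'<2\tau-1$.
\end{itemize}
Both norms therefore carry a decaying power of $2^{-i}$ with a small positive rate $\eta$, and taking $I$ large absorbs the constants. For the final claim, when $n\ge4$ we have $2\tau-1>n-3\ge(n-2)/2$, so the window $\big((n-2)/2,\min(\tau,2\tau-1)\big)$ is nonempty.
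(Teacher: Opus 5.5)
Your proposal is correct and follows essentially the same route as the paper: the schematic expansion $R=\partial V+Q$ with cancellation of the leading term $\chi_\lambda\partial^2 g_\eps$, the $C/\eps$ mollifier bound for $\partial^2 g_\eps$, and dyadic estimates on $A_i$ giving the rates $\tau-\tau'$, $2\tau-\tau'$ and $2\tau-1-\tau'$. The differences are cosmetic. You bound the $L^{n/2}$ norm directly instead of deducing it from the weighted norm via \Cref{lemma:dyadic}, and you use an unweighted Young bound on an enlarged annulus instead of the weighted mollifier estimate. Also, the term $|\partial\chi_\lambda||\sigma_\eps||\partial\sigma_\eps|^2$ in your Step 1 does not actually arise; the genuine cross terms $|\partial\chi_\lambda||\sigma_\eps||\partial\sigma_\eps|$ and $|\partial\chi_\lambda|^2|\sigma_\eps|^2$ are dominated by the linear terms you already handle.
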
 
\begin{proof}
Let $\lambda_i = 2^i$. To avoid excessive subscripts, we write $g_{\eps,i} = g_{\eps,\lambda_i}$ and $\chi_i \equiv \chi_{\lambda_i}$, and denote by $A_i$ the annular region $\{2^i\le |x|\le 2^{i+1}\}$. Since $R[g_{\eps,i}] - \chi_i R[g_\eps] \in C^\infty_c(M)$ is supported on $A_i$, we apply \Cref{lemma:dyadic} with $q=\frac{n}{2}$, $s = \tau^\prime +2$, with $\tau^\prime \in (0, \tau)$, yielding
\begin{align}
\|R[g_{\eps,i}] - \chi_i R[g_\eps]\|_{L^\frac{n}{2} }\leq C 2^{-\tau^\prime i}\|R[g_{\eps,i}] - \chi_i R[g_\eps]\|_{L^{p/2}_{-\tau^\prime-2}}.        
\end{align}
Therefore, it is sufficient to show that
\begin{equation}
\|R[g_{\eps,i}] - \chi_i R[g_\eps] \|_{L^{p/2}_{-\tau^\prime-2}}\leq \frac{C}{\eps} 2^{-\eta i} \longrightarrow 0 \quad \text{ as } i \to \infty,
\end{equation}
for $\tau^\prime<\tau$, some constant $C>0$ independent of $i$ and $\eps$, and some $\eta>0$.

We now claim that, in the difference $R[g_{\eps,i}] - \chi_i R[g_\eps]$, the leading second-order terms cancel, leaving
\begin{equation}\label{eq:scalar_curv_interp}
R[g_{\eps,i}] - \chi_i R[g_\eps] = O( |x|^{-\tau})  \partial^2 g_\eps +  O(1)(\chi_i^2-\chi_i)(\partial g_\eps)^2+ O (|x|^{-1})\partial g_\eps + O(|x|^{-\tau-2}),
\end{equation}
where each term on the right-hand side is supported in $A_i$, and the implicit constants in the $O(\cdot)$ notation are uniform in $\eps$ and $i$.

First, we establish uniform pointwise decay estimates for $g_{\eps,i}$. In asymptotically flat coordinates,
\begin{equation}\label{eq:gi_minus_delta}
g_{\eps,i} - \delta = \chi_i (g_{\eps}-\delta).   
\end{equation}
By the weighted Sobolev embedding and the uniform weighted estimate for $g_\eps$, we have a uniform constant $C_g$ depending on $g$, for all $\eps$ sufficiently small and $i$ sufficiently large, such that
\begin{equation}\label{eq:Decay_gi}
|g_{\eps,i}-\delta| \leq C_g |x|^{-\tau}.    
\end{equation}
From \eqref{eq:gi_minus_delta}, the derivatives of $g_{\eps,i}$ can be expressed as
\begin{equation}\label{eq:g_lambda_1dev_eq}
\partial g_{\eps,i} = \partial \chi_i (g_{\eps}-\delta) + \chi_i \partial g_{\eps},    
\end{equation}
and note that $|\partial \chi_i| \approx \lambda_i^{-1} \approx |x|^{-1}$ on $\operatorname{spt}(\partial \chi_i)=A_i$.

Recall that by \eqref{eq:CoordScalarC}, for any smooth metric, we can write $R[g]$ schematically as
\begin{equation}
R[g] = \partial V+ Q,    
\end{equation}
where $V = g^2*\partial g$ and $Q = (1+g)*g*(\partial g)^2$. Here, we have suppressed the distinction between the metric $g$ and its inverse $g^{-1}$, writing both schematically as $g$. This is justified because $g^{-1}$ and $\partial g^{-1}$ satisfy the same decay estimates and convergence properties as $g$ and $\partial g$, respectively.

Note that derivatives of $V$ can be written schematically as 
\begin{align}
    \partial V & = \partial (g^2*\partial g) 
     = g^2 * \partial^2 g + g *(\partial g)^2.
\end{align}
For the metric $g_{\eps,i}$, we denote $R[g_{\eps,i}]=\partial V_{\eps,i} + Q_{\eps,i}$. It is clear that $g_\eps-g_{\eps,i} = O(|x|^{-\tau})$, uniformly in $\eps$ and in $i$, which also holds for the inverses, and it is supported on $\operatorname{spt}(1-\chi_i)$. Using the decay and support of the terms $g_\eps-g_{\eps,i}$, we can estimate $\partial V_{\eps,i}$ by
\begin{align*}
        \partial V_{\eps,i} & =   g^2_i * \partial^2 g_{\eps,i} + g_{\eps,i} * (\partial g_{\eps,i})^2\\
        & = g^2_\eps * \partial^2 g_{\eps,i} +  g_\eps * (\partial g_{\eps,i})^2+ O(|x|^{-\tau})  (\partial^2 g_{\eps,i} + (\partial g_{\eps,i})^2),
\end{align*}
where the lower order terms are only supported on $A_i$, since $\partial g_{\eps,i}$ are supported on $B_{2^{i+1}}$ and the terms $O(|x|^{-\tau})$ are supported outside $B_{2^i}$.

For the second derivatives, from \eqref{eq:g_lambda_1dev_eq} and $|\partial \chi_i| \approx \lambda_i^{-1} \approx |x|^{-1}$, we have that
\begin{align*}
    \partial^2 g_{\eps,i} & = \partial (\partial\chi_i (g_\eps-\delta) + \chi_i \partial g_\eps))\\
    & = \partial^2 \chi_i (g_\eps-\delta) + 2 \partial \chi_i \partial g_\eps + \chi_i \partial^2 g_\eps\\
    &  = \chi_i \partial^2 g_\eps + O (|x|^{-1})\partial g_\eps + O(|x|^{-\tau-2}),
\end{align*}
where the lower order terms are only supported in $A_i$ since $\operatorname{spt}(\partial \chi_i)=A_i$. The quadratic term can be written as
\begin{align*}
    (\partial g_{\eps,i})^2 & =  (\partial\chi_i (g_\eps-\delta) + \chi_i \partial g_\eps)^2\\
    & =(\partial\chi_i (g_\eps-\delta))^2 + 2 \chi_i \partial\chi_i (g_\eps-\delta) \partial g_\eps + (\chi_i \partial g_\eps)^2\\
    & =(\chi_i \partial g_\eps)^2 +  O(|x|^{-1-\tau}) \partial g_\eps + O(|x|^{-2\tau-2}).
\end{align*}
Combining both, we obtain
\begin{align*}
        \partial V_{\eps,i} & = \chi_i \partial V_\eps +  (\chi_i^2-\chi_i)(\partial g_\eps)^2+ O(|x|^{-\tau})(\partial^2 g_{\eps,i} + (\partial g_{\eps,i})^2)+O(|x|^{-1})\partial g_\eps  +  O(|x|^{-\tau-2}),
\end{align*}
where $\partial V_\eps \colonequals g^2_\eps\partial^2 g_\eps +  g_\eps  (\partial g_\eps)^2$. Rewriting only in terms of $g_\eps$, we have
\begin{align}
        \partial V_{\eps,i} - \chi_i \partial V_\eps & = O(|x|^{-\tau})\partial^2 g_\eps + (\chi_i^2-\chi_i) (\partial g_\eps)^2+O(|x|^{-1})\partial g_\eps + O(|x|^{-\tau-2}),
\end{align}
where each term on the right-hand side is supported in $A_i$.

A similar computation for $Q_{\eps,i}$ reveals that
\begin{equation}
Q_{\eps,i} - \chi_i Q_\eps=  O(1)(\chi_i^2-\chi_i)(\partial g_\eps)^2+ O(|x|^{-1-\tau})\partial g_\eps + O(|x|^{-2\tau-2}),    
\end{equation}
where $Q_\eps = (1+g_\eps)*g_\eps*(\partial g_\eps)^2$, where $O(1)$ can be bounded by a constant uniformly in $\eps$ and $i$. Combining the two previous equations yields \eqref{eq:scalar_curv_interp} and concludes the claim.

What remains is to establish that all these terms converge to zero in $L^{p/2}_{-\tau^\prime-2}$ as $i \to \infty$. We start with the first term in \eqref{eq:scalar_curv_interp}. From \Cref{eq:Weighted_mollifier}, for any $f \in L^p_{-\tau}$, it holds that 
\begin{align}
\|f*\rho_\eps\|_{L^p_{-\tau}}& \leq C\| f\|_{L^p_{-\tau}},\\
\|\partial (f*\rho_\eps)\|_{L^p_{-\tau}}& \leq \frac{C}{\eps}\| f\|_{L^p_{-\tau}}, \label{eq:weighted_eps_estimate}
\end{align}
for some $C>0$ depending on $\rho$ and $p$. We can apply this to the second-order derivatives as follows:
\begin{align*}
\||x|^{-\tau} \partial^2 g_\eps \|_{L^{p/2}_{-\tau^\prime-2}(A_i)} = \||x|^{-\tau+1} \partial^2 g_\eps \|_{L^{p/2}_{-\tau^\prime-1}(A_i)} 
& \leq 2^{-(\tau-1)i} \||x|^{\tau^\prime-\tau} \partial^2 g_\eps \|_{L^{p/2}_{-\tau-1}(A_i)} \\
& \leq C 2^{-(2\tau-1-\tau^\prime)i} \|\partial^2 g_\eps \|_{L^{p}_{-\tau-1}(A_i)} \|1\|_{L^{p}_{0}(A_i)} \\
& \leq\frac{C}{\eps} 2^{-(2\tau-1-\tau^\prime)i} \|\partial g \|_{L^{p}_{-\tau-1}},
\end{align*}
where we applied H\"{o}lder's inequality and that $\|1\|_{L^{p}_{0}(A_i)}^p\leq C \log(2)$ is bounded independently of $i$, and the last line follows from \eqref{eq:weighted_eps_estimate}.
Thus, this quantity converges to zero as $i \to \infty$ since $\tau^\prime< 2\tau-1$.

The second term quadratic in $\partial g_\eps$, since $\tau^\prime<\tau$ and $\partial g_\eps \in L^p_{-\tau-1}$, we have
$$
\|(\partial g_\eps)^2 \|_{L^{p/2}_{-\tau^\prime-2}(A_i)} = \||x|^{\tau^\prime-2\tau} (\partial g_\eps)^2 \|_{L^{p/2}_{-2\tau-2}(A_i)} \leq C 2^{-(2\tau-\tau^\prime)i} \| \partial g \|_{L^{p}_{-\tau-1}}^2 \to 0.
$$

For the third term, similarly, we have
\begin{equation}
\||x|^{-1} \partial g_\eps \|_{L^{p/2}_{-\tau'-2}(A_i)} = \||x|^{\tau^\prime-\tau} \partial g_\eps \|_{L^{p/2}_{-\tau-1}(A_i)} \leq C 2^{-(\tau-\tau^\prime)i} \| \partial g \|_{L^{p}_{-\tau-1}} \to 0,
\end{equation}
where again we applied H\"{o}lder's inequality.

For the fourth term, since $\tau>\tau^\prime$, we have
\begin{equation}
\||x|^{-\tau-2}\|_{L^{p/2}_{-\tau^\prime-2}(A_i)}^{p/2}\le C\int_{2^{i}}^{2^{i+1}}r^{-\frac{p}{2}(\tau-\tau^\prime)-1}\,dr\le C2^{-\frac{p}{2}(\tau-\tau^\prime)i}\to 0,   
\end{equation}
and putting everything together, we conclude the claim.
\end{proof}

Now, we restate and prove our density theorem (\Cref{theo:density}).

\densitythm*
\begin{proof}
    Let $\lambda_i=2^i$, and $\eps_i = 2^{-i\eta/2}$, where $\eta$ is given by \Cref{lemma:AnnularDecay}, and define $g_i \colonequals g_{\eps_i,\lambda_i}$ \eqref{eq:def_g_lambdaeps}, $\Delta_i \colonequals \Delta_{\eps_i,\lambda_i}$,  $\chi_i\colonequals \chi_{\lambda_{i}}$, and $V_i \colonequals V_{\eps_i,\lambda_i}=-\chi_{i} R[g_{\eps_i}]_-+\left(R[g_i]-\chi_{i} R[g_{{\eps_i}}]\right)$ \eqref{eq:V_def}. Clearly, there exists $I>0$ such that the operators $\Delta_{i}$ are asymptotic to the Euclidean Laplacian $\Delta$ at rate $\tau^\prime$ (see \Cref{def:Deltaasymp_decayrate}) with uniform bounds for all $i \geq I$.
      
According to \Cref{prop:ConfReescProp}, the problem boils down to showing that $V_i$ has sufficiently small $L^{n/2}$ and $L^{p/2}_{-\tau^\prime-2}$ norms. Note that $V_i \in L^p_{-\tau-2}$ because it is compactly supported and locally Lipschitz. Since $R[g]\geq0$ in $\D^\prime$,  $\tau^\prime>0$, and $p\geq n$, we can apply \Cref{prop:ConvofRs_weighted} to show 
\begin{align}\label{eq:Rnegative_convzero}
\|R[g_{\eps_i}]_-\|_{L^{\frac{n}{2}}\cap L^{p/2}_{-\tau^\prime -2}}\leq C \|R[g_{\eps_i}]_-\|_{L^{p/2}_{-2\tau^\prime -2}}\longrightarrow 0 \text{ as } i \to \infty,
\end{align}
as $L^{p/2}_{-2\tau^\prime-2}\subset  L^{n/2} \cap L^{p/2}_{-\tau^\prime-2}$. Analogously, by \Cref{lemma:AnnularDecay}, we see
\begin{equation}
\|R[g_i] - \chi_{\lambda_i} R[g_{\eps_i}]\|_{L^{\frac{n}{2}} \cap L^{p/2}_{-\tau^\prime-2}}\leq C 2^{-i{\eta}/{2}} \longrightarrow 0 \text{ as } i \to \infty.
\end{equation}
Combining the two previous inequalities, we have
\begin{equation}\label{eq:V_bound}
\|V_i\|_{L^{\frac{n}{2}} \cap L^{p/2}_{-\tau^\prime-2}}\longrightarrow 0 \text{ as } i \to \infty.
\end{equation}

Therefore, after increasing $I$, if necessary, $V_i$ satisfies the hypotheses of \Cref{prop:ConfReescProp} for every $i \geq I$. Consequently, for any $\tau^\prime$ satisfying $0<\tau^\prime<\tau$ and $\tau^\prime< 2\tau-1$, there exists a global solution $u_i$, with $w_i\colonequals (u_i-1)\in W^{2,p/2}_{-\tau^\prime}$ such that $\widetilde{g}_i = u_i^{4/(n-2)} g_i$ is a $W^{1,p}_{-\tau^\prime}$ asymptotically flat metric. The constants in \Cref{prop:ConfReescProp,prop:Isom} can be chosen independently of $i$, since $g_i \to g$ in $W^{1,p}_{-\tau^\prime}$. This convergence follows from $g_{\eps_i} \to g$  in $W^{1,p}_{-\tau}$ and an estimate later established in the proof (see \eqref{eq:gi_gepsi_W1p_estimate}).

Since $V_i$ is Lipschitz and $g_i$ smooth, elliptic regularity implies that $u_i$ is $C^{2,\alpha}$, for some $\alpha \in (0,1)$, and hence $\widetilde{g}_i \in C^{2,\alpha}$. Moreover, since $g_i=\delta$ and $V_i = 0$ outside a compact set, $u_i$ is Euclidean harmonic there, and thus $\widetilde{g}_i = u_i^{4/(n-2)} \delta$ is harmonically flat at infinity. By \cite[Corollary 1.9]{Bartnik1986}, $u_i$ admits the asymptotic expansion \eqref{eq:harmonic_expansion}. This expansion, together with the regularity of $u_i$ and $g_i$, yields $\widetilde{g}_i \in W^{2,p}_{-\tau^\prime}$.

 The properties of the scalar curvature in item (2) of \Cref{theo:density} are verified as follows. By the usual formula for the conformal Laplacian \eqref{eq:Sc_conformalmetric} and the definition of $V_i$ \eqref{eq:V_def}, we have 
\begin{equation}\label{eq:ScalarCurvFormProof}
    R[\widetilde{g}_i] = u_i^{-\frac{4}{n-2}} (R[g_i] - V_i) = u_i^{-\frac{4}{n-2}}\chi_i R[g_{\eps_i}]_+\geq 0,
\end{equation}
and $R[\widetilde{g}_i] \in L^1(M)$ because it is compactly supported. 

Now, we check the closeness of $R[\widetilde{g}_i]$ and $R[g_{\eps_i}]$ in $L^1$ in order to obtain the weak convergence of measure. By \eqref{eq:ScalarCurvFormProof}, the difference between the scalar curvatures is given by
\begin{align}
R[\widetilde{g}_i]-R[g_{\eps_i}]& =\chi_i u_i^{-\frac{4}{n-2}} R[g_{\eps_i}]_+-R[g_{\eps_i}]\\
& =\chi_i (u_i^{-\frac{4}{n-2}}-1) R[g_{\eps_i}]_++(\chi_i-1)R[g_{\eps_i}]_+ + R[g_{\eps_i}]_-.
\end{align}
As $R[g]\geq 0$ in $\D^\prime$ and $R[g]$ is a finite measure outside a compact set by hypothesis, \Cref{lemma:conv_eps_sc_curv_L1}, item (3), gives $\|R[g_{\eps_i}]\|_{L^1}\leq C$ uniformly in $\eps_i$. To control the first term, we combine this estimate with the bound $u_i \in (\frac{1}{2}, \frac{3}{2})$, as established in the proof of \Cref{prop:ConfReescProp}. Hence, 
\begin{align}
\int_M \left| \chi_i (u_i^{-\frac{4}{n-2}}-1) R[g_{\eps_i}]_+\right| \,d\mu_h&
\leq \|u_i^{-\frac{4}{n-2}}-1\|_{\infty} \int_M\chi_i  \left| R[g_{\eps_i}]_+\right| \,d\mu_h \\&\leq C  \|u_i-1\|_\infty,
\end{align}
which converges to zero by \Cref{prop:Isom} (see \eqref{eq:improved_gradient}) combined with the estimate \eqref{eq:V_bound}.
For the second term,
\begin{align}
\int_M |(\chi_i-1)R[g_{\eps_i}]_+| \,d\mu_h \leq \int_{|x|\geq\lambda_i}  |R[g_{\eps_i}]_+| \,d\mu_h,
\end{align}
which converges by the $L^1$ uniform bound of $R[g_{\eps_i}]$ using the estimate \eqref{eq:Unif_bound_Rgeps} in $\{|x|\geq \lambda_i\}$ as in \Cref{lemma:conv_eps_sc_curv_L1}. The third term converges to zero by \Cref{prop:ConvofRs_weighted} and noticing that $\tau>(n-2)/2$ implies $L^{p/2}_{-2\tau-2} \subset L^1$. Consequently, we have 
\begin{align}\label{eq:weakconvergence}
\int_M \left| R[\widetilde{g}_i]-R[g_{\eps_i}]\right|  \,d\mu_h \longrightarrow 0 \quad \text{ as } i \to \infty,
\end{align}
and, as in the proof of \Cref{lemma:conv_eps_sc_curv_L1}, this is sufficient to conclude that
\begin{equation}\label{eq:measure_convergence}
R[\widetilde{g}_i]\,d\mu_h \rightharpoonup dR[g]
\end{equation}
weakly as finite nonnegative Radon measures.

We begin the proof of item (3) by verifying the $W^{1,p}_{-\tau^\prime}$ closeness of $\widetilde{g}_i$ and $g$. For this, we have the following estimate
\begin{equation}\label{eq:W1p_close}
    \|\widetilde{g}_i-g\|_{W^{1,p}_{-\tau^\prime}} \leq \|\widetilde{g}_i-g_i\|_{W^{1,p}_{-\tau^\prime}} + \|g_{\eps_i}-g_i\|_{W^{1,p}_{-\tau^\prime}} + \|g_{\eps_i}-g\|_{W^{1,p}_{-\tau^\prime}}.
\end{equation}

By \Cref{lemma:FamilyOfMetrics_weighted}, it remains to estimate the first two terms on the right-hand side of \eqref{eq:W1p_close}. First, using that $|\chi_i-1| \leq 1$ and $|\partial\chi_i|\leq C |x|^{-1}$, we have
\begin{align}
    \|g_i-g_{\eps_i}\|_{W^{1,p}_{-\tau}} &\leq  C  \|g_{\eps_i}-\delta\|_{W^{1,p}_{-\tau}}\leq C,
\end{align}
uniformly in $\eps$ and $i$ by the usual mollifier properties.
 
Now, since $\operatorname{supp}(g_i - g_{\eps_i}) \subset \{|x|\geq \lambda_i\}$, we see
\begin{equation}\label{eq:gi_gepsi_W1p_estimate}
\|g_i - g_{\eps_i}\|_{L^p_{-\tau^\prime}}  = \||x|^{-\tau+\tau^\prime}(g_i - g_{\eps_i})\|_{L^p_{-\tau}}\leq 2^{-(\tau-\tau^\prime)i}  \|g_i - g_{\eps_i}\|_{L^p_{-\tau}}\leq C2^{-(\tau-\tau^\prime)i},
\end{equation}
where $C>0$ is independent of $\eps$ and $i$. A similar argument works for the first derivatives, and this concludes the closeness of $g_i$ and $g_{\eps_i}$ in $W^{1,p}_{-\tau^\prime}$.

Since $\widetilde{g}_i-g_i = \left(u_i^{\frac{4}{n-2}}-1\right) g_i$, $g_i$ is uniformly bounded, and $g_i \to g$ in $W^{1,p}_{-\tau^\prime}$, we have the following estimate, with $C>0$ independent of $i$,
\begin{equation}
    \|\widetilde{g}_i-g_i\|_{W^{1,p}_{-\tau^ \prime}}\leq C \|u_i-1\|_{W^{1,p}_{-\tau^ \prime}}\leq C \|V_i\|_{L^{p/2}_{-\tau^ \prime-2}}.
\end{equation}
The right-hand side converges to zero as $i \to \infty$ by the estimate \eqref{eq:V_bound}

The convergence $\widetilde{g}_i \to g$ in $W^{1,p}_{-\tau^\prime}$, with $p>n$ and $\tau^\prime>0$, is sufficient to show that $\widetilde{g}_i \to g$ in $L^\infty$ by the Sobolev embedding. In particular, if $\tau^\prime>(n-2)/2$ (and then $n\geq 4$), we obtain that $\partial \widetilde{g}_i \to \partial g$ in $L^p_{-1-\tau^\prime} \subset L^2$. This is no longer immediate if $\tau^\prime<(n-2)/2$, and we instead use the gradient estimate \eqref{eq:gradient_estimate_L2} from \Cref{prop:Isom}, together with the structure of the potential $V_i$. 

Note that the difference $R[g_i] - \chi_i R[g_i] $ can be written in divergence form by using the coordinate formula for the scalar curvature \eqref{eq:CoordScalarC} in asymptotically flat coordinates and then applying the product rule. Thus, we can write
\begin{equation}
R[g_i] - \chi_i R[g_{\eps_i}] = \operatorname{div}_{g_{i}} X_{i}  + \mathcal{Q}_{i}, 
\end{equation}
where $X_{i}$ is a compactly supported vector field containing the relevant terms involving $\partial g_{\eps_i}$ and $\partial g_{i}$, and $\mathcal{Q}_i$ is a compactly supported function consisting of quadratic terms, the remaining terms from the product rule, and the corrections needed to obtain the divergence. Moreover, $X_i$ and $\mathcal{Q}_i$ are supported in the annular region $A_i$. In particular, $\|X_i \|_{L^p_{-\tau-1}}$ and $\|\mathcal{Q}_i \|_{L^{p/2}_{-\tau-2}}$ are uniformly bounded in $i$ because they depend on $g_{\eps_i}$ and $g_i$, with $g_{\eps_i} \to g$ in $W^{1,p}_{-\tau}$ and $\|g_i-g\|_{W^{1,p}_{-\tau}}\leq C$ uniformly in $i$. 

By \eqref{eq:gradient_estimate_L2} and the smallness of $\|V_i\|_{L^{n/2}}$ \eqref{eq:V_bound}, we have
        \begin{equation}
            \|\nabla^{g_i} w_i\|_{L^{2}}  \leq  C \left(\|X_i\|_{L^2} +  \|\mathcal{Q}_i\|_{L^{\frac{2n}{n+2}}} + \|R[g_{\eps_i}]_-\|_{L^{\frac{2n}{n+2}}}\right),
        \end{equation}
for some constant $C>0$ independent of $i$. Applying \Cref{lemma:dyadic}, we obtain
        \begin{equation}
        \left(\|X_i\|_{L^2} +  \|\mathcal{Q}_i\|_{L^{\frac{2n}{n+2}}}\right) \leq C2 ^{-(\tau - \frac{n-2}{2})i}\left( \|X_i\|_{L^p_{-\tau-1}} +   \|\mathcal{Q}_i\|_{L^{p/2}_{-\tau-2}}\right).
        \end{equation}
Furthermore, $ \|R[g_{\eps_i}]_-\|_{L^{\frac{2n}{n+2}}} \leq C  \|R[g_{\eps_i}]_-\|_{L^{p/2}_{-2\tau-2}} \to 0$ as $i \to \infty$ by \Cref{prop:ConvofRs_weighted}. Because $\tau>(n-2)/2$, both powers of $2$ above converge to zero and the uniform weighted bounds imply
\begin{equation}
\|\nabla^{g_i} w_i\|_{L^{2}} \longrightarrow 0 \text{ as } i \to \infty,
\end{equation}
 and using this fact, $\partial g_i \to \partial g$ in $L^2$, and $w_i \to 0 $ in $L^\infty_{-\tau^\prime}$, we can show that $\partial \widetilde{g}_i \to \partial g$ in $L^2$.

Therefore, by \Cref{lemma:mass_cont}, $\misoc(\widetilde{g}_i) \to \misoc(g)$ as $i \to \infty$. Also note that, since $\widetilde{g}_i$ is a $W^{2,p}_{-\tau^\prime}$, $p>n$, asymptotically flat metric, harmonically flat outside a compact set, and $R[\widetilde{g}_i] \in L^1$, the ADM mass $\madm(\widetilde{g}_i)$ is well-defined and finite, and, by \Cref{cor:proof_ADMmass}, $\madm(\widetilde{g}_i) = \misoc(\widetilde{g}_i)$. Thus $\madm(\widetilde{g}_i) \to \misoc(g)$ as $i \to \infty$. It is a standard fact that $\madm(\widetilde{g}_i ) = 2 A_i$ for metrics that are conformal to the Euclidean metric, where $A_i$ is the parameter in the expansion \eqref{eq:harmonic_expansion}. The $\delta_i$-closeness as bilinear forms on $M$ in item (4) follows from the $L^\infty$ convergence of $\widetilde{g}_i $ to $g$.
\end{proof}

\begin{theorem}\label{theo:PMT_W1p}
    Let $(M^n, g)$ be a $W^{1,p}_{-\tau}$, $p>n$, asymptotically flat manifold with $\tau>(n-2)/2$. Assume $R[g]\geq0$ in $\D^\prime$ and $R[g]$ is a finite measure outside some compact set. Then $\misoc(g)\geq0$.
\end{theorem}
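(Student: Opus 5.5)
The plan is to reduce Theorem~\ref{theo:PMT_W1p} to the smooth positive mass theorem through the density theorem, Theorem~\ref{theo:density}. There is one small technical issue to handle first. Theorem~\ref{theo:density} requires $\tau\in(\frac{n-2}{2},n-2)$, while here we only know $\tau>(n-2)/2$.

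If $\tau\geq n-2$, I would lower the decay rate. Pick any $\tau_0\in(\frac{n-2}{2},n-2)$. Since $W^{1,p}_{-\tau}\subset W^{1,p}_{-\tau_0}$ whenever $\tau_0<\tau$, the metric $g$ is also $W^{1,p}_{-\tau_0}$ asymptotically flat. The hypotheses on $R[g]$ do not involve $\tau$. Moreover, $\misoc(g)$ is defined through the same coordinate balls, so it does not depend on which decay rate we record. Hence we may assume $\tau\in(\frac{n-2}{2},n-2)$.

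Next, fix some admissible $\tau'$ with $0<\tau'<\tau$ and $\tau'<2\tau-1$; for instance $\tau'=\tau/2$ works. Apply Theorem~\ref{theo:density} to obtain $C^2$ metrics $\widetilde g_i$ with the following properties:
\begin{itemize}
\item each $\widetilde g_i$ is harmonically flat outside a bounded set;
\item $R[\widetilde g_i]\geq 0$;
\item $\madm(\widetilde g_i)=2A_i\to\misoc(g)$ as $i\to\infty$.
\end{itemize}

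Each $\widetilde g_i$ is complete. This follows from item (4) of Theorem~\ref{theo:density}: $\widetilde g_i$ is uniformly bilipschitz to the complete metric $g$. Each $\widetilde g_i$ is also asymptotically flat with the standard decay. Indeed, it equals $u_i^{4/(n-2)}\delta$ near infinity, with $u_i=1+A_i|x|^{2-n}+O_\infty(|x|^{1-n})$. So $\widetilde g_i-\delta=O_2(|x|^{2-n})$, and $n-2>(n-2)/2$ gives the required decay. Its scalar curvature is nonnegative, and it is compactly supported, hence integrable.

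The smooth Riemannian positive mass theorem therefore gives $\madm(\widetilde g_i)\geq 0$ for every $i$. Letting $i\to\infty$ yields $\misoc(g)=\lim_i\madm(\widetilde g_i)\geq 0$.

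The main obstacle is justifying the use of the positive mass theorem in full generality: arbitrary dimension, no spin assumption, and merely $C^2$ rather than smooth metrics. For the generality, I would cite the known proofs valid in all dimensions, namely Schoen--Yau/Lohkamp, or the dimension-descent scheme of Brendle--Wang. For the regularity, I would note that a $C^{2,\alpha}$ metric which is harmonically flat at infinity falls within the classical framework. If needed, one can first smooth $\widetilde g_i$ on a compact set and then correct by a conformal factor with small change in mass, as in the standard reductions. Alternatively, one can apply the positive mass theorem for harmonically flat $C^2$ metrics via Lohkamp-type reductions, which only needs $R\geq0$ and an expansion of the form \eqref{eq:harmonic_expansion}.
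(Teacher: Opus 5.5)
Your proposal is correct and is essentially the paper's argument: the density theorem supplies harmonically flat $C^2$ metrics $\widetilde g_i$ with $R[\widetilde g_i]\geq 0$ and $\madm(\widetilde g_i)\to\misoc(g)$, and the classical positive mass theorem then forces $\misoc(g)\geq 0$ (the paper phrases this as a contradiction). Your preliminary reduction from $\tau\geq n-2$ to $\tau\in(\frac{n-2}{2},n-2)$ makes explicit a step the paper leaves implicit. One small slip: $\tau'=\tau/2$ violates $\tau'<2\tau-1$ when $n=3$ and $\tau\leq 2/3$, but any $\tau'\in(0,\min\{\tau,2\tau-1\})$ works, and this interval is nonempty because $\tau>1/2$.
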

\begin{proof}
    Suppose that $\misoc(g)<0$. Then, we approximate $g$ by a smooth metric $\widetilde{g}$ as in \Cref{theo:density}, chosen such that $\madm(\widetilde{g})<0$. By construction, $\widetilde{g}$ has nonnegative scalar curvature and  is harmonically flat outside a compact. In particular, it satisfies the asymptotic assumptions required to apply the classical smooth positive mass theorem. However, this gives a contradiction. 
\end{proof}

\subsection{The rigidity case}\label{sec:Rigidity_W1p}

This section is devoted to proving the rigidity case for the isoperimetric mass, i.e., $\misoc(g) = 0$ implies that $(M^n, g)$ is isometric to Euclidean space $(\RR^n, \delta)$. The proof proceeds by first establishing scalar-flatness and then Ricci-flatness, before concluding flatness. At each step, we use the conclusion obtained in the preceding one. The overall strategy and methods are adapted from \cite{Hafemann2026}, with the individual steps modified slightly to accommodate $\misoc$ and the weaker asymptotic assumptions.

The first two steps rely on the improved scalar curvature estimates from \Cref{prop:ConvofRs_weighted}. Both conclusions are established by contradiction, if the curvature does not vanish, we perturb the metric and apply a conformal deformation to construct metrics satisfying the hypothesis of the low-regularity positive mass theorem (\Cref{theo:PMT_W1p}), but having negative isoperimetric mass. The Ricci-flatness argument follows the approach of Jiang-Sheng-Zhang \cite{Jiang2022}, and uses, as an essential input, the consequence of \Cref{prop:ConvofRs_weighted}, that $R[g_\eps] \to 0$ in $L^1$ whenever $R[g]=0$. This strategy is similar to the standard rigidity argument, but uses a positive mass theorem requiring weaker decay. 

Once distributional Ricci-flatness has been established, we use the equivalence between distributional Ricci curvature lower bounds and nonnegative Ricci curvature in the sense of $\RCD$ spaces established in \cite{mondino2025}. The volume comparison argument for $\RCD$ spaces employed in \cite{Jiang2022} then shows that $(M^n, d_g)$ is isometric to $(\RR^n, d_\delta)$. This metric space isometry can finally be upgraded to a Riemannian isometry by the low-regularity Myers-Steenrod theorem \cite{Matveev2017}. 

For completeness, we state and prove below the versions of the rigidity results in \cite{Hafemann2026} for metrics that need not be smooth outside a compact. Although many details closely follow that reference, we include the proofs to indicate the modifications required to prove the result for $\misoc$ and weaker decay assumption.

\begin{lemma}\label{lemma:ScalarLowerBound_W1p}
     Let $(M, g)$ be a $W^{1,p}_{-\tau}$ asymptotically flat manifold with $p>n$ and $\tau>(n-2)/2$. Let $g_{\eps,t} = g_\eps + t q$, where $q$ is a smooth compactly supported $(0,2)$ symmetric tensor and $g_\eps$ is the mollified metric. Consider functions $w,v \in C^{0}_{-\tau}$, with $dw, dv \in L^2(M)$ satisfying $w, v \in L^{\frac{2n}{n-2}}(M)$. Then, there exist $\eps_0>0$ and $t_0>0$, and constants $C_0, C_1,C_2>0$ independent of $\eps$ and $t$ such that for all $\eps \in (0, \eps_0)$ and $t \in [0, t_0)$, we have 
    \begin{equation}\label{eq:linear_control_W1p}
        \left| \int  R[g_{\eps,t}] w \, d\mu_{\eps,t} \right| \leq \left( C_0  + C_1 t +C_2 t^2 \right) \|\nabla^{g_{\eps,t}} w\|_{L^2(M, g_{\eps,t})} ,
    \end{equation}
    and
     \begin{equation} \label{eq:quadratic_control_W1p}
        \left| \int  R[g_{\eps,t}] w v \, d\mu_{\eps,t} \right| \leq \left( C_0  + C_1 t +C_2 t^2 \right) \|\nabla^{g_{\eps,t}} w\|_{L^2(M, g_{\eps,t})} \|\nabla^{g_{\eps,t}} v\|_{L^2(M, g_{\eps,t})},
    \end{equation}
   
    Moreover, if $R[g] = 0$, then, after possibly decreasing $\eps_0$ and $t_0$, there exists $A(\eps)\geq0$ satisfying $A(\eps) \to 0$ as $\eps \to 0$, such that $C_0$ may be replaced by $A(\eps)$ in both estimates above.
\end{lemma}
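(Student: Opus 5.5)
We use the divergence structure of scalar curvature: integrate by parts once so that only first derivatives of the metric appear, and then use Hölder together with \Cref{SobolevIneq}. In the coordinate decomposition \eqref{eq:CoordScalarC}, written chartwise with the partition of unity as in \eqref{eq:dist_scalar}, we write
\begin{equation*}
R[g_{\eps,t}]\,d\mu_{\eps,t} = \big(\partial_k V^k_{\eps,t} + Q_{\eps,t}\big)\sqrt{|g_{\eps,t}|}\,dx .
\end{equation*}
Testing against $w$ and integrating by parts gives
\begin{equation*}
\int R[g_{\eps,t}]\,w\,d\mu_{\eps,t} = \int \Big(-V_{\eps,t}^k\,\partial_k\big(w\sqrt{|g_{\eps,t}|}\big) + Q_{\eps,t}\,w\sqrt{|g_{\eps,t}|}\Big)\,dx .
\end{equation*}
This step is justified by a cutoff argument, since $w\in C^0_{-\tau}$ and $V_{\eps,t}\sim\partial g_{\eps,t}\in L^p_{-\tau-1}$. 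The boundary terms on $S_r$ behave like $r^{n-1}r^{-\tau}r^{-\tau-1}$ in an averaged sense and vanish along a sequence $r\to\infty$ because $\tau>(n-2)/2$. In the quadratic estimate we test against $wv$ instead.

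Next we bound each piece. The term $V\cdot\partial w$ is controlled by $\|\partial g_{\eps,t}\|_{L^2}\|dw\|_{L^2}$. The term $V\,w\,\partial\sqrt{|g|}$ and the term $Q\,w$ are both of type $(\partial g)^2 w$. They are bounded by $\|(\partial g_{\eps,t})^2\|_{L^{2n/(n+2)}}\|w\|_{L^{2n/(n-2)}}$, and then by \Cref{SobolevIneq} by $C\|\nabla w\|_{L^2}$. Since $\partial g_\eps\in L^p_{-\tau-1}$ with $\tau>(n-2)/2$, \Cref{lemma:dyadic} applied on dyadic annuli gives $\partial g_\eps\in L^2\cap L^{n}$, and hence $(\partial g_\eps)^2\in L^{n/2}\cap L^{2n/(n+2)}$ (interpolate between $L^1$ and $L^{p/2}_{-2\tau-2}$), uniformly in small $\eps$ by \Cref{lemma:FamilyOfMetrics_weighted}. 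Expanding $g_{\eps,t}=g_\eps+tq$, with $q$ smooth and compactly supported, produces the polynomial $C_0+C_1t+C_2t^2$. Uniform equivalence for $t<t_0$ lets us switch between norms of $g_{\eps,t}$, $g$ and $h$ with fixed constants, which also makes the Sobolev constants uniform. For the quadratic version, $\partial(wv)=v\,\partial w+w\,\partial v$. We bound $\int |V||v||\partial w|$ by $\|V\|_{L^n}\|v\|_{L^{2n/(n-2)}}\|\partial w\|_{L^2}$, and $\int (\partial g)^2|wv|$ by $\|(\partial g)^2\|_{L^{n/2}}\|w\|_{L^{2^*}}\|v\|_{L^{2^*}}$; Sobolev then finishes.

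For the case $R[g]=0$, the $t$-independent part is $\int R[g_\eps]\,w\,d\mu_\eps$. Here we use \Cref{prop:ConvofRs_weighted}: $R[g]\star_M\rho_\eps=0$, so $R[g_\eps]\to0$ in $L^{p/2}_{-2\tau-2}$. This weighted space embeds in $L^{n/2}\cap L^{2n/(n+2)}$ by \Cref{lemma:dyadic} (as used in the density theorem). Therefore, \emph{without} integrating by parts, Hölder and Sobolev give
\begin{equation*}
\Big|\int R[g_\eps]\,w\Big|\le \|R[g_\eps]\|_{L^{2n/(n+2)}}\,C\|\nabla w\|_{L^2},
\end{equation*}
and similarly with $\|R[g_\eps]\|_{L^{n/2}}$ in the quadratic case. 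This gives $A(\eps)\to0$. The $t$-dependent remainder $R[g_{\eps,t}]-R[g_\eps]$ is supported in $\operatorname{supp}q$, which is compact. On that set, the divergence form argument above applies to the difference: it is linear and higher order in $t$, with coefficients involving $\partial g_\eps$, which is uniformly bounded in $L^p$ on the compact set. These terms are absorbed into $C_1t+C_2t^2$.

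The main obstacle is the $t$-dependent part. The second-order term $t\,g\,\partial^2 q$ is harmless, but the cross terms $t\,\partial q\,\partial g_\eps$ must be controlled by constants independent of $\eps$, and they have to be integrated by parts since $\partial^2 g_\eps$ blows up like $\eps^{-1}$. A second delicate point is the justification of the integration by parts at infinity for $w$ that merely lies in $C^0_{-\tau}$ with $dw\in L^2$.
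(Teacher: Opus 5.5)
Your proposal is correct and follows essentially the paper's argument: the divergence form $R=\partial_k V^k+Q$ with integration by parts justified by a cutoff on dyadic annuli, H\"older plus \Cref{SobolevIneq} with constants made uniform by $g_\eps\to g$ in $W^{1,p}_{-\tau}$, and, when $R[g]=0$, \Cref{prop:ConvofRs_weighted} giving $R[g_\eps]\to0$ in $L^{n/2}\cap L^{2n/(n+2)}$ without integrating by parts. The only difference is bookkeeping. The paper organizes the $t$-dependence through the variational formula $R[g_{\eps,t}]=R[g_\eps]+t\,DR|_{g_\eps}(q)+t^2B_{\eps,t}(q)$ and treats the $\langle \Ric[g_\eps],q\rangle$ term by the same divergence argument, whereas you expand the coordinate divergence form of $R[g_{\eps,t}]$ directly in $t$; this yields the same compactly supported, first-order-in-$g_\eps$ remainder.
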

    \begin{proof}
        The proof of this theorem is a generalization of \cite[Lemma 4.5]{Hafemann2026} without assuming smoothness outside a compact set, but with a Sobolev decay rate instead. The proof can be followed almost line by line, except by replacing local arguments or estimates by global ones. We sketch the main steps and constant dependence, while the intermediate steps are detailed in \cite[Lemma 4.5]{Hafemann2026}.

        We begin with recalling the following variational formula for the scalar curvature \cite[Exercise 1.18]{L:19} (or \cite[Proposition 4]{BrendleMarques2011}),
        \begin{equation} \label{eq:VarScalarCurv_complete2_W1p}
            R[g_{\eps,t}] = R[g_\eps] + t\restr{DR}{g_\eps}(q) + t^2 B_{\eps,t}(q),
        \end{equation}
        where
        \begin{equation}\label{eq:VarScalarCurv2_W1p}
            \restr{DR}{g_\eps}(q) = \operatorname{div}_{g_\eps}(\operatorname{div}_{g_\eps}(q)) - \Delta_{g_\eps} \tr_{g_\eps}(q)  -  \langle \Ric[g_\eps] , q \rangle_{g_\eps},
        \end{equation}
        and $B_{\eps,t}(q)$ is the sum of a divergence term $\operatorname{div}_{g_\eps} U_{\eps,t}$, where $U_{\eps,t}$ is a compactly supported vector field obtained by contracting $q*\nabla^{g_\eps} q$ with $g^{-1}_\eps$ and $g^{-1}_{\eps,t}$, a term quadratic in $\nabla^{g_\eps} q$ and a term $\Ric[g_\eps]*q^2$ with analogous contractions.
        
         By a partition of unity argument, we may work on $M = \RR^n$. In these coordinates, we use the decomposition $R[g_\eps] = \partial_k V^k[g_\eps] + Q[g_\eps]$ from \eqref{eq:dist_scalar}. Although $w$ and $v$ need not to be compactly supported, the following identity holds
        \begin{equation}\label{eq:sc_wv}
            \int R[g_\eps] w v \,d\mu_{g_\eps} = \int -V^k[g_\eps] \partial_k(wv \sqrt{|g_\eps|}) + Q[g_\eps]  wv \sqrt{|g_\eps|} \, dx.
        \end{equation}
        To obtain this identity, we apply integration by parts to the compactly supported function $\chi_R wv$, where $\chi_R$ is the cutoff function introduced in \Cref{sec:ProofDensityTheo}. The resulting error term containing $\partial \chi_R$ is supported in an annular region and tends to zero as $R \to \infty$ by \Cref{lemma:dyadic}, since $V[g_\eps] \in L^p_{-\tau-1}$, $w,v \in C^{0}_{-\tau}$, and $d\chi_R=O(R^{-1})$. The argument applies with $\chi_R w$ in place of $\chi_R w v$.

        From a standard argument involving H\"{o}lder's inequality and the Sobolev inequality (\Cref{SobolevIneq}), we can show that 
       \begin{align}\label{eq:Scalar_UpperBound1_W1p}
           \left| \int  R[g_{\eps}] w v \, d\mu_{\eps}\right| \leq  C(\eps)  \|\nabla^{g_\eps} w\|_{L^2(M, g_\eps)} \|\nabla^{g_\eps} v\|_{L^2(M,g_\eps)},
    \end{align}
    and
    \begin{align}\label{eq:Scalar_UpperBound2_W1p}
        \left| \int  R[g_{\eps}] w \, d\mu_{\eps}\right| \leq  C(\eps)  \|\nabla^{g_\eps} w\|_{L^2(M, g_\eps)},
    \end{align}
    where
    \begin{equation}\label{eq:C_eps_def}
         C(\eps) \colonequals C \left(\sum_{k=1}^n\|V^k[g_\eps]\|_{L^n\cap L^2(M, g_\eps)} +  \|Q[g_\eps]\|_{L^\frac{n}{2} \cap L^\frac{2n}{n+2}(M, g_\eps)}\right),
    \end{equation}
    for some constant $C>0$ depending on the Sobolev constant $C(g_\eps)$ and $\|\partial {\sqrt{|g_\eps|}}\|_{L^n}$. For sufficiently small $\eps$, these quantities can be bounded uniformly in $\eps$, since $g_\eps \to g$ in $W^{1,p}_{-\tau}$. Moreover, the same convergence provides uniform bounds for the norms of $V^k[g_\eps]$ and $Q[g_\eps]$. Consequently, there exists a constant $C_0$ independent of $\eps$ such that $C(\eps)\leq C_0$.

    The remaining terms in \eqref{eq:VarScalarCurv_complete2_W1p} contain $q$ or its derivatives and are therefore compactly supported. They can be treated as in \cite[Lemma 4.5]{Hafemann2026}, using integration by parts, H\"{o}lder's inequality, and the Sobolev inequality. The Ricci term in \eqref{eq:VarScalarCurv2_W1p} can be estimated by the same argument used for the scalar curvature. Then, we obtain that
    \begin{equation}\label{eq:DistTermsQuad}
         t\left|\int_M\restr{DR}{g_\eps}(q)wv  d\mu_{g_\eps} \right|+ t^2 \left|\int_M B_{\eps,t}(q) wv  d\mu_{g_\eps} \right| \leq (C_1 t  + C_2 t^2)\|\nabla^{g_\eps} w\|_{L^2(M, g_\eps)} \|\nabla^{g_\eps} v\|_{L^2(M, g_\eps)},
    \end{equation}
    and
    \begin{equation}\label{eq:DistTermsLin}
         t\left|\int_M\restr{DR}{g_\eps}(q)w  d\mu_{g_\eps} \right|+ t^2 \left|\int_M B_{\eps,t}(q) w  d\mu_{g_\eps} \right| \leq (C_1 t  + C_2 t^2)\|\nabla^{g_\eps} w\|_{L^2(M, g_\eps)},
    \end{equation}
    where the constants $C_1$ and $C_2$ depend on the relevant $L^q$ norms, with $q$ as in \eqref{eq:C_eps_def}, of products involving $\partial q$ contracted with $g_\eps^{-1}$, and of $\Gamma[g_\eps]$ contracted with $q$. Since $q$ is compactly supported and $g_\eps \to g$ in $W^{1,p}_{-\tau}$, these norms are uniformly bounded for sufficiently small $\eps$. Then, $C_1$ and $C_2$ can be chosen independently of $\eps$.

    It is not hard to see that replacing $wv$ by $w v d\mu_{g_{\eps,t}}/d\mu_{g_{\eps}} = \left(wd\mu_{g_{\eps,t}}/d\mu_{g_{\eps}}\right) v $ is sufficient to conclude that the above estimates also hold with respect to $d\mu_{g_{\eps,t}}$ using the Sobolev inequality, since $g_{\eps,t} \to g$ in $W^{1,p}_{-\tau}$ as $t \to 0$ and $\eps \to 0$. Thus, taking $\eps$ and $t$ sufficiently small, we obtain
    \begin{equation}\label{eq:Scalar_CaseR0_W1p}
        \left| \int  R[g_{\eps,t}] w v \, d\mu_{g_{\eps,t}} \right| \leq \left( C_0  + C_1 t +C_2 t^2 \right) \|\nabla^{g_{\eps,t}} w\|_{L^2(M, g_{\eps,t})}\|\nabla^{g_{\eps,t}} v\|_{L^2(M, g_{\eps,t})},
    \end{equation}
    and
    \begin{equation}\label{eq:Scalar_CaseR1_W1p}
        \left| \int  R[g_{\eps,t}] w \, d\mu_{g_{\eps,t}} \right| \leq \left( C_0  + C_1 t +C_2 t^2 \right) \|\nabla^{g_{\eps,t}} w\|_{L^2(M, g_{\eps,t})}.
    \end{equation}
    where $C_0, C_1, C_2>0$ are constants independent of $t$ and $\eps$.

    Assume that $g$ is scalar flat, i.e., $R[g]=0$, then we have that $R[g] \star_M \rho_\eps = 0$. Applying H\"{o}lder's inequality and the Sobolev inequality, we obtain
    \begin{equation}
        \left|\int R[g_\eps] w v \,d\mu_{g_\eps} \right|\leq C(g_\eps)^2 \|R[g_\eps]\|_{L^{n/2}(M, g_\eps)}  \|\nabla^{g_\eps} w\|_{L^2(M, g_\eps)} \|\nabla^{g_\eps} v\|_{L^2(M,g_\eps)},
    \end{equation}
    and,
    \begin{equation}
        \left|\int R[g_\eps] w \,d\mu_{g_\eps} \right|\leq C(g_\eps) \|R[g_\eps]\|_{L^{\frac{2n}{n+2}}(M, g_\eps)}  \|\nabla^{g_\eps} w\|_{L^2(M, g_\eps)},
    \end{equation}
    where $C(g_\eps)$ is the Sobolev constant of $g_\eps$.
    
    Since $R[g]\star_M \rho_\eps = 0$ and $\tau>(n-2)/2$, we have that $\|R[g_\eps]\|_{L^q(M, g_\eps)} \to 0$ as $\eps \to 0$, for every $q \in [1,p/2]$, by \Cref{prop:ConvofRs_weighted}. Then, we have
    \begin{align} \label{eq:Scalar_UpperBound1_small_W1p}
           \left| \int  R[g_{\eps}] w v \, d\mu_{\eps}\right| \leq  A(\eps)  \|\nabla^{g_\eps} w\|_{L^2(M, g_\eps)} \|\nabla^{g_\eps} v\|_{L^2(M,g_\eps)},
    \end{align}
    and
        \begin{align} \label{eq:Scalar_UpperBound2_small_W1p}
           \left| \int  R[g_{\eps}] w \, d\mu_{\eps}\right| \leq  A(\eps)  \|\nabla^{g_\eps} w\|_{L^2(M, g_\eps)},
    \end{align}
    where
       \begin{equation}
         A(\eps)\colonequals C  \|R[g_\eps]\|_{L^{\frac{n}{2}} \cap L^{\frac{2n}{n+2}} (M, g_\eps)} ,
    \end{equation}
    for some constant $C>0$ uniformly in $\eps$, for $\eps$ sufficiently small, and $A(\eps) \to 0$ as $\eps \to 0$.

    Again, combining \eqref{eq:Scalar_UpperBound1_small_W1p} and \eqref{eq:DistTermsQuad} (or \eqref{eq:Scalar_UpperBound2_small_W1p} and \eqref{eq:DistTermsLin}) and choosing $\eps$ and $t$ smaller if necessary, we have
    \begin{equation}
        \left|\int  R[g_{\eps,t}] w v \, d\mu_{g_{\eps,t}} \right|\leq \left( A(\eps)  + C_1 t +C_2 t^2 \right) \|\nabla^{g_{\eps,t}} w\|_{L^2(M, g_{\eps,t})}\|\nabla^{g_{\eps,t}} v\|_{L^2(M, g_{\eps,t})},
    \end{equation}
    and
    \begin{equation}
        \left|\int  R[g_{\eps,t}] w \, d\mu_{g_{\eps,t}} \right|\leq \left( A(\eps)  + C_1 t +C_2 t^2 \right) \|\nabla^{g_{\eps,t}} w\|_{L^2(M, g_{\eps,t})}.
    \end{equation}
    where $C_1,C_2>0$ are constants independent of $t$ and $\eps$ and $A(\eps) \to 0$ as $\eps \to 0$.
    \end{proof}

    \begin{proposition} \label{prop:scalar_flat_W1p}
        Let $(M, g)$ be a $W^{1,p}_{-\tau}$ asymptotically flat manifold with $p>n$, $\tau>(n-2)/2$. Assume that $R[g] \geq 0 $ in $\D^\prime$. If $\misoc(g) = 0$, then $g$ is scalar-flat.
    \end{proposition}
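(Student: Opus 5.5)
We argue by contradiction, following the scheme of the scalar-flatness step in \cite{Hafemann2026}, but built on the mollified metrics $g_\eps$ and using \Cref{theo:PMT_W1p} in place of the smooth positive mass theorem. Suppose $R[g]\geq 0$ in $\D^\prime$, $\misoc(g)=0$, and $R[g]\neq 0$. Then the nonnegative Radon measure $dR[g]$ is nonzero, so there is a nonnegative $\varphi\in\Cinfc(M)$ with $\langle R[g],\varphi\,d\mu_h\rangle>0$.

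The idea is to remove a little of this positive scalar curvature by a conformal change that lowers the mass, while keeping scalar curvature nonnegative. For small $s>0$, consider the potential $V=s\varphi$ on $(M,g)$ and apply \Cref{prop:ConfReescProp} with $V=s\varphi$. Note that $V_-=0$, $V\in L^1\cap L^{p/2}_{-\tau-2}$, and $\misoc(g)=0$ is finite by \Cref{prop:EquivMasses}. This yields $u>0$ with $u-1\in W^{2,p/2}_{-\tau}$ and $\widetilde g=u^{4/(n-2)}g$, a $W^{1,p}_{-\tau}$ asymptotically flat metric. It satisfies
\[
\misoc(\widetilde g)=-\frac{s}{2(n-1)\omega_{n-1}}\int_M\varphi u\,d\mu_g<0,
\qquad
R[\widetilde g]=(R[g]-s\varphi)u^{-\frac{4}{n-2}} .
\]
Away from $\operatorname{supp}\varphi$, $R[\widetilde g]$ is a finite nonnegative measure, so the finiteness hypothesis persists.

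The crux is choosing $\varphi$ and $s$ so that $R[g]-s\varphi\geq 0$ in $\D^\prime$; this is not automatic for an arbitrary measure. Two routes are available.
\begin{itemize}
\item If $dR[g]$ has a nonzero absolutely continuous part, take $\varphi$ to be a truncation of its density (dominated by the density) and then mollify slightly. A general nonnegative $\varphi\leq$ density is admissible in \Cref{prop:ConfReescProp} as long as it is bounded and compactly supported.
\item If $dR[g]$ is purely singular, no bounded $V$ works. I would then instead use the positive mass theorem for the approximations, namely \Cref{theo:density}.
\end{itemize}

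The robust approach, and the one I would implement, is a perturbative argument through \Cref{theo:density} and \Cref{lemma:ScalarLowerBound_W1p}. Take the harmonically flat approximations $\widetilde g_i$, with $R[\widetilde g_i]\geq0$, $R[\widetilde g_i]\,d\mu_h\rightharpoonup dR[g]$, and $\madm(\widetilde g_i)\to 0$. On each $\widetilde g_i$, solve $-c_n\Delta v_i+V_i v_i=0$ with $V_i=\frac12 R[\widetilde g_i]\psi$, where $\psi$ is a fixed compactly supported cutoff with $\int\psi\,dR[g]>0$. The resulting metric $v_i^{4/(n-2)}\widetilde g_i$ is $C^2$, harmonically flat, and has scalar curvature $(1-\tfrac12\psi)R[\widetilde g_i]v_i^{-4/(n-2)}\geq 0$. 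By \eqref{eq:conf_misoc} combined with \Cref{cor:proof_ADMmass}, its mass is
\[
\madm(\widetilde g_i)-\frac{1}{4(n-1)\omega_{n-1}}\int\psi R[\widetilde g_i]v_i\,d\mu_{\widetilde g_i}.
\]

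The main obstacle is controlling $v_i$ uniformly, since $V_i\geq0$ is only bounded in $L^1$, not in $L^{p/2}_{-\tau-2}$. I would use \eqref{eq:quadratic_control_W1p}-type bounds from \Cref{lemma:ScalarLowerBound_W1p}, namely $|\int R\,wv|\lesssim\|\nabla w\|_{L^2}\|\nabla v\|_{L^2}$. Applied with $w=v=v_i-1$ in the energy identity, this gives a uniform $L^2$-gradient bound and hence $v_i\to$ a limit. Here the sign $V_i\geq0$ helps the maximum principle: it gives $0<v_i\leq1$. The remaining task is a lower bound on $\int\psi R[\widetilde g_i]v_i$. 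If $\psi$ is replaced by $t\psi$ with $t$ small, the bilinear estimate gives $\|v_i-1\|$ small uniformly in $i$, hence $\int t\psi R[\widetilde g_i]v_i\geq\frac t2\int\psi\,dR[g]>0$ for large $i$.

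Since $\madm(\widetilde g_i)\to0$, the conformally changed metrics eventually have negative ADM mass and nonnegative scalar curvature. This contradicts the smooth positive mass theorem, or \Cref{theo:PMT_W1p}. Hence $R[g]=0$.
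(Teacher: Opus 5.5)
Your final argument (the ``robust approach'') is correct, but it runs on a different family of approximations than the paper's proof does.

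\textbf{The paper's route.} The paper perturbs the mollified metrics $g_\eps$, which still carry negative scalar curvature. It uses the potential $V_{\eps,t}=t\varphi R[g_\eps]_+-R[g_\eps]_-$, so that $R[\widetilde g_{\eps,t}]=u_{\eps,t}^{-4/(n-2)}(1-t\varphi)R[g_\eps]_+\geq 0$. This forces it to use:
\begin{itemize}
\item \Cref{prop:ConvofRs_weighted}, to make $R[g_\eps]_-$ negligible;
\item \Cref{cor:miso_close_eps}, for $\misoc(g_\eps)\to 0$;
\item \Cref{lemma:ScalarLowerBound_W1p}, to control $\|\nabla u_{\eps,t}\|_{L^2}$;
\item and the low-regularity \Cref{theo:PMT_W1p} for the final contradiction.
\end{itemize}

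\textbf{Your route.} You take the approximants $\widetilde g_i$ of \Cref{theo:density} as a black box. Since $R[\widetilde g_i]\geq 0$ already, your potential $t\psi R[\widetilde g_i]$ is nonnegative. Consequently:
\begin{itemize}
\item the smallness condition on $V_-$ in \Cref{prop:ConfReescProp} is vacuous;
\item $\madm(\widetilde g_i)\to\misoc(g)=0$ comes for free;
\item the new metrics are $C^{2,\alpha}$ and harmonically flat, since $u_iv_i$ is Euclidean harmonic outside a compact set, so the classical positive mass theorem suffices.
\end{itemize}

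\textbf{What you still need to supply.} \Cref{lemma:ScalarLowerBound_W1p} is stated for $g_\eps+tq$, not for $\widetilde g_i$. You must re-derive
\[
\Bigl|\int\psi R[\widetilde g_i]\,w\,d\mu_{\widetilde g_i}\Bigr|\le C\|\nabla w\|_{L^2}
\]
with $C$ independent of $i$. This does go through by the same integration by parts against $V^k[\widetilde g_i]$ and $Q[\widetilde g_i]$. Because $\psi$ has compact support, only local $W^{1,p}$ bounds on $\widetilde g_i$ enter, and these follow from item~3 of \Cref{theo:density}. The Sobolev constants are uniform by item~4.

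With this bound and $V_i\ge 0$, the energy identity alone gives $\|\nabla(v_i-1)\|_{L^2}=O(t)$. Hence
\[
\int V_iv_i\,d\mu_{\widetilde g_i}\ge t\int\psi R[\widetilde g_i]\,d\mu_{\widetilde g_i}-O(t^2),
\]
which is all you need. The quadratic bound and the convergence of $v_i$ are unnecessary.

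You should also state why \Cref{theo:density} applies. Its finiteness hypothesis on $R[g]$ outside a compact set follows from $\misoc(g)=0$ being finite, via \Cref{prop:EquivMasses}(2).

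\textbf{Comparison.} The paper's route keeps the scalar-flat step parallel to the Ricci-flat step (\Cref{prop:Ricciflat_W1p}), where perturbing the mollified metrics is unavoidable. Yours is more modular and avoids tracking $R[g_\eps]_-$ altogether.

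Your preliminary observation is correct: $V=s\varphi$ applied directly on $g$ fails for singular $dR[g]$. In the absolutely continuous bullet, note that mollifying $\varphi$ would break the domination by the density, though you do not rely on that step.
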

    \begin{proof}
    A more detailed exposition of this proof can be found in \cite[Proposition 4.6]{Hafemann2026}, the proof idea and steps are essentially the same but using global results for $W^{1,p}_{-\tau}$ asymptotically flat obtained in this work.
    
    Let $g_\eps$ be the regularisation of $g$ via mollifiers. Using that $g_\eps \to g$ in $W^{1,p}_\loc\cap C^0_\loc$, we can prove that if $R[g]\geq 0$ in $\D^\prime$ and $R[g]\neq0$, then there exists a test function $\varphi \in C^\infty_c(M)$, $\varphi\geq0$, and constants $\eps_0, C_\varphi>0$, such that
        \begin{equation}\label{eq:ScalarCurv_lowerbound_W1p}
    \int R[g_\eps] \varphi d\mu_{g_\eps} \geq C_\varphi>0,
    \end{equation}
    for every $\eps \in (0, \eps_0)$.
    
    Consider the following potential,
    \begin{equation}
    V_{\eps,t}\colonequals  R[g_\eps]_+ t\varphi - R[g_\eps]_- =  R[g_\eps]  t\varphi - R[g_\eps]_-(1-t \varphi).
    \end{equation}
    Since $\varphi \geq 0$ and $L^{p/2}_{-2\tau-2} \subset L^{n/2}$, $p>n$ and $\tau>(n-2)/2$, it is clear that 
    \begin{equation}
    \|(V_{\eps,t})_-\|_{L^{n/2}}=\|R[g_\eps]_-\|_{L^{n/2}}\leq C \|R[g_\eps]_-\|_{L^{p/2}_{-2-2\tau}} \longrightarrow 0 \quad \text{ as } \eps \to 0,
    \end{equation}
     for all $0 \leq t\leq 1/\|\varphi\|_{L^\infty}$ by \Cref{prop:ConvofRs_weighted}. Then, the isomorphism condition in \Cref{prop:ConfReescProp} is satisfied for $V_{\eps,t}$ when $\eps$ is sufficiently small for any $0 \leq t\leq 1/\|\varphi\|_{L^\infty}$. Therefore, there exists $\eps_0>0$ such that for all $\eps \in (0, \eps_0)$ and $t \in [0, 1/\|\varphi\|_{L^\infty}]$, there exists a positive solution $u_{\eps,t}$ on $M$ of
    \begin{equation}
        -c_n\Delta_{g_\eps}u_{\eps,t} + V_{\eps,t} u_{\eps,t}  = 0,    
    \end{equation}
    such that $u_{\eps,t} -1 \in W^{2,p/2}_{-\tau}$.
    
    The conformally rescaled metric $\widetilde{g}_{\eps,t} = u_{\eps,t}^{\frac{4}{n-2}}g_\eps$ is a $W^{1,p}_{-\tau}$ asymptotically flat metric and its scalar curvature $R[\widetilde{g}_{\eps,t}] $ satisfies
    \begin{equation}
    R[\widetilde{g}_{\eps,t}] = u_{\eps,t}^{-\frac{4}{n-2}} (R[g_\eps]- R[g_\eps]_+ t\varphi + R[g_\eps]_-) = u_{\eps,t}^{-\frac{4}{n-2}} R[g_\eps]_+ (1 - t\varphi )\geq0.
    \end{equation}
    Since $V_{\eps,t} \in L^1(M)$, \Cref{prop:ConfReescProp} gives
    \begin{align}\label{eq:estimate_mass_W1p}
    \misoc(\widetilde{g}_{\eps,t})&  =  \misoc(g_\eps) - \frac{1}{2(n-1)\omega_{n-1}} \int_M V_{\eps,t} u_{\eps,t} d\mu_{g_\eps}.
    \end{align}
    From the weighted $L^p$ embeddings and \Cref{prop:ConvofRs_weighted}, we have that $\|R[g_\eps]_-\|_{L^q} \leq C \|R[g_\eps]_-\|_{L^{p/2}_{-2\tau-2}}$ which converges to zero as $\eps \to 0$, for every $q \in [1,p/2]$. Using this estimate, \Cref{lemma:ScalarLowerBound_W1p}, the Sobolev inequality, and the uniform boundedness of $g_\eps$, we can show that there exist $\eps_0>0$ and $t_0>0$ such that for all $\eps \in (0, \eps_0)$ and $t \in [0, t_0)$, we have
    \begin{equation}\label{eq:V_eps,t_lowerbound_W1p}
    \int_M V_{\eps,t} u_{\eps,t} d\mu_{g_\eps} \geq t \frac{C_\varphi}{2} - C \|R[g_\eps]_-\|_{L^{p/2}_{-2\tau-2}},
  \end{equation}
  where the constants $C_\varphi$ and $C$ are independent of $\eps$ and $t$.  Note that $\|\nabla u\|_{L^2}$ can be estimated by integrating by parts, similar to the proof of \Cref{prop:Isom}, and applying \Cref{lemma:ScalarLowerBound_W1p}.
  
  Therefore, combining the previous estimate for $V_{\eps,t}$ \eqref{eq:V_eps,t_lowerbound_W1p} and the estimate for $\misoc(\widetilde{g}_{\eps,t})$ \eqref{eq:estimate_mass_W1p}, we obtain 
    
    \begin{align}
   \misoc(\widetilde{g}_{\eps,t})&  \leq  \misoc(g_\eps) -\frac{1}{ 2(n-1)\omega_{n-1}}\left(  t \frac{C_\varphi}{2} - C\|R[g_\eps]_-\|_{L^{p/2}_{-2\tau-2}} \right),
    \end{align}
    for all $\eps \in (0, \eps_0)$ and $t \in [0, t_0)$. Since $\misoc(g_\eps) \to \misoc(g)=0$ as $\eps \to 0$ by \Cref{cor:miso_close_eps} and $\|R[g_\eps]_-\|_{L^{p/2}_{-2\tau-2}} \to 0$, we have that for any fixed positive $t \in (0, t_0)$, there exists $\eps_0>0$ such that $\misoc(\widetilde{g}_{\eps,t}) <0$ for all $\eps \in (0, \eps_0)$,  with $R[\widetilde{g}_{\eps,t}] \geq 0$, contradicting the positive mass theorem (\Cref{theo:PMT_W1p}). Hence, we have $R[g]=0$.
    \end{proof}

    \begin{proposition}\label{prop:Ricciflat_W1p}
        Let $(M, g)$ be a $W^{1,p}_{-\tau}$ asymptotically flat manifold with $p>n$, $\tau>(n-2)/2$. Assume that $R[g] \geq 0 $ in $\D^\prime$. If $\misoc(g) = 0$, then $g$ is Ricci-flat.
    \end{proposition}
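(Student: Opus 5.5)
We argue by contradiction, following the standard Schoen--Yau scheme adapted in \cite[Proposition 4.7]{Hafemann2026}. By \Cref{prop:scalar_flat_W1p} we already know $R[g]=0$, hence by \Cref{prop:ConvofRs_weighted} we have $R[g_\eps]\to 0$ in $L^q$ for every $q\in[1,p/2]$, and in particular in $L^{n/2}\cap L^{\frac{2n}{n+2}}$. Suppose the distributional Ricci tensor of $g$ does not vanish. Since $\Ric[g_\eps]\to\Ric[g]$ in $\D'$ (by $g_\eps\to g$ in $C^0_\loc\cap W^{1,p}_\loc$), we can find a smooth compactly supported symmetric $(0,2)$-tensor $q$ and constants $\eps_0, C_q>0$ with
\[
\int_M \langle \Ric[g_\eps], q\rangle_{g_\eps}\, d\mu_{g_\eps}\geq C_q>0 \quad\text{for all } \eps\in(0,\eps_0).
\]
We then perturb to $g_{\eps,t}=g_\eps - t q$ (sign chosen so that $-\langle\Ric,\cdot\rangle$ in \eqref{eq:VarScalarCurv2_W1p} produces a positive contribution) and conformally correct to zero scalar curvature.

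Concretely, we take the potential $V_{\eps,t}:=R[g_{\eps,t}]$ and solve $-c_n\Delta_{g_{\eps,t}}u_{\eps,t}+V_{\eps,t}u_{\eps,t}=0$ with $u_{\eps,t}-1\in W^{2,p/2}_{-\tau}$, obtaining $\widetilde g_{\eps,t}=u_{\eps,t}^{4/(n-2)}g_{\eps,t}$ with $R[\widetilde g_{\eps,t}]=0$. To apply \Cref{prop:ConfReescProp} we need $V_{\eps,t}$ small in $L^{n/2}\cap L^{p/2}_{-\tau-2}$. Using \eqref{eq:VarScalarCurv_complete2_W1p}, this reduces to smallness of $R[g_\eps]$, which holds by \Cref{prop:ConvofRs_weighted} since $L^{p/2}_{-2\tau-2}$ embeds in both spaces, together with $t\,DR|_{g_\eps}(q)+t^2B_{\eps,t}(q)$, which is compactly supported and $O(t)$ in $W^{-1}$-type norms. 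Here the divergence terms in $DR$ contain second derivatives of $q$ only and are harmless, while $\Ric[g_\eps]$ lies in $L^{p/2}_\loc$ uniformly in $\eps$ after integration by parts. Alternatively, and more robustly, we avoid pointwise smallness of $V$ entirely: the kernel argument in \Cref{prop:Isom} only needs the quadratic form bound, and \eqref{eq:quadratic_control_W1p} with $A(\eps)+C_1t+C_2t^2$ small gives coercivity of $\int c_n|\nabla w|^2+V w^2$ directly. The bound $\tfrac12\le u\le \tfrac32$ then follows from the $L^\infty$ estimate.

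The mass formula \eqref{eq:conf_misoc} gives
\[
\misoc(\widetilde g_{\eps,t})=\misoc(g_{\eps,t})-\frac{1}{2(n-1)\omega_{n-1}}\int_M R[g_{\eps,t}]\,u_{\eps,t}\,d\mu_{g_{\eps,t}}.
\]
Since $q$ has compact support, $\misoc(g_{\eps,t})=\misoc(g_\eps)\to\misoc(g)=0$ by \Cref{cor:miso_close_eps}. We write $u=1+w$. The term $\int R[g_{\eps,t}]\,d\mu$ equals $\int R[g_\eps]\,d\mu_{g_\eps}+t\int \langle\Ric[g_\eps],q\rangle+O(t^2)$, using that the divergence terms integrate to zero and that the volume-form variation contributes an $O(t\|R[g_\eps]\|_{L^1}) $ error. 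The first piece tends to $0$ because $R[g_\eps]\to0$ in $L^1$. The term $\int R[g_{\eps,t}]w$ is bounded via \eqref{eq:linear_control_W1p} by $(A(\eps)+C_1t+C_2t^2)\|\nabla w\|_{L^2}$, and testing the equation with $w$ and applying \eqref{eq:linear_control_W1p} and \eqref{eq:quadratic_control_W1p} again yields $\|\nabla w\|_{L^2}\lesssim A(\eps)+C_1t+C_2t^2$. Hence
\[
\misoc(\widetilde g_{\eps,t})\le \misoc(g_\eps)-c\bigl(tC_q - C(A(\eps)+t)^2 - o_\eps(1)\bigr).
\]
We fix $t>0$ small so that $tC_q$ dominates the $Ct^2$ term, and then take $\eps\to0$ to get $\misoc(\widetilde g_{\eps,t})<0$. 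Since $\widetilde g_{\eps,t}$ is $W^{1,p}_{-\tau}$ asymptotically flat and scalar flat, this contradicts \Cref{theo:PMT_W1p}.

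The main obstacle is this quadratic bookkeeping. The linear-in-$t$ gain $tC_q$ must beat the error $\int R w$, which a priori is only $O(t)\cdot O(t)$ when $\|\nabla w\|_{L^2}=O(t)$. That bound in turn relies on the linear error constant $C_0$ being replaceable by $A(\eps)\to0$, which is precisely where $R[g]=0$ and \Cref{lemma:ScalarLowerBound_W1p} enter. A secondary point is ensuring that the mollified Ricci pairing is bounded below uniformly in $\eps$, which should follow from distributional convergence since $q$ is smooth and compactly supported.
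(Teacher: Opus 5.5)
Your proposal is correct and follows the paper's proof. You perturb $g_\eps$ by $t$ times a compactly supported tensor detecting $\Ric[g]\neq 0$, and conformally correct to a scalar-flat metric using the quadratic-form bound of \Cref{lemma:ScalarLowerBound_W1p} with $C_0$ replaced by $A(\eps)$; your ``more robust'' route is exactly the paper's claim. The gain $tC_q$ then beats the $O((A(\eps)+t)^2)$ error for fixed small $t$ and $\eps\to0$, contradicting \Cref{theo:PMT_W1p}.

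Drop your first route, though. It does not work, because $DR|_{g_\eps}(q)$ contains $\partial^2 g_\eps$, both through $\Ric[g_\eps]$ and through the Christoffel terms in $\operatorname{div}_{g_\eps}\operatorname{div}_{g_\eps} q$. Hence $R[g_{\eps,t}]$ is not uniformly small in $L^{n/2}$, as the paper itself remarks. Likewise, the bound $\tfrac12\le u\le\tfrac32$ is not available uniformly in $\eps$. It is also not needed: positivity of $u$ follows from the same quadratic-form bound applied to $u_-$, together with the strong maximum principle.
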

    \begin{proof}
    Again, a more detailed exposition of this proof can be found in \cite[Proposition 4.7]{Hafemann2026}.  By \Cref{prop:scalar_flat_W1p}, we have $R[g]=0$ in $\D^\prime$. Assume that $\Ric[g]\neq 0$ in $\D^\prime$. Without loss of generality, there exists a $C^0 \cap W^{1,p}$ regular compactly supported $(0,2)$-symmetric tensor $q$ and a sequence of smooth compactly supported $(0,2)$-symmetric tensor $q_\eps$ such that $q_\eps \to q$ in $C^0_\loc \cap W^{1,p}_\loc$, such that 
    \begin{equation} \label{eq:Riccibound_W1p}
        \int_M \langle \Ric[g_\eps], q_\eps\rangle_{g_\eps} d\mu_{g_\eps} \leq -C_q<0,
    \end{equation} 
    for some constant $C_q>0$ independent of $\eps$ for all $\eps$ sufficiently small. We may replace $q$ by $-q$ if necessary.

     Define $g_{\eps,t} = g_\eps + t q_\eps$, and note that $g_{\eps,t}$ is a compactly supported perturbation of $g_\eps$, since $q_\eps$ is compactly supported, and $g_{\eps,t}$ remains a Riemannian metric uniformly in $\eps$ and $t$, for $\eps,t>0$ sufficiently small. 

    Although we do not have a uniform $L^{n/2}$ bound for $R[g_{\eps,t}]$ needed to apply \Cref{prop:ConfReescProp} uniformly in $\eps$ and $t$, we can use \Cref{lemma:ScalarLowerBound_W1p} (case $R[g]=0$) to conformally rescale $g_{\eps,t}$ to a scalar-flat metric through the following claim.
    
    Claim: For any $\delta>0$, there exist $\eps_0, t_0>0$ such that for all $t \in (0,t_0)$ and $\eps \in (0, \eps_0$), there is a positive solution $u_{\eps,t}>0$ of 
    \begin{equation}
    -c_n\Delta_{g_{\eps,t}}u_{\eps,t} + R[g_{\eps,t}] u_{\eps,t}  = 0,    
    \end{equation}
    satisfying $(u_{\eps,t}-1) \in W^{2,p/2}_{-\tau}$ and $\|\nabla^{g_{\eps,t}}u_{\eps,t}\|_{L^2(M, g_{\eps,t})} < \delta$.
     
    The proof of this claim essentially repeats the proof of \Cref{prop:Isom} and \Cref{prop:ConfReescProp}, but using \Cref{lemma:ScalarLowerBound_W1p} instead of $L^p$ estimates. An exposition for the proof can be found in \cite[Proposition 4.7]{Hafemann2026}, where in our setting, we use \Cref{lemma:ScalarLowerBound_W1p} instead of \cite[Lemma 4.5]{Hafemann2026}.

    From this claim, as in \Cref{prop:ConfReescProp}, we have that the conformally rescaled metric $\widetilde{g}_{\eps,t} = u^{\frac{4}{n-2}}_{\eps, t} g_{\eps,t}$ is $W^{1,p}_{-\tau}$ asymptotically flat, and its scalar curvature is given by
    \begin{equation}
    R[\widetilde{g}_{\eps,t}] = u_{\eps,t}^{-\frac{4}{n-2}} ( R[g_{\eps,t}] - R[g_{\eps,t}])= 0, 
    \end{equation}
    i.e., scalar flat, and the isoperimetric mass is given by
    \begin{align}
    \misoc(\widetilde{g}_{\eps,t})&  =\misoc(g_{\eps,t}) - \frac{1}{ 2(n-1)\omega_{n-1}} \int_M R[g_{\eps,t}]  u_{\eps,t}   \,d\mu_{g_{\eps,t}}\\
    &  = \misoc(g_{\eps}) - \frac{1}{ 2(n-1)\omega_{n-1}}\int_M R[g_{\eps,t}]  u_{\eps,t}   \,d\mu_{g_{\eps,t}},
    \end{align}
    where we have used that $\misoc(g_{\eps,t}) = \misoc(g_\eps)$ since both metrics agree outside a compact set.

    Applying the variational formula for the scalar curvature \eqref{eq:VarScalarCurv_complete2_W1p} again to estimate the integral $R[g_{\eps,t}]$ together with \Cref{lemma:ScalarLowerBound_W1p} and the curvature bound \eqref{eq:Riccibound_W1p}, we have
    \begin{align*}
        \int_{M} R[g_{\eps,t}] u_{\eps,t} \,d\mu_{g_{\eps,t}} & =  \int_{M} R[g_{\eps,t}] \,d\mu_{g_{\eps}}  +  \int_{M} R[g_{\eps,t}] (u_{\eps,t}\frac{d\mu_{g_{\eps,t}}}{d\mu_{g_{\eps}}}-1) \, d\mu_{g_{\eps}} \\
        &\geq \left(\int_{M}  R[g_{\eps}] \,d\mu_{g_{\eps}}+  \frac{C_q}{2} t - C_3 t^2\right) - \left(\left( A(\eps)  + C_1 t +C_2 t^2 \right) \|\nabla^{g_{\eps,t}} u_{\eps,t}\|_{L^2}\right),
    \end{align*}
    where $A(\eps) \to 0$ as $\eps \to 0$, and $C_1,C_2,C_3$ are positive constants independent of $\eps$ and $t$, for $\eps$ and $t$ sufficiently small. Note that \Cref{lemma:ScalarLowerBound_W1p} was applied to $q_\eps$ instead of a fixed compactly supported tensor, however, the relevant estimates can be taken uniformly in $\eps$ since $q_\eps \to q$ in $C^0_\loc \cap W^{1,n}_\loc$. A more detailed derivation can be found in \cite{Hafemann2026}.
  
   Therefore, by the claim, for any $\delta>0$, there exist $\eps_0, t_0>0$, such that for any $\eps \in (0, \eps_0)$, $t \in (0,t_0)$, we have
    \begin{equation}
     \misoc(\widetilde{g}_{\eps,t}) \leq  \misoc(g_{\eps}) -\frac{1}{ 2(n-1)\omega_{n-1}}\left(\int_{M}  R[g_{\eps}] \,d\mu_{g_{\eps}}+  \frac{C_q}{4}t - A(\eps)\delta  - (C_1+C_2)t  \delta\right).
    \end{equation}
    Taking $\delta =\frac{C_q }{8 (C_1+C_2)}$, we have
    \begin{equation}
         \misoc(\widetilde{g}_{\eps,t}) \leq  \misoc(g_{\eps}) -\frac{1}{ 2(n-1)\omega_{n-1}}\left( \int_{M}  R[g_{\eps}] \,d\mu_{g_{\eps}} +   \frac{C_q}{8} t -  A(\eps)  \frac{C_q }{8 (C_1+C_2)}\right).
    \end{equation}
    Since $R[g] = 0$ implies that $R[g] \star_M \rho_\eps=0$, and $\tau>(n-2)/2$, we have that $R[g_\eps] \to 0$ in $L^1$ by \Cref{prop:ConvofRs_weighted}. \Cref{cor:miso_close_eps} implies that $\misoc(g_\eps) \to \misoc(g)=0$. Therefore, for any fixed $t \in (0,t_0)$, there exists $\eps_0>0$ such that for all $\eps \in (0, \eps_0)$ we obtain $\misoc(\widetilde{g}_{\eps,t}) < 0$, with $R[\widetilde{g}_{\eps,t}] = 0$, contradicting the low-regularity positive mass theorem (\Cref{theo:PMT_W1p}). Then, $\Ric[g]=0$.
     \end{proof}

    \begin{theorem}\label{theo:flat}
         Let $(M, g)$ be a $W^{1,p}_{-\tau}$ asymptotically flat manifold with $p>n$, $\tau>(n-2)/2$. Assume that $R[g] \geq 0 $ in $\D^\prime$. If $\misoc(g) = 0$, then there exists a $C^{1,1-\frac{n}{p}}$ isometry between $(M,g)$ and $(\RR^n, \delta)$.
\end{theorem}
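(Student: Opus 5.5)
By \Cref{prop:Ricciflat_W1p}, $\Ric[g]=0$ in $\D^\prime$. The plan is to pass from this distributional statement to the synthetic setting, run a volume comparison argument there, and finally upgrade the resulting metric isometry to a regular Riemannian one, following \cite{Hafemann2026} and \cite{Jiang2022}.

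\textbf{Step 1: $\RCD$ structure.} Since $g\in W^{1,p}_\loc$ with $p>n$, Morrey's embedding gives $g\in C^{0,1-n/p}_\loc$, so $g$ is continuous and the Riemannian distance $d_g$ is well defined. The equivalence of \cite{mondino2025} between distributional Ricci lower bounds and synthetic ones then shows that $(M,d_g,\mu_g)$ is an $\RCD(0,n)$ space. This step requires checking that the regularity hypotheses of \cite{mondino2025} cover continuous $W^{1,p}_\loc$ metrics with $p>n$. I expect this is the main technical check.

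\textbf{Step 2: Euclidean volume growth.} By \Cref{def:AFmfd}, the metric $g$ is $C^0_{-\tau}$ close to $\delta$ on $\mathcal{E}$. Hence $d_g$-balls are comparable to coordinate balls up to a factor $1+o(1)$, and
\[
\lim_{r\to\infty}\frac{\mu_g(B^g_r(x_0))}{\omega_{n-1}r^n/n}=1 .
\]
Two points need care. First, $M$ has only one end, so $K$ contributes only a bounded volume. Second, the distance from $x_0$ to points $x\in\mathcal{E}$ equals $|x|(1+o(1))$, which follows from uniform closeness of $g$ to $\delta$ on the end.

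\textbf{Step 3: rigidity via Bishop--Gromov.} For $\RCD(0,n)$ spaces the Bishop--Gromov ratio is monotone and bounded above by the Euclidean value for small $r$. This uses that $\mu_g$ is $n$-dimensional Hausdorff measure with density $1$ at every point, which holds because $g$ is continuous. By Step 2 the ratio also tends to $1$ at infinity, so it is constant equal to $1$. The volume cone implies metric cone theorem of De Philippis--Gigli then shows that $(M,d_g)$ is a metric cone over every point. Following \cite{Jiang2022}, this forces $(M,d_g)$ to be isometric to $(\RR^n,d_\delta)$.

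\textbf{Step 4: regularity of the isometry.} Let $F\colon(\RR^n,d_\delta)\to(M,d_g)$ be the metric isometry. The low-regularity Myers--Steenrod theorem \cite{Matveev2017} implies that a distance-preserving map between manifolds with $C^{0,\alpha}$ metrics is $C^{1,\alpha}$ and satisfies $F^*g=\delta$. Here $\alpha=1-n/p$, which gives the stated $C^{1,1-\frac{n}{p}}$ regularity.

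The expected obstacle is Step 1, namely verifying the hypotheses of \cite{mondino2025}. The other steps use only asymptotic flatness and classical $\RCD$ results.
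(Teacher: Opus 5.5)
Your proposal is correct and follows essentially the same route as the paper. That route is: distributional Ricci-flatness from \Cref{prop:Ricciflat_W1p}, the $\RCD$ characterization of \cite{mondino2025}, a volume-comparison rigidity argument as in \cite{Jiang2022,Hafemann2026} (the paper cites \cite[Corollary 1.7]{Gigli2018} for this step rather than spelling out volume cone implies metric cone), and the low-regularity Myers--Steenrod theorem \cite{Matveev2017} with $\alpha=1-n/p$; your Steps 2 and 3 simply make explicit the Euclidean volume growth and Bishop--Gromov details that the paper leaves to the cited references.
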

\begin{proof}
    The results follow directly from the argument used in the proof of \cite[Theorem 4.11]{Hafemann2026} (similar to \cite{Jiang2022}). The argument requires only $C^0_{-\tau}$ asymptotic flatness and distributional Ricci flatness to conclude that $(M^n, d_g)$ is isometric to $(\RR^n, d_\delta)$ as a metric space. The proof relies crucially on equivalence between distributional Ricci curvature lower bounds and $\RCD$ spaces proved in \cite[Corollary 7.7]{mondino2025}, together with volume comparison arguments for $\RCD$ spaces (see \cite[Corollary 1.7]{Gigli2018}). Finally, the low-regularity version of Myers-Steenrod, \cite[Corollary C]{Matveev2017}, implies that there exists a $C^{1,\alpha}$ isometry, $\alpha=1-n/p$, from $(M, g)$ and $(\RR^n, \delta)$.    
\end{proof}

\appendix
\section{Appendix}\label{append:A}
In this section, we work on $\RR^n$, which is sufficient for extending the results to manifolds. In particular, the weighted norms are defined as in \Cref{sec:preliminaries}, i.e., the radial function is the identity on $B_R$. We also assume that $R=1$. We denote by $\partial_j$ the partial derivative $\frac{\partial}{\partial x^j}$, where $j=1,\ldots,n$, and by $\rho_\eps$ the standard mollifier, i.e., $\rho_\eps(x) =\rho(\frac{x}{\eps})/{\eps^n}$, where $\eps \in (0,1]$ and $\rho: \RR^n \to \RR$ is a smooth positive mollifier supported in the unit ball with unit integral. For any $p \in [1, +\infty)$, we define the \textit{conjugate exponent} $p^\prime$ of $p$ by $\frac{1}{p}+\frac{1}{p^\prime}=1$, with the usual conventions for $p=1$ and $p=+\infty$.

Let $h \in \RR^n$. The translation operator along $h$ is defined for any $f \in L^p_s(\RR^n)$ by
\begin{equation}
    \tau_h(f)(x)\colonequals \left(x \mapsto f(x-h)\right).
\end{equation}
\begin{lemma}\label{lemma:translation_weighted}
    Let $f \in L^{p}_s(\RR^n)$, with $p \in [1,\infty)$, $s \in \RR$. Then for every  $h \in \RR^n$ with $|h|\leq 1$, the translation operator satisfies
    \begin{equation}
        \|\tau_h(f)\|_{L^{p}_s(\RR^n)} \leq C \|f\|_{L^{p}_s(\RR^n)},
    \end{equation}
    for some $C>0$ depending only on $p$ and $s$. In particular, $\tau_h: L^{p}_s(\RR^n) \to L^{p}_s(\RR^n)$ is a bounded linear continuous operator, uniformly for $|h|\leq1$.
\end{lemma}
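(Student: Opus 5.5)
The plan is to reduce the weighted bound to a pointwise comparison of the weight functions. The translated function evaluated at $x$ only involves values of $f$ at $x-h$, and $|h|\le 1$. So the weight at $x$ and the weight at $x-h$ should be comparable up to constants depending only on $s$ and $p$.

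Here the weight is built from the radial function $r(x)$, which by the convention of the appendix is smoothed near the origin: it equals $1$ on $B_1$, or more generally is comparable to $\max\{1,|x|\}$. Writing
\[
\|\tau_h f\|_{L^p_s}^p=\int_{\RR^n}|f(x-h)|^p\, r(x)^{-sp-n}\,dx ,
\]
I would substitute $y=x-h$, which has unit Jacobian, to get
\[
\|\tau_h f\|_{L^p_s}^p=\int_{\RR^n}|f(y)|^p\, r(y+h)^{-sp-n}\,dy .
\]
The whole proof then comes down to the estimate
\[
r(y+h)^{-sp-n}\le C\, r(y)^{-sp-n}\quad\text{for all } y\in\RR^n,\ |h|\le1 .
\]

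To prove this, I would use that $r$ is comparable to $\langle y\rangle:=\max\{1,|y|\}$ with universal constants. For $|h|\le1$ one has $\langle y+h\rangle\le \langle y\rangle+1\le 2\langle y\rangle$. Applying the same bound with $y+h$ in place of $y$ and $-h$ in place of $h$ gives $\langle y\rangle\le 2\langle y+h\rangle$. Hence
\[
\tfrac12\le \frac{r(y+h)}{r(y)}\le 2
\]
up to the fixed comparability constants. Raising this to the power $-sp-n$, whose sign is irrelevant because both bounds hold, yields the estimate with $C=(2C_0)^{|sp+n|}$, where $C_0$ is the comparability constant. Taking $p$-th roots gives the claimed bound with a constant depending only on $p$ and $s$.

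Linearity of $\tau_h$ is immediate, and boundedness then gives continuity, uniformly for $|h|\le1$.

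The only delicate point is the region near the origin, where $|x|$ itself degenerates. This is exactly why the regularized radial function $\max\{1,|x|\}$ is used, so no genuine obstacle arises.
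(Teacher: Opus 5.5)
Your proposal is correct and is essentially the paper's argument: substitute $z=x-h$ and compare the weights pointwise for $|h|\le 1$. The differences are cosmetic. The paper compares through the auxiliary weight $(1+|x|)^{-sp-n}$ rather than comparing $\max\{1,|\cdot|\}$ directly. It also tracks a factor $2^{p-1}$, because the norm is defined as the sum of the $L^p(B_1)$ and $L^p_s(\RR^n\setminus B_1)$ pieces rather than as a single weighted integral. Your stated equality glosses over this, but it only changes the constant.
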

\begin{proof}
    From the definition, we have
        \begin{align*}
        \| \tau_h(f)\|_{L^p_s(\RR^n)}^p & = \left(\| \tau_h(f)\|_{L^p(B_1)} + \|\tau_h(f)\|_{L^p_s(\mathcal{E})}\right)^{p} \\
         & \leq 2^{p-1} \int_{\RR^n} \left|f(x-h) \right|^{p} \sigma(x)^{-sp-n}dx,
    \end{align*}
where $\mathcal{E}= \RR^n \setminus B_1$ and $\sigma(x) \colonequals \max\{1, |x|\}$. A standard computation shows that there exists $C_1 > 1$, only depending on $p$ and $s$,  such that 
\begin{equation}\label{eq:compare_weights}
   \frac{1}{C_1} (1+|x|)^{-sp-n} \leq \sigma(x)^{-sp-n} \leq C_1 (1+|x|)^{-sp-n}.
\end{equation}
Then, we can estimate the norm by
        \begin{align}
        \| \tau_h(f)\|_{L^p_s(\RR^n)}^p \leq  2^{p-1}C_1 \int_{\RR^n} \left|f(x-h) \right|^{p}  (1+|x|)^{-sp-n}dx.
    \end{align}
Consider the change of coordinates $z = x-h$, thus
\begin{align}
    \| \tau_h(f)\|_{L^p_s(\RR^n)}^p \leq 2^{p-1} C_1 \int_{\RR^n} \left|f(z) \right|^{p}  (1+|z+h|)^{-sp-n} dz.
\end{align}
Since $|h|\leq1$, we have that 
\begin{equation}
    (1+|z+h|)^{-sp-n} \leq C (1+|z|)^{-sp-n},
\end{equation}
with $C=2^{|sp+n|}$. The above estimate together with \eqref{eq:compare_weights} yields
    \begin{align*}
        \| \tau_h (f)\|_{L^p_s(\RR^n)}^p & \leq  2^{p-1}C C_1 \int_{\RR^n} \left|f(z) \right|^{p}  (1+|z|)^{-sp-n} dz\\
         & \leq  2^{p-1}C C_1^2 \int_{\RR^n} \left|f(z) \right|^{p}  \sigma(z) ^{-sp-n} dz\\
        & \leq  2^{p-1}C C_1^2\| f\|_{L^p_s(\RR^n)}^p.
    \end{align*}
\end{proof}

\begin{lemma}\label{eq:Weighted_mollifier}
    Let $f \in L^{p}_s(\RR^n)$, with $p \in [1,\infty)$, $s \in \RR$, and let $\rho_\eps$ be a standard mollifier. For any $\eps<1$, we have
    \begin{equation}\label{eq:Weighted_Jensen}
        \|f * \rho_\eps\|_{L^p_s(\RR^n)}\leq C \|f\|_{L^p_s(\RR^n)},
    \end{equation}
and
    \begin{equation}\label{eq:Weighted_reg_dev}
        \|\partial_j (f*\rho_\eps)\|_{L^p_s(\RR^n)} \leq \frac{C}{\eps} \|f\|_{L^p_s(\RR^n)},  
    \end{equation}
    for some $C>0$ depending only on $\rho$, $p$ and $s$.
\end{lemma}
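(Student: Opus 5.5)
The first estimate \eqref{eq:Weighted_Jensen} will follow from Jensen's (or Minkowski's integral) inequality together with \Cref{lemma:translation_weighted}. We write
\begin{equation*}
f*\rho_\eps(x)=\int_{B_1}\rho(z)\, f(x-\eps z)\,dz=\int_{B_1}\rho(z)\,\tau_{\eps z}(f)(x)\,dz .
\end{equation*}
Since $\rho\,dz$ is a probability measure, Minkowski's integral inequality in the weighted space $L^p_s(\RR^n)$, whose norm is an $L^p$ norm with respect to the measure $\sigma(x)^{-sp-n}dx$ up to the equivalent splitting into $B_1$ and $\mathcal{E}$, gives
\begin{equation*}
\|f*\rho_\eps\|_{L^p_s}\leq C\int_{B_1}\rho(z)\,\|\tau_{\eps z}f\|_{L^p_s}\,dz .
\end{equation*}
Because $|\eps z|\leq 1$, \Cref{lemma:translation_weighted} bounds each $\|\tau_{\eps z}f\|_{L^p_s}$ by $C\|f\|_{L^p_s}$ uniformly in $z$. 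Integrating against $\rho$ then yields \eqref{eq:Weighted_Jensen}. To make the Minkowski step clean, I will work with the equivalent single-weight norm $\big(\int|f|^p\sigma^{-sp-n}dx\big)^{1/p}$, which is comparable to $\|\cdot\|_{L^p_s}$ by the argument in \Cref{lemma:translation_weighted}, and absorb the comparison constants into $C$.

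For the derivative estimate \eqref{eq:Weighted_reg_dev}, I will move the derivative onto the mollifier, $\partial_j(f*\rho_\eps)=f*\partial_j\rho_\eps$, and note that $\partial_j\rho_\eps(y)=\eps^{-n-1}(\partial_j\rho)(y/\eps)$. This gives
\begin{equation*}
\partial_j(f*\rho_\eps)(x)=\frac{1}{\eps}\int_{B_1}(\partial_j\rho)(z)\,\tau_{\eps z}(f)(x)\,dz .
\end{equation*}
Applying Minkowski's inequality again, now with the finite signed kernel $|\partial_j\rho|\,dz$ of total mass $\|\partial_j\rho\|_{L^1}$, and then \Cref{lemma:translation_weighted}, I obtain $\|\partial_j(f*\rho_\eps)\|_{L^p_s}\leq \eps^{-1}C\|\partial_j\rho\|_{L^1}\|f\|_{L^p_s}$. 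The resulting constant depends only on $\rho$, $p$ and $s$.

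The only step that needs care is justifying Minkowski's integral inequality for the weighted norm. This reduces to measurability of $(x,z)\mapsto f(x-\eps z)$, which is standard, and to the uniform translation bound already provided by \Cref{lemma:translation_weighted}. I therefore do not expect any real obstacle here; the main point is simply that all translations in the argument have length at most $\eps<1$, which is exactly the regime covered by that lemma.
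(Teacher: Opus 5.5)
Your proof is correct and follows essentially the same route as the paper. The paper applies Jensen's inequality pointwise with the kernels $\rho_\eps$ and $|\partial_j\rho_\eps|$, then uses Fubini and the uniform translation bound of \Cref{lemma:translation_weighted}; this is just the $p$-th power version of your Minkowski integral inequality argument, and your explicit passage to the single weight $\sigma(x)^{-sp-n}$ handles the splitting of the norm into $B_1$ and $\mathcal{E}$ slightly more cleanly.
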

\begin{proof}
    Let $1_{B_1}$ be the indicator function on $B_1(0)$, then
    \begin{align}
        \|f * \rho_\eps\|_{L^p_s(\RR^n)}^p & = \int_{\RR^n}  \left|(f * \rho_\eps) (x)\right|^{p}  (1_{B_1}+|x|^{-sp-n}) dx.
    \end{align}
    Applying H\"{o}lder's inequality with $p^\prime=1-1/p$, the conjugate exponent of $p$, i.e., Jensen's inequality, we have 
        \begin{align*}
        \left|(f * \rho_\eps) (x)\right|^{p}  & \leq   \| \rho_\eps\|^{p-1}_{L^1(B_\eps)} \int_{B_\eps} \left| f(x-y)\right|^{p} \rho_\eps(y) dy = \int_{B_\eps} \left| f(x-y)\right|^{p} \rho_\eps(y) dy,
    \end{align*}
since $ \|\rho_\eps\|_{L^1(B_\eps)}=1$ and $\rho_\eps$ is nonnegative. This estimate together with Fubini's theorem yields
    \begin{align*}
        \|f * \rho_\eps\|_{L^p_s(\RR^n)}^p & \leq \int_{B_\eps} \int_{\RR^n} \left|f(x-y) \right|^{p}  |x|^{-sp-n} \rho_\eps(y) dx dy .
    \end{align*}
Observe that $\tau_y(f)(x) = f(x-y)$ is a translation by $y \in B_\eps(0)$ and $\eps<1$. Applying \Cref{lemma:translation_weighted} and $\|\rho_\eps\|_{L^1}=1$, we obtain 
\begin{align}
    \|f * \rho_\eps\|_{L^p_s(\RR^n)}^p & \leq \int_{B_\eps} \|\tau_y(f)\|_{L^p_s(\RR^n)}^p \rho_\eps(y) dy \leq C \|f\|_{L^p_s(\RR^n)},
\end{align}
for some constant $C$ depending only on $p$ and $s$.

For the second inequality \eqref{eq:Weighted_reg_dev}, recall that 
\begin{equation}
    \partial_j(f* \rho_\eps)(x) = \int_{\RR^n} f(y) \partial_j^x\rho_\eps(x-y)dy=\int_{B_\eps} f(x-y) \partial_j\rho_\eps(y)dy.
\end{equation}
Arguing again with the Jensen inequality, we obtain
        \begin{align*}
        \left|\partial_j (f * \rho_\eps) (x)\right|^{p}  & \leq \| \partial_j \rho_\eps\|^{p-1}_{L^1(B_\eps)} \int_{B_\eps} \left| f(x-y)\right|^{p} |\partial_j \rho_\eps(y)| dy.
    \end{align*}
Then, repeating the argument with the translation operator, we have the estimate
    \begin{align*}
        \|\partial_j(f * \rho_\eps)\|_{L^p_s(\RR^n)}^p & \leq C \| \partial_j \rho_\eps\|^{p}_{L^1(B_\eps)} \|f\|_{L^p_s(\RR^n)}^p \\
        & \leq \frac{C}{\eps^p} \| \partial_j \rho\|^{p}_{L^1(B_1)} \|f\|_{L^p_s(\RR^n)}^p, 
    \end{align*}
where we have applied $\partial_j (\rho_\eps) = \frac{1}{\eps^{n+1}} \partial_j\rho (x/\eps)$ and $\int_{B_{\eps}}\left|\eps \partial_j \rho_{\eps}(y)\right| dy=\int_{B_1}\left|\partial_j \rho(y)\right| d y$ in the last inequality. 
\end{proof}

\begin{lemma}{\label{lemma:AuxFriedrichsWp_weighted}}
Let $f \in L^{p}_s(\RR^n)$, with $p \in [1,\infty)$, $s \in \RR$, and let $\rho_\eps$ be a standard mollifier. Then, 
$$\eps \|\partial_j(f * \rho_\eps)\|_{L^p_s(\RR^n)} \longrightarrow 0 \text{ as }\eps \to 0.$$
\end{lemma}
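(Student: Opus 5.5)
The plan is a density argument in the spirit of the classical proof that $\eps\|\partial_j(f*\rho_\eps)\|_{L^p}\to 0$: the statement holds trivially for smooth compactly supported functions, and the uniform bound of \Cref{eq:Weighted_mollifier} lets us pass to all of $L^p_s(\RR^n)$.

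\emph{Step 1 (density).} Since $p<\infty$, $\Cinfc(\RR^n)$ is dense in $L^p_s(\RR^n)$. Indeed, $L^p_s$ is an $L^p$ space with respect to the measure $w(x)\,dx$, where $w=1_{B_1}+|x|^{-sp-n}1_{\RR^n\setminus B_1}$ is locally bounded above and below on compact sets. Fix $\delta>0$ and choose $\varphi\in\Cinfc(\RR^n)$ with $\|f-\varphi\|_{L^p_s}<\delta$.

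\emph{Step 2 (splitting).} By linearity of convolution,
\[
\eps\|\partial_j(f*\rho_\eps)\|_{L^p_s}\le \eps\|\partial_j((f-\varphi)*\rho_\eps)\|_{L^p_s}+\eps\|\partial_j(\varphi*\rho_\eps)\|_{L^p_s}.
\]
For the first term, apply \eqref{eq:Weighted_reg_dev} to $f-\varphi$. This gives a bound of $\eps\cdot\frac{C}{\eps}\|f-\varphi\|_{L^p_s}\le C\delta$, and the constant $C$ depends only on $\rho,p,s$, not on $\eps$. For the second term, use $\partial_j(\varphi*\rho_\eps)=(\partial_j\varphi)*\rho_\eps$. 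Then \eqref{eq:Weighted_Jensen} gives $\eps\|(\partial_j\varphi)*\rho_\eps\|_{L^p_s}\le C\eps\|\partial_j\varphi\|_{L^p_s}$, which tends to $0$ as $\eps\to0$ because $\partial_j\varphi\in\Cinfc\subset L^p_s$.

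\emph{Step 3 (conclusion).} Combining the two bounds gives $\limsup_{\eps\to0}\eps\|\partial_j(f*\rho_\eps)\|_{L^p_s}\le C\delta$. Since $\delta>0$ is arbitrary, the limit is $0$.

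The only point needing care is the density of $\Cinfc$ in the weighted space in Step 1. I would prove it in two moves. First, truncate: $f1_{B_N}\to f$ in $L^p_s$ by dominated convergence, since $|f|^pw\in L^1$. Second, smooth the truncation: on the fixed compact set $B_{N+1}$ the weight is bounded above and below, so the weighted norm there is equivalent to the unweighted $L^p$ norm, and ordinary $L^p$ mollification converges. Alternatively, one can invoke \Cref{lemma:translation_weighted} to get continuity of translations on $L^p_s$ directly. Everything else is a direct application of the previously established uniform estimates.
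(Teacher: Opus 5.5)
Your argument is correct, but it is organized differently from the paper's. You split the \emph{function}: you approximate $f$ by $\varphi\in\Cinfc$ in $L^p_s$, control the remainder uniformly in $\eps$ by \eqref{eq:Weighted_reg_dev}, and handle the smooth part by moving the derivative onto $\varphi$ and applying \eqref{eq:Weighted_Jensen}. This is the standard principle that the family $\eps\,\partial_j(\cdot*\rho_\eps)$ is uniformly bounded on $L^p_s$ and tends to zero on a dense subset.

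The paper instead splits \emph{space}. It chooses a compact $K$ with $\|f\|_{L^p_s(\RR^n\setminus K)}\le\delta$. It bounds the exterior contribution by the localized estimate \eqref{eq:translation_openset}, which is just \eqref{eq:Weighted_reg_dev} applied to $f1_{\RR^n\setminus K}$, since $\partial_j(f*\rho_\eps)$ on $\RR^n\setminus K_\eps$ only sees $f$ off $K$. It then treats the compact region, where the weight is harmless, by citing the unweighted local result \cite[Lemma 3.1]{Hafemann2026}.

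The paper's route fits its theme of reducing the weighted Friedrichs lemma to the compact case and reuses an existing lemma. Yours is self-contained within the appendix. It needs only \Cref{eq:Weighted_mollifier} and the density of $\Cinfc$ in $L^p_s$; your truncate-then-mollify argument establishes that density correctly, and the paper uses it later anyway.

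One small correction to an aside: \Cref{lemma:translation_weighted} does not give continuity of translations on $L^p_s$, only uniform boundedness for $|h|\le 1$. Continuity would itself need a density argument. Your main density proof does not rely on that aside, so the argument stands.
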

\begin{proof}
    First, notice that for any compact set $K \in \RR^n$ a similar version of \Cref{lemma:translation_weighted} shows that
    \begin{equation} \label{eq:translation_openset}
    \|\partial_j (f*\rho_\eps)\|_{L^p_s(\RR^n \setminus K_\eps)} \leq \frac{C}{\eps} \|f\|_{L^p_s(\RR^n\setminus K)},   \end{equation}
    for some constant $C>0$ depending on $p$, $s$ and $\rho$, where $K_\eps$ is an $\eps$-neighbourhood of $K$. Since $f \in L^p_s(\RR^n)$, for any $\delta>0$ there exists a compact set $K \subset \RR^n$ such that $\|f\|_{L^p_s(\RR^n \setminus K)}\leq \delta$. Hence,
\begin{align*}
     \eps \|\partial_j (f*\rho_\eps)\|_{L^p_s(\RR^n)}  &\leq  \eps C_{K_1} \|\partial_j (f*\rho_\eps)\|_{L^p(K_1)} + C \delta\|f\|_{L^p_s(\RR^n \setminus K)},
\end{align*}
where $K_1$ is a $1$-neighbourhood of $K$ and $C_{K_1}$ is a constant depending on $K_1$. The claim follows by observing that $\delta>0$ is arbitrary and that $\eps C_{K_1} \|\partial_j (f*\rho_\eps)\|_{L^p_s(K_1)} \to 0$ as $\eps \to 0$ by the corresponding result for $f \in L^p_\loc(\RR^n)$ \cite[Lemma 3.1]{Hafemann2026}.
\end{proof}

\begin{lemma}\label{lemma:RateConvSobolev_weighted}
Let $a \in W^{1,p}_{s_1}\left(\RR^n\right)$ and $f \in L^q_{s_2}\left(\RR^n\right)$, with $s_1$, $s_2 \in \RR$, and $p, q \in [1, +\infty)$, with $q\geq p^\prime$. Then
\begin{equation}
\|(a * \rho_\eps) (f* \rho_\eps) - (af)* \rho_\eps \|_{L^r_{s_1+s_2-1}} \leq C \eps \|Da\|_{L^p_{s_1-1}} \|f\|_{L^q_{s_2}},    
\end{equation}
where $r$ is defined by $1/r=1/p+1/q$.
\end{lemma}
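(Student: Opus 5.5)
The starting point is the standard commutator identity for mollification. For almost every $x$ I write
\begin{equation*}
(a * \rho_\eps)(x)(f * \rho_\eps)(x) - ((af)*\rho_\eps)(x) = \int_{B_\eps} \big(a(x-y)-a(x)\big) f(x-y)\rho_\eps(y)\,dy \;-\; \big(a(x) - (a*\rho_\eps)(x)\big)(f*\rho_\eps)(x).
\end{equation*}
This follows by adding and subtracting $a(x)(f*\rho_\eps)(x)$. Both terms then involve only differences $a(x-y)-a(x)$ with $|y|\le\eps$, weighted by $\rho_\eps(y)$. The second term has the same structure, because $a(x)-(a*\rho_\eps)(x) = \int (a(x)-a(x-y))\rho_\eps(y)\,dy$. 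It therefore suffices to bound the model quantity
\begin{equation*}
T(x) = \int_{B_\eps} |a(x-y)-a(x)|\,|F(x,y)|\,\rho_\eps(y)\,dy,
\end{equation*}
where $F$ is either $f(x-y)$ or $(f*\rho_\eps)(x)$.

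For the difference I use the fundamental theorem of calculus along segments, which is valid for $W^{1,p}_\loc$ functions by density of smooth functions:
\begin{equation*}
|a(x-y)-a(x)| \le |y|\int_0^1 |Da(x-ty)|\,dt \le \eps \int_0^1 |Da(x-ty)|\,dt.
\end{equation*}
This produces the factor $\eps$ and reduces $T(x)$ to
\begin{equation*}
\eps\int_0^1\!\!\int_{B_\eps} |\tau_{ty}(Da)(x)|\,|F(x,y)|\rho_\eps(y)\,dy\,dt.
\end{equation*}
Next I take the weighted $L^r_{s_1+s_2-1}$ norm and apply Minkowski's integral inequality to move the norm inside the $dy\,dt$ integral, using $\int\rho_\eps=1$. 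Inside, I apply the weighted H\"older inequality $\|uv\|_{L^r_{s_1+s_2-1}} \le C\|u\|_{L^p_{s_1-1}}\|v\|_{L^q_{s_2}}$ with $1/r = 1/p+1/q$. The weights combine correctly: $|x|^{-(s_1+s_2-1)r - n}$ splits as $(|x|^{-(s_1-1)p-n})^{r/p}(|x|^{-s_2 q - n})^{r/q}$ because $r/p+r/q=1$, and on $B_1$ everything is unweighted.

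What remains are the translated norms. Since $|ty|\le\eps<1$, \Cref{lemma:translation_weighted} gives $\|\tau_{ty}Da\|_{L^p_{s_1-1}}\le C\|Da\|_{L^p_{s_1-1}}$, and likewise $\|\tau_y f\|_{L^q_{s_2}}\le C\|f\|_{L^q_{s_2}}$. For the second term, \Cref{eq:Weighted_mollifier} gives $\|f*\rho_\eps\|_{L^q_{s_2}}\le C\|f\|_{L^q_{s_2}}$. Collecting these yields the claimed bound $C\eps\|Da\|_{L^p_{s_1-1}}\|f\|_{L^q_{s_2}}$.

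The condition $q\ge p'$ is only needed to guarantee $r\ge1$, so that Minkowski's inequality applies.

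I expect the main obstacle to be the Minkowski step in the first term, where the integrand depends jointly on $x$ and $y$ through both $Da(x-ty)$ and $f(x-y)$. If applying the norm inside the integral causes trouble, I would instead use Jensen's inequality in $y$ as in the proof of \Cref{eq:Weighted_mollifier}: raise to the power $r$, use Fubini, then apply H\"older in $x$ for each fixed $(y,t)$. That route also uses $r\ge1$ and the uniform translation bound.
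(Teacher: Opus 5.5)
Your proof is correct and follows essentially the same route as the paper. Both add and subtract $a\,(f*\rho_\eps)$, use the fundamental theorem of calculus along segments to gain the factor $\eps$, and then apply the weighted H\"older inequality with $1/r=1/p+1/q$ together with the uniform translation bound of \Cref{lemma:translation_weighted}. The differences are cosmetic: the paper first bounds $\|a-a*\rho_\eps\|_{L^p_{s_1-1}}\le C\eps\|Da\|_{L^p_{s_1-1}}$ via Jensen before applying H\"older, whereas you handle both terms at once with Minkowski's integral inequality, which copes with the joint $(x,y)$ dependence without difficulty. Also, your first integrand should read $a(x)-a(x-y)$ rather than $a(x-y)-a(x)$, a sign slip that is harmless since only absolute values enter.
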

\begin{proof}
    We follow the argument in \cite[Lemma 3.2]{Hafemann2026}, keeping track of the decay rate. Adding and subtracting the term $a (f*\rho_\eps)$ gives
\begin{align*}
   (a * \rho_\eps) (f* \rho_\eps) - (af)* \rho_\eps & = \big[(a * \rho_\eps - a) (f* \rho_\eps)\big] \\
   & +  \big[a(f* \rho_\eps)-(af)* \rho_\eps\big].
\end{align*}
Applying the H\"{o}lder's inequality and $\|f* \rho_\eps\|_{L^q_{s_2}} \leq C \|f\|_{L^q_{s_2}}$ \eqref{eq:Weighted_Jensen}, we obtain
\begin{align*}
\| (a * \rho_\eps) (f* \rho_\eps) - (af)* \rho_\eps\|_{L^r_{s_1+s_2-1}} & \leq C \|a * \rho_\eps - a\|_{L^p_{s_1-1}} \|f\|_{L^q_{s_2}} \\
& +  \|a(f* \rho_\eps) - (af)* \rho_\eps\|_{L^r_{s_1+s_2-1}}.
\end{align*}
We first estimate the term $a - a*\rho_\eps$. By the definition of convolution,
\begin{align*}
 \big(a-a * \rho_\eps  \big) (x) & = \int_{B_\eps} \big(a(x) - a(x-y)\big) \rho_\eps(y) dy \\
 &  = \eps \int_{B_1} \frac{\big(a(x) - a(x-\eps z)\big)}{\eps}\rho (z) dz.
\end{align*}
If $a$ is a test function, we have
\begin{equation}
\big(a - a * \rho_\eps\big) (x) = \eps \int_{B_1} \int_0^1 \big[Da{(x-t\eps z)} \cdot z\big] \rho (z) dtdz. 
\end{equation}
Integrating over $\mathcal{E}:=\RR^n \setminus B_r(0)$, $r\geq1$ and applying Jensen's inequality gives
\begin{align*}
\int_{\mathcal{E}} \left| (a - a * \rho_\eps) (x) \right|^p|x|^{-(s_1-1)p-n} dx
& \leq \eps^p \int_{B_1} \int_0^1 \|Da(\cdot-t\eps z)\|^p_{L^p_{s_1-1}(\mathcal{E})} \rho(z)  dt dz.
\end{align*}
Adding the contribution from the compact region and applying \Cref{lemma:translation_weighted}, we have
\begin{equation}\label{eq:speed_decay2}
\| a - a * \rho_\eps\|_{L^p_{s_1-1}} \leq C \eps  \|Da\|_{L^p_{s_1-1}}.
\end{equation}
By density of test functions in $W^{1,p}_{s_1}(\RR^n)$, for $p<\infty$, the estimate holds for all $a \in W_{s_1}^{1,p}(\RR^n)$.

It remains to estimate the commutator term $ \big(a (f* \rho_\eps) - (af) * \rho_\eps\big)$. Using the identity
\begin{equation}
\big(a (f* \rho_\eps) - (af) * \rho_\eps\big) (x) = \eps \int_{B_1} \int_0^1 \big[Da{(x-t\eps z)} \cdot z\big] f(x-\eps z)\rho (z) dtdz,     
\end{equation}
and applying the H\"older's inequality together with \Cref{lemma:translation_weighted}, we obtain
\begin{equation}
      \|a(f* \rho_\eps) - (af)* \rho_\eps\|_{L^r_{s_1+s_2-1}} \leq C \eps \|Da\|_{L^p_{s_1-1}} \|f\|_{L^q_{s_2}}.
\end{equation}
Combining these two estimates concludes the proof.
\end{proof}

\begin{proposition}\label{prop:FriedrichsWp_weighted}
For $a \in W^{1,p}_{s_1}\left(\RR^n\right)$ and $f \in L^q_{s_2}\left(\RR^n\right)$, with $s_1$, $s_2 \in \RR$, $p, q \in [1, +\infty)$, with $q\geq p^\prime$, we have that 
\begin{equation}
    \|\left(a * \rho_{\eps}\right)\left(f * \rho_{\eps}\right)-(a f) * \rho_{\eps}\| _{W^{1,r}_{s_1+s_2}} \longrightarrow 0 \text{ as } \eps \to 0,
\end{equation}
where $r$ is defined by $1/r=1/p+1/q$.
\end{proposition}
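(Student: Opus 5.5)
We need convergence in $W^{1,r}_{s_1+s_2}$ of the commutator $T_\eps := (a*\rho_\eps)(f*\rho_\eps)-(af)*\rho_\eps$. The plan is to treat the $L^r_{s_1+s_2}$ part and the derivative part (in $L^r_{s_1+s_2-1}$) separately, and to reduce the derivative part to \Cref{lemma:RateConvSobolev_weighted} and \Cref{lemma:AuxFriedrichsWp_weighted}, following the unweighted Friedrichs lemma of \cite{Hafemann2026}.

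\textbf{Zeroth-order term.} By \Cref{lemma:RateConvSobolev_weighted}, $\|T_\eps\|_{L^r_{s_1+s_2-1}}\le C\eps\|Da\|_{L^p_{s_1-1}}\|f\|_{L^q_{s_2}}$. The weight here is one order weaker than needed, so for the $L^r_{s_1+s_2}$ norm we argue differently. Each of $(a*\rho_\eps)(f*\rho_\eps)$ and $(af)*\rho_\eps$ converges to $af$ in $L^r_{s_1+s_2}$. This uses weighted H\"older, the bound \eqref{eq:Weighted_Jensen}, and the standard density argument showing $g*\rho_\eps\to g$ in $L^p_s$ for every $g\in L^p_s$: approximate $g$ by $C_c$ functions and use the uniform bound of \Cref{eq:Weighted_mollifier}. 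The $a$-factor must lie in $L^p_{s_1}$, which holds since $a\in W^{1,p}_{s_1}$.

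\textbf{Derivative term.} Differentiating gives
\[
\partial_j T_\eps=(\partial_j a*\rho_\eps)(f*\rho_\eps)-(\partial_j a\, f)*\rho_\eps+\Big[(a*\rho_\eps)\,\partial_j(f*\rho_\eps)-(a\,\partial_j\rho_\eps)*f\Big],
\]
where the last convolution is written schematically via the identity $\partial_j((af)*\rho_\eps)=(af)*\partial_j\rho_\eps$.

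The first difference tends to $0$ in $L^r_{s_1+s_2-1}$. Both products converge to $\partial_j a\,f$ because $\partial_ja\in L^p_{s_1-1}$, $f\in L^q_{s_2}$, mollification converges in weighted spaces, and weighted H\"older applies.

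For the bracket, we use the commutator identity
\[
(a*\rho_\eps)\partial_j(f*\rho_\eps)-(af)*\partial_j\rho_\eps=\int \big(a*\rho_\eps(x)-a(x-y)\big)f(x-y)\,\partial_j\rho_\eps(y)\,dy .
\]
We write $a*\rho_\eps(x)-a(x-y)=\big(a*\rho_\eps(x)-a(x)\big)+\big(a(x)-a(x-y)\big)$ with $|y|\le\eps$. The second piece is handled as in the proof of \Cref{lemma:RateConvSobolev_weighted}: write $a(x)-a(x-y)=\int_0^1 Da(x-ty)\cdot y\,dt$, and note that $|y|\,|\partial_j\rho_\eps(y)|$ has $L^1$-norm bounded independently of $\eps$. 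This shows the piece equals an average over $z\in B_1$, $t\in[0,1]$ of terms $Da(x-t\eps z)f(x-\eps z)$ against the fixed kernel $z\,\partial_j\rho(z)$. Using $\int \partial_j\rho(z)z^k\,dz=-\delta_{jk}$ and the continuity of translations in weighted spaces (\Cref{lemma:translation_weighted} plus density), this average converges in $L^r_{s_1+s_2-1}$ to $-\partial_j a\,f$, which cancels the corresponding limit from the first difference. The first piece equals $(a*\rho_\eps-a)\,\partial_j(f*\rho_\eps)$. By \eqref{eq:speed_decay2}, $\|a*\rho_\eps-a\|_{L^p_{s_1-1}}\le C\eps\|Da\|$, so this piece is bounded by $C\|Da\|_{L^p_{s_1-1}}\,\eps\|\partial_j(f*\rho_\eps)\|_{L^q_{s_2}}$, which tends to $0$ by \Cref{lemma:AuxFriedrichsWp_weighted}.

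\textbf{Expected obstacle.} The main difficulty is this cancellation step: bookkeeping the $\eps$-translates in weighted norms while showing that the averaged terms converge rather than merely stay bounded. To handle it, we first prove everything for $a,f\in C_c^\infty$, where it is classical. We then pass to general $a,f$ using the uniform bound $\|T_\eps\|_{W^{1,r}_{s_1+s_2}}\le C\|a\|_{W^{1,p}_{s_1}}\|f\|_{L^q_{s_2}}$, which follows from the estimates above, together with bilinearity and density. The condition $q\ge p'$ ensures $r\ge1$.
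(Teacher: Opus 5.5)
Your argument is correct in substance and follows essentially the paper's route. The piece $(a*\rho_\eps-a)\,\partial_j(f*\rho_\eps)$ is killed exactly as in the paper, by the rate \eqref{eq:speed_decay2} together with \Cref{lemma:AuxFriedrichsWp_weighted}. The remaining Friedrichs commutator is handled by a uniform bilinear bound plus density of $\Cinfc$ functions. The paper applies the density argument only to $A_\eps=\partial_j(af_\eps)-\partial_j(af)_\eps$, which equals your ``second piece'' plus $\partial_j a\,(f*\rho_\eps)$. Your fallback applies it to all of $T_\eps$; that also works, and on its own already proves the proposition.

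Two things need fixing.

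First, your displayed formula for $\partial_j T_\eps$ is off by $(\partial_j a\,f)*\rho_\eps$. As written, the first difference tends to $0$ and the bracket tends to $-\partial_j a\,f$, so the limit $-\partial_j a\,f$ has nothing to cancel against. The correct identity is
\[
\partial_j T_\eps=(\partial_j a*\rho_\eps)(f*\rho_\eps)+\Big[(a*\rho_\eps)\,\partial_j(f*\rho_\eps)-(af)*\partial_j\rho_\eps\Big].
\]
Its first term tends to $\partial_j a\,f$ in $L^r_{s_1+s_2-1}$ and cancels the limit $-\partial_j a\,f$ of your second piece. Also, ``$(a\,\partial_j\rho_\eps)*f$'' should read $(af)*\partial_j\rho_\eps$.

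Second, your remark on the zeroth-order term has the weights backwards. Since the radial weight is $\geq 1$, one has $\|u\|_{L^r_{s}}\le\|u\|_{L^r_{s-1}}$, i.e.\ $L^r_{s-1}\subset L^r_s$. A smaller index means more decay, which is the direction consistent with the paper's use of $L^{p/2}_{-2\tau-2}\subset L^1$. So \Cref{lemma:RateConvSobolev_weighted} is the \emph{stronger} estimate and gives $\|T_\eps\|_{L^r_{s_1+s_2}}=O(\eps)$ directly, which is exactly how the paper handles this term. Your alternative, showing that both products converge to $af$, is valid but unnecessary.
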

\begin{proof}
 We mostly follow and use the arguments in \cite[Proposition 3.3]{Hafemann2026}. In this proof, given a function $f$, we denote $f_\eps := f * \rho_\eps$ and $\partial_j f_\eps := \partial_j(f *\rho_\eps)$. The convergence in $L^r_{s_1+s_2}(\RR^n)$ follows directly from \Cref{lemma:RateConvSobolev_weighted}.

Now, we proceed to demonstrate the convergence of first-order derivatives. For any function $f$, it holds that $\partial_j f_\eps := \partial_j(f* \rho_\eps) =  f * \partial_j \rho_\eps$, then
\begin{align*}
\partial_j (a_\eps f_\eps - (af)_\eps)  & = \partial_j ((a_\eps -a ) f_\eps + af_\eps - (af)_\eps)\\
& = \partial_j ((a_\eps -a ) f_\eps) + \partial_j(af_\eps) - \partial_j(af)_{\eps}.
\end{align*}
The first term on the right-hand side can be controlled by the following inequalities
\begin{align*}
\|\partial_j \left((a_\eps -a ) f_\eps\right)\|_{L^r_{s_1+s_2-1}} 
&  \leq \|\partial_j (a_\eps -a ) f_\eps\|_{L^r_{s_1+s_2-1}} + \| (a_\eps -a) \partial_jf_\eps\|_{L^r_{s_1+s_2-1}} \\
& \leq \|\partial_j (a_\eps -a )\|_{L^p_{s_1-1}}  \|f_\eps\|_{L^q_{s_2}} + \|a_\eps -a\|_{L^p_{s_1-1}} \| \partial_j(f_\eps)\|_{L^q_{s_2}}\\
& \leq C\left(\|a_\eps -a\|_{W^{1,p}_{s_1}}  \|f\|_{L^q_{s_2}} +  \eps \| \partial_j(f_\eps)\|_{L^q_{s_2}}\right),
\end{align*}
for some constant $C>0$. Using that $a_\eps \to a$ in $W^{1,p}_{s_1}(\RR^n)$ as $\eps \to 0$ and applying \Cref{lemma:AuxFriedrichsWp_weighted}, we obtain 
\begin{equation}\label{eq:Partial_j_Fried_weighted}
    \|\partial_j ((a_\eps -a ) f_\eps)\|_{L^r_{s_1+s_2-1}} \longrightarrow 0 \text{ as } \eps \to 0. 
\end{equation}
Then, it remains to deal with the last two terms on the right-hand side. Define $A_\eps \colonequals \partial_j(af_\eps) - \partial_j(af)_\eps$. Arguing as in \cite[Proposition 3.3]{Hafemann2026}, but using the weighted H\"{o}lder's inequality instead, we have
\begin{equation}
    \|A_\eps\|_{L^r_{s_1+s_2-1}} \leq C \|Da\|_{L^p_{s_1-1}} \|f\|_{L^q_{s_2}},
\end{equation}
for some constant $C$ independent of $\eps$. 

Since smooth compactly supported functions are also dense in $W^{1,p}_{s_1}$ and $L^q_{s_2}$, for $p,q<\infty$, the same argument in \cite[Proposition 3.3]{Hafemann2026} using the uniform boundedness principle applies here to conclude that $A_\eps \to 0$ in $L^r_{s_1+s_2-1}(\RR^n)$ as $\eps \to 0$ for every $(a, f) \in W^{1,p}_{s_1}(\RR^n) \times L^{q}_{s_2}(\RR^n)$.

Finally, we have
\begin{align*}
\|\partial_j (a_\eps f_\eps - (af)_\eps)\|_{L^r_{s_1+s_2-1}} &\leq     \|\partial_j ((a_\eps -a ) f_\eps)\|_{L^r_{s_1+s_2-1}} + \|\partial_j(af_\eps) - \partial_j(af)_\eps\|_{L^r_{s_1+s_2-1}}\\
& \leq C\left(\|a_\eps -a\|_{W^{1,p}_{s_1}}  \|f\|_{L^q_{s_2}} + \eps \| \partial_j(f_\eps)\|_{L^q_{s_2}}\right) + \| A_\eps\|_{L^r_{s_1+s_2-1}},
\end{align*}
which converges to zero as $\eps \to 0$.
\end{proof}

\begin{corollary}
\label{cor:ConvFriedPlusApproximations_weighted}
    Let $p,q \in [1,+\infty)$, $q \geq p^\prime$ and $s_1,s_2 \in \RR$. Let $a, a_\eps \in  W_{s_1}^{1,p}\left(\RR^n\right)$ and assume that $a_\eps \to a$ in $W_{s_1}^{1,p}(\RR^n)$ as $\eps \to 0$. Additionally, suppose that there exists $C$ such that $\| a - a_\eps\|_{L^p_{s_1-1}} \leq C \eps$. Let $f \in L_{s_2}^q\left(\RR^n\right)$, then 
    \begin{equation}
        a_\eps  (f * \rho_\eps) - (a f)* \rho_\eps \longrightarrow 0 \text{ in } W^{1,r}_{s_1+s_2}(\RR^n) \text{ as } \eps \to 0,
    \end{equation}
    where $r$ is defined by $1/r=1/p+1/q$.
\end{corollary}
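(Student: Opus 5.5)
The plan is to reduce the corollary to \Cref{prop:FriedrichsWp_weighted} by splitting off the error caused by replacing $a*\rho_\eps$ with $a_\eps$. Concretely, I will write
\begin{equation*}
a_\eps (f*\rho_\eps) - (af)*\rho_\eps = \big[(a*\rho_\eps)(f*\rho_\eps) - (af)*\rho_\eps\big] + (a_\eps - a*\rho_\eps)(f*\rho_\eps).
\end{equation*}
The first bracket tends to zero in $W^{1,r}_{s_1+s_2}$ directly by \Cref{prop:FriedrichsWp_weighted}. So it remains to show that the second term, $(a_\eps - a*\rho_\eps)(f*\rho_\eps)$, converges to zero in $W^{1,r}_{s_1+s_2}$.

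For the zeroth-order part, I use the weighted H\"older inequality with $1/r = 1/p+1/q$ and the uniform bound $\|f*\rho_\eps\|_{L^q_{s_2}}\le C\|f\|_{L^q_{s_2}}$ from \Cref{eq:Weighted_mollifier}. This reduces matters to showing $a_\eps - a*\rho_\eps\to 0$ in $L^p_{s_1}$ (or even in $L^p_{s_1-1}$). Both $a_\eps\to a$ in $W^{1,p}_{s_1}$ (by hypothesis) and $a*\rho_\eps\to a$ (by the mollifier properties used in \Cref{lemma:FamilyOfMetrics_weighted}), so this holds.

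For the derivative, I expand
\begin{equation*}
\partial_j\big[(a_\eps - a*\rho_\eps)(f*\rho_\eps)\big] = \partial_j(a_\eps - a*\rho_\eps)\,(f*\rho_\eps) + (a_\eps - a*\rho_\eps)\,\partial_j(f*\rho_\eps).
\end{equation*}
The first term is bounded in $L^r_{s_1+s_2-1}$ by $\|\partial(a_\eps - a*\rho_\eps)\|_{L^p_{s_1-1}}\,C\|f\|_{L^q_{s_2}}$, and this goes to zero because both $a_\eps$ and $a*\rho_\eps$ converge to $a$ in $W^{1,p}_{s_1}$.

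The second term is the main obstacle, since $\partial_j(f*\rho_\eps)$ only has the bound $C\eps^{-1}\|f\|_{L^q_{s_2}}$ from \eqref{eq:Weighted_reg_dev}. Here I will use the rate hypothesis: $\|a-a_\eps\|_{L^p_{s_1-1}}\le C\eps$ by assumption, and $\|a-a*\rho_\eps\|_{L^p_{s_1-1}}\le C\eps\|Da\|_{L^p_{s_1-1}}$ by \eqref{eq:speed_decay2}. Together these give $\|a_\eps - a*\rho_\eps\|_{L^p_{s_1-1}}\le C\eps$. By H\"older,
\begin{equation*}
\|(a_\eps - a*\rho_\eps)\,\partial_j(f*\rho_\eps)\|_{L^r_{s_1+s_2-1}}\le C\eps\,\|\partial_j(f*\rho_\eps)\|_{L^q_{s_2}},
\end{equation*}
which is only bounded if \eqref{eq:Weighted_reg_dev} is used. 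Instead I invoke \Cref{lemma:AuxFriedrichsWp_weighted}, which gives $\eps\|\partial_j(f*\rho_\eps)\|_{L^q_{s_2}}\to 0$, so this term tends to zero as well.

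Finally, I combine the convergence in $L^r_{s_1+s_2}$ with the convergence of the derivatives in $L^r_{s_1+s_2-1}$ to conclude convergence in $W^{1,r}_{s_1+s_2}$. One bookkeeping point to check is the weight exponents. The H\"older pairing of weights $s_1-1$ and $s_2$ gives $s_1+s_2-1$, which is exactly the weight required for first derivatives in $W^{1,r}_{s_1+s_2}$. For the zeroth-order term I use the weight $s_1$ paired with $s_2$, giving $s_1+s_2$.
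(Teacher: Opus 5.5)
Your proposal is correct and follows the paper's proof essentially verbatim. You use the same splitting into the commutator $(a*\rho_\eps)(f*\rho_\eps)-(af)*\rho_\eps$, handled by \Cref{prop:FriedrichsWp_weighted}, plus $(a_\eps - a*\rho_\eps)(f*\rho_\eps)$. The $\eps^{-1}$ loss in $\partial_j(f*\rho_\eps)$ is offset by the $O(\eps)$ rate from the hypothesis and \eqref{eq:speed_decay2}, combined with \Cref{lemma:AuxFriedrichsWp_weighted}; you also make explicit the zeroth-order estimate that the paper leaves implicit.
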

\begin{proof}
    The claim follows by considering the following identity
\begin{align}
      a_\eps (f*\rho_\eps) - (af)* \rho_\eps& = \big[a_\eps - (a*\rho_\eps) \big] (f*\rho_\eps) 
  + \big[ (a*\rho_\eps) (f *\rho_\eps) - (af)* \rho_\eps \big].
\end{align}
The second term on the right-hand side converges in $W^{1,r}_{s_1+s_2}(\RR^n)$ by \Cref{prop:FriedrichsWp_weighted}. For the first term, we control its derivatives. Applying H\"older's inequality yields
\begin{align*}
\|\partial_j  \left(\big[a_\eps - (a*\rho_\eps) \big] (f*\rho_\eps)\right)\|_{L^r_{s_1+s_2-1}(\RR^n)} 
& \leq C \|a_\eps - (a*\rho_\eps)\|_{W^{1,p}_{s_1}(\RR^n)} \|f\|_{L^q_{s_2}
(\RR^n)} \\ &\quad + \|a_\eps - (a*\rho_\eps)\|_{L^p_{s_1-1}(\RR^n)}  \|\partial_j(f*\rho_\eps)\|_{L^q_{s_2}(\RR^n)}.    
\end{align*}
A standard computation together with \eqref{eq:speed_decay2} and the hypothesis $\|a-a_\eps\|_{L^p_{s_1-1}}\leq C \eps$ shows that $\|a*\rho_\eps- a_\eps\|_{L^p_{s_1-1}} \leq C \eps$, for some constant $C>0$, and $\|a*\rho_\eps - a_\eps\|_{W^{1,p}_{s_1}(\RR^n)}\to 0$ as $\eps \to 0$. Finally, the result follows by applying \Cref{lemma:AuxFriedrichsWp_weighted}.
\end{proof}

\begin{remark}\label{remark:appendixFried}
    For asymptotically flat metrics, one typically only has  $a-c \in W^{1,p}_{s_1}(\RR^n)$, where $a$ denotes (some of) the components of the metric in an asymptotically flat chart and $c \in \RR$. Using that $c * \rho_\eps = c$, we obtain the identity
    \begin{align*}
    ((a-c)*\rho_\eps)(f*\rho_\eps) - ((a-c)f)*\rho_\eps &= (a*\rho_\eps)(f*\rho_\eps) - c f*\rho_\eps - (af)*\rho_\eps + cf*\rho_\eps\\ 
    &= (a*\rho_\eps)(f*\rho_\eps) - (af)*\rho_\eps.
    \end{align*}
    This shows that the same arguments apply in this setting and immediately yields \Cref{lemma:RateConvSobolev_weighted}, \Cref{prop:FriedrichsWp_weighted}, and \Cref{cor:ConvFriedPlusApproximations_weighted} in the case $a-c \in W^{1,p}_{s_1}$.
\end{remark}

\printbibliography[heading=bibintoc]

\end{document}